\numberwithin{equation}{section}
\newtheorem{theorem}{Theorem}[section]
\newtheorem{prop}[theorem]{Proposition}
\newtheorem{defn}[theorem]{Definition}
\newtheorem{cor}[theorem]{Corollary}
\newtheorem{lem}[theorem]{Lemma}
\theoremstyle{remark}
\newtheorem*{oss}{{\bf Remark}}
\newtheorem*{ass}{Assumption A}
\newcommand{\uhat}{{\hat{\mathbf u}}}
\newcommand{\utilde}{{\tilde{\mathbf u}}}
\newcommand{\uv}{{\mathbf u}}
\newcommand{\y}{{\mathbf y}}
\newcommand{\nuhat}{{{\hat \nu}_{t,x}}}
\newcommand{\nutilde}{{{\tilde \nu}_{t,x}}}
\newcommand{\E}{{\mathbb E}}
\newcommand{\R}{{\mathbb R}}
\newcommand{\Pbb}{{\mathbb P}}
\newcommand{\Pbbtilde}{{\tilde{\mathbb P}}}
\newcommand{\lambN}{{\lambda^N_{\beta, \bar p, \tau}}}
\newcommand{\lamb}{{\lambda_{\beta, \bar p, \tau}}}
\newcommand{\lambo}{{\lambda_{\beta, 0, \tau}}}
\newcommand{\barmi}[1][]{\ensuremath{\bar #1_{m,i}^{\rho, \bar p}}}
\newcommand{\barli}[1][]{\ensuremath{\bar #1_{l,i}^{\rho, \bar p}}}
\newcommand{\limN}{{\lim_{N \to \infty}}}
\newcommand{\weak}{\rightharpoonup}
\newcommand{\weakstar}{\overset{*}{\rightharpoonup}}
\newcommand{\normp}[2][]{\ensuremath{\left\lVert #1 \right\rVert}_{L^{#2}(Q_T)}}
\newcommand{\norminf}[1][]{\ensuremath{\left\lVert #1 \right\rVert}_{L^\infty}}
\newcommand{\normH}[1][]{\ensuremath{\left\lVert #1 \right\rVert}_{H^1}}
\newcommand{\normLdue}[1][]{\ensuremath{\left\lVert #1 \right\rVert}_{L^2}}
\newcommand{\expnu}[1][]{\langle #1, \nutilde \rangle}
\newcommand{\parr}[1][]{\ensuremath{\partial_{r_{#1}}}}
\newcommand{\parp}[1][]{\ensuremath{\partial_{p_{#1}}}}
\begin{document}
\title{Hydrodynamic Limit for an Anharmonic Chain \\ under Boundary Tension}
\author{Stefano Marchesani \\ Stefano Olla\\
% {\footnotesize $^3$ CEREMADE, UMR-CNRS 7534, Universit\'e de Paris Dauphine, PSL Research University}\\
% {\footnotesize Place du Mar\'echal De Lattre De Tassigny, 75775 Paris Cedex 16, France}\\
% {\footnotesize \tt olla@ceremade.dauphine.fr}
}

%\thanks{Work partially supported by the grants ANR-15-CE40-0020-01 LSD of the French National Research Agency (ANR)}

\date{}
\maketitle

\abstract
 {
We study the hydrodynamic limit for the isothermal dynamics of an
anharmonic chain under hyperbolic space-time scaling under varying tension. 
The temperature is kept constant by a contact with a heat bath, realised via a
stochastic momentum-preserving noise added to the dynamics. The
noise is designed to be large  at the microscopic level, but vanishing
in the macroscopic scale. Boundary conditions are also considered:
one end of the  chain is kept fixed, while a time-varying tension
is applied to the other end. 
We show that the volume stretch and
momentum converge (in an appropriate sense) to a weak solution of a
system of hyperbolic conservation laws (isothermal Euler equations in
Lagrangian coordinates) with boundary conditions. 
This result includes the shock regime of the system. 
This is proven by adapting the theory of compensated compactness to a
stochastic setting, as developed by J. Fritz in} \cite{Fritz1}  {for the
same model without boundary conditions. 
Finally, changing the external tension allows us to define
thermodynamic isothermal transformations between equilibrium states. We use this
to deduce the first and the second principle of Thermodynamics for our
model.}

{\let\thefootnote\relax
\footnote{{\today}}
}

\section{Introduction} 
\label{sec:intro}

Hydrodynamic limits concern the deduction of macroscopic conservation laws from microscopic dynamics.
Ideally the microscopic dynamics should be deterministic and Hamiltonian but most
existing results are obtained using microscopic stochastic dynamics. 
Often the stochastic dynamics { models} the action of a heat bath thermalising a Hamiltonian dynamics.

For scalar hyperbolic conservation laws {these hydrodynamic} limits are well understood, 
even in presence of shocks \cite{Reza:1991} and of boundary conditions \cite{baha2012}. 
Much less is known for hyperbolic systems of conservation laws { with} boundary conditions, 
which have been understood only in the smooth regime \cite{even2010hydrodynamic}. 
In presence of shock waves, in infinite volume, only the hydrodynamic limit
 for the Leroux system \cite{toth2004} and the p-system \cite{Fritz1} have been obtained. 
Since uniqueness of entropy solutions is still an open problem for these systems, the result is 
intended here only in the sense that the limit distribution of the macroscopic profiles concentrates 
on the set of possible weak solutions. 

 { This article is a first attempt at understanding
the hydrodynamic limit in presence of 
boundary conditions and shocks in dynamics with more conservation laws. 
Changing boundary conditions 
(in particular time dependent tension) are important in order to perform 
isothermal transformations and study the corresponding first and second laws of thermodynamics. }

The model is an anharmonic chain of $N+1$ particles with a
time-dependent external force (tension) attached to one end of the chain
(particle number $N$). The other end of the chain (particle number
$0$) is kept fixed. 
\begin{center}
%\begin{figure}[h]
\includegraphics[width=0.8\textwidth]{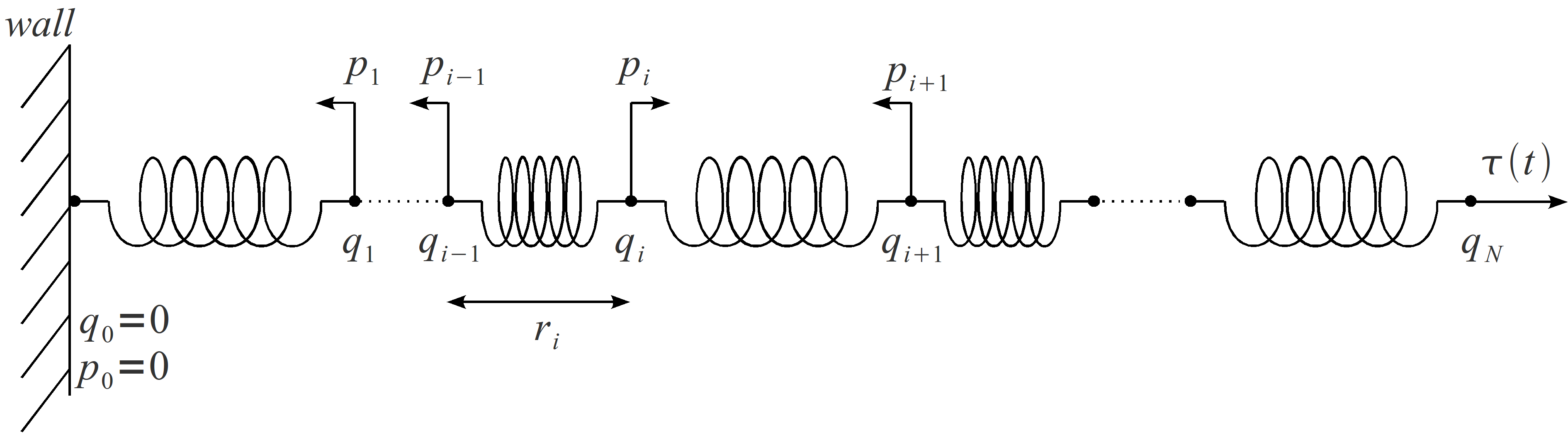}\\	
%\end{figure}
\end{center}

The system is in contact with a thermal bath
modelled by a stochastic dynamics chosen in such a way that: 
\begin{enumerate}
\item The total dynamics is ergodic,% in the limit $N \to \infty$.
\item The temperature of the chain is fixed to the value $\beta^{-1}$,
  i.e. the equilibrium stationary probability are given by canonical
  Gibbs measure at this temperature.
\item The momentum and volume are locally conserved, while the energy is not.
\item The strength of the noise is scaled in such a way that it does not appear in the macroscopic equations.
\end{enumerate}
 { This noise is realised by a random continuous exchange of momentum and volume stretch between nearest neighbour particles}.  
This is the same setup considered by Fritz \cite{Fritz1} in infinite volume in order to obtain the p-system:
\begin{equation}
  \label{eq:p1}
  \begin{cases}
 \partial_t  r (t,x) -  \partial_x  p(t,x)=0
 \\
 \partial_t  p(t,x)- \partial_x \tau_\beta(r(t,x))=0,
\end{cases}
\end{equation}
where $r(t,x)$ and $p(t,x)$ are the local volume strain and momentum of the chain, 
while  { $\tau_\beta(r)$, smooth and strictly increasing in $r$, is the equilibrium tension of the chain corresponding to the length $r$ 
and temperature $\beta^{-1}$}. Here $x$ is the Lagrangian material coordinate.
For the finite system $x\in [0,1]$. The physical boundary conditions that we impose microscopically are
\begin{itemize}
\item $p(t,0) = 0, \ t\ge 0$: the first particle is not moving,
\item $\tau_\beta(r(t,1)) = \bar \tau(t)$, where $\bar\tau(t)$ is the force (tension) applied to the last particle $N$, 
eventually changing on the macroscopic time scale.
\end{itemize}
In the shock regime, when weak non-smooth solutions are considered, one has to specify the meaning of 
these boundary conditions, since a discontinuity can be found right at the boundaries. 
The standard way to address this (see eg \cite{chen-frid1999})
 is to consider the special viscous approximation
\begin{equation}
  \label{eq:vpsystem-0}
\begin{cases}
 \partial_t  r^\delta (t,x) -  \partial^\delta_x  p(t,x)= \delta_1 \partial_{xx} \tau_\beta(r^\delta(t,x))
 \\
 \partial_t  p^\delta(t,x)- \partial_x \tau_\beta( r^\delta(t,x))= \delta_2 \partial_{xx} p^\delta(t,x)
\end{cases}.
\end{equation}
with the boundary conditions:
\begin{equation*}
 p^\delta(t,0)=0, \quad \tau_\beta(r^\delta(t,1)) = \bar \tau(t),
\quad \partial_x  p^\delta(t,1)= 0, \quad  \partial_x  r^\delta(t,0)= 0.
\end{equation*}
% This system has a strong unique solution.
Then the \emph{vanishing viscosity solutions} of \eqref{eq:p1} are defined as the limit for 
$\delta = (\delta_1,\delta_2) \to 0$ for $ r^\delta,  p^\delta$. 
Notice that \eqref{eq:vpsystem-0} has two extra boundary conditions 
that will create a boundary layer in the limit $\delta\to 0$. The particular choice 
of the viscosity terms and boundary conditions in \eqref{eq:vpsystem-0} is done in such a way 
that we have the right thermodynamic entropy production  {(see appendix} \ref{sec:visc-appr}).
% in presence of non-smooth solutions
At the moment there is no uniqueness result for this vanishing viscosity limit, and in principle 
it may depend on the particular choice of the viscosity term. 

The stochastic perturbation of our microscopic dynamics is chosen so that it gives 
a microscopic stochastic version of \eqref{eq:vpsystem-0}. 
We prove % in this work
that the distribution of the empirical profiles of strain and  
momentum, tested against functions with compact support on $(0,1)$, 
concentrate on weak solutions of \eqref{eq:p1}.
Unfortunately we are not able to prove that these limit profiles are the vanishing viscosity solutions 
with the right boundary conditions, but we conjecture that our limit distributions are concentrated 
on such vanishing viscosity solutions.

% Thanks to the ergodicity and since the noise preserves only volume and
% momentum, but not energy, we obtain the $2 \times 2$ Euler
% conservation equations. We choose to work in Lagrangian coordinates,
% so the hyperbolic system is a p-system. 
% The system is moreover \emph{nonlinear} as long as the
% chain is not harmonic. 

Hydrodynamic limits {in a smooth} %typo
 regime have been well understood so
far. The hydrodynamic limit for the 1D full $3 \times 3$ Euler system
in Lagrangian coordinates and boundary conditions has been studied in
\cite{even2010hydrodynamic}, while the 3D $3 \times 3$ Euler system in
Eulerian coordinates has been derived in
\cite{olla1993hydrodynamical}. Both \cite{even2010hydrodynamic} and
\cite{olla1993hydrodynamical} use the relative entropy method introduced in the diffusive setting by Yau
\cite{Yau1991}. 
% Since the relative entropy controls only quantities bounded by the
% Hamiltonian, the kinetic energy in \cite{olla1993hydrodynamical} is
% modified so the flux of the energy is controlled by the the energy
% itself. This is not needed in Lagrangian coordinates, where the
% kinetic energy is allowed to be quadratic in the velocities \cite{even2010hydrodynamic}. 

The relative entropy method assumes the existence of strong
solutions to the macroscopic equation. 
Then one samples these solutions and builds a
family of time-dependent inhomogeneous Gibbs measure which are 
used for the relative entropy.

% In the present work the macroscopic equation is given by an hyperbolic
% p-system and we want to obtain the solution globally in time. 
% Since a p-system can develop shocks in
% a finite time, we have to consider weak solutions that in general are
% not unique, and we cannot use the relative entropy method as in  \cite{even2010hydrodynamic} and
% \cite{olla1993hydrodynamical}.

% This gives rise to two problems. 
% First of all the solutions may not be continuous and their pointwise
% values are not, in general, well defined. 
% Secondly, we are not able to prove uniqueness for our p-system in
% our framework and thus we would not know a priori to \emph{which}
% solution the system converges to when performing the hydrodynamic
% limit. 

% In the infinite system case (no boundary conditions), this problem was
% studied by J. Fritz \cite{Fritz1}. 
% We extend the work in \cite{Fritz1} to our model by considering a
% finite chain and including boundary conditions, where the chain is
% attached to one point on one side and subject top a varying tension
% force on the other side.

As an alternative to the relative entropy method, \cite{Fritz1} adapts
the techniques of the vanishing viscosity to a stochastic
setting, in conjunction with the approach of Guo-Papanicolau-Varadhan \cite{Guo:1988p18645}
 based on bounds on Dirichlet forms.
We extend the work in \cite{Fritz1} to our model by considering a
 finite chain and including boundary conditions, where the chain is
 attached to one point on one side and subject to a varying tension
 force on the other side.
We construct some averages of the conserved quantities $\uhat_N(t,x)$ which solve (in an
appropriate sense) equations that approximate in a mesoscopic scale the p-system we want to
derive as $N \to \infty$. Then we carefully perform the limit $N \to
\infty$ and obtain $L^2$-valued weak solutions to the p-system. % In order to take into
% account the formation of shocks, we consider the solution in a
% \emph{weak} sense.
The main technical challenge is then to prove that
we can commute the weak limits with composition with nonlinear
functions.  { This is done using a stochastic extension,
 introduced by Fritz in} \cite{Fritz1},  {of the Tartar-Murat 
compensated compactness lemma, properly adapted to the presence of the boundary conditions.
The compensated compactness was used originally by Di Perna} \cite{diperna}
 {in order to prove convergence of viscous approximation of hyperbolic systems.}

After proving the hydrodynamic limit, we exploit the external tension
$\bar \tau(t)$ in order to perform a thermodynamic transformation
between two equilibrium states. This is done by letting $\bar \tau(t)$
to change from a value $\tau_0$ at $t =0$ to a value $\tau_1$ as $t
\to \infty$. Correspondingly, the system is brought from the
equilibrium state $(\beta,\tau_0)$ to the state
$(\beta,\tau_1)$.  Since the temperature is fixed by the noise,
 this transformation is \emph{isothermal}.  

Isothermal transformations are of great importance in thermodynamics,
as they constitute, together with adiabatic transformations, the Carnot cycle. The study of the first and the second law of
thermodynamics for an isothermal transformation in \emph{smooth}
regime has been carried out in \cite{olla2014microscopic}, in a system
where energy and momentum are not conserved and a
diffusive scaling is performed. In that situation volume stretch evolves macroscopically 
accordingly to a nonlinear diffusive equation.  

The first law of thermodynamics is an energy balance, which takes into
account ``gains'' or ``losses'' of total internal energy via exchange of heat and work: 
one defines the internal energy $U$ and the work $W$ (which depends on the
external tension only) and proves that the difference of internal
energy between two equilibrium states is given by $W$ plus some extra
term, which we call \emph{heat} and denote by $Q$. In formulae,
$\Delta U = W + Q$. The heat depends on terms coming from the stochastic thermostats
which survive in the limit $N \to \infty$. We prove the first law in
exactly the same fashion as \cite{olla2014microscopic}. 

The second law % requires some extra care. It
states that, for an
isothermal transformation, the difference of thermodynamic entropy
$\Delta S$ is never smaller than $\beta Q$. The equality $\Delta S =
\beta Q$ occurs only for \emph{quasistatic transformations}. The
entropy $S$ is defined by $S=\beta(U-F)$, where $F$ is the free
energy.  The second law can then be restated as $\Delta F \le W$. This
is also known as \emph{inequality of Clausius}. In
\cite{olla2014microscopic}, this inequality is obtained at
the macroscopic level: the macroscopic equation is diffusive and the
system dissipates even if the solutions are smooth. This is not the
case in the present paper, as smooth solutions would always give a
Clausius \emph{equality}. 
The main assumption that we have to make in order to obtain  \emph{inequality of Clausius}
is that the distributions of the limit profiles concentrate on the vanishing viscosity solutions.
We will refer to this solutions as  {\emph{thermodynamic entropy solutions}}.  { 
In mathematical literature the term \emph{entropy solution} is referred 
to a more strict class of weak solutions (in principle). }

% This is due to the fact that our macroscopic
% equations are conservative. We should look for the entropy production 
% at a \emph{mesoscopic} scale, intermediate between the microscopic
% (highly diffusive) and macroscopic (conservative) ones. By looking, at
% the appropriate scale, at the difference of free energies between the initial and the final mnacroscopic state,
%  one obtains, after taking the limit $N \to \infty$, $\Delta F = W - \mathcal D$,
% where $\mathcal D$ is a strictly positive term that depends on the noise alone. 

\section{The Model and the Main Theorem} \label{sec:mainthm}
We study a one-dimensional Hamiltonian system of $N+1 \in \mathbb N$
particles of unitary mass. The position of the $i$-th particle ($i \in
\{0,1,\dots, N\}$)  is denoted by $q_i \in \mathbb R$ and its momentum
by $p_i \in \mathbb R$. We assume that particle $0$ is kept fixed,
i.e. $(q_0,p_0) \equiv (0,0)$, while on particle $N$ is applied a
time-dependent force, $\bar \tau(t)$,  {bounded, with bounded derivative.}

Denote by ${\bf q} =(q_0, \dots, q_N)$ and ${\bf p} =(p_0,\dots,
p_N)$. The interaction between particles $i$ and $i-1$ is described by
the potential energy $V(q_i-q_{i-1})$ of an anharmonic spring. 

 {We take $V$ to be a mollification of the function}
\begin{equation} \label{eq:Vvera}
r \longmapsto \frac{1}{2}(1-\kappa)r^2+\frac{1}{2}\kappa r|r|_+,
\end{equation}
 {where $|r|_+=\max\{r,0\}$ and $\kappa \in (0,1/3)$.} 

In particular, $V$ is a  {uniformly convex function that grows
quadratically at infinity}: there exist constants $c_1$ and $c_2$ such
that for any $r \in \mathbb R$: 
\begin{equation}  \label{eq:Vconvex}
0<c_1 \le V''(r) \le c_2.
\end{equation}
Moreover, there are some positive constants $V''_+, V''_-, \alpha$ and $R$ such that
\begin{equation} \label{eq:AN}
\begin{gathered}
\left| V''(r) - V''_+ \right| \le e^{-\alpha r}, \quad r > R
\\
\left| V''(r) - V''_- \right| \le e^{\alpha r}, \quad r < -R.
\end{gathered}
\end{equation}
 {Finally, the choice of $\kappa$ is such that the macroscopic tension, defined below, is strictly convex.}

The energy is defined by the following Hamiltonian:
\begin{equation} 
\label{eq:hamiltonian}
\mathcal H_N({\bf q}, {\bf p}):= \sum_{i =0}^N \left( \frac{p_i^2}{2}+V(q_i-q_{i-1})\right),
\end{equation}
Since the interaction depends only on the distance between particles, we define
\begin{equation}
r_i := q_i-q_{i-1}, \quad i \in \{1,\dots, N\}.
\end{equation}
Consequently, the configuration of the system is given by  
$\left({\bf  r} =(r_1, \dots, r_N), {\bf p} =(p_0,\dots,p_N)\right)^\intercal$ and the
  phase space is given by $\mathbb R^{2N}$. 

 {Given the tension $\bar \tau(t)$, the dynamics of the system is determined by the generator}
\begin{equation} \label{eq:fullgen}
 \mathcal G_N^{\bar \tau(t)}:= N L_N^{\bar \tau(t)}+ N{\sigma} ( S_N + \tilde  S_N ).
\end{equation}
 {$\sigma=\sigma(N)$ is a positive number that tunes the strength of the noise. We need it to be big enough to provide ergodicity but small enough to disappear in the hydrodynamic limit:}
\begin{equation} \label{eq:siglim}
\lim_{N\to+ \infty} \frac{{\sigma}}{N}= \lim_{N \to \infty} \frac{N}{{\sigma}^2} = 0.
\end{equation}
The Liouville operator $L_N^{\bar \tau(t)}$ is given by
\begin{equation} \label{eq:hamgen}
L_N^{\bar \tau(t)}  = \sum_{i = 1}^N (p_i-p_{i-1}) \partial_{r_i} +
\sum_{i =1}^{N-1} \left(V'(r_{i+1}) -V'(r_i) \right)  \partial_{p_i} +(\bar \tau(t) - V'(r_N)) \partial_{p_N},
\end{equation}
where we have used the fact that $p_0 \equiv 0$. Note that the time
scale in the tension is chosen such that it changes smoothly on the
macroscopic scale. 

The operators $S_N$ and $\tilde S_N$ generate the stochastic part of the dynamics and are defined by
\begin{equation} \label{eq:SN}
S_N := -\sum_{i=1}^{N-1}  D^*_i D_i, \quad \tilde S_N := -\sum_{i=1}^{N-1}  \tilde D^*_i \tilde D_i,
\end{equation}
where
\begin{equation} \label{eq:DD*}
\begin{gathered}
\qquad D_i := \frac{\partial }{\partial p_{i+1}}-\frac{\partial }{\partial p_i}, \qquad D^*_i := p_{i+1}-p_i - \beta^{-1}D_i
\\
\tilde D_i  := \frac{\partial }{\partial r_{i+1}}-\frac{\partial }{\partial r_i}, \qquad \tilde D^*_i := V'(r_{i+1})- V'(r_i) -  \beta^{-1}\tilde D_i.
\end{gathered}
\end{equation}
They conserve total mass and momentum but not energy. The temperature
is fixed to the constant value $\beta^{-1}$, in the sense that the
only stationary measures of the stochastic dynamics  generated by $S_N
+\tilde S_N$ are given by the corresponding canonical Gibbs measure
at temperature $\beta^{-1}$, see definition below. 

The positions and the momenta of the particles then evolve in time accordingly to the following system of stochastic equations
\begin{equation} \label{eq:SDE}
\begin{cases}
d r_1 = Np_1 d t +N \sigma \left(V'(r_2)-V'(r_1)\right) d t - \sqrt{2 \beta^{-1}N\sigma}d \widetilde w_1
\\
d r_i = N(p_i-p_{i-1}) d t +N \sigma\left(V'(r_{i+1})+V'(r_{i-1})-2V'(r_i)\right) d t + \sqrt{2 \beta^{-1} N \sigma}
(d \widetilde w_{i-1}-d \widetilde w_i)
\\
d r_N = N(p_N-p_{N-1}) d t +N \sigma\left(V'(r_{N-1}) - V'(r_N)\right) d t + 
\sqrt{2\beta^{-1} N\sigma } d \widetilde w_{N-1}
\\
d p_1 = N(V'(r_2)-V'(r_1)) d t +N {\sigma} \left(p_2 - p_1\right) d t 
- \sqrt{2\beta^{-1}N {\sigma}}  { d w_1}
\\
d p_i = N(V'(r_{i+1})-V'(r_i)) d t +N \sigma \left(p_{i+1}+p_{i-1}-2p_i\right) d t + \sqrt{2\beta^{-1}N \sigma }(d  w_{i-1}-d  w_i)
\\
d p_N = N(\bar \tau(t)-V'(r_N)) d t +N \sigma \left(p_{N-1}-p_N\right) d t + \sqrt{2\beta^{-1}N \sigma }d  w_{N-1},
\end{cases},
\end{equation}
for $i \in \{2,\dots, N-1\}$.  { $\{ w_i\}_{i=1}^\infty$ and $\{\widetilde w_i\}_{i=1}^\infty$} are independent families of independent Brownian motions on a common probability space $(\Omega, \mathcal F, \Pbb)$. The expectation with respect to $\Pbb$ is denoted by $\E$.

For $\tau \in \mathbb R$ we define the canonical Gibbs function as
\begin{equation} \label{eq:Gdefn}
 { G(\beta,\tau):= \log \int_{\mathbb R} \exp\left(% -\frac{\beta}{2} p^2
  -\beta V(r) +\beta \tau r  \right)\; dr.}
\end{equation}
For $\rho \in \mathbb R$, the free energy is given by the Legendre transform of $G$:
\begin{equation}\label{eq:freedef}
F(\beta, \rho) := \sup_{\tau \in \mathbb R} \left\{ \tau \rho - \beta^{-1}G(\beta, \tau)\right\},
\end{equation}
so that its inverse is
\begin{equation}\label{eq:legInv}
G(\beta,\tau) = \beta \sup_{\rho \in \mathbb R} \left\{ \tau \rho -  F(\beta,\rho)\right\}.
\end{equation}
We denote by $\rho(\beta,\tau)$ and $\tau(\beta,\rho)$  the corresponding convex conjugate variables, that satisfy
\begin{equation}\label{eq:gamrho}
\rho(\beta, \tau) = \beta^{-1}\partial_\tau G(\beta,\tau), \quad  {\tau_\beta}(\rho) = \tau(\beta,\rho) = \partial_\rho F(\beta,\rho).
\end{equation}
On the one-particle state space $\mathbb R^2$ we define a family of probability measures
\begin{equation} \label{eq:lambdagamdef}
\lamb (d r, dp) :=  \exp\left(-\frac{\beta}{2}(p-\bar p)^2-\beta V(r) +\beta\tau r - G(\beta,\tau) \right)\; dr\;  {\frac{dp}{\sqrt{2\pi\beta^{-1}}}}.
\end{equation}
The mean deformation and momentum are
\begin{equation}\label{eq:rhopi}
{ \E_\lamb[r] = \rho(\beta,\tau), \quad \E_\lamb[p] = \bar p.}
\end{equation}
We have the relations
\begin{equation}\label{eq:temper}
{\E_\lamb[p^2]-\bar p^2 = \beta^{-1}, \quad \E_\lamb[V'(r)] = \tau}
\end{equation}
that identify $\beta^{-1}$ as the temperature and $\tau$ as the tension.

For constant $  \bar \tau$ in the dynamics, the family of product measures
\begin{equation}\label{eq:invmeas}
\lambda^N_{\beta,0, \bar \tau}(d{\bf r}, d{\bf p}) = \prod_{i=1}^N\lambda_{\beta,0, \bar \tau}(dr_i, dp_i)
\end{equation}
is stationary. These are the canonical Gibbs measures
at a temperature $\beta^{-1}$, pressure $ \bar \tau$ and velocity $0$. 

We need Gibbs measures with average velocity different from $0$ and we
use the following notation: 
\begin{equation}\label{eq:lamN}
\lambN(d{\bf r}, d{\bf p}) = \prod_{i=1}^N\lamb(dr_i, dp_i).
\end{equation}
Observe that $S_N$ and $\tilde S_N$ are symmetric with respect to $\lambN$ for any choice of $\bar p$ and $\tau$.

Denote by $\mu_t^N$ the probability measure, on $\R^{2N}$, of the system a time $t$. 
The density $f_t^N$ of $\mu_t^N$ with respect to  {$\lambda^N = \lambda^N_{\beta,0,0}$} solves the Fokker-Plank equation
\begin{equation}\label{eq:ftN}
\frac{\partial f_t^N}{\partial t} = \left(\mathcal G_N^{\bar \tau(t)} \right)^* f_t^N.
\end{equation}
Here $\left(\mathcal G_N^{\bar \tau(t)} \right)^* = -N L_N^{\bar \tau(t)} + N\bar\tau(t) p_N +N\sigma(S_N+\tilde S_N)$ 
is the adjoint of $\mathcal G_N^{\bar \tau(t)}$ with respect to $\lambda^N$.

Define the relative entropy
\begin{equation} \label{eq:HN}
H_N(f_t^N) :=\int_{\R^{2N}} f_t^N \log f_t^N d \lambda^N 
\end{equation}
and the Dirichlet forms 
\begin{equation} \label{eq:DN}
\begin{gathered}
\mathcal D_N(f_t^N) := \sum_{i=1}^{N-1}\int_{\R^{2N}} \frac{1}{4f_t^N}  
\left(\frac{\partial f_t^N}{\partial p_{i+1}}- \frac{\partial f_t^N}{\partial p_i}\right)^2d\lambda^N,
\\
\widetilde {\mathcal D}_N(f_t^N) :=\sum_{i=1}^{N-1} \int_{\R^{2N}} \frac{1}{4f_t^N}  
\left(\frac{\partial f_t^N}{\partial r_{i+1}}- \frac{\partial f_t^N}{\partial r_i}\right)^2d\lambda^N.
\end{gathered}
\end{equation}
We assume there is a constant $C_0$ independent of $N$ such that 
\begin{equation}
H_N(0) \le C_0N.  \label{eq:entbound}
\end{equation}
Since the noise does not conserve the energy, we are interested in the
macroscopic behaviour of the volume stretch and momentum of the particles,
at time $t$, as $N \to \infty$. Note that $t$ is already the
macroscopic time, as we have already multiplied by $N$ in the
generator. We shall use Lagrangian coordinates, that is our space
variables will belong to the lattice $\{1/N, \dots, (N-1)/N, 1\}$. 

 {Consequently, we set ${\bf u}_i := (r_i, p_i)^\intercal$. For a fixed macroscopic time $T$, we  introduce the empirical measures on $[0,T] \times [0,1]$ representing the space-time distributions on the interval $[0,1]$ of volume stretch and momentum: }
\begin{equation}\label{eq:empmeas}
{\pmb \zeta}_N(dx,dt) := \frac{1}{N} \sum_{i=1}^N \delta \left(x-\frac{i}{N}\right)  \uv_i(t) dx\; dt.
\end{equation}
We expect that the measures ${\pmb \zeta}^N(dx,dt)$ converge, as $N \to
\infty$ to an absolutely continuous measure with densities $r(t,x)$ and $p(t,x)$, satisfying the
following system of conservation laws: 
\begin{equation} \label{eq:psystem}
\begin{cases}
 \partial_t  r (t,x) -  \partial_x  p(t,x)=0
 \\
 \partial_t  p(t,x)- \partial_x \tau_\beta( r(t,x))=0,
\end{cases}
\qquad p(t,0)=0, \quad \tau_\beta(r(t,1)) = \bar \tau(t).
\end{equation}
Since \eqref{eq:psystem} is a hyperbolic system of nonlinear partial
differential equation, its solutions may develop shocks in a finite
time, even if smooth initial conditions are given. Therefore, we shall
look for \emph{weak} solutions, which are defined even if
discontinuities appear.

\begin{defn}
  { We say that $(r(t,x), p(t,x))^\intercal \in \left[L^2_{loc}(\mathbb R_+\times [0,1])\right]^2$} %\cap \mathcal C(\mathbb R_+, \mathcal S'([0,1]))\right]^2$
 { is a \emph{weak solution} of the system} \eqref{eq:psystem}  {provided}
  \begin{equation} \label{eq:maineqr} 
   {  \int_0^\infty \int_0^1\left( r(t,x) \partial_t \varphi(t,x) - p(t,x) \partial_x\varphi(t,x)\right) dx\;  dt }
    % + \int_0^1 r(0,x) \varphi(0,x) dx
    = 0
  \end{equation}
  \begin{equation} \label{eq:maineqp} 
  {   \int_0^\infty \int_0^1 \left( p(t,x) \partial_t\psi(t,x) -\tau_\beta( r(t,x)) \partial_x\psi(t,x) \right) dx\;dt}
    % {\color{red}+ \int_0^\infty \psi(t,1) \bar \tau(t) dt} 
    % + \int_0^1 p(0,x) \psi(0,x) dx
    =0
  \end{equation}
  { for all functions $\varphi, \psi \in C^2(\mathbb R_+ \times [0,1])$ with compact support
  on $\mathbb R_+ \setminus \{0\} \times (0,1)$.}
  % with the following properties:
%   \begin{itemize}
%   \item $\varphi(\cdot,x)$ and $\varphi( \cdot, x)$ are compactly
%     supported in $[0,\infty)$ for all $x \in [0,1]$;
%   \item
%     $\partial_x \varphi(t, 0) = \partial_x\psi(t, 0) = \partial_x \varphi(t, 1) = \partial_x\psi(t, 1) = 0$
%     % are compactly supported in $(0,1)$
%     for all $t \ge 0$;
%   \item $\varphi(t,1) = \varphi(t,0) \color{red}{= \psi(t,1)} = \psi(t,0) = 0$ for all $ t \ge 0$.
%   \end{itemize}
\end{defn}
\begin{oss}
  {Notice that this definition of weak solution does not give any information on boundary conditions 
nor about initial conditions. }% (see Chen article)
   % In fact if a discontinuity appears at a
  % boundary, it can make any imposed boundary value irrelevant.
\end{oss}
% \begin{oss}
%    {\color{red} I think we can prove compactness in $\mathcal C([0,T],
%      L^2([0,1]))$ and use the following definition of weak solution:
%      \begin{equation}
%        \label{eq:5}
%        \begin{split}
%          \int_0^1\left( p(t,x) - p(0,x)\right) \psi(x) dx - \int_0^t
%          ds \int_0^1 \psi'(x) \tau(r(s,x)) dx = 0\\
%           \int_0^1\left( r(t,x) - r(0,x)\right) \phi(x) dx - \int_0^t
%          ds \int_0^1 \phi'(x) p(s,x) dx = 0
%        \end{split}
%      \end{equation}
% for any smooth functions $\phi, \psi$ on $[0,1]$ null at boundaries
% with their first derivatives.
% }
% \end{oss}

% For any continuous test function $J : [0,1] \to \mathbb R$ % with compact support in $(0,1)$
% consider the empirical densities
% \begin{equation}\label{eq:empden}
% {\pmb \zeta}_N(t,J):= \langle J, {\pmb \zeta}_N(dx,t)\rangle 
% = \frac{1}{N}\sum_{i=1}^NJ \left(\frac{i}{N}\right) {\bf u}_i(t).
% \end{equation}
Denote by $\frak Q_N$ the probability distribution of ${\pmb \zeta}_N$ on $\mathcal M([0,T]\times[0,1])^2$. Observe that  ${\pmb \zeta}_N\in \mathcal C([0,T], \mathcal M([0,1])^2)$, where $\mathcal M([0,1])$ 
is the space of signed measures on $[0,1]$, endowed by the weak topology.
Our aim is to show the convergence
\begin{equation}\label{eq:conv}
{\pmb \zeta}_N(T,J) \to \left( \int_0^T \int_0^1 J(t,x)r(t,x) dx dt, \int_0^T\int_0^1 J(t,x) p(t,x) dx dt \right)^\intercal,
\end{equation}
where $r(t,x)$ and $p(t,x)$ satisfy \eqref{eq:maineqr}-\eqref{eq:maineqp}. 
 Since we do not have uniqueness for the solution of these equations, we need 
a more precise statement. % Denote by $Q_N$ the probability distribution of ${\pmb \zeta}_N$ on 
% $\mathcal C([0,T], \mathcal M([0,1]^2))$}.
\begin{theorem}[Main theorem] \label{thm:main}
Assume that the initial distribution satisfies the entropy bound \eqref{eq:HN}.% , and in probability 
% \begin{equation}
%   \label{eq:9}
%   \frac{1}{N} \sum_{i=1}^N J\left(\frac{i}{N}\right)  \uv_i(0)  \mathop{\longrightarrow}_{N\to\infty} \int_0^1  J(x) \left( r(0,x) , p(0,x) \right) dx
% \end{equation}
Then sequence $\mathfrak Q_N$ is compact and any limit point of $\mathfrak Q_N$ 
has support on absolutely continuous measures with densities $r(t,x)$ and $p(t,x)$
 solutions of \eqref{eq:maineqr}-\eqref{eq:maineqp}. 
\end{theorem}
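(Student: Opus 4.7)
The plan is to follow the stochastic compensated-compactness scheme introduced by Fritz in \cite{Fritz1}, adapted to our finite interval and time-varying boundary tension. The argument naturally decomposes into (i) a priori bounds and tightness of $\mathfrak Q_N$; (ii) construction of mesoscopic block-averages $\uhat_N(t,x)$ satisfying a discrete viscous $p$-system up to vanishing errors; and (iii) passing to the limit through $\tau_\beta$ via a stochastic div-curl lemma.

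For step (i), I would propagate the entropy bound \eqref{eq:entbound} in time by differentiating $H_N(f_t^N)$ along \eqref{eq:ftN}, using the boundedness of $\bar\tau(t)$ and $\bar\tau'(t)$ together with the symmetry of $S_N+\tilde S_N$ with respect to $\lambN$. This produces an estimate of the form
\begin{equation*}
H_N(f_t^N) + \sigma N\int_0^t\bigl(\mathcal D_N(f_s^N)+\widetilde{\mathcal D}_N(f_s^N)\bigr)\,ds \;\le\; C(t)\,N,
\end{equation*}
where the boundary term $N\bar\tau(t)p_N$ from $(\mathcal G_N^{\bar\tau(t)})^*$ is absorbed using the entropy inequality and the quadratic growth of $V$ in \eqref{eq:Vconvex}. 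This yields uniform $L^2$ control of $\E\!\int (r_i^2+p_i^2)\,d\lambda^N$, and hence a uniform bound on the total variation of ${\pmb\zeta}_N$. Combined with equicontinuity estimates in time for $\langle J,{\pmb\zeta}_N\rangle$ with $J\in C^\infty_c((0,1))$, obtained directly from the martingale structure of \eqref{eq:SDE} (the quadratic variation of the noise contribution being of order $\sigma/N\to 0$ by \eqref{eq:siglim}), this gives tightness and therefore compactness of $\mathfrak Q_N$.

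In step (ii), I would introduce block-averages $\bar r_{\ell,i},\bar p_{\ell,i}$ on mesoscopic cells of size $\ell=\ell(N)$ with $1\ll \ell \ll \sigma \ll N$, which is permitted by \eqref{eq:siglim}. Applying Itô's formula and reorganising \eqref{eq:SDE} shows that the associated piecewise-constant profile $\uhat_N(t,x)$ satisfies, against any $\varphi,\psi\in C^2_c(\mathbb R_+\!\setminus\!\{0\}\times(0,1))$, a discrete weak form of the viscous system \eqref{eq:vpsystem-0}: an Itô drift of order $\sigma/N$ from $\sigma N(S_N+\tilde S_N)$, and a martingale part with quadratic variation $O(\sigma/N)$. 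Because the test functions vanish near $x=0$ and $x=1$, the boundary contributions from $p_0\equiv 0$ and from $\bar\tau(t)-V'(r_N)$ are absent. The nontrivial replacement $V'(\bar r_{\ell,i})\rightsquigarrow \tau_\beta(\bar r_{\ell,i})$ is then the standard one-block/two-block estimate, driven by the Dirichlet-form bound from step (i): this forces local Gibbs behaviour on mesoscopic cells, and equivalence of ensembles together with the uniform convexity \eqref{eq:Vconvex} and the asymptotic estimates \eqref{eq:AN} give the replacement up to an $L^2(\Pbb\otimes dt\,dx)$-vanishing error.

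The main obstacle, as in \cite{Fritz1}, is step (iii): identifying the limit as a true weak solution of \eqref{eq:psystem} requires commuting weak limits with $\tau_\beta$. The strategy is to exhibit a rich family of entropy--entropy-flux pairs $(\eta,q)$ of the $p$-system and to show that $\partial_t \eta(\uhat_N)+\partial_x q(\uhat_N)$ is precompact in $H^{-1}_{\mathrm{loc}}((0,T)\times(0,1))$. This is produced by an Itô expansion for $\eta(\uhat_N)$, whose remainder is $\eta''(\uhat_N)$ contracted with the Dirichlet-form density -- controlled by the entropy-production bound of step (i) -- and by the Murat interpolation between measure-boundedness and $W^{-1,p}$-boundedness for some $p>2$. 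The stochastic div-curl lemma of Fritz then forces the Young measure generated by $\uhat_N$ to be a Dirac mass a.e., where the strict convexity of $\tau_\beta$ granted by the choice of $\kappa$ below \eqref{eq:Vvera} plays a crucial role. Passing to the limit in the discrete weak equations from step (ii) then yields \eqref{eq:maineqr}--\eqref{eq:maineqp}, and the $L^2$ moment bounds upgrade the limit measures to absolutely continuous ones with $L^2_{\mathrm{loc}}$ densities, finishing the proof.
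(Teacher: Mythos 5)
Your plan reproduces the paper's own proof strategy: propagate the entropy bound to get the Dirichlet-form estimate and the energy bound, build mesoscopic block averages with one-block/two-block replacement of $V'$ by $\tau$, and then reduce the random Young measure of $\uhat_N$ to a Dirac mass via entropy--entropy-flux pairs and the stochastic Murat--Tartar/div-curl machinery of Fritz, exactly as in Sections 3.1--3.3. The only differences are presentational: the paper verifies the Murat--Tartar hypothesis through the explicit decomposition $X_N=Y_N+Z_N$ with $\|\psi\|_{H^1}$- and $\|\psi\|_{L^\infty}$-type bounds (rather than your $H^{-1}_{\mathrm{loc}}$/$W^{-1,p}$ interpolation phrasing), obtains the quantitative one-block bound via a logarithmic Sobolev inequality for the conditioned Gibbs measures, and uses Shearer's family of bounded entropy pairs, so these are the same ideas in a slightly different dress.
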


% Here is the precise statement of our main result ({\color{red} this
%   should be rephrased}):
% \begin{theorem}[Main theorem] \label{thm:main}
% Let $\{r_i(t), p_i(t)\}_{i=1}^N$ be a solution of the system \eqref{eq:SDE} at time $t$ and let, for $T > 0$, $Q_T =[0,T] \times [0,1]$.
% \begin{itemize}
% \item[i)]  There exists a family $\{\uhat_N\}_{N \in \mathbb N}$  of stochastic processes on $\mathbb R_+ \times [0,1]$ such that \\  $\uhat_N \in L^2(\Omega \times Q_T ; \mathbb R^2)$ for any $N \in \mathbb N$ and $T> 0$ and such that
% \begin{equation}
% \lim_{N \to \infty}   \Pbb \left\{\left| \frac{1}{N} \sum_{i=1}^N  J\left(\frac{i}{N}\right)  \uv_i(t)- \int_0^1 J(x) \uhat_N(t ,x) dx \right|  > \delta \right\} = 0 ,
% \end{equation}
% for all $\delta >0$ and all test functions $J : [0,1] \to \mathbb R$ compactly supported in $(0,1)$.
% \item[ii)] There exists a probability space $(\tilde \Omega, \tilde {\mathcal F}, \Pbbtilde)$, a sequence $(N_n)_{n \in \mathbb N}$ and stochastic processes $(\utilde_n)_{n \in \mathbb N}$, $\utilde$ such that $\utilde_n,\utilde \in L^2(\tilde \Omega \times Q_T; \mathbb R^2)$ for any $n \in \mathbb N$ and $T >0$. Moreover, $\utilde_n$ and $\uhat_{N_n}$ have the same law and, $\Pbbtilde$-almost surely, $\utilde_n \to \utilde$ as $n \to \infty$ in $L^p$-strong for all $1 \le p < 2$, where $\utilde = (\tilde r, \tilde p)$ almost surely satisfies \eqref{eq:maineqr}-\eqref{eq:maineqp}.
% \end{itemize}
% \end{theorem}
\begin{oss}
Since we are dealing with possibly discontinuous solutions, 
it is not possible to use the entropy method to perform the hydrodynamic limit. 
Furthermore, we shall \emph{not assume} that solutions of \eqref{eq:maineqr}-\eqref{eq:maineqp} exists, 
but we \emph{prove} existence as part of the proof of Theorem \ref{thm:main}. 
% {\color{red} With a little more effort it is possible to prove continuity in time for the limit distributions. 
% The proof is relatively standard and we chose not to include it here. \emph{cancellare}}
% Finally, we do not take into account initial data, is because 
%the method we describe in this paper does not allow us to determine
% whether the solutions we construct are unique.
\end{oss}

Following Theorem \ref{thm:main}, we discuss the thermodynamics of the system, 
 in particular that the isothermal transformation we have obtained in the hydrodynamic limit satisfies 
the first and second principle of thermodynamics. A mathematical deduction of this requires some 
further assumption that are:
\begin{itemize}
\item any limit distribution of the momentum and stretch profiles $\mathfrak Q$ 
is concentrated on certain \emph{vanishing viscosity solutions} 
(see definition in appendix \ref{sec:visc-appr}), 
\item these solutions reach equilibrium as time approach infinity.
\end{itemize}
A further technical assumption is that the hydrodynamic limit is valid for quadratic functions of the profiles, 
like the energy.
For this purpose we have to define the \emph{macroscopic} work $W$ done by the system. 
Under the weak formulation of the equations \eqref{eq:maineqr}-\eqref{eq:maineqp}
 this is impossible without further conditions. 
We obtain the following theorem.
\begin{theorem}\label{thm:thermo}
Let $\tau$, $U$, $W$, $Q$, $F$ as in \eqref{eq:gamrho}, \eqref{eq:internal}, \eqref{eq:lavoro}, \eqref{eq:Q}, \eqref{eq:freedef}. % Moreover, assume the external tension $\bar \tau(t)$ varies smoothly from $\tau_0$ at $t=0$ to $\tau_1$ as $t \to \infty$. 
Then, under the assumptions in Section \ref{sec:thermo}, we have
\begin{equation} \label{eq:thermo1}
U(\beta, \tau_1) - U(\beta,\tau_0) = W+Q.
\end{equation}
and
%Furthermore, under the assumption of removing the noise $\tilde S_N$ in the positions, we obtain
\begin{equation} \label{eq:thermo2}
F(\beta, \tau_\beta^{-1}(\tau_1))-F(\beta,\tau_\beta^{-1}(\tau_0)) \le W.
\end{equation}
\end{theorem}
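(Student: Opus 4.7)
\textbf{Plan for Theorem \ref{thm:thermo}.}

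\emph{First law.} I would start from the microscopic Hamiltonian and apply Itô's formula to $\mathcal H_N$ using the SDE \eqref{eq:SDE}. Noting that $L_N^{\bar\tau(t)}\mathcal H_N=\bar\tau(t)p_N$, one obtains
\begin{equation*}
\mathcal H_N(T)-\mathcal H_N(0)
= N\int_0^T \bar\tau(t)p_N(t)\,dt + N\sigma\int_0^T(S_N+\tilde S_N)\mathcal H_N\,dt + M^N_T,
\end{equation*}
where $M^N_T$ is a martingale that vanishes in $L^2$ after dividing by $N$ by standard quadratic variation estimates. A direct computation from \eqref{eq:DD*} yields
\begin{equation*}
S_N\mathcal H_N=-\sum_{i=1}^{N-1}\bigl[(p_{i+1}-p_i)^2-2\beta^{-1}\bigr],\quad
\tilde S_N\mathcal H_N=-\sum_{i=1}^{N-1}\bigl[(V'(r_{i+1})-V'(r_i))^2-\beta^{-1}(V''(r_i)+V''(r_{i+1}))\bigr],
\end{equation*}
which have zero expectation at local Gibbs equilibrium with temperature $\beta^{-1}$. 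Dividing by $N$, taking $\E$, and letting first $N\to\infty$ (using the extension of Theorem \ref{thm:main} to the quadratic observables $p^2/2$ and $V(r)$, which is precisely the additional technical assumption made before the statement) and then $T\to\infty$ (using the equilibration assumption), the left-hand side converges to $U(\beta,\tau_1)-U(\beta,\tau_0)$, the Liouville contribution converges to the macroscopic work $W$ defined by \eqref{eq:lavoro}, and the thermostat contribution converges to $Q$ defined by \eqref{eq:Q}, giving \eqref{eq:thermo1}.

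\emph{Second law.} By the assumption that any limit $\mathfrak Q$ is concentrated on vanishing viscosity solutions of \eqref{eq:psystem}, I would work at the level of a viscous approximation $(r^\delta,p^\delta)$ of \eqref{eq:vpsystem-0} and pass $\delta\to 0$. Since $\partial_r F(\beta,\cdot)=\tau_\beta$ by \eqref{eq:gamrho}, the natural mathematical entropy/flux pair is
\begin{equation*}
\eta(r,p) = \frac{p^2}{2}+F(\beta,r), \qquad q(r,p)=-p\,\tau_\beta(r),
\end{equation*}
for which $\partial_t\eta+\partial_x q\equiv 0$ on smooth solutions of \eqref{eq:psystem}. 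Along $(r^\delta,p^\delta)$ one computes
\begin{equation*}
\partial_t\eta(r^\delta,p^\delta)+\partial_x q(r^\delta,p^\delta)
= \delta_1\,\tau_\beta(r^\delta)\,\partial_{xx}\tau_\beta(r^\delta)+\delta_2\, p^\delta\,\partial_{xx}p^\delta.
\end{equation*}
Integrating over $[0,T]\times[0,1]$, integrating by parts in $x$, and using the boundary conditions of \eqref{eq:vpsystem-0} (which make the resulting viscous boundary terms either exactly zero or of order $\delta_i$), the two remaining bulk viscous terms have a definite negative sign, producing
\begin{equation*}
\int_0^1\eta(r^\delta,p^\delta)(T,x)\,dx-\int_0^1\eta(r^\delta,p^\delta)(0,x)\,dx \le \int_0^T\bar\tau(t)p^\delta(t,1)\,dt+o_\delta(1).
\end{equation*}
Passing $\delta\to 0$ along the vanishing-viscosity subsequence, then $T\to\infty$ using the equilibration assumption together with $p(\infty,\cdot)\equiv 0$ (since the stationary state has zero mean velocity), the left-hand side converges to $F(\beta,\tau_\beta^{-1}(\tau_1))-F(\beta,\tau_\beta^{-1}(\tau_0))$ and the right-hand side to $W$, yielding \eqref{eq:thermo2}.

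\emph{Main obstacle.} The hard part of the first law is identifying the limit of $(N\sigma/N)\int_0^T(S_N+\tilde S_N)\mathcal H_N\,dt$ with the macroscopic heat $Q$: although the integrand has zero equilibrium average, it is not small (its contribution is amplified by the factor $\sigma$ from \eqref{eq:siglim}), so one needs a careful one-block-type replacement of the microscopic fluctuations $(p_{i+1}-p_i)^2$ and $(V'(r_{i+1})-V'(r_i))^2$ by their local Gibbs averages parametrised by the current temperature $\beta^{-1}$, together with an entropy estimate coming from \eqref{eq:entbound} and the Dirichlet forms \eqref{eq:DN}. The delicate point of the second law is giving meaning to the boundary trace $p(t,1)$ of the work: this is impossible in the weak formulation \eqref{eq:maineqr}--\eqref{eq:maineqp} without extra information, and it is precisely the vanishing-viscosity assumption of Section \ref{sec:thermo} that permits the computation at the viscous level where the trace is well-defined, together with the control of the small boundary contributions $\delta_1\bar\tau(t)\partial_x\tau_\beta(r^\delta)(t,1)$ arising from integration by parts.
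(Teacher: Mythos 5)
Your proposal follows essentially the same route as the paper on both laws: the first law is the microscopic energy balance $E_N(t)-E_N(0)=W_N(t)+Q_N(t)$ combined with Assumption A (convergence of the quadratic observables) and the equilibration hypothesis; the second law is the free-energy/entropy-production computation for the viscous system with $\eta=p^2/2+F(\beta,r)$, $q=-p\,\tau_\beta(r)$, exactly as in Appendix \ref{sec:visc-appr}.

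Two bookkeeping remarks. For the first law, the paper does not need the one-block replacement of $(p_{i+1}-p_i)^2$ by its Gibbs average that you flag as the main obstacle: it simply \emph{defines} $Q(t)$ as in \eqref{eq:Q}, i.e.\ as the limit of $Q_N(t)=E_N(t)-E_N(0)-W_N(t)$, which exists because the other two terms converge; no independent identification of the noise contribution is required. For the second law, your claim that the boundary term $\delta_1\bar\tau(t)\,\partial_x\tau_\beta(r^\delta)(t,1)$ produced by the integration by parts is $o_\delta(1)$ is not justified — a boundary layer can make $\partial_x\tau_\beta(r^\delta)(\cdot,1)$ of order $\delta_1^{-1}$. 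The paper avoids this issue by absorbing that flux into the work itself: since $\frac{d}{dt}\mathcal L(t)=p(t,1)+\delta_1\partial_x\tau_\beta(r^\delta(t,x))\big|_{x=1}$, the term is part of $W(t)=\int_0^t\bar\tau\,d\mathcal L$ as defined in \eqref{eq:lavoro} and \eqref{eq:v22}, and the remaining bulk viscous terms are negative, giving \eqref{eq:v2} and hence \eqref{eq:thermo2} without any smallness claim on the boundary flux. With that adjustment your argument coincides with the paper's.
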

\begin{oss}
Equation \eqref{eq:thermo1} expresses the first law of thermodynamics, 
and is deduced directly from the microscopic dynamics. 
The main assumption here is the the convergence of the energy, which is quadratic in the positions and the momenta.
In fact, \ref{thm:main} allows us to pass the weak limit inside nonlinear functions
 with strictly less than quadratic growth, but we can say nothing if the growth is quadratic.

Equation \eqref{eq:thermo2} is the \emph{inequality of Clausius}. 
It is equivalent to the second law of thermodynamics for an isothermal transformation: 
$\Delta S \ge \beta Q$. From a PDE point of view, on the other hand, the inequality of Clausius 
reads as a Lax-entropy inequality, provided $W = 0$. 
The presence of $W$ is due to the presence of boundary terms.  
In fact, the work $W$ depends  on the external tension $\bar \tau$. 
% Among the assumption who led us to \eqref{eq:thermo2} a crucial one is the removal of the noise $\tilde S_N$ on the positions: the presence of such a term, in fact, yields terms which don't apparently vanish as $N \to \infty$ \emph{and} that have no definite sign. From a physical point of view, this is not too upsetting, as a noise in the position is \emph{not} physical. We believe it is in fact possibile to obtain all the results in this paper with \emph{only} the noise on the momenta and this is object of current research.
The inequality of Clausius is strictly connected to the possible presence of shocks in the solutions obtained 
in Theorem \ref{thm:main}. 
 { In fact global smooth solutions imply \emph{equality} in} \eqref{eq:thermo2}.
% In order to observe a strict inequality, we need to look at a mesoscopic 
% (i.e. intermediate between microscopic and macroscopic scale) and then perform the limit $N \to \infty$. 
\end{oss}

\section{The Hydrodynamic Limit} \label{sec:hydrolim}

Since the temperature $\beta^{-1}$ is fixed throughout the article, in
order to simplify notations we fix $\beta = 1$ in most sections.

\subsection{Approximate Solutions}
In this section we construct a family $\{\hat{\bf u}_N\}_{N \in \mathbb N}$ 
of stochastic processes which solve an approximate version of  \eqref{eq:maineqr}-\eqref{eq:maineqp}. 
% {\color{red} Initial data will be ignored, since we are not able to prove uniqueness.}

For any $1 \le l \le N$ and $l \le i \le N-l+1$ we define the block average:
\begin{equation} \label{eq:uhat}
\uhat_{l,i}:=(\hat r_{l,i},\hat p_{l,i})^\intercal := \frac{1}{l} \sum_{|j|<l} \frac{l-|j|}{l}\uv_{i-j}.
\end{equation}
We choose $l = l(N)$ such that
\begin{equation}\label{eq:l(N)}
\lim_{N \to \infty}\frac{l}{\sigma} = \lim_{N \to \infty}\frac{N \sigma}{l^3} = 0
\end{equation}
and we define the following empirical process:
\begin{equation} \label{eq:uhatN}
\uhat_N(t,x):= (r_N(t,x),p_N(t,x))^\intercal:= \sum_{i=l}^{N-l+1} 1_{N,i}(x) \uhat_{l,i}(t), \quad (t,x) \in \mathbb R_+ \times [0,1],
\end{equation}
where $1_{N,i}$ is the indicator function of the ball of center $i/N$ and diameter $1/N$.
 Note that, since $l/N \to 0$, for $N$ large enough $\uhat_N(t,\cdot)$ is compactly supported in $(0,1)$.
 
 {We use the average} \eqref{eq:uhat}  {has it is smoother than the  mean} $\bar \uv_{l,i}:= \dfrac{1}{l} \overset{l}{\underset{j=1}{\sum}} \uv_{i-j}$  {and thus provides better estimates as $N\to \infty$} (see Lemma \ref{lem:twoblock}, Lemma \ref{lem:etahat} and Corollary \ref{cor:twoblock}).

The proof of the first part of Theorem \ref{thm:main} relies on the following lemma, 
which will be proven in Section \ref{sub:estimates}:
\begin{lem}[Energy estimate] \label{lem:enest}
{ For any time $t\ge 0$} there exists $C_e(t)$ independent of $N$ such that
\begin{equation}\label{eq:enest}
\E \left[\sum_{i=1}^N |\uv_i(t)|^2\right] \le C_e(t) N. 
\end{equation}
\end{lem}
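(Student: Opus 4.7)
The plan is to pass through a relative entropy bound with respect to the time-dependent product Gibbs measure $\nu_t := \lambda^N_{\beta,0,\bar\tau(t)}$, whose tension matches the instantaneous boundary force. By \eqref{eq:Vconvex} one has $r^2 \le C(V(r)+1)$, so bounding $\E[\mathcal H_N(t)] \le C(t) N$ is enough; since the marginals of $\nu_t$ have Gaussian-type tails, for $\gamma$ small enough $\log \E_{\nu_t}[\exp(\gamma \sum_i |\uv_i|^2)] \le C N$, and the entropy inequality reduces the statement to showing $\tilde H(t) := H(\mu_t^N | \nu_t) \le C(t) N$.

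The core structural observation is that, for frozen $\bar\tau$, $\lambda^N_{\beta,0,\bar\tau}$ is stationary for $\mathcal G_N^{\bar\tau}$: the Liouville operator preserves $\exp(-\beta(\mathcal H_N - \bar\tau q_N))$ because the equations of motion are Hamiltonian with effective Hamiltonian $\mathcal H_N - \bar\tau q_N$, while $S_N$ and $\tilde S_N$ are symmetric with respect to every $\lambN$. Writing $\log(d\nu_t/d\lambda^N) = \bar\tau(t) q_N - N[G(\bar\tau(t)) - G(0)]$ with $\beta = 1$ and differentiating $\tilde H(t)$ using the standard Fokker--Planck identity
\[
\tfrac{d}{dt} H_N(f_t^N) = N \bar\tau(t)\E[p_N] - 4N\sigma(\mathcal D_N + \tilde{\mathcal D}_N)(f_t^N),
\]
together with $\mathcal G_N^{\bar\tau} q_N = N p_N$ and the thermodynamic identity $N\partial_\tau G(\bar\tau(t)) = N\rho(\bar\tau(t)) = \E_{\nu_t}[q_N]$, the two occurrences of $N\bar\tau(t)\E[p_N]$ cancel exactly, leaving
\[
\tfrac{d\tilde H(t)}{dt} = -4N\sigma(\mathcal D_N + \tilde{\mathcal D}_N)(f_t^N) + \bar\tau'(t)\bigl(\E_{\nu_t}[q_N] - \E_{\mu_t^N}[q_N]\bigr).
\]

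To close via Gronwall, one discards the nonpositive dissipation and bounds the remaining term by another entropy inequality. Under $\nu_t$, $q_N = \sum r_i$ is a sum of i.i.d.\ subgaussian variables, so $|\E_{\mu_t^N}[q_N] - \E_{\nu_t}[q_N]| \le C\sqrt{N \tilde H(t)}$. Substituting gives $\tfrac{d\tilde H(t)}{dt} \le C|\bar\tau'(t)|\sqrt{N \tilde H(t)}$; integrating $\tfrac{d}{dt}\sqrt{\tilde H(t)/N}$ produces $\tilde H(t) \le C(t) N$ provided $\tilde H(0) = O(N)$, which follows from \eqref{eq:entbound} by the same entropy inequality applied to $\bar\tau(0) q_N(0)$.

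The main obstacle is arranging the exact cancellation of the boundary work term $N\bar\tau(t)\E[p_N]$, which depends critically on choosing the reference $\nu_t$ as the Gibbs carrying the current tension so that Hamiltonian flow at frozen $\bar\tau$ leaves it invariant. Without it, one would need to control $\E[p_N]$ from an $O(N)$ entropy bound, which the entropy inequality cannot deliver with an $N$-independent constant.
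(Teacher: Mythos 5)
Your argument is correct, but it takes a genuinely different route from the paper. The paper works throughout with the \emph{fixed} reference measure $\lambda^N=\lambda^N_{\beta,0,0}$: differentiating $H_N(f_t^N)$ produces the boundary work term $N\bar\tau(t)\int p_N f_t^N d\lambda^N$, which is tamed by writing $Np_N=\mathcal G_N^{\bar\tau}q_N$ (using $S_Nq_N=\tilde S_Nq_N=0$), integrating by parts in time, and bounding $\int |q_N| f_s^N d\lambda^N$ with the entropy inequality and linear exponential moments; a Gr\"onwall argument linear in $H_N$ then gives $H_N(f_t^N)\le C(t)N$ plus the Dirichlet form bound, and the energy estimate follows from one more entropy inequality, which coincides with your final step. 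You instead take the time-dependent reference $\nu_t=\lambda^N_{\beta,0,\bar\tau(t)}$, whose stationarity under the frozen-tension generator (stated in the paper at \eqref{eq:invmeas}) makes the work term cancel exactly, leaving only the source $\bar\tau'(t)\bigl(\E_{\nu_t}[q_N]-\E_{\mu_t^N}[q_N]\bigr)$, which you bound by $C\sqrt{N\tilde H(t)}$; this concentration bound is indeed uniform in $t$ because $G''\le c_1^{-1}$ and $\bar\tau$ is bounded, and the square-root Gr\"onwall closes, with $\tilde H(0)=O(N)$ from \eqref{eq:entbound} as you say. Both routes rest on the same two identities ($\mathcal G_N^{\bar\tau}q_N=Np_N$ and the symmetry of $S_N,\tilde S_N$ with respect to every $\lambda^N_{\beta,\bar p,\tau}$), so they are of comparable depth: yours is in the spirit of the relative entropy method with a tension-matched equilibrium, avoids the time integration by parts, and still yields the dissipation bound $N\sigma\int_0^t(\mathcal D_N+\widetilde{\mathcal D}_N)\,ds\le C(t)N$ since the term you drop is nonpositive; the paper's version keeps all entropy inequalities relative to the single measure $\lambda^N$, which is the form used later in the LSI and the block estimates (your bound transfers to $H_N(f_t^N)\le C(t)N$ in one extra line, since $\E_{\mu_t^N}[|q_N|]\le CN$). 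One small caveat: your closing remark that the matched-tension reference is essentially forced is overstated --- the paper's $q_N$ trick shows the work term can also be handled with the fixed reference.
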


\begin{lem} \label{lem:i)main}
 {For all $t\ge 0$, $\delta>0$ and any test function $J\in \mathcal C^1([0,1])$:}
\begin{equation} \label{eq:ukhatuN}
\lim_{N \to \infty}   \Pbb \left\{\left| \frac{1}{N} \sum_{i=1}^N  J\left(\frac{i}{N}\right)  \uv_i(t)- \int_0^1 J(x) \uhat_N(t ,x) dx \right|  > \delta \right\} = 0 ,
\end{equation}
%for all $\delta >0$ and all test functions $J : [0,1] \to \mathbb R$ compactly supported in $(0,1)$.
\end{lem}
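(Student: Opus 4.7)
The plan is to express both quantities appearing in the difference as weighted sums against the microscopic configuration $\{\uv_i(t)\}_{i=1}^N$, and then exploit the $C^1$ regularity of $J$ together with the energy estimate of Lemma \ref{lem:enest} to show that the discrepancy vanishes in $L^1(\Pbb)$.

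First, since $1_{N,i}$ is the indicator of an interval of length $1/N$ centered at $i/N$, a Taylor expansion of $J$ on each such interval gives
\[
\int_0^1 J(x)\, \uhat_N(t,x)\,dx \;=\; \frac{1}{N}\sum_{i=l}^{N-l+1} J(i/N)\, \uhat_{l,i}(t) \;+\; R_1(t),
\]
with $|R_1(t)| \le C\|J'\|_\infty N^{-2}\sum_i |\uhat_{l,i}(t)|$, which is $O(1/N)$ in expectation because Jensen's inequality for the triangular weights combined with Lemma \ref{lem:enest} yields $\E[\sum_i |\uhat_{l,i}|^2] \le C_e(t) N$. Substituting $\uhat_{l,i}(t) = l^{-2}\sum_{|j|<l}(l-|j|)\,\uv_{i-j}(t)$ and reindexing via $k = i-j$, the main term rewrites as $N^{-1}\sum_{k=1}^N K_N(k)\, \uv_k(t)$, where
\[
K_N(k) \;:=\; \frac{1}{l^2}\sum_{|j|<l}(l-|j|)\,J((k+j)/N)\,\mathbf{1}_{\{l\le k+j\le N-l+1\}}.
\]
For $k$ in the bulk range $2l \le k \le N-2l$ the indicator is identically $1$, and since $l^{-2}\sum_{|j|<l}(l-|j|) = 1$ is a probability kernel, a first-order Taylor expansion of $J$ yields $|K_N(k) - J(k/N)| \le C\|J'\|_\infty\, l/N$.

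Denoting the difference in the lemma by $D_N(t)$, these estimates give
\[
|D_N(t)| \;\le\; C\|J'\|_\infty \frac{l}{N}\cdot\frac{1}{N}\sum_{k=1}^N |\uv_k(t)| \;+\; \frac{C\|J\|_\infty}{N}\sum_{k\in B_N} |\uv_k(t)| \;+\; |R_1(t)|,
\]
where $B_N \subset \{1,\dots,N\}$ collects the at most $Cl$ boundary indices for which either $k\notin[l,N-l+1]$ or the triangular kernel is truncated. Taking expectations, applying Cauchy--Schwarz to each sum, and invoking Lemma \ref{lem:enest} gives
\[
\E[|D_N(t)|] \;\le\; C(t)\Bigl(\tfrac{l}{N} + \sqrt{\tfrac{l}{N}}\Bigr) \;\longrightarrow\; 0,
\]
since \eqref{eq:siglim} and \eqref{eq:l(N)} jointly imply $l \ll \sigma \ll N$. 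Markov's inequality then yields the claim.

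The only real subtlety is the bookkeeping of boundary terms, which come both from the cutoff restricting $\uhat_N$ to $l\le i\le N-l+1$ and from the truncation of the kernel $K_N$ near $k=1$ and $k=N$. These generate $O(l)$ defective indices where one loses the Lipschitz improvement and only $L^\infty$-in-$J$ control remains; however, since the number of such indices is $O(l)$, Cauchy--Schwarz against the energy estimate absorbs them into a factor $\sqrt{l/N}\to 0$, which is enough.
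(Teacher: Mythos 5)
Your proof is correct and follows essentially the same route as the paper's: both arguments split the discrepancy into a quadrature error of order $\|J'\|_\infty N^{-2}\sum_i|\uhat_{l,i}|$, a bulk smoothing error controlled by the Lipschitz bound $|J(i/N)-J((i+j)/N)|\le \|J'\|_\infty |j|/N$ together with the normalisation of the triangular weights (yielding the $l/N$ factor), and $O(l)$ boundary indices absorbed via Cauchy--Schwarz and the energy estimate into a $\sqrt{l/N}$ term. Your kernel $K_N$ is just an explicit repackaging of the reindexing the paper performs, and the conclusion via Markov's inequality is identical.
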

\begin{proof}
% By Markov's inequality it is enough to prove
% \begin{equation}
% \lim_{N \to \infty}   \E \left[\left| \frac{1}{N} \sum_{i=1}^N  J\left(\frac{i}{N}\right)  \uv_i(t)- \int_0^1 J(x) \uhat_N(t ,x) dx \right|\right] = 0.
% \end{equation}
First observe that boundary terms are negligeable since
\begin{equation*}
  \left| \frac 1N \sum_{i=1}^l J\left(\frac{i}{N}\right) \uv_i \right| \le\left(\frac 1N \sum_{i=1}^l J\left(\frac{i}{N}\right)^2\right)^{1/2} 
  \left(\frac 1N \sum_{i=1}^N \uv_i^2\right)^{1/2} \le  \|J\|_\infty \sqrt{\frac{l}{N}} \left(\frac 1N \sum_{i=1}^N \uv_i^2\right)^{1/2}
\end{equation*}
and similarly on the other side. Then we estimate separately
\begin{equation}\label{2est}
\begin{gathered}
% \left| \frac{1}{N} \sum_{i=1}^N  J\left(\frac{i}{N}\right)  \uv_i(t)- \int_0^1 J(x) \uhat_N(t ,x) dx \right|
% \\
% \le
\left| \frac{1}{N} \sum_{i=l+1}^{N-l-1}  J\left(\frac{i}{N}\right)  \left(\uv_i - \uhat_{l,i}\right) \right|\\
 \left| \frac{1}{N} \sum_{i=l+1}^{N-l-1}  J\left(\frac{i}{N}\right)  \uhat_{l,i} - \int_0^1 J(x) \uhat_N(x)\; dx \right|.
\end{gathered}
\end{equation}
%Since $J$ is compactly supported in $(0,1)$ we have, for large enough $N$ and after a summation by parts,
Using that $\dfrac{1}{l}\underset{|j|<l}{\sum}\dfrac{l-|j|}{l}=1$, we have
\begin{equation}
\begin{gathered}
\left|\frac{1}{N} \sum_{i=l+1}^{N-l-1}  J\left(\frac{i}{N}\right)  \left(\uv_i - \uhat_{l,i}\right) \right|
=\left| \frac{1}{N}\sum_{i=l}^{N-l+1}  \frac{1}{l}\sum_{|j|<l} \frac{l-|j|}{l} \left( J \left( \frac{i}{N}\right)-J \left(\frac{i+j}{N}\right) \right) \uv_i\right|\\
   \le \norminf[J']  \frac{1}{N}\sum_{i=l}^{N-l+1} \frac{1}{l} \sum_{|j|<l} \frac{l-|j|}{l} \frac{|j|}{N} | \uv_i|  
   \le \norminf[J']  \frac{l}{N^2}\sum_{i=1}^{N}  | \uv_i|  \\
   \le \norminf[J']  \frac{l}{N} \left(\frac 1N \sum_{i=1}^{N}  | \uv_i|^2\right)^{1/2}.
 \end{gathered}
\end{equation}
% \begin{equation}
%  {= -\frac{1}{lN} \sum_{i=l}^{N-l+1} \sum_{|j|<l} \frac{l-|j|}{l}J'(\xi_{i,j}) \frac{j}{N} \uv_i(t)}
% \end{equation}
%for some $\xi_{i,j}$ between $i/N$ and $(i+j)/N$.
%  Therefore, the energy estimate \eqref{eq:enest} yields
% \begin{equation}
% \begin{gathered}
% \E \left[  \left| \frac{1}{N} \sum_{i=1}^N  J\left(\frac{i}{N}\right)  \uv_i(t)- \frac{1}{N} \sum_{i=1}^N  J\left(\frac{i}{N}\right)  \uhat_{l,i}(t) \right| \right]
% \\
% \le \norminf[J']  {\frac{1}{l}\sum_{|j|<l}\frac{l-|j|}{l}} \frac{l}{N^2}\E \left[ \sum_{i=1}^N|\uv_i(t)| \right] \le  \norminf[J'] \frac{l}{N^2} \E \left[ \sum_{i=1}^N\left(1+|\uv_i(t)|^2\right) \right] 
% \\
% \le \norminf[J'](1+C_e(t))\frac{l}{N},
% \end{gathered}
% \end{equation}
% which vanishes as $N \to \infty$. 
Similarly for the second of \eqref{2est}: %, for large enough $N$,
\begin{equation*}
\begin{gathered}
% \frac{1}{N} \sum_{i=1}^N  J\left(\frac{i}{N}\right)  \uhat_{l,i}(t)- \int_0^1 J(x) \uhat_N(t ,x) dx
% \\
\left| \frac{1}{N} \sum_{i=l}^{N-l+1} \left( J \left(\frac{i}{N} \right)- N \int_{i/N-1/(2N)}^{i/N+1/(2N)}J(x)dx \right) \uhat_{l,i} \right| \le 
 \frac{ \norminf[J']}{N^2}  \sum_{i=l}^{N-l+1} |\uhat_{l,i}| \le \frac{ \norminf[J']}{N^2}  \sum_{i=1}^{N} |\uv_i|. 
\end{gathered}
\end{equation*}
% Therefore,
% \begin{equation}
% \begin{gathered}
% \E \left[\left| \frac{1}{N} \sum_{i=1}^N  J\left(\frac{i}{N}\right)  \uhat_{l,i}(t)- \int_0^1 J(x) \uhat_N(t ,x) dx \right|\right] \le 
% \frac{ \norminf[J']}{N^2} \E \left[ \sum_{i=l}^{N-l+1} |\uhat_{l,i}(t)| \right]
% \end{gathered}
% \end{equation}
% and the conclusion follows once more from the energy estimate, since
% \begin{equation}
% \sum_{i=1}^{N-l+1} |\uhat_{l,i}(t)|^2 \le 4 \sum_{i=1}^N |\uv_i(t)|^2.
% \end{equation}
\end{proof}
 { It follows from Lemmas} \ref{lem:enest} and \ref{lem:i)main}  {that} $\uhat_N(t,x)$ has values in $L^2([0,T]\times[0,1])^2$. Let us denote 
by $\tilde{\mathfrak Q}_N$ the distribution of $\uhat_N(t,x)$ on $L^2([0,T]\times[0,1])$.
We will show in the following that any convergent subsequence of $\tilde{\mathfrak Q}_N$ is 
concentrated on the weak solutions of \eqref{eq:psystem}. By Lemma \ref{lem:i)main} this implies 
the conclusion of the main Theorem \ref{thm:main} for any limit point of ${\mathfrak Q}_N$.

From the interaction $V$ we define
\begin{equation}\label{eq:hatV'}
\hat V'_{l,i}(t):= \frac{1}{l}\sum_{|j|<l}\frac{l-|j|}{l}V'(r_{i-j}(t)).
\end{equation}
It follows from \eqref{eq:Vconvex} that $V'$ is linearly bounded. From Appendix \ref{app:tension}, so is  {$\tau = \tau_{\beta = 1}$} (as defined by \eqref{eq:gamrho}). Thus we easily obtain the following from lemma \ref{lem:i)main} :
\begin{cor} \label{cor:appr}
\begin{equation} \label{eq:V'appr}
\lim_{N \to \infty}   \Pbb \left\{\left| \frac{1}{N} \sum_{i=1}^N  J\left(\frac{i}{N}\right) V'(r_i(t))- \frac{1}{N}\sum_{i=l}^{N-l+1} J\left(\frac{i}{N}\right)\hat V'_{l,i}(t)  \right|  > \delta \right\} = 0,
\end{equation}
\begin{equation}
\lim_{N \to \infty}   \Pbb \left\{\left| \frac{1}{N} \sum_{i=l}^{N-l+1}  J\left(\frac{i}{N}\right) \tau(\hat r_{l,i}(t))-\int_0^1 J(x) \tau(\hat r_N(t,x))  \right|  > \delta \right\} = 0,
\end{equation}
for all $t\ge 0, \ \delta >0$ and all test functions  {$J \in \mathcal C^1( [0,1])$}.
\end{cor}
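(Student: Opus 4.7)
The plan is to mimic the decomposition used in the proof of Lemma \ref{lem:i)main} with $\uv_i$ replaced by the nonlinear scalar fields $V'(r_i(t))$ and $\tau(\hat r_{l,i}(t))$. The key point is that each of the estimates used there (boundary truncation, smoothing-kernel error, Riemann-sum error) controls the random variables only through their empirical $L^2$ norm. Because $V'$ is Lipschitz by \eqref{eq:Vconvex} and $\tau=\tau_{\beta=1}$ is linearly bounded by Appendix \ref{app:tension}, we have $|V'(r)|^2+|\tau(r)|^2\le C(1+r^2)$; combined with Jensen's inequality and the fact that the averaging-kernel weights $(l-|j|)/l^2$ sum to one, this reduces everything to the energy bound of Lemma \ref{lem:enest}, yielding
\begin{equation*}
\E\!\left[\frac{1}{N}\sum_{i=1}^N |V'(r_i(t))|^2\right] + \E\!\left[\frac{1}{N}\sum_{i=l}^{N-l+1}|\tau(\hat r_{l,i}(t))|^2\right]\le C(t).
\end{equation*}

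For \eqref{eq:V'appr}, set $\xi_i:=V'(r_i(t))$ and observe that $\hat V'_{l,i}(t)$ is produced from $\xi_i$ by exactly the same kernel that produced $\uhat_{l,i}$ from $\uv_i$ in \eqref{eq:uhat}. Splitting the discrete sum into boundary pieces ($1\le i<l$ and $N-l+2\le i\le N$) and a bulk, the first two arguments from the proof of Lemma \ref{lem:i)main} give a bound of order $\|J\|_\infty\sqrt{l/N}\,\bigl(\frac{1}{N}\sum_i\xi_i^2\bigr)^{1/2}$ for the boundary and $\|J'\|_\infty(l/N)\,\bigl(\frac{1}{N}\sum_i\xi_i^2\bigr)^{1/2}$ for the smoothing error. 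Since $l/N\to 0$ by \eqref{eq:l(N)} and \eqref{eq:siglim}, and the $L^2$ norm of $\xi$ is controlled above, Markov's inequality closes the argument.

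The second estimate is a pure Riemann-sum error: since $\hat r_N(t,\cdot)$ is piecewise constant, taking the value $\hat r_{l,i}(t)$ on the interval of length $1/N$ centred at $i/N$, so is $\tau(\hat r_N(t,\cdot))$, and
\begin{equation*}
\int_0^1 J(x)\tau(\hat r_N(t,x))\,dx = \sum_{i=l}^{N-l+1} \tau(\hat r_{l,i}(t)) \int_{i/N-1/(2N)}^{i/N+1/(2N)} J(x)\,dx.
\end{equation*}
Replacing each cell integral of $J$ by $J(i/N)/N$ costs at most $\|J'\|_\infty/(2N^2)$ per cell; summing and applying Cauchy--Schwarz bounds the total error by $\frac{\|J'\|_\infty}{2N}\bigl(\frac{1}{N}\sum_i|\tau(\hat r_{l,i})|^2\bigr)^{1/2}$, which tends to zero in $L^1(\Pbb)$ by the $L^2$ bound above. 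The only mild subtlety is the $L^2$ control of $\tau(\hat r_{l,i})$, which reduces to the energy estimate via Jensen's inequality; beyond that no new obstacle arises, since both assertions are essentially Lemma \ref{lem:i)main} applied to a linearly bounded observable.
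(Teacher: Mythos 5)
Your proposal is correct and follows essentially the same route as the paper: the paper's own proof consists precisely of the observation that $V'$ (by \eqref{eq:Vconvex}) and $\tau$ (by Appendix \ref{app:tension}) are linearly bounded, so that the boundary, smoothing-kernel and Riemann-sum estimates of Lemma \ref{lem:i)main}, combined with the energy bound of Lemma \ref{lem:enest} and Markov's inequality, apply verbatim to these observables. You have merely written out the details the paper leaves implicit, and they check out (in particular the reduction of the empirical $L^2$ control of $V'(r_i)$ and $\tau(\hat r_{l,i})$ to the energy estimate via Jensen's inequality is the right way to close the argument).
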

The following theorem will be proven in Section \ref{sub:estimates}. Recall that $l = l(N)$ that satisfies \eqref{eq:l(N)}.
\begin{theorem}[One-block estimate] \label{thm:1block}
\begin{equation}\label{eq:1block}
{\limN\E\left[\frac{1}{N} \sum_{i=l}^{N-l+1} \int_0^t \left( \hat V'_{l,i}(s)-\tau(\hat r_{l,i}(s))\right)^2ds \right] =0.}
\end{equation}
\end{theorem}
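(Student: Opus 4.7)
The plan is to follow a Guo--Papanicolaou--Varadhan one-block strategy, as adapted in \cite{Fritz1}, with modifications to account for the boundary-driven dynamics. The two main ingredients will be (i) a relative entropy bound $H_N(f_t^N) \le C(t) N$ and (ii) a Dirichlet form bound $N\sigma\int_0^t (\mathcal D_N + \widetilde{\mathcal D}_N)(f_s^N)\,ds \le C(t) N$. Both follow by differentiating $H_N(f_t^N)$ along \eqref{eq:ftN}: the antisymmetric part $-NL_N^{\bar\tau(t)}$ reduces to the boundary contribution involving $N\bar\tau(t)p_N$, controlled by $\sup_t|\bar\tau(t)|$ together with Lemma \ref{lem:enest} via Young's inequality, while the symmetric part produces the dissipation $-N\sigma(\mathcal D_N + \widetilde{\mathcal D}_N)(f_t^N)$. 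Combined with \eqref{eq:siglim}, this shows that the time-integrated Dirichlet forms are $O(1/\sigma)$, which is the quantitative regularization we will need.

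Next I would apply the entropy inequality
\begin{equation*}
\int F\, f_s^N\, d\lambda^N \le \frac{1}{\gamma}\log \int e^{\gamma F}\, d\lambda^N + \frac{1}{\gamma}H_N(f_s^N)
\end{equation*}
locally, with $F = (\hat V'_{l,i} - \tau_\beta(\hat r_{l,i}))^2$ for each $i$ and $\gamma$ proportional to $N$. Since $\hat V'_{l,i}$ and $\hat r_{l,i}$ depend only on coordinates in the block $\Lambda_{l,i}:=\{i-l+1,\dots,i+l-1\}$, and since $\tilde S_N$ restricted to such a block is symmetric with respect to the canonical Gibbs measure at fixed block stretch, the Dirichlet form bound together with a standard coarse-graining argument allows one to replace the block marginal of $f_s^N$, in relative entropy per site, by the canonical Gibbs measure with empirical mean $\hat r_{l,i}$. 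The constraint $l/\sigma \to 0$ from \eqref{eq:l(N)} is precisely what makes this replacement quantitative.

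On such a canonical measure, the equivalence of ensembles for uniformly convex potentials (ensured by \eqref{eq:Vconvex} and the asymptotics \eqref{eq:AN}) reduces conditional expectations to grand-canonical ones, and $\hat V'_l$ converges in $L^2$ to $\tau_\beta(\rho)$ as $l\to\infty$ by the law of large numbers together with continuity of $\tau_\beta$ from Appendix \ref{app:tension}. The smoothed weights in \eqref{eq:hatV'} cause no essential difficulty because $\hat V'_{l,i}$ is a convex combination of standard uniform block averages, to each of which the same LLN applies.

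The main obstacle is to make the three scales $N$, $\sigma$, $l$ in \eqref{eq:siglim}--\eqref{eq:l(N)} cooperate: we need $l \ll \sigma$ so that blocks equilibrate under the noise on the macroscopic time scale, yet $\sigma \ll N$ so that the noise does not smear macroscopic gradients, and $N\sigma \ll l^3$ so that the coarse-graining error remains summable when averaged over the $O(N)$ blocks. A secondary point is the lack of translation invariance due to boundaries, but since the sum in \eqref{eq:1block} runs only over $l \le i \le N-l+1$, every block $\Lambda_{l,i}$ stays entirely in the bulk, and the only boundary dependence enters through the global entropy estimate, which already absorbs it.
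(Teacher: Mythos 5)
Your overall architecture --- entropy production bound, time-integrated Dirichlet forms of order $1/\sigma$, a local entropy inequality combined with an LSI/coarse-graining step, equivalence of ensembles, and a law of large numbers under the grand canonical measure --- is exactly the route the paper takes, and your remarks about the smoothed averages and about the sum staying in the bulk are correct in spirit (the paper still needs a separate comparison lemma between the triangular and flat block averages). There is, however, a genuine gap in your ingredient (i)--(ii). The antisymmetric part of $\frac{d}{dt}H_N(f_t^N)$ does not vanish: it leaves the boundary work term $N\bar\tau(t)\int p_N f_t^N\,d\lambda^N$, a \emph{single-site} observable multiplied by $N$. Young's inequality gives $N\bar\tau p_N\le \tfrac12 N\bar\tau^2+\tfrac12 Np_N^2$, and Lemma \ref{lem:enest} only controls $\E[\sum_i p_i^2]=O(N)$, not $\E[p_N^2]$ separately, so this route yields at best $O(N^2)$, which Gr\"onwall cannot absorb at the $O(N)$ level; worse, Lemma \ref{lem:enest} is itself deduced from the entropy bound $H_N(f_t^N)\le C(t)N$, so the argument is circular. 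The paper's resolution is the identity $Np_N=\mathcal G_N^{\bar\tau(t)}q_N$ (valid because $S_Nq_N=\tilde S_Nq_N=0$), which turns $N\int_0^t\bar\tau(s)\int p_Nf_s^N\,d\lambda^N\,ds$ into a time integration by parts involving only $\int q_Nf_s^N\,d\lambda^N$; the latter is bounded by $\alpha^{-1}H_N(f_s^N)+C(\alpha)N$ via the entropy inequality, and only then does Gr\"onwall close. Some such device is indispensable and is missing from your plan.

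A second, smaller but quantitatively fatal error is the choice ``$\gamma$ proportional to $N$'' in the local entropy inequality. The exponential moment $\log\int\exp\bigl(\gamma(\bar V'_{l,i}-\tau(\rho))^2\bigr)\,d\lambda$ is finite, and $O(1)$, only for $\gamma\lesssim l$, since the fluctuations of the block average are of order $l^{-1/2}$; with $\gamma\sim N\gg l$ the right-hand side is $+\infty$. The correct choice $\gamma\sim l$ (the paper takes $\alpha=l/(4c_2)$, using the sub-Gaussian bound of Lemma \ref{lem:1blockapp} rather than a bare $L^2$ law of large numbers) gives, after dividing by $\gamma N$, the bound $C(t)\bigl(1/l+l^2/(N\sigma)\bigr)$, which vanishes because $l\to\infty$, $l/\sigma\to0$ and $l/N\to 0$. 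Note also that the condition $N\sigma/l^3\to0$ plays no role in the one-block estimate itself; it is needed for the two-block estimate and for the noise and quadratic-variation terms in the compensated compactness argument.
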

We are now in a position to prove the following:
\begin{prop}\label{prop:approxx}
Let $\varphi$ and $\psi$ be as in \eqref{eq:maineqr}-\eqref{eq:maineqp}. Then
\begin{equation} \label{eq:eqrappr}
\limN \Pbb \left\{\left |\int_0^\infty \int_0^1 \hat r_N(t,x) \partial_t \varphi(t,x) - \hat p_N(t,x)  \partial_x\varphi(t,x)dx dt \right| >\delta \right\} =0
\end{equation}
\begin{equation} \label{eq:eqpappr}
\limN \Pbb \left\{\left |\int_0^\infty \int_0^1  \hat  p_N(t,x) \partial_t\psi(t,x) -\tau( \hat r_N(t,x))  \partial_x\psi(t,x) dxdt 
\right|>\delta \right\}=0
\end{equation}
for any $\delta >0$.
\end{prop}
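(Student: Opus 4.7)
The plan is to apply Itô's formula to each block average $\hat r_{l,i}(t)$ and $\hat p_{l,i}(t)$ for $i$ in the bulk, integrate the resulting identity against the test function evaluated at lattice points, and integrate by parts in time to produce the weak formulation. The compact support of $\varphi,\psi$ inside $\mathbb R_+\setminus\{0\}\times(0,1)$ is crucial: for $N$ large enough, only indices in a range $l+1\le i\le N-l-1$ contribute to either sum, and for such $i$ the averages $\uhat_{l,i}$ depend only on bulk coordinates governed by the translation invariant central SDEs in \eqref{eq:SDE}, so neither the boundary tension $\bar\tau(t)$ in the equation for $p_N$, nor the noise at $i=1$ or $i=N$, enters the analysis. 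The compact support away from $t=0$ also eliminates any initial boundary term from the time integration by parts.

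For \eqref{eq:eqrappr}, after replacing $\int_0^1\hat r_N(t,x)\partial_t\varphi(t,x)\,dx$ by $\frac1N\sum_i\hat r_{l,i}(t)\partial_t\varphi(t,i/N)$ up to an $O(1/N)$ error (as in the proof of Lemma \ref{lem:i)main}) and integrating by parts in time, Itô's formula on the bulk SDE for $r$ decomposes $d\hat r_{l,i}$ into a Hamiltonian drift $N\sum_{|j|<l}\frac{l-|j|}{l^2}(p_{i-j}-p_{i-j-1})\,dt$, a viscosity drift, and a martingale $dM_i^r$. A discrete summation by parts in $i$ shifts the nearest-neighbour difference onto $\varphi$ and, since the triangular kernel $a_j=(l-|j|)/l^2$ has total mass one and width $l/N\to 0$, yields $\frac1N\sum_m p_m\partial_x\varphi(t,m/N)+o(1)$, which by Lemma \ref{lem:i)main} identifies the limit as $\int_0^1\hat p_N\partial_x\varphi\,dx$. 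A direct computation shows that the discrete Laplacian of the triangular kernel is supported on $\{0,\pm l\}$ with amplitude $O(1/l^2)$, so two further summations by parts reduce the viscosity term to $\frac{\sigma}{N}\cdot\frac1N\sum_i\partial_{xx}\varphi(t,i/N)V'(r_i)$, which vanishes in $L^1$ by the linear bound on $V'$ from \eqref{eq:Vconvex}, Lemma \ref{lem:enest} and $\sigma/N\to 0$ in \eqref{eq:siglim}. The martingale contribution has quadratic variation bounded, after resumming the Brownian increments site-by-site, by $O(\sigma/N^2)$, which vanishes by the same scaling.

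The argument for \eqref{eq:eqpappr} is entirely parallel: Itô's formula for $\hat p_{l,i}$ produces the Hamiltonian drift $N\sum_{|j|<l}\frac{l-|j|}{l^2}(V'(r_{i-j+1})-V'(r_{i-j}))$; the same summation by parts turns this into $-\frac1N\sum_m \hat V'_{l,m}\partial_x\psi(t,m/N)+o(1)$, and the one-block estimate of Theorem \ref{thm:1block} together with Corollary \ref{cor:appr} identifies the $L^2$ limit as $\int_0^1\tau(\hat r_N)\partial_x\psi\,dx$. The viscosity and martingale contributions for $\hat p_{l,i}$ are controlled exactly as for $\hat r_{l,i}$, using the $L^2$ bound on momenta from Lemma \ref{lem:enest}.

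The main technical obstacle I expect is the bookkeeping of the two successive discrete summations by parts — first on the averaging kernel against the nearest neighbour difference, then on the resulting discrete gradient against the smooth test function — while keeping uniform $L^2$ control of every error term so that the probabilities in \eqref{eq:eqrappr}--\eqref{eq:eqpappr} actually vanish. A subtler conceptual point is the clean separation between bulk and boundary indices: for $N$ large the supports of $\varphi,\psi$ avoid the $O(l)$-thick strips on which $\uhat_{l,i}$ is undefined or fed by the non translation invariant boundary equations in \eqref{eq:SDE}, and the small residual contributions from these strips must be controlled uniformly by Lemma \ref{lem:enest}. Finally, that both the viscosity and the martingale contributions truly vanish reduces, after the two summations by parts, to the combined scaling \eqref{eq:siglim} together with \eqref{eq:l(N)}, which is exactly the regime for which these parameters were chosen.
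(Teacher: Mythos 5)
Your proposal is correct and follows essentially the same route as the paper: derive a discrete weak formulation from the SDEs \eqref{eq:SDE} via It\^o's formula and discrete summation by parts, kill the noise and viscosity contributions using the energy estimate together with $\sigma/N\to 0$, and identify the nonlinear term through Lemma \ref{lem:i)main}, Corollary \ref{cor:appr} and the one-block estimate of Theorem \ref{thm:1block}. The only difference is cosmetic: you apply It\^o directly to the block averages $\uhat_{l,i}$ and track the triangular kernel through the summations by parts, whereas the paper computes the evolution of $\frac1N\sum_i\psi_i p_i$ with the raw variables and converts to $\hat p_N$, $\tau(\hat r_N)$ only at the end — the same bookkeeping you propose is in fact used by the paper later, in the decomposition \eqref{eq:entropyDec}.
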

\begin{proof}
We prove \eqref{eq:eqpappr}, as the proof of \eqref{eq:eqrappr} is analogous and technically easier. We denote
\begin{equation}\label{eq:psii}
\psi_i(t) := \psi \left( t,\frac{i}{N}\right)
\end{equation}
and, for any sequence $(x_i)$,
\begin{equation}\label{eq:deltalap}
\nabla x_i := x_{i+1}-x_i, \quad \nabla^*x_i:= x_{i-1}-x_i, \quad \Delta x_i := x_{i+1}+x_{i-1}-2x_i.
\end{equation}
Using \eqref{eq:SDE} we can compute the time evolution:
\begin{equation}
  \label{eq:timeev}
  \begin{split}
    \frac{1}{N} \sum_{i=1}^N \psi_i(T) p_i(T) -  \frac{1}{N}
    \sum_{i=1}^N \psi_i(0) p_i(0)  =
    \int_0^T \frac{1}{N} \sum_{i=1}^N  \dot\psi_i(t) p_i(t) \; dt\\
    + \int_0^T  \left[\sum_{i=1}^{N-1}  \psi_i(t) \left(V'(r_{i+1}(t)) -
      V'(r_{i}(t)) \right) + \psi(t,1) \left(\bar\tau(t) -
      V'(r_N(t))\right)\right] dt \\
  +  \int_0^T \sigma
    \sum_{i=2}^{N-1} \psi_i(t)  \Delta p_i(t) dt 
 + \int_0^T \sqrt{2 \frac{\sigma}{N}}  \sum_{i=1}^{N-1} \psi_i(t)
 \nabla^* d  w_i(t) 
  \end{split}
\end{equation}
where in the above equation we have set $p_0, p_{N+1}, w_N$ identically equal to
$0$. 

The second line of \eqref{eq:timeev} is equal to
\begin{equation}
  \label{eq:2ndline}
  \int_0^T  \sum_{i=2}^{N} (\nabla^* \psi(t))_i V'(r_i(t))  dt
  - \int_0^T \psi_1(t)  V'(r_1(t)) dt  + \int_0^T \bar\tau(t)
  \psi(t,1) dt.
\end{equation}
Since $\psi(t,0) = 0 = \psi(t,1)$ and $\psi$ has continuous derivatives in
$[0,1]$, we have that $\psi_1(t) = \psi(t, N^{-1})\sim O(N^{-1})$ and the second term
of \eqref{eq:2ndline} is negligible, while the third is identically null.

The last line of \eqref{eq:timeev}, depending on $\sigma$, can be
rewritten as
\begin{equation}
  \label{eq:evsigma}
  \begin{split}
    \int_0^T \Big\{\frac{\sigma}{N^2} \sum_{i=2}^{N-1} N^2
      (\Delta \psi (t))_i p_i(t) 
  - \frac{\sigma}{N}\left[N (\nabla  \psi (t))_1 p_1(t) + N (\nabla^* \psi
  (t))_N p_N(t)\right] \\
  + \sigma \psi_N(t) \nabla p_{N-1}(t) -
  \sigma \psi_1(t) \nabla p_1(t) \Big\} dt\\
 +  \int_0^T \sqrt{2 \frac{\sigma}{N}} \left(\sum_{i=1}^{N-2} \nabla \psi_i
 dw_i  -  \psi_{N-1} dw_{N-1} \right).
\end{split}
\end{equation}
% Integrating by parts in time, using Ito formula and summing by parts gives
% \begin{equation} \label{eq:appr1}
% \begin{gathered}
% \int_0^\infty dt \frac{1}{N} \sum_{i=1}^N \partial_t \psi_i p_i   = \int_0^\infty \left(   \frac{1}{N}\sum_{i=1}^{N-1} N\nabla \psi_i V'(r_i)- \psi (t,1) \bar \tau(t)\right)dt+
% \\
%   + \int_0^\infty \left( \frac{\sigma}{N} \frac{1}{N} \sum_{i=2}^{N-1} N^2 \Delta \psi_i p_i dt 
%  +\sqrt{2 \frac{\sigma}{N}} \frac{1}{N} \sum_{i=1}^{N-1} N \nabla \psi_i d  w_i \right)+
%  \\
% + \int_0^\infty dt \left(  \psi_1 V'(r_1) - \frac{\sigma}{N}(N \nabla \psi_1 p_1+N\nabla^*\psi_N p_N)\right).
% \end{gathered}
% \end{equation}
% Since, $\psi(t,\cdot)$ is compactly supported in $(0,1]$, $\psi_1(t) = \psi(t,1/N)$ is identically zero for large enough $N$.
Since $\psi$ is twice differentiable,
\begin{equation}\label{eq:gradpsi}
N \nabla \psi_i(t) = \partial_x \psi\left(t, \frac{i}{N}\right) + \mathcal O\left(\frac{1}{N}\right), \quad  N^2 \Delta \psi_i(t) = \partial^2_{xx} \psi\left(t, \frac{i}{N}\right) + \mathcal O\left(\frac{1}{N}\right)
\end{equation}
as $N \to \infty$.% and recall that $\partial_x \psi(t, \cdot)$ is compactly supported in $(0,1)$
This, together with $\sigma/N \to 0$ and the energy estimate imply
that the first line of \eqref{eq:evsigma} vanish
as $N \to \infty$. For the same reason, the quadratic variation of the
stochastic integrals (last line of \eqref{eq:evsigma}) also vanish in
the limit.

Also the second line of \eqref{eq:evsigma} will be negligible for the same reason, since 
$\psi(t,0)= 0 = \psi(t,1)$.

% in the
% limit, but it requires a bit more work to prove it. Notice that 
% \begin{equation}
%   \label{eq:4}
%  \sigma \int_0^T  \psi_N(t) (\nabla p)_{N-1}(t) dt = \frac \sigma N
% \left(\psi_N(T) r_N(T) - \psi_N(0) r_N(0)  \right) - \frac \sigma N \int_0^T
% \dot\psi_N(t) r_N(t) dt 
% \end{equation}
% {\color{red} This seems a problem since $\sigma$ increase with $N$}

% Trying with the bounds on
% the Dirichlet forms proven later on,  we have that 
% $\nabla p_i = D^*_i 1$, and consequently we have
% \begin{equation}
%   \label{eq:3}
%   \begin{split}
%     \mathbb E \left( \int_0^T \sigma \psi_N(t) \nabla p_{N-1}(t) -
%       \sigma \psi_1(t) \nabla p_1(t) dt \right) = \sigma \int_0^T
%     \int_{\mathbb R^{2n}} \left(\psi_N(t) D_{N-1} f^N_t - \psi_1(t)
%       D_1 f^N_t\right) d\lambda^N dt \\
%     \le C \sigma \int_0^T  \sqrt{\mathcal D_N(f^N_t)} dt \le C \sqrt {T\sigma}
%     \sqrt{\int_0^T \sigma \mathcal D_N(f^N_t) dt} \le C' \sqrt {T\sigma}
%   \end{split}
% \end{equation}
% Since $\psi(t,0)$ is smooth and null at the boundaries, we have that
% $\psi_1(t)$ and $\psi_N(t)$ are of order $1/N$ and teh abouve
% converges to $0$. 

% {\color{red} Vedo un problema serio qui se $\psi(t,1) \neq 0$. D'altra
%   parte anche in una dimostrazione dell'esistenza con approssimazione
%   viscosa si arriverebbe alle stessa difficolt\`a}.

% \bigskip
% \hrule
% \bigskip

The conclusion then follows by replacing the sums
with integrals, $p_i$ by $\hat p_N(t,x)$ and $V'(r_i)$ by $\tau(\hat
r_N(t,x))$ accordingly to Lemma \ref{lem:i)main}, Corollary
\ref{cor:appr} and Theorem \ref{thm:1block}. 
\end{proof}%\begin{oss}
%It follows from \eqref{eq:eqrappr} and \eqref{eq:eqpappr} and the compactness that, up to a subsequence, 
%$Q_N$ converges to $Q$ that has support on the weak solutions of the macroscopic equations.
% \begin{equation}\label{eq:aeconvr}
% \limN\int_0^\infty \int_0^1  r_N(t,x) \partial_t \varphi(t,x) - p_N(t,x)  \partial_x\varphi(t,x)dx dt=0
% \end{equation}
% \begin{equation}\label{eq:aeconvp}
% \limN\int_0^\infty \int_0^1    p_N(t,x) \partial_t\psi(t,x) -\tau( r_N(t,x))  \partial_x\psi(t,x) dxdt 
% % + \int_0^\infty  \psi(t,1) \bar \tau(t)dt
% =0
% \end{equation}
% $\Pbb$-almost surely.
%\end{oss}
% By similar calculation and a standard use of Doob's inequality it follows that the distribution of 
% $\uhat_N$ is tight in $\mathcal C([0,T], \mathcal S'([0,1])$ as stated in the following proposition:
% \begin{prop}
% For any $J \in \mathcal C^2([0,1])$ 
%   \begin{equation}
%     \label{eq:6}
%     \sup_N \mathbb E \left( \sup_{0\le t \le T} \int_0^1 \uhat_N(t,x) J(x) \; dx\right) \le C_J
%   \end{equation}
%   \begin{equation}
%     \label{eq:7}
%   \lim_{\delta\to 0} \limsup_{N\to\infty} \mathbb P \left( \sup_{|t-s|\le \delta\atop 0\le s,t\le T} 
%     \int_0^1 \left(\uhat_N(t,x) - \uhat_N(s,x)\right) J(x) \; dx >\epsilon\right)\ =\ 0.
%   \end{equation}
% \end{prop}
% \begin{proof}
%   Consider first $J(x)$ with compact support in $(0,1)$, then extend it for general $J$. 
% \end{proof}
Once we have \eqref{eq:eqrappr} and \eqref{eq:eqpappr}, we deduce that 
the distributions $\tilde{\mathfrak Q}_N$ of
 $\uhat_N(t,x)$ concentrate on the solutions of the macroscopic equations \eqref{eq:maineqr}, 
\eqref{eq:maineqp} as follows.
We associate to $\uhat_N(t,x)$ the random Young measure 
$\nuhat^N = \delta_{\uhat_N(t,x)}$. We show that the sequence $(\nuhat^N)_{N \ge 0}$ is compact, 
in an appropriate probability space, and converges weakly-* to a measure $\nutilde$. 
Since \eqref{eq:eqrappr} is linear, we are done for it.
Concerning \eqref{eq:eqpappr} (in which the nonlinear unbounded function $\tau$ appears) we prove that
\begin{equation}
\tau(\hat r_N(t,x)) = \int_{\R^2} \tau(y_1) d\nuhat^N(y_1,y_2) \overset{\ast}{\rightharpoonup} \int_{\R^2} \tau(y_1) d\nutilde(y_1,y_2) \qquad \text{in weak}^\ast\text{-}L^\infty,
\end{equation}
which is not obvious, since weak-* convergence is not enough to pass limits inside unbounded functions like $\tau$.

Finally, using the theory of compensated compactness, we reduce the support of the limit Young measure $\nutilde$ to a point, that is $\nutilde = \delta_{\utilde(t,x)}$, for some function $\utilde(t,x)=(\tilde r(t,x), \tilde p(t,x))^\intercal$, almost surely and for almost all $t,x$. This closes the equation, as it implies
\begin{equation}
\tau(\hat r_N(t,x)) \to \tau(\tilde r(t,x)).
\end{equation}
What we have just presented is only a sketchy statement of what is extensively proven in the rest of this paper. 
In particular, extra care is taken when applying the theory of Young measures and compensated compactness to a stochastic setting like ours.

\subsection{Convergence of the Empirical Process}
Proposition \ref{prop:approxx} was a first step in proving Theorem \ref{thm:main}. In this section we complete the proof.

%In this section we show how we can pass the limit in \eqref{eq:aeconvp} inside the nonlinear function $\tau$. 
This is done using random Young measures and a stochastic extension of the theory of compensated compactness. 
We refer to Sections 4 and 5 of \cite{stochasticYoung} for the definitions and results concerning random Young measures. 
%For the rest of this section, we fix a time $T >0$ and we set $Q_T := [0,T] \times [0,1]$.
\subsubsection{Random Young Measures and Weak Convergence}
Denote by $\nuhat^N = \delta_{\uhat_N(t,x)}$ the random Young measure on $\R^2$ associated to the {empirical} %typo
 process $\uhat_N(t,x)$:
\begin{equation}
 \int_{\R^2}f(\y)d\nuhat^N(\y) = f(\uhat_N(t,x))
\end{equation}
for any $f : \R^2 \to \R$.  {Set $Q_T=(0,T) \times (0,1)$ for any $T>0$.} Since $\uhat_N \in L^2(\Omega \times Q_T)^2$, we say that $\nuhat^N$ is a $L^2$-random Dirac mass. The chain of inequalities
\begin{equation}\label{eq:enyoung1}
\begin{gathered}
{\E \left[ \int_{Q_T} \int_{\R^2}|\y|^2 d \nuhat^N(\y)dxdt \right]=\E \left[ \normp[\uhat_N]{2}^2 \right] = \mathbb E \left[ \int_{Q_T} |\uhat_N(t,x)|^2dxdt \right]}
\\
\le \int_{Q_T} \sum_{i=l}^{N-l+1} 1_{N,i}(x) |\uhat_{l,i}(t)|^2dxdt \le \frac{1}{N}\int_0^T \sum_{i=l}^{N-l+1}|\uhat_{l,i}(t)|^2  \le 4 \int_0^T C_e(t)dt = \tilde C(T)
\end{gathered}
\end{equation}
with $\tilde C(T)$ independent of $N$, implies that there exists a subsequence of random Young measures $(\nuhat^{N_n})$ and a subsequence of real random variables $(\normp[\uhat_{N_n}]{2})$ that converge in law.

We can now apply the Skorohod's representation theorem to the laws of $( \nuhat^{N_n
},\normp[\uhat_{N_n}]{2})$ and find a common probability space such that the convergence happens almost surely. This proves the following proposition:
\begin{prop}\label{prop:skoro}
There exists a probability space $(\tilde \Omega, \tilde{\mathcal F}, \Pbbtilde)$,  random Young measures $\nutilde^n,\nutilde$ and real random variables $a_n,a$ such that $\nutilde^n$ has the same law of $\nuhat^{N_n}$, $a_n$ has the same law of $\normp[\uhat_{N_n}]{2}$ and $\nutilde^n \weakstar \nutilde$, $a_n \to a$, $\Pbbtilde$-almost surely.
\end{prop}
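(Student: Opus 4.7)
The proof is a direct application of Prokhorov's theorem together with Skorohod's representation theorem, and the only substantive input is the uniform second-moment bound \eqref{eq:enyoung1}. Since the ingredients of the Young-measure machinery are supplied by \cite{stochasticYoung}, most of the work has essentially already been done; what remains is to check tightness of the joint laws on the appropriate Polish space.

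First, I would fix the ambient space. Following the framework recalled in Sections 4--5 of \cite{stochasticYoung}, the set $\mathcal Y^2(Q_T;\R^2)$ of $L^2$-Young measures on $Q_T$ taking values in $\R^2$, endowed with the narrow (weak-*) topology, is Polish; so is $\R_+$. Hence the joint laws of the pairs $(\nuhat^N,\normp[\uhat_N]{2}^2)$ live on a Polish product space, and Skorohod's representation theorem will be applicable once weak convergence of a subsequence has been obtained.

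Second, I would establish tightness. The relevant tightness criterion for $L^2$-random Young measures (a Young-measure version of the de la Vallée-Poussin criterion, see \cite{stochasticYoung}) is precisely a uniform bound of the form
\[
\sup_N \E\!\left[\int_{Q_T}\!\int_{\R^2}\! |\y|^2 \, d\nuhat^N(\y)\, dx\, dt\right]<\infty.
\]
The chain of inequalities \eqref{eq:enyoung1} furnishes exactly such a bound by $\tilde C(T)$, so the laws of $\nuhat^N$ on $\mathcal Y^2(Q_T;\R^2)$ form a tight family. Simultaneously, the same inequality gives $\E[\normp[\uhat_N]{2}^2]\le \tilde C(T)$, which via Markov's inequality yields tightness of the laws of $\normp[\uhat_N]{2}^2$ on $\R_+$. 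Marginal tightness in both components gives tightness of the joint laws on the product.

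Finally, Prokhorov's theorem extracts a subsequence, which we index by $n$, along which the joint laws converge weakly to some probability measure on $\mathcal Y^2(Q_T;\R^2)\times\R_+$. Skorohod's representation theorem, valid because the target is Polish, then produces a probability space $(\tilde\Omega,\tilde{\mathcal F},\Pbbtilde)$ supporting pairs $(\nutilde^n,a_n)$ with the same joint law as $(\nuhat^{N_n},\normp[\uhat_{N_n}]{2}^2)$ and a limit $(\nutilde,a)$ such that $\nutilde^n\weakstar\nutilde$ and $a_n\to a$ almost surely. The one delicate point is the verification that $\mathcal Y^2(Q_T;\R^2)$ is Polish in the narrow topology and that the $L^2$-moment bound translates into narrow tightness; both facts are standard in the setting of \cite{stochasticYoung}, and I would simply cite them rather than reprove them.
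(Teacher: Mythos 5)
Your proposal is correct and follows essentially the same route as the paper: the uniform second-moment bound \eqref{eq:enyoung1} gives tightness (in the framework of \cite{stochasticYoung}) of the joint laws of the pair $(\nuhat^{N},\normp[\uhat_{N}]{2})$, Prokhorov yields a subsequence converging in law, and Skorohod's representation theorem on the Polish product space produces the almost surely convergent copies. The only cosmetic difference is that you carry the squared norm $\normp[\uhat_{N_n}]{2}^2$ rather than the norm itself, which changes nothing since the square root is continuous on $\R_+$.
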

Since $\nuhat^{N_n}$ is a random Dirac mass and $\nutilde^n$ and $\nuhat^{N_n}$ have the same law, $\nutilde^n$ is a $L^2$-random Dirac mass, too: $\nutilde^n = \delta_{\utilde_n(t,x)}$ for some $\utilde_n \in L^2(\tilde \Omega \times Q_T)^2$.  $\utilde_n$ and $\uhat_{N_n}$ have the same law. Since $a_n \to a$ almost surely, we have that $(a_n)$ is bounded and so $\normp[\utilde_n]{2}$ is bounded uniformly in $n$ with $\Pbbtilde$-probability $1$. Since from a uniformly bounded sequence in $L^p$ we can extract a weakly convergent subsequence, we obtain the following proposition:
\begin{prop}\label{prop:weakconv}
There exist $L^2(Q_T)^2$-valued random variables $(\utilde_n), \utilde$ such that $\utilde_n$ and $\uhat_{N_n}$ have the same law and, $\Pbbtilde$-almost surely and up to a subsequence, $\utilde_n \weak \utilde$ in $L^2(Q_T)$.
\end{prop}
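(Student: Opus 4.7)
I would approach the proposition by constructing the candidate weak limit directly from the limit Young measure provided by Proposition \ref{prop:skoro}, rather than extracting a new subsequence by an abstract weak-compactness argument. As explained in the paragraph preceding the statement, $\nutilde^n$ is $\Pbbtilde$-a.s.\ a Dirac mass $\delta_{\utilde_n(t,x)}$ with $\utilde_n \in L^2(\tilde\Omega\times Q_T)^2$ having the same law as $\uhat_{N_n}$, so that $\normp[\utilde_n]{2} = a_n$ almost surely. The natural candidate for the weak limit is then the first moment of the limit Young measure,
\begin{equation*}
    \utilde(t,x) := \int_{\mathbb R^2} \y\, d\nutilde(\y).
\end{equation*}
That $\utilde \in L^2(Q_T)^2$ $\Pbbtilde$-almost surely follows from Jensen applied $\nutilde$-pointwise, combined with lower semicontinuity of second moments under weak-$\ast$ Young measure convergence: approximating $|\y|^2$ by bounded continuous truncations one obtains $\int_{Q_T}\int_{\mathbb R^2}|\y|^2 d\nutilde\,dx\,dt \le \liminf_n \int_{Q_T}\int|\y|^2 d\nutilde^n\,dx\,dt = \liminf_n a_n^2 = a^2 < \infty$ a.s.\ by Proposition \ref{prop:skoro}, so $\normp[\utilde]{2} \le a$.

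The main obstacle is that weak-$\ast$ Young measure convergence applies to bounded continuous test functions of $\y$, while to recover $\utilde_n$ from $\nutilde^n$ one needs to test against the unbounded identity $\y\mapsto \y$. I would surmount this by truncation. Fix $\varphi \in C_c(Q_T)^2$ and a continuous cut-off $\chi_M: \mathbb R^2 \to [0,1]$ with $\chi_M \equiv 1$ on $\{|\y|\le M\}$ and $\chi_M \equiv 0$ outside $\{|\y|\le M+1\}$. Since $\y\chi_M(\y)$ is bounded continuous,
\begin{equation*}
    \int_{Q_T}\!\int_{\mathbb R^2} \varphi(t,x)\cdot \y\chi_M(\y)\, d\nutilde^n\, dx\,dt \;\longrightarrow\; \int_{Q_T}\!\int_{\mathbb R^2} \varphi\cdot \y\chi_M\, d\nutilde\, dx\,dt
\end{equation*}
almost surely by Proposition \ref{prop:skoro}. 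The tail is controlled uniformly in $n$ by Chebyshev: writing $|\y|\mathbf{1}_{|\y|>M} \le |\y|^2/M$ and using $\nutilde^n = \delta_{\utilde_n}$,
\begin{equation*}
    \left|\int_{Q_T}\!\int_{\mathbb R^2} \varphi\cdot \y(1-\chi_M)(\y)\, d\nutilde^n\, dx\,dt\right| \le \norminf[\varphi]\, \frac{a_n^2}{M},
\end{equation*}
and the analogous estimate holds with $\nutilde$ in place of $\nutilde^n$. Passing $n\to\infty$ followed by $M\to\infty$ yields $\int_{Q_T}\varphi\cdot \utilde_n\,dx\,dt \to \int_{Q_T}\varphi\cdot \utilde\,dx\,dt$ almost surely.

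Finally, since $C_c(Q_T)^2$ is dense in $L^2(Q_T)^2$ and $\sup_n \normp[\utilde_n]{2} = \sup_n a_n < \infty$ almost surely (because $a_n \to a$), a standard $\varepsilon/3$ argument extends the convergence to every $\varphi \in L^2(Q_T)^2$. This is exactly $\utilde_n \weak \utilde$ in $L^2(Q_T)^2$ along the subsequence provided by Proposition \ref{prop:skoro}, which is the conclusion of the proposition.
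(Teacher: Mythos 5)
Your proof is correct, but it follows a genuinely different route from the paper. The paper's own argument is purely by abstract compactness: since $\nutilde^n$ is a random Dirac mass $\delta_{\utilde_n(t,x)}$ with $\utilde_n$ equal in law to $\uhat_{N_n}$, the almost sure convergence $a_n \to a$ gives $\sup_n \normp[\utilde_n]{2} < \infty$ with $\Pbbtilde$-probability one, and one then extracts a weakly convergent subsequence from the bounded sequence in $L^2(Q_T)^2$; the weak limit $\utilde$ is left unidentified at this stage (it is only matched with the barycenter of $\nutilde$ later, in the lemma following Proposition \ref{prop:momconv}). You instead construct the limit explicitly as the first moment $\utilde(t,x)=\int_{\R^2}\y\,d\nutilde(\y)$, control its $L^2$ norm by Jensen plus lower semicontinuity of second moments (using $\normp[\utilde_n]{2}=a_n$ a.s., an identity that transfers through the Skorohod representation because it holds with probability one for $(\nuhat^{N_n},\normp[\uhat_{N_n}]{2})$ --- the paper relies on the same fact), and then prove $\int \varphi\cdot\utilde_n \to \int\varphi\cdot\utilde$ by truncating the unbounded test function $\y\mapsto\y$, Chebyshev tail bounds uniform in $n$, and density of $C_c(Q_T)^2$ in $L^2(Q_T)^2$; this is essentially an anticipation of the moment-convergence argument the paper carries out in Proposition \ref{prop:momconv}. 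What your version buys is that no further subsequence (beyond the Skorohod one) is needed and the limit is identified from the start as the barycenter of $\nutilde$, which also sidesteps the measurability subtlety that an $\omega$-by-$\omega$ weak-compactness extraction produces a subsequence that may depend on the realization; what the paper's version buys is brevity, since the identification of $\utilde$ with the barycenter is postponed to where it is actually needed. Both establish the stated conclusion.
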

The condition $\nutilde^n \weakstar \nutilde$ in Proposition \ref{prop:skoro} reads
\begin{equation}
\lim_{n\to \infty}  \int_{\R^2} f(\y) d\nutilde^n(\y) dxdt =  \int_{\R^2}f(\y) d\nutilde(\y) dxdt
\end{equation}
for all continuous and \emph{bounded} $f : \mathbb R^2 \to \mathbb R$. The next proposition extend this result to functions $f$ with subquadratic growth:
\begin{prop} \label{prop:momconv}
There is a constant $C$ independent of $n$ such that
\begin{equation} \label{eq:ennutilde}
\E \left[\int_{Q_T}\int_{\R^2} |\y|^2 d\nutilde(\y)dxdt \right] \le C.
\end{equation}
Furthermore, let $J : Q_T \to \R$ and $f: \R^2 \to \R$ be continuous, with  $f(\y)/|\y|^2 \to 0$ as $|\y|\to \infty$. We have
\begin{equation} \label{eq:momconv}
\lim_{n\to \infty} \mathbb E \left[  \left| \int_{Q_T}\int_{\R^2}J(t,x) f(\y) d \nutilde^n(\y)dxdt- \int_{Q_T}\int_{\R^2}J(t,x) f(\y) d \nutilde(\y)dxdt \right| \right] = 0
\end{equation}
\end{prop}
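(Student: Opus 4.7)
The plan splits the statement into two independent steps. First, I will derive the moment bound \eqref{eq:ennutilde} from the uniform $L^2$-estimate on $\utilde_n$ by truncating $|\y|^2$ and applying Fatou. Then I will use \eqref{eq:ennutilde} together with the weak-$\ast$ convergence $\nutilde^n \weakstar \nutilde$ of Proposition \ref{prop:skoro} to pass to the limit for $f$ of subquadratic growth via a cut-off argument, in which the bulk is handled by weak-$\ast$ convergence against bounded continuous functions and the tail is controlled uniformly by $\varepsilon|\y|^2$.

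For the first step, I would start from the identity
\begin{equation*}
\Etilde\left[\int_{Q_T}\int_{\R^2}|\y|^2\, d\nutilde^n(\y)\, dx\, dt\right] = \Etilde\bigl[\normp[\utilde_n]{2}^2\bigr] = \E\bigl[\normp[\uhat_{N_n}]{2}^2\bigr] \le \tilde C(T),
\end{equation*}
which follows from the equality of laws of $\utilde_n$ and $\uhat_{N_n}$ together with \eqref{eq:enyoung1}. Since $\y\mapsto|\y|^2\wedge M$ is bounded and continuous, the $\Pbbtilde$-a.s.\ weak-$\ast$ convergence of $\nutilde^n$, combined with dominated convergence under $\Etilde$, transfers the bound from $\nutilde^n$ to $\nutilde$ for each fixed $M$; monotone convergence in $M$ then removes the truncation and delivers \eqref{eq:ennutilde}.

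For \eqref{eq:momconv}, the plan is: fix $\varepsilon>0$, use $f(\y)/|\y|^2\to 0$ to pick $R$ with $|f(\y)|\le\varepsilon|\y|^2$ on $\{|\y|\ge R\}$, and introduce a continuous cut-off $\chi_R\in C_c(\R^2)$ with $0\le\chi_R\le 1$ and $\chi_R\equiv 1$ on $\{|\y|\le R\}$. The function $J\, f\chi_R$ is bounded and continuous, so weak-$\ast$ convergence together with dominated convergence handles it and yields the corresponding $L^1(\Pbbtilde)$ convergence. The complementary tail is estimated by
\begin{equation*}
\left|\int_{Q_T}\int_{\R^2}J\, f\,(1-\chi_R)\, d\nu(\y)\, dx\, dt\right| \le \varepsilon\,\norminf[J]\int_{Q_T}\int_{\R^2}|\y|^2\, d\nu(\y)\, dx\, dt,
\end{equation*}
uniformly for $\nu=\nutilde^n$ and $\nu=\nutilde$, and step one bounds the expectation of the right-hand side by $\varepsilon\,\norminf[J]\,\tilde C(T)$ in either case. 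Sending $n\to\infty$ for fixed $R$ and then $\varepsilon\to 0$ finishes the argument.

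The main obstacle is the first step: weak-$\ast$ convergence of $L^2$-random Young measures does not preserve moments of unbounded functions, so the truncation $|\y|^2\wedge M$ combined with Fatou is essential in order to transfer the uniform second moment from $\nutilde^n$ to $\nutilde$. Once that bound is in hand, the uniform-integrability argument in step two is routine and the subquadratic growth condition is used in exactly the minimal way.
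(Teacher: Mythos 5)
Your proposal is correct and follows essentially the same route as the paper's proof: a uniform second moment transferred to $\nutilde$ by truncation plus Fatou/monotone convergence, then a cut-off splitting of $f$ into a bounded continuous part handled by the a.s.\ weak-$\ast$ convergence and a tail controlled by the second-moment bound. The only differences are cosmetic: you truncate with $|\y|^2\wedge M$ and an $\varepsilon$--$R$ tail estimate and conclude by bounded convergence, whereas the paper uses the cut-off $\chi_R\left(|\y|^2/(1+|f(\y)|)\right)$, a $C\norminf[J]/R$ tail bound, and Vitali's theorem.
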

\begin{proof}
Since $\nutilde^n$ and $\nuhat^{N_n}$ have the same law,  \eqref{eq:enyoung1} imply
\begin{equation}
\E \left[\int_{Q_T}\int_{\R^2} |\y|^2 d\nutilde^n(\y)dxdt \right] \le C
\end{equation}
for some constant $C$ independent of $n$.

Let $\chi : \mathbb R \to \mathbb R$ be a continuous, non-negative non-increasing function supported in $[0, 2]$ which is identically equal to $1$ on $[0,1]$. For $R >1$ and $a \in \mathbb R$ define $\chi_R(a):= \chi\left(a/R\right)$. By the monotone convergence theorem,
\begin{equation}
\int_{Q_T}\int_{\R^2}  |\y|^2 d\nutilde(\y)dxdt = \lim_{R \to \infty} \int_{Q_T}\int_{\R^2} |\y|^2 \chi_R(|\y|^2) d \nutilde(\y)dxdt.
\end{equation}
Since now $|\y|^2 \chi_R(|\y|^2)$ is continuous and bounded, we have, almost surely,
\begin{equation}
\int_{Q_T}\int_{\R^2}  |\y|^2 \chi_R(|\y|^2) d \nutilde(\y)dxdt = \lim_{n \to \infty}\int_{Q_T}\int_{\R^2}  |\y|^2 \chi_R(|\y|^2) d \nutilde^n(\y)dxdt.
\end{equation}
Then, applying the Fatou lemma twice, we get
\begin{equation}
\begin{gathered}
\mathbb E \left[ \int_{Q_T}\int_{\R^2}  |\y|^2 d \nutilde(\y)dxdt \right] \le \liminf_{R \to\infty} \mathbb E \left[ \int_{Q_T}\int_{\R^2}  |\y|^2 \chi_R(|\y|^2) d\nutilde(\y)dxdt\right]
\\
\le \liminf_{R \to \infty} \liminf_{n \to\infty} \mathbb E \left[ \int_{Q_T}\int_{\R^2}  |\y|^2 \chi_R(h(\xi))d\nutilde^n(\y)dxdt\right] \le C,
\end{gathered}
\end{equation}
which proves \eqref{eq:ennutilde}.

Define
\begin{equation}
I_n := \int_{Q_T}\int_{\R^2}  J(t,x) f(\y)d\nutilde^n(\y)dxdt, \quad I:=\int_{Q_T}\int_{\R^2}  J(t,x) f(\y) d\nutilde(\y)dxdt.
\end{equation}
so that \eqref{eq:momconv} reads
\begin{equation}
\lim_{n \to \infty} \mathbb E[|I_n -I|] = 0.
\end{equation}
We further define
\begin{equation}
\begin{gathered}
I^R_n := \int_{Q_T}\int_{\R^2} J(t,x) f(\y) \chi_R\left(\frac{|\y|^2}{1+|f(\y)|}\right)d\nutilde^n(\y)dxdt,
\\
I^R := \int_{Q_T}\int_{\R^2} J(t,x) f(\y) \chi_R\left(\frac{|\y|^2}{1+|f(\y)|}\right)d\nutilde^n(\y)dxdt,
\end{gathered}
\end{equation}
and estimate
\begin{equation}
\mathbb E[| I_n - I|] \le \mathbb E[|I_n - I_n^R|] + \mathbb E[|I_n^R - I^R|] + \mathbb E[|I^R - I|].
\end{equation}
The first term on the right hand side estimates as follows:
\begin{equation}
\mathbb E[|I_N - I_N^R|] \le \mathbb E \left[\int_{Q_T}\int_{\R^2} |J(t,x)| |f(\y)|\left(1-\chi_R\left(\frac{|\y|^2}{1+|f(\y)|}\right)\right)  d\nutilde^n(\y)dxdt \right].
\end{equation}
Since and $f(\y)/|\y|^2 \to 0$ as $|\xi| \to \infty$ we have
\begin{equation}
1-\chi_R\left(\frac{|\y|^2}{1+|f(\y)|}\right) \le  1_{\frac{|\y|^2}{1+|f(\y)|}> R}(\y) \le \frac{|\y|^2}{R(1+|f(\y)|)},
\end{equation}
for any $R >1$. This implies
\begin{equation}
\mathbb E[|I_n - I_n^R|] \le  \frac{\norminf[J]}{R} \mathbb E\left[\int_{Q_T}\int_{\R^2}|\y|^2 d\nutilde^n(\y)dxdt \right] \le \frac{C\norminf[J]}{R}. 
\end{equation}
For \eqref{eq:ennutilde} we have as well
\begin{equation}
\mathbb E[|I - I^R|] \le\frac{C\norminf[J]}{R},
\end{equation}
which gives
\begin{equation}
\mathbb E[|I_n - I|] \le  \frac{2C\norminf[J]}{R} + \mathbb E[|I_n^R - I^R|].
\end{equation}
Since $f$ may diverge only at infinity and $f(\y)/|\y|^2 \to 0$ as $|\y| \to \infty$, then $f(\y) \chi_R\left(\dfrac{|\y|^2}{1+|f(\y)|}\right)$ is continuous and bounded and hence $I_n^R \to I^R$  almost surely. Moreover, thanks to the uniform estimate
\begin{equation}
\mathbb E \left[ |I_n^R| \right] \le \norminf[J] C,
\end{equation}
the sequence $(I_n^R)$ is equi-integrable. Therefore, by the Vitali theorem,
\begin{equation}
\lim_{n \to \infty}\mathbb E [ |I_n^R - I^R |]=0,
\end{equation}
which completes the proof.

\end{proof}
We are interested in the weak limit of $\tau(\hat r_N(t,x))$. Since $\tau$ is linearly bounded, the previous proposition applies and the main theorem \ref{thm:main} is proved once we show that $\nutilde = \delta_{\utilde(t,x)}$, almost surely and for almost all $(t,x) \in Q_T$.

In the next two subsections we shall prove  that the support of $\nutilde$ is almost surely and almost everywhere a point. The result will then follow from the lemma:
\begin{lem}
$\nutilde = \delta_{\utilde(t,x)}$ almost surely and for almost all $(t,x) \in Q_T$ if and only if the support of $\nutilde$ is a point for almost all $(t,x) \in Q_T$. In this case, $\utilde_n \to \utilde$ in $L^p(Q_T)^2$-strong for all $1 \le p<2$.
\end{lem}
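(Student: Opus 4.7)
The plan is to handle the equivalence by a direct measure-theoretic inspection, and to derive the strong convergence by combining Proposition \ref{prop:momconv} with the Radon--Riesz property of uniformly convex $L^p$ spaces.

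For the equivalence, the direction $\nutilde = \delta_{\utilde(t,x)} \Rightarrow \mathrm{supp}(\nutilde) = \{\utilde(t,x)\}$ is immediate. Conversely, any probability measure on $\R^2$ whose support is a single point is necessarily the Dirac mass at that point, so it suffices to define $\utilde(t,x) := \int_{\R^2} \y \, d\nutilde(\y)$ and to verify that this yields a well-posed, jointly measurable, $L^2(Q_T)^2$-valued object. Well-posedness and measurability follow from \eqref{eq:ennutilde} together with Proposition \ref{prop:momconv} applied to the (linearly, hence subquadratically, growing) coordinate projections; the $L^2$-bound on $\utilde$ follows from \eqref{eq:ennutilde} and Jensen's inequality. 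A further use of the same proposition, with $J \in C^\infty_c(Q_T)$ and $f$ a coordinate function, shows $\int J\utilde_n\,dx\,dt \to \int J\utilde\,dx\,dt$, which by uniqueness of weak $L^2$-limits identifies this $\utilde$ with the one from Proposition \ref{prop:weakconv}.

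For the strong-convergence claim, fix $p \in (1,2)$. The map $f(\y) = |\y|^p$ is continuous with $f(\y)/|\y|^2 \to 0$ at infinity, so Proposition \ref{prop:momconv} with $J \equiv 1$ gives
\begin{equation*}
\lim_{n \to \infty} \Etilde \left[ \left| \normp[\utilde_n]{p}^p - \normp[\utilde]{p}^p \right| \right] = 0,
\end{equation*}
and along a suitable subsequence this convergence holds $\Pbbtilde$-almost surely. The weak $L^2$-convergence from Proposition \ref{prop:weakconv}, together with $|Q_T|<\infty$, upgrades to weak $L^p$-convergence $\utilde_n \weak \utilde$. Since $L^p$ is uniformly convex for $p \in (1,\infty)$, the Radon--Riesz theorem turns weak convergence plus norm convergence into strong convergence, yielding $\utilde_n \to \utilde$ in $L^p(Q_T)^2$, $\Pbbtilde$-almost surely. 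The case $p = 1$ then follows from strong convergence in some $L^q$ with $q \in (1,2)$ combined with H\"older's inequality on the bounded domain $Q_T$.

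The main obstacle, though modest, is reconciling the Young-measure barycentre $\utilde$ defined via $\nutilde=\delta_{\utilde}$ with the weak-$L^2$ limit from Proposition \ref{prop:weakconv}; this is resolved by a single application of Proposition \ref{prop:momconv} to linearly growing test functions, and glues the two halves of the argument together.
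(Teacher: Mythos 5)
Your proposal is correct and follows essentially the same route as the paper: identify the support point (barycentre) with the weak-$L^2$ limit $\utilde$ by testing against arbitrary test functions via Proposition \ref{prop:momconv}, then obtain strong $L^p$ convergence for $1<p<2$ from $f(\y)=|\y|^p$ in \eqref{eq:momconv} together with weak convergence and uniform convexity (Radon--Riesz), and finally handle $p=1$ by H\"older on the bounded domain. Your extra care about extracting an almost-sure subsequence for the norm convergence and about measurability of the barycentre only makes explicit details the paper leaves implicit.
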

\begin{proof}
Suppose there is a measurable function ${\bf u}^*: Q_T \to \R^2$ such that $\nutilde = \delta_{{\bf u}^*(t,x)}$ for almost all $(t,x) \in Q_T$. For any test function ${\bf J} : Q_T \to \R^2$ consider the quantity
\begin{equation}
\int_{Q_T} {\bf J}(t,x) \cdot \utilde_n(t,x)dxdt = \int_{Q_T} \int_{\R^2} {\bf J}(t,x) \cdot \y d\nutilde^n(\y)dxdt.
\end{equation}
By taking the limit for $n \to \infty$ {in the sense} of $L^2$-weak first and in the sense of \eqref{eq:momconv} then, we obtain
\begin{equation}
\int_{Q_T} \int_{\R^2} {\bf J}(t,x) \cdot \utilde(t,x)dxdt =\int_{Q_T} \int_{\R^2} {\bf J}(t,x) \cdot \y d\nutilde(\y)dxdt = \int_{Q_T} \int_{\R^2} {\bf J}(t,x) \cdot {\bf u}^*(t,x)dxdt
\end{equation}
almost surely. Then $\utilde(t,x) = {\bf u}^*(t,x)$ for almost all $(t,x) \in Q_T$ follows from the fact that ${\bf J}$ was arbitrary. 

Next, fix $1<p<2$. Taking $f(\y) = |\y|^p$ in \eqref{eq:momconv} gives $\normp[\utilde_n]{p} \to \normp[\utilde]{p}$, which, together with $\utilde_n \weak \utilde$ in $L^p(Q_T)^2$ and the fact that $L^p(Q_T)^2$ is uniformly convex for $1<p<\infty$ implies strong convergence.

The case $p=1$ follows from the result for $p>1$ and {H\"{o}lder}'s inequality.
\end{proof}

\subsubsection{Reduction of the Limit Young Measure}
In this section we prove that the support of $\nutilde$ is almost surely and almost everywhere a point. 

We recall that Lax entropy-entropy flux pair for the system
\begin{equation} \label{eq:psystem1}
\begin{cases}
\partial_t r(t,x)- \partial_x p(t,x) =0
\\
\partial_t p(t,x) - \partial_x\tau(p(t,x))=0
\end{cases}
\end{equation}
is a pair of functions $\eta, q : \R^2 \to \R$ such that
\begin{equation}
\partial_t \eta(\uv(t,x)) + \partial_x q(\uv(t,x)) = 0
\end{equation} 
for any smooth solution $\uv(t,x) = (r(t,x),p(t,x))^\intercal$ of \eqref{eq:psystem1}. This is equivalent to the following:
\begin{equation}
\begin{cases}
 \partial_r\eta(r,p)+\partial_pq(r,p)=0
\\
\tau'(r)\partial_p\eta(r,p) +   \partial_rq(r,p) =0
\end{cases}.
\end{equation}
Under appropriate conditions on $\tau$, Shearer (\cite{Shearer1}) constructs a family of entropy-entropy flux pairs $(\eta,q)$ such that $\eta,q$, their first and their second derivatives are bounded. As we shall see in the appendix, our choice of the potential $V$ ensures that the tension $\tau$ has the required properties, so the result of Shearer applies to our case.

In particular,  {following Section 5 of }\cite{Shearer1}, we have that the support $\nutilde$ is almost surely and almost everywhere a point provided {Tartar's} %typo
commutation relation
\begin{equation} \label{eq:tartar}
\expnu[\eta_1q_2-\eta_2q_1] = \expnu[\eta_1]\expnu[q_2] - \expnu[\eta_2]\expnu[q_1]
\end{equation}
holds almost surely and almost everywhere for any bounded pairs $(\eta_1,q_1)$, $(\eta_2,q_2)$ with bounded first and second derivatives.

Obtaining \eqref{eq:tartar} in a deterministic setting is standard and relies on the div-curl and {Murat-Tartar} %typo
 lemma. Both of these lemmas have a stochastic extension ( {cf Appendix A of }\cite{Divcurl}) and what we ultimately need to prove in order to obtain \eqref{eq:tartar} is that the hypotheses for the stochastic Murat-Tartar lemma are satisfied (cf \cite{stochasticYoung} {, Proposition 5.6}). This is ensured by the following theorem, which we will prove in the next section.
\begin{theorem} \label{thm:Xn}
Let $(\eta,q)$ be a bounded Lax entropy-entropy flux pair with bounded first and second derivatives. Let $\varphi : \R^2 \to \R $ be such that $\varphi =  \phi \psi$, with $\phi$  {smooth and compactly supported} in $(0,\infty) \times (0,1)$ and $\psi \in L^\infty(\R_+\times [0,1]) \cap H^1(\R_+\times [0,1])$. Define
\begin{equation}\label{eq:Xn}
\tilde X_n(\varphi,\eta) :=\int_0^\infty \int_0^1 
\left[\partial_t \varphi(t,x) \eta(\utilde_n(t,x)) + \partial_x\varphi(t,x)q(\utilde_n(t,x))\right] \;dx\; dt.
\end{equation}
Then $\tilde X_n$ decomposes as
\begin{equation}
\tilde X_n = \tilde Y_n+\tilde Z_n
\end{equation}
 {and there are $A_n, B_n \in \mathbb R_+$ independent of $\psi$ such that}
\begin{equation} \label{eq:YNZN}
 {\E \left[\left|\tilde Y_n(\phi \psi,\eta)\right|\right] \le A_n \normH[\psi], \quad \E \left[\left| \tilde Z_n(\phi \psi,\eta)\right| \right]\le B_n \norminf[\psi]}
\end{equation}
with
\begin{equation}
\lim_{n\to \infty} A_n = 0, \quad \limsup_{n \to \infty} B_n < \infty.
\end{equation}
\end{theorem}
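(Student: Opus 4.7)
My plan is to apply It\^o's formula to $\eta(\uhat_{l,i})$, unfold the generator into Hamiltonian and noise parts, match the Hamiltonian drift to $-\partial_x q(\uhat_N)$ via the Lax entropy relations combined with the one-block estimate, and then sort the resulting remainders into the pieces $\tilde Y_n$ and $\tilde Z_n$. By Proposition \ref{prop:skoro} it suffices to produce the decomposition with $\uhat_{N_n}$ in place of $\utilde_n$, since they have the same law. Set $\bar\varphi_i(t):=N\int 1_{N,i}(x)\varphi(t,x)\,dx$. Applying It\^o to $t\mapsto \eta(\uhat_{l,i}(t))$, multiplying by $\bar\varphi_i$, integrating by parts in $t$ (legal because $\phi$ is compactly supported in $(0,\infty)\times(0,1)$), and summing $\tfrac1N\sum_{i=l}^{N-l+1}$ gives
\[
\int_0^\infty\!\!\int_0^1 \partial_t\varphi\,\eta(\uhat_N)\,dx\,dt
= -\frac1N\sum_{i=l}^{N-l+1}\int_0^\infty \bar\varphi_i\,\mathcal G_N^{\bar\tau(t)}\eta(\uhat_{l,i})\,dt - \mathcal M_N + \mathcal E^{\mathrm{disc}}_N,
\]
where $\mathcal M_N=\tfrac1N\sum_i\int\bar\varphi_i\,dM_{l,i}$ is the Brownian martingale produced by the noise in \eqref{eq:SDE} and $\mathcal E^{\mathrm{disc}}_N$ is a discretization remainder of order $1/N$ against $\norminf[\varphi]$.

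The central identity is that $NL_N^{\bar\tau}\eta(\uhat_{l,i})$ equals $-\partial_x q(\uhat_N)|_{x=i/N}$ modulo a one-block error. Writing $NL_N^{\bar\tau}\eta(\uhat_{l,i})=\partial_r\eta(\uhat_{l,i})\,NL_N\hat r_{l,i}+\partial_p\eta(\uhat_{l,i})\,NL_N\hat p_{l,i}$ by the chain rule, an Abel summation against the triangular kernel rewrites $NL_N\hat r_{l,i}$ and $NL_N\hat p_{l,i}$ as mesoscopic finite differences of block averages of $p$ and of $V'(r)$. A second discrete summation by parts in $i$ transfers that difference onto $\bar\varphi_i\,\partial_r\eta$ and $\bar\varphi_i\,\partial_p\eta$; once $V'(r_i)$ is replaced by $\tau(\hat r_{l,i})$ via Theorem \ref{thm:1block}, the Lax entropy conditions $\partial_r\eta+\partial_pq=0$ and $\tau'(r)\partial_p\eta+\partial_rq=0$ collapse the bulk into $\int\!\int \partial_x\varphi\,q(\uhat_N)\,dx\,dt$ plus remainders. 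The stochastic piece $N\sigma(S_N+\tilde S_N)\eta(\uhat_{l,i})$ produces only corrections quadratic in the bounded derivatives of $\eta$ and of size $N\sigma/l^3+\sigma/N$, both vanishing under \eqref{eq:siglim}--\eqref{eq:l(N)}. The boundary contribution from $\bar\tau(t)$ at particle $N$ disappears because $\phi$ (hence $\varphi$) vanishes near $x=1$. Assemble the decomposition as follows: into $\tilde Y_n$ place the one-block error, the $N\sigma(S_N+\tilde S_N)\eta$ remainders, and the martingale $\mathcal M_N$, each estimated (by Cauchy--Schwarz together with Theorem \ref{thm:1block}, by \eqref{eq:siglim}--\eqref{eq:l(N)}, and by the It\^o isometry against the quadratic variation $O(\sigma/N)$, respectively) by a vanishing prefactor times $\normH[\psi]$; into $\tilde Z_n$ place $\mathcal E^{\mathrm{disc}}_N$ and the residual surface terms from the summation by parts near $i=l,\,N-l+1$, each bounded uniformly in $n$ by $\norminf[\psi]$ via $\norminf[\eta]+\norminf[q]<\infty$ and Lemma \ref{lem:enest}.

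The main obstacle will be pushing the one-block error onto the $\normH[\psi]$ side rather than the $\norminf[\psi]$ side: a crude bound gives only $\sqrt{\epsilon_N}\,\norminf[\psi]$, which would force a vanishing term into $\tilde Z_n$ and leave $A_n$ bounded rather than null. To avoid this I arrange the second summation by parts so that the factor multiplying $V'(r_i)-\tau(\hat r_{l,i})$ is a discrete $x$-gradient of $\bar\varphi_i\,\partial_p\eta(\uhat_{l,i})$; distributing the gradient yields one term carrying $\partial_x\psi$ (which supplies the $\normH[\psi]$ factor) and one term with $\partial^2\eta$ against a discrete gradient of $\uhat_{l,i}$, controllable in $L^2(\tilde\Omega\times Q_T)$ uniformly in $N$ by the boundedness of $D^2\eta$ and the energy estimate \eqref{eq:enest}. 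With this bookkeeping each surviving remainder has precisely the structure required to feed into the stochastic Murat--Tartar lemma in the next subsection.
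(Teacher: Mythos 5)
Your overall route coincides with the paper's (It\^o decomposition of $X_N$, use of the Lax relations plus the one-block estimate on the antisymmetric part, then sorting remainders into $Y$- and $Z$-type terms), but two of your key claims do not survive scrutiny. The serious one concerns the noise: you assert that $N\sigma(S_N+\tilde S_N)\eta(\uhat_{l,i})$ produces ``only corrections \dots of size $N\sigma/l^3+\sigma/N$, both vanishing'', and you place them in $\tilde Y_n$ with an $\normH[\psi]$ bound. Only the It\^o quadratic-variation correction is of size $N\sigma/l^3$. The drift part $\sigma\,\partial_r\eta(\uhat_{l,i})\,\Delta \hat V'_{l,i}$ (and its $p$-analogue, see \eqref{eq:tilXsN}) is not small pointwise; after the unavoidable summation by parts, the piece in which the discrete gradient falls on $\partial_r\eta(\uhat_{l,i})$, namely
$-\sigma\int_0^\infty\sum_i \bar\varphi_{i-1}\,(\nabla^*\partial_r\eta(\uhat_{l,i}))\,\nabla\hat V'_{l,i}\,dt$ as in \eqref{eq:Z*sN}, is controlled by Cauchy--Schwarz and the two-block estimate only by a quantity of order $N\sigma/l^3+1$, which stays bounded but does \emph{not} vanish, and it carries $\bar\varphi_i$ itself, hence only a $\norminf[\psi]$ factor (recall $H^1\not\hookrightarrow L^\infty$ in two variables, so you cannot trade $\norminf[\psi]$ for $\normH[\psi]$). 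This term must go into $\tilde Z_n$, and it is exactly the reason the theorem claims only $\limsup_n B_n<\infty$ rather than $B_n\to 0$; with your bookkeeping, $\tilde Y_n$ would not satisfy \eqref{eq:YNZN}. The same misplacement occurs, more mildly, for the martingale piece where the gradient falls on $\partial_p\eta(\uhat_{l,i})$: it is bounded by a vanishing constant times $\norminf[\psi]$, not times $\normH[\psi]$, so it belongs in $\tilde Z_n$.

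The second gap is in your final paragraph: the companion term carrying $D^2\eta$ against a discrete gradient of $\uhat_{l,i}$, multiplied by $\hat V'_{l,i}-\tau(\hat r_{l,i})$, cannot be closed with the energy estimate alone. The unnormalized one-block error satisfies only $\E\big[\sum_i\int_0^T(\hat V'_{l,i}-\tau(\hat r_{l,i}))^2\,dt\big]=O(N/l+l^2/\sigma)$, which diverges; pairing it with a merely uniform $O(N)$ bound on $\sum_i\int(\nabla^*\uhat_{l,i})^2$ from \eqref{eq:enest} gives a divergent Cauchy--Schwarz product of order $N^2/l$. What makes this term admissible is the two-block estimate (Theorem \ref{thm:2block}, Corollary \ref{cor:twoblock}), under which the product of expectations is $O\big((N/l^2+l/\sigma)^2\big)\to 0$, exactly as in Lemma \ref{lem:antisym} via Lemma \ref{lem:typeZ}. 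Relatedly, your claim that the discretization remainder is simply ``of order $1/N$ against $\norminf[\varphi]$'' glosses over the comparison between $\sum_i\bar\varphi_i(\cdots)$ and $\int\partial_x\varphi\,q(\uhat_N)$, which in the paper requires its own decomposition (Lemma \ref{lem:Derivative}) with Taylor remainders again controlled by the two-block estimate.
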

\begin{oss}
Recall that the $H^1$ norm of a function $f(t,x)$ is defined as $\normH[f] = \normLdue[f]+\normLdue[\partial_tf]+\normLdue[\partial_xf]$. Moreover, from now on, $\phi$ (and hence $\varphi$) will be supported in $[0,T] \times [x_-,x_+]$ for some fixed $T >0$ and $0<x_-<x_+<1$.  {The test function $\phi$ is used to localise the problem. In fact, Murat-Tartar lemma is obtained on bounded domains. Note that we already were on a bounded spacial domain. Nevertheless, $\phi$ ensures we stay away from the boundary, as we are not able to prove Theorem} \ref{thm:Xn}  {otherwise.}
\end{oss}
\subsubsection{Conditions for the Murat-Tartar Lemma}
This section is devoted to proving Theorem \ref{thm:Xn} through a series of lemmas. Since we are ultimately interested in taking expectations of functions of $\utilde_n$, and since $\utilde_n$ and $\uhat_{N_n}$ have the same law, we shall prove the theorem for
\begin{equation}
 X_N(\varphi, \eta) :=\int_0^\infty \int_0^1 \partial_t \varphi(t,x) \eta(\uhat_N(t,x)) + \partial_x\varphi(t,x)q(\uhat_N(t,x))dx dt.
\end{equation}
Recalling that $\uhat_N$ is built from a solution $\uhat_i=(r_i,p_i)^\intercal$ of the system of SDEs \eqref{eq:SDE}, and since $\varphi(t,\cdot)$ is compactly supported in $(0,1)$, It\^o formula yields, for large enough $N$,
\begin{equation} \label{eq:entropyDec}
{X_N = X_{a,N} + X_{s,N}+\tilde X_{s,N} + \mathcal M_N+\tilde{\mathcal M}_N+\mathcal N_N,}
\end{equation}
where
\begin{equation} \label{eq:XaN}
\begin{gathered}
 X_{a,N}(\varphi,\eta) =  \int_0^\infty   \sum_{i=l+1}^{N-l} \bar \varphi_i \left(  \partial_p\eta(\uhat_{l,i}) \nabla \hat V'_{l,i}- \partial_r\eta(\uhat_{l,i}) \nabla^*\hat p_{l,i} \right) dt +
\\
+ \int_0^\infty  \sum_{i=l+1}^{N-l} \bar \varphi_i \left( \partial_r q(\uhat_{l,i}) \nabla \hat r_{l,i} - \partial_pq(\uhat_{l,i})\nabla^* \hat p_{l,i} \right) dt,
\end{gathered}
\end{equation}
\begin{equation} \label{eq:XsN}
\begin{gathered}
X_{s,N}(\varphi,\eta)= \sigma \int_0^\infty \sum_{i=l+1}^{N-l} \bar \varphi_i    \partial_p\eta(\uhat_{l,i}) \Delta \hat p_{l,i}  dt+ \sigma \int_0^\infty \sum_{i=l+1}^{N-l+1}  \bar \varphi_i \partial^2_{pp}\eta(\uhat_{l,i}) ( \nabla^* d \hat w_{l,i})^2 ,
\end{gathered}
\end{equation}
\begin{equation} \label{eq:tilXsN}
\begin{gathered}
\tilde X_{s,N}(\varphi,\eta)= \sigma \int_0^\infty  \sum_{i=l+1}^{N-l} \bar \varphi_i   \partial_r\eta(\uhat_{l,i}) \Delta \hat V'_{l,i}dt+\sigma \int_0^\infty  \sum_{i=l+1}^{N-l+1} \bar \varphi_i \partial^2_{rr}\eta(\uhat_{l,i}) (\nabla^* d  \hat {\tilde w}_{l,i})^2,
\end{gathered}
\end{equation}
\begin{equation} \label{eq:MN}
\mathcal M_N(\varphi, \eta) = - \sqrt{2\frac{\sigma}{N}} \int_0^\infty   \sum_{i=l+1}^{N-l+1}  \bar \varphi_i    \partial_p\eta(\uhat_{l,i}) d \nabla^* \hat w_{l,i},
\end{equation}
\begin{equation} \label{eq:tildeMN}
\tilde{\mathcal M}_N(\varphi, \eta) = - \sqrt{2\frac{\sigma}{N}} \int_0^\infty   \sum_{i=l+1}^{N-l+1}  \bar \varphi_i    \partial_r\eta(\uhat_{l,i})   d \nabla^*\hat{\tilde w}_{l,i}
\end{equation}
and
\begin{equation} \label{eq:NN}
\begin{gathered}
\mathcal N_N(\varphi,\eta) = -\int_0^\infty \int_0^1  \partial_x\varphi(t,x) q(\uhat_N(t,x)) dx d t
\\
- \int_0^\infty  \sum_{i=l+1}^{N-l} \bar \varphi_i \left( \partial_r q(\uhat_{l,i}) \nabla \hat r_{l,i} - \partial_pq(\uhat_{l,i})\nabla^* \hat p_{l,i} \right) dt.
\end{gathered}
\end{equation}
We have set
\begin{equation}\label{eq:barphi}
{\bar \varphi_i(t) = N \int_0^1 \varphi(t,x)1_{N,i}(x) dx = N \int_{i/N-1/(2N)}^{i/N+1/(2N)}\varphi(t,x)dx}
\end{equation}
and
\begin{equation}
\hat w_{l,i} =  \frac{1}{l}\sum_{|j|<l}\frac{l-|j|}{l} w_{i-j}, \quad \hat{\tilde w}_{l,i} =  \frac{1}{l}\sum_{|j|<l}\frac{l-|j|}{l} \tilde w_{i-j}.
\end{equation}
 {The quadratic variation in} \eqref{eq:XsN}  {is evaluated thanks to Lemma} \ref{lem:etahat}  {and the formal identity $d w_i d w_j = \delta_{ij}dt$:}
\begin{equation}
 {(\nabla^*d\hat w_{l,i})^2 = \frac{1}{l^2}(d  \bar  w_{l,i-1+l}-d \bar  w_{l,i-1})^2 
= \frac{1}{l^4} \left( \sum_{j=0}^{l-1} d  w_{i-1+l-j} - \sum_{j=0}^{l-1} d  w_{i-1-j}\right)^2 = \frac{2}{l^3} dt.}
\end{equation}
 {Similarly, the analogous term in} \eqref{eq:tilXsN}  {gives}
\begin{equation}
 {(\nabla^*d\hat{\tilde w}_{l,i})^2  = \frac{2}{l^3} dt.}
\end{equation}
The proof of Theorem \ref{thm:Xn} will rely on the following theorems, which we will prove in Section \ref{sub:estimates}.
\begin{theorem}[One-block estimate - explicit bound]\label{thm:1blockexp}
There is $C_1(t)$ independent of $N$ such that
\begin{equation}\label{eq:1blockexp}
\E \left[ \frac{1}{N} \sum_{i=l}^{N-l+1}\int_0^t \left(\hat V'_{l,i}(s)-\tau(\hat r_{l,i}(s)) \right)^2 ds \right] \le  C_1(t) \left( \frac{1}{l} +\frac{l^2}{N\sigma} \right).
\end{equation}
\end{theorem}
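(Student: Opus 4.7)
The proof follows the Guo--Papanicolaou--Varadhan one-block scheme, adapted to our stochastic dynamics. The only dynamical input is a bound on the exchange Dirichlet form extracted from the evolution of the relative entropy; the rest of the argument is an equilibrium equivalence-of-ensembles estimate made quantitative by a spectral gap.

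\textbf{Entropy and Dirichlet form bound.} Differentiating $H_N(f_t^N)$ along \eqref{eq:ftN} and using that $S_N,\tilde S_N$ are self-adjoint with respect to $\lambda^N$, while the antisymmetric part of $L_N^{\bar\tau(t)}$ produces only a boundary contribution involving $\bar\tau(t)p_N$ (controlled by Cauchy--Schwarz together with Lemma~\ref{lem:enest}), a standard computation combined with \eqref{eq:entbound} yields
\begin{equation*}
H_N(f_t^N)+cN\sigma\int_0^t\bigl[\mathcal D_N(f_s^N)+\widetilde{\mathcal D}_N(f_s^N)\bigr]\,ds\le C(t)N,
\end{equation*}
so that $\int_0^t\widetilde{\mathcal D}_N(f_s^N)\,ds\le C(t)/\sigma$. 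This is the only place the dynamics is used, and it is the origin of the $l^2/(N\sigma)$ term.

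\textbf{Equivalence of ensembles and spectral gap.} For each $i$ let $\phi_l(\rho):=\E_{\lambda^N}\bigl[\hat V'_{l,i}\mid \hat r_{l,i}=\rho\bigr]$ denote the microcanonical expectation of $\hat V'_{l,i}$ on the block. I decompose
\begin{equation*}
\bigl(\hat V'_{l,i}-\tau(\hat r_{l,i})\bigr)^2\le 2\bigl(\hat V'_{l,i}-\phi_l(\hat r_{l,i})\bigr)^2+2\bigl(\phi_l(\hat r_{l,i})-\tau(\hat r_{l,i})\bigr)^2.
\end{equation*}
The second, deterministic summand is the equivalence-of-ensembles bias: using the uniform convexity \eqref{eq:Vconvex} and the tails \eqref{eq:AN}, a classical local CLT argument for the Gibbs family \eqref{eq:lambdagamdef} gives $|\phi_l(\rho)-\tau(\rho)|\le C/l$ uniformly in $\rho$, contributing $O(1/l^2)$, which is absorbed in $C_1(t)/l$. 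For the first summand I use that $\tilde S_N$ restricted to a block of $\sim l$ consecutive sites, conditioned on the empirical $r$-mean, is a reversible continuous-spin exchange dynamics whose spectral gap is of order $1/l^2$ uniformly in the conditioning value (a standard consequence of \eqref{eq:Vconvex}). The Rayleigh quotient on the conditioned measure, together with the entropy inequality to replace the conditional density of $f_s^N$ by the microcanonical reference at a bounded entropic cost, yields the pointwise estimate
\begin{equation*}
\E_{f_s^N}\bigl[(\hat V'_{l,i}-\phi_l(\hat r_{l,i}))^2\bigr]\le C\,l^2\,\widetilde{\mathcal D}_N^{(i,l)}(f_s^N)+\frac{C}{l},
\end{equation*}
where $\widetilde{\mathcal D}_N^{(i,l)}$ is the Dirichlet form associated with the edges inside the block around site $i$.

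\textbf{Assembly and main obstacle.} Summing the pointwise estimate on $i=l,\dots,N-l+1$ by tiling the chain with disjoint blocks of size $\sim l$ (so that each edge of $\widetilde{\mathcal D}_N$ is counted $O(1)$ times), normalizing by $1/N$ and integrating on $[0,t]$, the sum of block-localized Dirichlet forms is absorbed into the global bound of Step~1, producing a total contribution of order $l^2/(N\sigma)+1/l$, which is \eqref{eq:1blockexp}. The main technical obstacle I foresee is in Step~2: securing the $1/l^2$ spectral gap uniformly in the unbounded conditioning value $\rho$ (the state space for each $r_i$ is non-compact), and rigorously localizing the global Dirichlet form bound to a sum of block-Dirichlet forms. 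The boundary blocks of size $O(l)$ near $i=0$ and $i=N$ do not fit in the stationary block framework and must be estimated separately via Lemma~\ref{lem:enest}; their normalized contribution is $O(l/N)$ and is subleading.
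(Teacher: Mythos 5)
Your overall architecture (an entropy/Dirichlet-form bound as the only dynamical input, then a static block estimate) matches the paper, but the static step, as you assemble it, does not give the stated bound. The sum in \eqref{eq:1blockexp} runs over \emph{all} windows $i=l,\dots,N-l+1$, which overlap; you cannot ``tile with disjoint blocks so that each edge is counted $O(1)$ times''. Each edge lies in $O(l)$ windows, so summing your pointwise estimate $\E_{f_s}\bigl[(\hat V'_{l,i}-\phi_l(\hat r_{l,i}))^2\bigr]\le C\,l^2\,\widetilde{\mathcal D}^{(i,l)}_N+C/l$ over $i$ produces $C\,l^3\,\widetilde{\mathcal D}_N(f_s^N)+CN/l$, hence, after the $1/N$ normalization and the time integration, a term of order $l^3/(N\sigma)$ rather than $l^2/(N\sigma)$. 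This is not a cosmetic loss: the mesoscopic scale is chosen in \eqref{eq:l(N)} so that $N\sigma/l^3\to0$, i.e.\ $l^3/(N\sigma)\to\infty$, so a bound of that order is useless, whereas $l^2/(N\sigma)=(l/\sigma)(l/N)\to0$. The extra factor $1/l$ is precisely what the paper's proof is engineered to gain: it does not use a variance/spectral-gap bound but the entropy inequality with a \emph{large} parameter $\alpha\sim l$, admissible because $\exp\bigl(\alpha(\bar V'_{l,i}-\tau(\rho))^2\bigr)$ has bounded expectation under the canonical measure at that scale (Gaussian linearization plus Lemma \ref{lem:1blockapp}, after the equivalence-of-ensembles comparison \eqref{eq:grannum}); the block relative entropy is controlled by the LSI \eqref{eq:logSob} (Bakry--Emery, giving Lemma \ref{lem:LSI} with its $m^3$ factor), and dividing by $\alpha\sim l$ turns $l^3\mathcal D$ into $l^2\mathcal D$. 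A plain Rayleigh-quotient/gap argument does not exploit the $1/\sqrt l$ size of the fluctuation $\bar V'_{l,i}-\tau(\rho)$ and cannot produce this gain, so your Steps 2--3 fail to close as written.

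Two further gaps. Your derivation of the entropy/Dirichlet bound glosses over the boundary work term: controlling $N\bar\tau(t)\int p_N f_t^N\,d\lambda^N$ ``by Cauchy--Schwarz together with Lemma \ref{lem:enest}'' is circular (the energy estimate is itself deduced from the entropy bound) and, in any case, the factor $N$ makes a direct Cauchy--Schwarz too lossy; the paper instead integrates by parts in time, rewriting $Np_N$ through $q_N$ (using $\tilde S_Nq_N=0$), and applies the entropy inequality to exponential moments of $|q_N|$ before Gr\"onwall. Finally, you condition on the weighted average $\hat r_{l,i}$ and invoke block exchange dynamics and microcanonical equivalence for it, but the quantity conserved by the block noise is the flat average $\bar r_{l,i}$, and the standard equivalence-of-ensembles/conditioned-measure machinery applies to that; the paper first proves the estimate for the flat averages (Lemma \ref{lem:oneblock}) and then transfers it to the hatted ones through the comparison Lemma \ref{lem:avcomp} and Corollary \ref{cor:V'tau} --- a step your argument would also need.
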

\begin{theorem}[Two-block estimate]\label{thm:2block}
Let $\hat \zeta_{l,i} \in \{\hat r_{l,i},\hat p_{l,i}, \hat V'_{l,i}, \tau (\hat r_{l,i}) \}$. There is $C_2(t)$ independent of $N$ such that
\begin{equation}\label{eq:2block}
\E \left[\frac{1}{N} \sum_{i=l}^{N-l}\int_0^t \left(\hat \zeta_{l,i+1}-\hat \zeta_{l,i} \right)^2 ds \right] \le  C_2(t) \left( \frac{1}{l^3} +\frac{1}{N\sigma} \right).
\end{equation}
\end{theorem}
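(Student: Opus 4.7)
The plan is to reduce the triangular-weighted difference $\hat\zeta_{l,i+1}-\hat\zeta_{l,i}$ to a genuine pair of adjacent box averages of size $l$, and then control the resulting classical two-block functional by an entropy/Dirichlet-form argument in the spirit of Guo--Papanicolau--Varadhan \cite{Guo:1988p18645}.

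A short algebraic computation exploiting the telescoping structure of the triangular weights in \eqref{eq:uhat} shows that
\[
\hat\zeta_{l,i+1}-\hat\zeta_{l,i} \;=\; \frac{1}{l}\bigl(\bar\zeta^{R}_{l,i}-\bar\zeta^{L}_{l,i}\bigr),\qquad \bar\zeta^{R}_{l,i}:=\frac{1}{l}\sum_{j=1}^{l}\zeta_{i+j},\quad \bar\zeta^{L}_{l,i}:=\frac{1}{l}\sum_{j=0}^{l-1}\zeta_{i-j}.
\]
Hence the statement is equivalent to the classical two-block bound
\[
\E\!\left[\frac{1}{N}\sum_{i=l}^{N-l}\int_{0}^{t}\bigl(\bar\zeta^{R}_{l,i}-\bar\zeta^{L}_{l,i}\bigr)^{2}\,ds\right] \;\le\; C(t)\bigl(1/l+l^{2}/(N\sigma)\bigr).
\]
The first contribution $1/l$ is of equilibrium origin: under any product Gibbs measure $\lambda^{N}_{\beta,\bar p,\tau}$ the right and left blocks are independent, and each is an average of $l$ i.i.d.\ bounded-variance quantities (with Gaussian-type tails thanks to \eqref{eq:Vconvex} and the linear growth of $V'$ and $\tau$), so $\mathrm{Var}_{\lambda}(\bar\zeta^{R}_{l,i}-\bar\zeta^{L}_{l,i})=O(1/l)$ uniformly in the local parameters $(\bar p,\tau)$, along with uniform exponential moments of $l(\bar\zeta^{R}-\bar\zeta^{L})^{2}$.

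To transfer this equilibrium estimate to the true distribution $f_{s}^{N}\lambda^{N}$ I would first establish, starting from the Fokker--Planck equation \eqref{eq:ftN} and controlling the non-symmetric Liouville/boundary contributions by Lemma \ref{lem:enest} together with the boundedness of $\bar\tau$ and $\bar\tau'$, the entropy production inequality
\[
H_{N}(f_{t}^{N})+N\sigma\int_{0}^{t}\bigl(\mathcal D_{N}+\widetilde{\mathcal D}_{N}\bigr)(f_{s}^{N})\,ds \;\le\; H_{N}(f_{0}^{N})+C(t)N,
\]
which combined with \eqref{eq:entbound} yields $\int_{0}^{t}(\mathcal D_{N}+\widetilde{\mathcal D}_{N})\,ds\le C(t)/\sigma$. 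Applying the entropy inequality
\[
\E_{f_{s}^{N}\lambda^{N}}[G]\le \gamma^{-1}\bigl(H_{N}(f_{s}^{N})+\log\E_{\lambda^{N}}[e^{\gamma G}]\bigr)
\]
to the functional $G=\frac{l}{N}\sum_{i}(\bar\zeta^{R}_{l,i}-\bar\zeta^{L}_{l,i})^{2}$, integrating in time, and absorbing the ``Dirichlet defect'' of comparing $f_{s}^{N}$ with a product measure on blocks of size $l$ through the Dirichlet-form bound, produces the additional term $l^{2}/(N\sigma)$, exactly in the spirit of the explicit one-block bound of Theorem \ref{thm:1blockexp}.

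The four cases $\zeta\in\{r,p,V'(r),\tau(r)\}$ are treated uniformly: $r$ and $p$ directly, while $V'(r)$ and $\tau(r)$ reduce to $r$ via the Lipschitz bounds $|V''|\le c_{2}$ from \eqref{eq:Vconvex} and the smoothness of $\tau$ established in the appendix. The main obstacle is precisely the entropy-inequality/Dirichlet-form reduction: one must control the logarithmic moment-generating function of $G$ uniformly in the local chemical potentials that arise when the block-averaged profile is frozen, while simultaneously keeping the boundary tension $\bar\tau(t)$ from spoiling the $1/(N\sigma)$ scaling in the entropy production bound. Once this GPV-type replacement is done, the explicit rate $1/l^{3}+1/(N\sigma)$ follows by choosing the free parameter in the entropy inequality of order $l$.
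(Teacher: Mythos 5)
Your opening reduction is exactly the paper's Lemma \ref{lem:etahat}: the triangular averages telescope so that $\hat\zeta_{l,i+1}-\hat\zeta_{l,i}=\tfrac1l(\bar\zeta_{l,i+l}-\bar\zeta_{l,i})$, and reducing $\tau(\hat r_{l,i})$ to $\hat r_{l,i}$ by the Lipschitz bound on $\tau$ is also how Corollary \ref{cor:twoblock} proceeds. The gap is in how you propose to bound the flat two-block quantity. The entropy inequality you display, with the \emph{global} entropy $H_N(f_s^N)$ against the \emph{global} product measure $\lambda^N$, cannot deliver the claimed estimate: since $H_N(f_s^N)=O(N)$ you are forced to take the parameter $\gamma$ of order $l$ per term, and then $\log\E_{\lambda^N}\bigl[\exp\bigl(\gamma\sum_i(\bar\zeta^{R}_{l,i}-\bar\zeta^{L}_{l,i})^2\bigr)\bigr]$ is out of control: neighbouring windows overlap, so factorization requires a H\"older splitting into $\sim 2l$ classes, i.e.\ exponential moments of $c\,l^2(\bar\zeta^{R}-\bar\zeta^{L})^2$, which diverge because $\bar\zeta^{R}-\bar\zeta^{L}$ has variance of order $1/l$. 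Moreover that inequality contains no Dirichlet form, so the $l^2/(N\sigma)$ term could never emerge from it. The mechanism that actually produces it must be local: either (i) the paper's route in Lemma \ref{lem:twoblock} --- integrate $V'_{i+m}-V'_i$ (resp.\ $p_{i+m}-p_i$) by parts against $\lambda^N$, Cauchy--Schwarz, and telescope the resulting gradients of $f^N_s$ into $\widetilde{\mathcal D}_N$ (resp.\ $\mathcal D_N$), then use $\int_0^t(\mathcal D_N+\widetilde{\mathcal D}_N)\,ds\le C(t)/\sigma$; or (ii) the one-block-style route you gesture at, but then the entropy that enters is that of the marginal conditioned on windows of size $\sim 2l$, controlled by $l^3C_{lsi}(\mathcal D_N+\widetilde{\mathcal D}_N)$ through the logarithmic Sobolev inequality (Lemma \ref{lem:LSI}), together with equivalence of ensembles \eqref{eq:grannum} and exponential-moment bounds uniform in $(\rho,\bar p)$ as in Lemmas \ref{lem:1blockapp} and \ref{lem:avcomp}. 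Your ``Dirichlet defect'' remark points in this direction, but the LSI/local-entropy step is precisely the missing idea, not a detail.

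The case bookkeeping is also partly backwards. $\hat V'_{l,i}$ is a weighted average of the $V'(r_j)$, not $V'$ of an average, so $|V''|\le c_2$ does \emph{not} bound $\hat V'_{l,i+1}-\hat V'_{l,i}$ by $c_2|\hat r_{l,i+1}-\hat r_{l,i}|$ (differences of averages of a nonlinear function are not controlled by the difference of the averaged arguments, because of cancellations in the latter). In the paper the roles are reversed: $V'$ and $p$ are the cases treated directly, since they admit integration by parts against $\lambda^N$, while $r$ --- for which no such formula exists --- and $\tau(\bar r_{l,i})$ are recovered \emph{from} the $V'$ case using strict hyperbolicity $\tau'\ge c_1>0$ together with the one-block estimate, whose $N/l+l^2/\sigma$ rate is what fixes the constants in \eqref{eq:2block}. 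So as written your sketch has no valid argument for $\zeta=V'$, and for $\zeta=r$ it relies on exactly the uniform canonical-measure moment bound that you list as the open obstacle; closing that is where the real work of the proof lies.
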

We prove Theorem \ref{thm:Xn} through a series of lemmas.
\begin{lem} \label{lem:typeZ}
Let $(a_i)_{i \in \mathbb N}$ and $(b_i)_{i \in \mathbb N}$ be families of $L^2(\mathbb R)$-valued random variables such that
\begin{equation}
\limsup_{N \to \infty}\left( \mathbb E \left[\sum_{i=1}^N \int_0^t a_i(s)^2ds \right]\mathbb E \left[\sum_{i=1}^N \int_0^t b_i(s)^2ds \right]\right) < \infty
\end{equation}
for all $t$. Let $\varphi$ be as in Theorem \ref{thm:Xn} and {$\bar \varphi_i$ as in} \eqref{eq:barphi}.Then 
\begin{equation}
\left |\sum_{i=1}^N \int_0^\infty \bar \varphi_i a_i b_i dt \right | \le  \tilde B_N\norminf[\psi],
\end{equation}
where $\tilde B_N$ is a $\mathbb R_+$-valued random variable independent of $\psi$ such that
\begin{equation}
\limsup_{N \to \infty}\E[ \tilde B_N]  < \infty.
\end{equation}
\end{lem}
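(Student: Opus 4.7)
The plan is a direct two-step Cauchy--Schwarz argument: first applied pathwise to extract $\norminf[\psi]$ and reduce to a product of $\ell^2$--$L^2$ norms of $(a_i)$ and $(b_i)$, and then applied in expectation to invoke the hypothesis.

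First, since $\varphi = \phi\psi$ with $\phi\in C^\infty_c$ supported in $[0,T]\times[x_-,x_+]\subset (0,\infty)\times(0,1)$ and $\psi\in L^\infty\cap H^1$, the definition \eqref{eq:barphi} of $\bar\varphi_i$ gives the pointwise bound
\begin{equation*}
|\bar\varphi_i(t)| = \left|N\int_{i/N-1/(2N)}^{i/N+1/(2N)}\phi(t,x)\psi(t,x)\,dx\right|\le\norminf[\phi]\norminf[\psi],
\end{equation*}
and $\bar\varphi_i(t)=0$ for $t\notin [0,T]$ because $\phi$ is compactly supported in time. Therefore
\begin{equation*}
\left|\sum_{i=1}^N\int_0^\infty \bar\varphi_i(t)\,a_i(t)\,b_i(t)\,dt\right|\le\norminf[\phi]\norminf[\psi]\sum_{i=1}^N\int_0^T|a_i(s)\,b_i(s)|\,ds.
\end{equation*}
A single Cauchy--Schwarz applied to the combined index-and-time pairing yields
\begin{equation*}
\sum_{i=1}^N\int_0^T|a_i b_i|\,ds\le\left(\sum_{i=1}^N\int_0^T a_i(s)^2\,ds\right)^{1/2}\left(\sum_{i=1}^N\int_0^T b_i(s)^2\,ds\right)^{1/2},
\end{equation*}
so setting
\begin{equation*}
\tilde B_N := \norminf[\phi]\left(\sum_{i=1}^N\int_0^T a_i(s)^2\,ds\right)^{1/2}\left(\sum_{i=1}^N\int_0^T b_i(s)^2\,ds\right)^{1/2}
\end{equation*}
delivers the stated inequality. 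By construction $\tilde B_N$ depends only on $\phi$, $T$ and the random families $(a_i),(b_i)$, and is in particular independent of $\psi$.

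Finally, to control $\E[\tilde B_N]$ I apply Cauchy--Schwarz a second time, now in expectation on the product of the two square roots, obtaining
\begin{equation*}
\E[\tilde B_N]\le\norminf[\phi]\left(\E\left[\sum_{i=1}^N\int_0^T a_i(s)^2\,ds\right]\right)^{1/2}\left(\E\left[\sum_{i=1}^N\int_0^T b_i(s)^2\,ds\right]\right)^{1/2},
\end{equation*}
and the assumed uniform bound on the product of the two expectations immediately yields $\limsup_{N\to\infty}\E[\tilde B_N]<\infty$. There is no real obstacle: the content of the lemma is precisely that the pointwise boundedness $|\bar\varphi_i|\le\norminf[\phi]\norminf[\psi]$ cleanly separates the $\norminf[\psi]$ factor from the (non-equi-integrable) product $a_i b_i$, so that two standard Cauchy--Schwarz inequalities suffice.
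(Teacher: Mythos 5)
Your proposal is correct and follows essentially the same route as the paper: the pointwise bound $|\bar\varphi_i|\le \norminf[\phi]\norminf[\psi]$, a Cauchy--Schwarz over the joint index-and-time pairing to define $\tilde B_N$, and a second Cauchy--Schwarz in expectation combined with the hypothesis to bound $\limsup_N \E[\tilde B_N]$. The only difference is that you spell out the final expectation step, which the paper leaves implicit.
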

\begin{proof}
\begin{equation}
|\bar \varphi_i| = \left|N \int_0^1 \varphi(t,x)1_{N,i}(x) dx \right| \le \| \varphi \|_{L^\infty} \le c_\phi \|\psi\|_{L^\infty},
\end{equation}
where $c_\phi = \|\phi\|_{L^\infty}$ depends on $\phi$ only. Finally, by the Cauchy-Schwarz inequality and since $\varphi(\cdot, x)$ is supported in $[0,T]$ we have
\begin{equation}
\left |\sum_{i=1}^N \int_0^\infty \bar \varphi_i a_i b_i dt\right | \le c_\phi \|\psi\|_{L^\infty} {  \left(\sum_{i=1}^N \int_0^T a_i^2dt\right)^{1/2} \left(  \sum_{i=1}^N \int_0^T b_i^2dt \right)^{1/2}}.
\end{equation}
\end{proof}
\begin{lem} \label{lem:typeY}
Let $(a_i)_{i \in \mathbb N}$ be a family of $L^2(\mathbb R)$-valued random variables such that
\begin{equation}
\limN\mathbb E \left[ \frac{1}{N}\sum_{i=1}^N \int_0^t a_i(s)^2ds \right] =0
\end{equation}
for all $t$.Then, for $\bar \varphi_i$ as in Lemma \ref{lem:typeZ}, we have
\begin{equation}
\sum_{i=1}^{N-1} \int_0^\infty a_i \left(\bar \varphi_{i+1}-\bar \varphi_i \right) dt = Y_N(\varphi) + Z_N(\varphi),
\end{equation}
with
\begin{equation}
|  Y_N(\varphi)|  \le  A_N\normH[\psi] , \quad |  Z_N(\varphi)| \le A_N \|\psi\|_{L^\infty}  ,
\end{equation}
where $A_N$ is a $\mathbb R_+$-valued random variable  independent of $\psi$ such that
\begin{equation}
\lim_{N \to \infty}\E[ A_N] = 0.
\end{equation}
\end{lem}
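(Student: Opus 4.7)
The plan is to exploit the product structure $\varphi=\phi\psi$: differentiating $\phi$ costs only an $L^\infty$-norm on $\psi$, while differentiating $\psi$ produces a factor that is merely $L^2$. First, by shifting the integration variable by $1/N$ in the integral defining $\bar\varphi_{i+1}$,
\[
\bar\varphi_{i+1}-\bar\varphi_i \;=\; N\!\!\int_{i/N-1/(2N)}^{i/N+1/(2N)}\!\!\int_0^{1/N}\partial_x\varphi(t,x+s)\,ds\,dx.
\]
Applying the Leibniz rule $\partial_x\varphi=(\partial_x\phi)\psi+\phi\,\partial_x\psi$ inside the integrand splits this as $\alpha_i+\beta_i$, and I set
\[
Z_N(\varphi):=\sum_i\int_0^\infty a_i\,\alpha_i\,dt,\qquad Y_N(\varphi):=\sum_i\int_0^\infty a_i\,\beta_i\,dt.
\]

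For $Z_N$, the integrand $(\partial_x\phi)\psi$ is bounded pointwise by $\|\partial_x\phi\|_\infty\|\psi\|_\infty$ and the $(s,x)$-rectangle has area $N^{-2}$, so $|\alpha_i|\le N^{-1}\|\partial_x\phi\|_\infty\|\psi\|_\infty$. Recalling that only $t\in[0,T]$ contributes (as $\phi$ is time-compactly supported), Cauchy--Schwarz in $(i,t)$ gives
\[
|Z_N|\;\le\;\sqrt{T}\,\|\partial_x\phi\|_\infty\,\|\psi\|_\infty\Bigl(\tfrac{1}{N}\sum_{i=1}^{N-1}\int_0^T a_i^2\,dt\Bigr)^{1/2}.
\]
For $Y_N$, only an $L^2$ bound on $\partial_x\psi$ is available. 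Cauchy--Schwarz on the $(s,x)$-rectangle, followed by the change of variables $y=x+s$, yields
\[
N\!\!\int_{I_i}\!\!\int_0^{1/N}|\partial_x\psi(t,x+s)|\,ds\,dx\;\le\;\frac{1}{\sqrt{N}}\Bigl(\int_{J_i}|\partial_x\psi(t,y)|^2\,dy\Bigr)^{1/2},
\]
where $I_i=[i/N-1/(2N),i/N+1/(2N)]$ and $J_i=[i/N-1/(2N),i/N+3/(2N)]$. Hence $|\beta_i|\le\|\phi\|_\infty N^{-1/2}(\int_{J_i}|\partial_x\psi(t,y)|^2\,dy)^{1/2}$. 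A further Cauchy--Schwarz in $(i,t)$, together with the bounded-overlap property $\sum_i\mathbf 1_{J_i}(y)\le 2$, produces
\[
|Y_N|\;\le\;\sqrt{2}\,\|\phi\|_\infty\,\Bigl(\tfrac{1}{N}\sum_{i=1}^{N-1}\int_0^T a_i^2\,dt\Bigr)^{1/2}\|\partial_x\psi\|_{L^2(Q_T)}.
\]

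Setting $A_N:=C_\phi\bigl(\tfrac{1}{N}\sum_i\int_0^T a_i^2\,dt\bigr)^{1/2}$ with $C_\phi:=\max\bigl(\sqrt{T}\|\partial_x\phi\|_\infty,\sqrt{2}\|\phi\|_\infty\bigr)$ and using $\|\partial_x\psi\|_{L^2}\le\|\psi\|_{H^1}$ yields both advertised estimates with the same prefactor. Jensen's inequality then gives $\mathbb E[A_N]\le C_\phi\sqrt{\mathbb E[\tfrac{1}{N}\sum_i\int_0^T a_i^2\,dt]}\to 0$ by hypothesis. No single step is the real obstacle; the delicate bookkeeping lies in keeping track of the shifted intervals $I_i,J_i$ so that the $L^2$-norm of $\partial_x\psi$ on the fixed domain $Q_T$ (rather than on shifted translates) ultimately appears, and in arranging matters so that the \emph{same} random variable $A_N$ serves both bounds.
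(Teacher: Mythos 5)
Your proof is correct and rests on the same ingredients as the paper's: writing $\bar\varphi_{i+1}-\bar\varphi_i$ as an average of $\partial_x\varphi$ over an $O(1/N)$ window, Cauchy--Schwarz, and the product rule $\partial_x(\phi\psi)=(\partial_x\phi)\psi+\phi\,\partial_x\psi$ to route one piece through $\|\psi\|_{L^\infty}$ and the other through $\|\partial_x\psi\|_{L^2}\le\|\psi\|_{H^1}$. If anything, your version is slightly more explicit than the paper's, which performs Cauchy--Schwarz on the full sum and only splits the resulting norm $\|\partial_x\varphi\|_{L^2}^2\le C_\phi(\|\psi\|_{H^1}^2+\|\psi\|_{L^\infty}^2)$ afterwards, leaving the additive decomposition $Y_N+Z_N$ implicit, whereas your Leibniz splitting produces $Y_N$ and $Z_N$ as genuine summands.
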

\begin{proof}
By Cauchy-Schwarz we have
\begin{equation} \label{eq:typeYCS}
\left|\sum_{i=1}^{N-1} \int_0^\infty  a_i\left( \bar \varphi_{i+1}-\bar \varphi_i\right) dt \right| \le {\left(\sum_{i=1}^{N-1}\int_0^\infty \left( \bar \varphi_{i+1}-\bar \varphi_i\right)^2 dt \right)^{1/2} {\left( \sum_{i=1}^{N-1}\int_0^T a_i^2 dt\right)^{1/2}}}.
\end{equation}
We write
\begin{equation}
\begin{gathered}
\bar\varphi_{i+1} -\bar \varphi_i =N \int_0^1 1_{N,i+1}(x) \varphi(t,x)dx- N\int_0^1 1_{N,i}(x) \varphi(t,x)dx
\\
= N\int_0^1 1_{N,i}(x) \left( \varphi \left(t, x+\frac{1}{N}\right)-\varphi(t,x)\right)dx
\\
=  {N\int_0^1 1_{N,i}(x) \int_x^{x+\frac{1}{N}}\partial_x \varphi(t,y)dydx},
\end{gathered}
\end{equation}
 {Thus, Cauchy-Schwarz inequality implies}
\begin{equation}
 {\left( \bar \varphi_{i+1}-\bar \varphi_i\right)^2 \le \frac{1}{N} \int_0^1\int_0^1 1_{N,i}(x) |\partial_x\varphi(t,y)|^2 dydx =\frac{1}{N^2} \int_0^1 |\partial_x\varphi(t,y)|^2 dy,}
\end{equation}
 {and so}
\begin{equation} \label{eq:gradphi}
 {\int_0^\infty \sum_{i=1}^{N-1}\left( \bar \varphi_{i+1}-\bar \varphi_i\right)^2 dt \le \frac{1}{N}  \int_0^\infty \int_0^1 \partial_x\varphi(t,x)^2 dxdt = \frac{1}{N} \normLdue[\partial_x\varphi]^2.}
\end{equation}
The conclusion finally follows from \eqref{eq:typeYCS} and
\begin{equation}
\normLdue[\partial_x\varphi]^2 \le 2 \|\phi\|_{L^\infty}^2 \|\partial_x\psi\|_{L^2}^2 + 2 \| \partial_x\phi\|_{L^2} \|\psi \|_{L^\infty}^2 \le C_\phi(\|\psi\|_{H^1}^2+\|\psi \|_{L^\infty}^2),
\end{equation}
where $C_\phi = 2 \max\{\norminf[\phi]^2,\normLdue[\partial_x\phi]^2\}$ depends on $\phi$ only.
\end{proof}
\begin{oss}
We will diffusely use summation by parts formulae like
\begin{equation}
\sum_{i=l+1}^{N-l} \bar \varphi_i \nabla^* \hat p_{l,i} = \sum_{i=l+1}^{N-l} \hat p_{l,i} \nabla \bar \varphi_i +\bar \varphi_{l+1} \hat p_{l,l} - \bar\varphi_{N-l+1}\hat p_{l,N-l}
\end{equation}
However, since $\varphi(t,\cdot)$ is compactly supported in $(0,1)$ and $l/N \to 0$, the boundary terms $\bar \varphi_{l+1}$ and $\bar \varphi_{N-l+1}$  will be identically zero for $N$ large enough. With this in mind, and since we will eventually take the limit $N\to \infty$, we shall simply write
\begin{equation}
\sum_{i=l+1}^{N-l} \bar \varphi_i \nabla^* \hat p_{l,i} = \sum_{i=l+1}^{N-l} \hat p_{l,i} \nabla \bar \varphi_i.
\end{equation}
\end{oss}
\begin{lem} \label{lem:antisym}
Let $\varphi$ be as in Theorem \ref{thm:Xn} and $X_{a,N}$ as in \eqref{eq:XaN}. Then there exist $\mathbb R_+$-valued random variables  $A_{a,N}$, $B_{a,N}$ independent of $\psi$ such that  $X_{a,N}= Y_{a,N}+ Z_{a,N}$, where
\begin{equation} \label{eq:YaNZaN}
|Y_{a,N}(\varphi,\eta)| \le \|\psi\|_{H^1} A_{a,N}  , \quad |Z_{a,N}(\varphi,\eta)| \le \|\psi\|_{L^\infty} B_{a,N}  
\end{equation}
and
\begin{equation}
\lim_{N \to \infty} \mathbb E[A_{a,N}  ] = \lim_{N \to \infty} \mathbb E[B_{a,N}  ] = 0.
\end{equation}
\end{lem}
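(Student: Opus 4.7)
My plan is to combine the Lax entropy identities with summation-by-parts to decompose $X_{a,N}$ into pieces that can be handled by Lemma~\ref{lem:typeY} and by Theorems~\ref{thm:1blockexp}--\ref{thm:2block}. First, using the Lax-pair relations $\partial_p q=-\partial_r\eta$ and $\partial_r q=-\tau'(r)\partial_p\eta$, the two $\nabla^*\hat p_{l,i}$-summands in \eqref{eq:XaN} cancel exactly, so
\[
X_{a,N}=\int_0^\infty\sum_{i=l+1}^{N-l}\bar\varphi_i\,\partial_p\eta(\uhat_{l,i})\,\bigl[\nabla\hat V'_{l,i}-\tau'(\hat r_{l,i})\nabla\hat r_{l,i}\bigr]\,dt.
\]

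Next I would add and subtract $\nabla\tau(\hat r_{l,i})$ to write $X_{a,N}=I+II$, with $I$ built from $\nabla[\hat V'_{l,i}-\tau(\hat r_{l,i})]$ and $II$ from the Taylor remainder $\nabla\tau(\hat r_{l,i})-\tau'(\hat r_{l,i})\nabla\hat r_{l,i}$. Taylor expansion bounds the bracket in $II$ pointwise by $\tfrac12\|\tau''\|_{L^\infty}(\nabla\hat r_{l,i})^2$, so the two-block estimate applied with $\hat\zeta_{l,i}=\hat r_{l,i}$ gives $\E|II|\le C\|\psi\|_{L^\infty}(N/l^3+1/\sigma)\to 0$, absorbing $II$ into $Z_{a,N}$.

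For $I$ I would perform a summation-by-parts (whose boundary terms vanish for $N$ large because $\bar\varphi_i$ has compact support in $(0,1)$) to move the gradient onto $\bar\varphi_i\partial_p\eta(\uhat_{l,i})$, and then use the discrete Leibniz rule
\[
\nabla^*\!\bigl[\bar\varphi_i\partial_p\eta(\uhat_{l,i})\bigr]=(\nabla^*\bar\varphi_i)\,\partial_p\eta(\uhat_{l,i})+\bar\varphi_{i-1}\,\nabla^*\partial_p\eta(\uhat_{l,i})
\]
to split $I=I_1+I_2$. The piece $I_1$ fits the hypothesis of Lemma~\ref{lem:typeY} with $a_i=[\hat V'_{l,i}-\tau(\hat r_{l,i})]\partial_p\eta(\uhat_{l,i})$ (the passage from $\nabla^*\bar\varphi_i$ to $\nabla\bar\varphi_{i-1}$ is a harmless index shift); boundedness of $\partial_p\eta$ combined with Theorem~\ref{thm:1block} ensures $\E[N^{-1}\sum_i a_i^2]\to 0$, so Lemma~\ref{lem:typeY} supplies the required $Y_{a,N}$ and $Z_{a,N}$ contributions with vanishing prefactor.

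The main obstacle is the remaining piece $I_2$, which must end up as a $Z$-type term with vanishing expectation. For this I would use $|\nabla^*\partial_p\eta(\uhat_{l,i})|\le\|\eta''\|_{L^\infty}|\nabla^*\uhat_{l,i}|$ and Cauchy-Schwarz twice to obtain
\[
\E|I_2|\le\|\psi\|_{L^\infty}\|\eta''\|_{L^\infty}\sqrt{\E\!\Bigl[\textstyle\sum_i(\hat V'_{l,i}-\tau(\hat r_{l,i}))^2\Bigr]\,\E\!\Bigl[\textstyle\sum_i|\nabla^*\uhat_{l,i}|^2\Bigr]}.
\]
Inserting the quantitative bounds of Theorems~\ref{thm:1blockexp} and~\ref{thm:2block} reduces matters to showing $(N/l+l^2/\sigma)(N/l^3+1/\sigma)\to 0$, i.e.\ that each of $N^2/l^4$, $N/(l\sigma)$, $l^2/\sigma^2$ vanishes. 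Verifying this is the technical heart of the argument: combining $\sigma>l$ with $l^3\gg N\sigma$ from \eqref{eq:l(N)} forces $l^2\gg N$ (hence $N^2/l^4\to 0$); $l/\sigma\to 0$ directly yields $l^2/\sigma^2\to 0$; and $N/\sigma^2\to 0$ from \eqref{eq:siglim}, together with $l\sigma\gg N$, forces $N/(l\sigma)\to 0$. Once this delicate cross-scale bookkeeping is in place, $I_2$ is also absorbed into $Z_{a,N}$, and aggregating $I_1$, $I_2$, and $II$ produces the required decomposition $X_{a,N}=Y_{a,N}+Z_{a,N}$.
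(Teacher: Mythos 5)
Your proposal is correct and follows essentially the same route as the paper: use the Lax-pair relations to cancel the $\nabla^*\hat p_{l,i}$ terms, add and subtract $\nabla\tau(\hat r_{l,i})$, handle the Taylor remainder with the two-block estimate, sum by parts and split via the discrete Leibniz rule, treat the $\nabla^*\bar\varphi_i$ piece with Lemma \ref{lem:typeY} plus the one-block estimate, and control the $\nabla^*\partial_p\eta$ piece by Cauchy--Schwarz with the product bound $(N/l^2+l/\sigma)^2\to 0$. The only cosmetic difference is that you carry out the Cauchy--Schwarz argument for that last piece directly, where the paper routes it through Lemma \ref{lem:typeZ}, which is the same estimate.
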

\begin{proof}
\begin{equation}
\begin{gathered}
X_{a,N}(\varphi,\eta) = -\int_0^\infty \sum_{i=l+1}^{N-l} \bar \varphi_i \left(  \partial_r\eta(\uhat_{l,i}) +  \partial_pq(\uhat_{l,i}) \right) \nabla^* \hat p_{l,i} dt +
\\
+ \int_0^\infty \sum_{i=l+1}^{N-l} \bar \varphi_i \left(  \partial_p\eta(\uhat_{l,i}) \nabla \hat V'_{l,i} +  \partial_rq(\uhat_{l,i}) \nabla \hat r_{l,i} \right)  dt.
\end{gathered}
\end{equation}
We use the equations which define the entropy-entropy flux $(\eta, q)$, namely
\begin{equation}
\begin{cases}
 \partial_r\eta+\partial_pq=0
\\
\tau'(r)\partial_p\eta +   \partial_rq =0
\end{cases}
\end{equation}
to  obtain
\[
X_{a,N}(\varphi,\eta) = \sum_{i=l+1}^{N-l} \int_0^\infty  \bar \varphi_i  \partial_p\eta(\uhat_{l,i}) \left(  \nabla \hat V'_{l,i}  - \tau'(\hat r_{l,i})\nabla \hat r_{l,i} \right) dt.
\]
\begin{equation} \label{eq:XaN1}
= \sum_{i=l+1}^{N-l} \int_0^\infty \bar \varphi_i \partial_p\eta(\uhat_{l,i}) \nabla ( \hat V'_{l,i}- \tau(\hat r_{l,i}))dt+
\end{equation}
\begin{equation} \label{eq:XaN2}
+ \sum_{i=l+1}^{N-l}\int_0^\infty \bar \varphi_i \partial_p\eta(\uhat_{l,i}) (\nabla \tau(\hat r_{l,i})- \tau'(\hat r_{l,i}) \nabla \hat r_{l,i})dt.
\end{equation}
After a summation by parts, \eqref{eq:XaN1} gives
\begin{equation} \label{eq:XaN3}
\sum_{i=l+1}^{N-l} \int_0^\infty \bar \varphi_i \partial_p\eta(\uhat_{l,i}) \nabla ( \hat V'_{l,i}- \tau(\hat r_{l,i}))dt = Q_{a,N}(\varphi,\eta) + Z_{a,N}(\varphi,\eta),
\end{equation}
where
\begin{equation}
Q_{a,N}(\varphi,\eta) = \int_0^\infty \sum_{i=l+1}^{N-l} (\nabla^* \bar \varphi_i) {\partial_p\eta(\uhat_{l,i-1})}(\hat V'_{l,i}-\tau(\hat r_{l,i})) dt
\end{equation}
and
\begin{equation}
{Z_{a,N}(\varphi,\eta) = \int_0^\infty \sum_{i=l+1}^{N-l}  \bar \varphi_i (\nabla^* \partial_p\eta(\uhat_{l,i}))(\hat V'_{l,i}-\tau(\hat r_{l,i})) dt.}
\end{equation}
 {$\partial_p\eta$ is bounded;} moreover, Theorem \ref{thm:1blockexp} implies
\begin{equation}
{\lim_{N \to \infty}\mathbb E\left[\frac{1}{N} \int_0^T \sum_{i=l+1}^{N-l} \int (\hat V'_{l,i}-\tau(\hat r_{l,i}))^2  dt \right]= 0,}
\end{equation}
for any $T >0$.  {Therefore we can apply Lemma} \ref{lem:typeY}  {to $Q_{a,N}$ and obtain}
\begin{equation}
{Q_{a,N}(\varphi,\eta) = Y_{a,N}(\varphi,\eta) + Z_{Qa,N}(\varphi,\eta),}
\end{equation}
where
\begin{equation}
|Y_{a,N}(\varphi,\eta)| \le \|\psi\|_{H^1} A_{a,N}  , \quad |Z_{Qa,N}(\varphi,\eta)| \le \|\psi\|_{L^\infty} A_{a,N}  ,
\end{equation}
for some  $\mathbb R_+$-valued random variable $A_{a,N}$ independent of $\psi$ and such that
\begin{equation}
\lim_{N \to \infty} \mathbb E [A_{a,N}  ] = 0.
\end{equation}
We can apply Lemma \ref{lem:typeZ} to $Z_{a1,N}$. In fact, since the second derivatives of $\eta$ are bounded, we have
\begin{equation}
(\nabla^* \partial_p\eta(\uhat_{l,i}))^2 \le C((\nabla^* \hat r_{l,i})^2 + (\nabla^* \hat p_{l,i})^2),
\end{equation}
for some $C>0$. Furthermore, Theorems \ref{thm:1blockexp} and \ref{thm:2block} imply
\begin{equation}
\begin{gathered}
{\E\left[ \int_0^T \sum_{i=l+1}^{N-l}  (\hat r_{l,i}-\hat r_{l,i-1})^2  dt\right]} \E\left[ \int_0^T \sum_{i=l+1}^{N-l}  (\hat V'_{l,i}-\tau(\hat r_{l,i}))^2  dt\right]+
\\
+\E\left[ \int_0^T \sum_{i=l+1}^{N-l}  (\hat p_{l,i}-\hat
  p_{l,i-1})^2 dt\right]\E\left[ \int_0^T \sum_{i=l+1}^{N-l}  (\hat V'_{l,i}-\tau(\hat r_{l,i}))^2  dt\right]
\\
  \le  2C_1(T)C_2(T) \left( \frac{N}{l^2}+\frac{l}{\sigma}  \right)^2,
\end{gathered}
\end{equation}
which vanishes as $N \to \infty$ for any $T >0$. Therefore,
\begin{equation}
|Z_{a1,N}(\varphi,\eta)| \le \|\psi\|_{L^\infty} B_{a1,N}  ,
\end{equation}
where $B_{a1,N}$ is a functional independent of $\psi$ such that
\begin{equation}
\lim_{N \to \infty} \mathbb E [ B_{a1,N}  ] = 0.
\end{equation}
Finally, Lemma \ref{lem:typeZ} applies to \eqref{eq:XaN2}, too. We set
\begin{equation}
Z_{a2,N}(\varphi,\eta) = \sum_{i=l+1}^{N-l}\int_0^\infty \bar \varphi_i \partial_p\eta(\uhat_{l,i}) (\nabla \tau(\hat r_{l,i})- \tau'(\hat r_{l,i}) \nabla \hat r_{l,i})dt
\end{equation}
and write
\begin{equation}
\nabla \tau(\hat r_{l,i}) - \tau'(\hat r_{l,i})\nabla \hat r_{l,i} =
(\tau' (\tilde r_{l,i})-\tau'(\hat r_{l,i}))\nabla \hat r_{l,i} =
\tau''(\tilde{\tilde r}_{l,i})(\tilde r_{l,i}-\hat r_{l,i})\nabla \hat
r_{l,i},
\end{equation}
where $\tilde r_{l,i}$ is between $\hat r_{l,i+1}$ and $\hat r_{l,i}$, while $\tilde {\tilde  r}_{l,i}$ is between $\hat r_{l,i}$ and $\tilde r_{l,i}$. With this in mind and using the fact (proven in  Appendix \ref{app:tension}) that $\tau''$ is bounded, we obtain
\begin{equation}
\left | \nabla \tau(\hat r_{l,i}) - \tau'(\hat r_{l,i})\nabla \hat r_{l,i}  \right | \le \|\tau''\|_{L^\infty} |\nabla \hat r_{l,i}|^2.
\end{equation}
Finally, since, for any $T>0$,
\begin{equation}
{\E \left[\sum_{i=l+1}^{N-l} \int_0^T  (\hat r_{l,i+1}-\hat r_{l,i})^2  dt\right] \le C_2(T) \left( \frac{N}{l^3} + \frac{1}{\sigma}\right) \to 0}
\end{equation}
as $N \to \infty$, we obtain
\begin{equation}
|Z_{a2,N}(\varphi,\eta)| \le  \| \psi\|_{L^\infty} B_{a2,N}  ,
\end{equation}
where $B_{a2,N}$ is independent of $\psi$ and
\begin{equation}
\lim_{N \to \infty} \mathbb E [ B_{a2,N}  ] = 0.
\end{equation}
Putting everything together, we have obtained 
\begin{equation}
X_{a,N} = Y_{a,N} + Z_{a,N},
\end{equation}
where
\begin{equation}
Z_{a,N} = Z_{Qa,N} + Z_{a1,N}+ Z_{a2,N},
\end{equation}
and $Y_{a,N}$ and $Z_{a,N}$ have the claimed properties.
\end{proof}
\begin{lem} \label{lem:symmetric}
Let $\varphi$ be as in Theorem \ref{thm:Xn} and let $\tilde X_{s,N}$ be as in \eqref{eq:tilXsN}. Then there exist  $\mathbb R_+$-valued random variables $\tilde A_{s,N}$, $\tilde B_{s,N}$, $\tilde B_{s,N}^*$, independent of $\psi$ such that  $\tilde X_{s,N} = \tilde Y_{s,N}+ \tilde Z_{s,N}+ \tilde Z_{s,N}^*$, where
\begin{equation}
\begin{gathered}
|\tilde Y_{s,N}(\varphi,\eta)| \le \|\psi\|_{H^1} \tilde A_{s,N}  , \quad |\tilde Z_{s,N}(\varphi,\eta)| \le \|\psi\|_{L^\infty} \tilde B_{s,N}  ,
\\
  |\tilde Z^*_{s,N}(\varphi,\eta)| \le \|\psi\|_{L^\infty} \tilde B^*_{s,N}  
\end{gathered}
\end{equation}
and
\begin{equation}
\lim_{N \to \infty} \mathbb E [ \tilde A_{s,N}  ] =\lim_{N \to \infty} \mathbb E [ \tilde B_{s,N}  ] = 0, \quad \limsup_{N \to \infty} \mathbb E[ \tilde B_{s,N}^*  ] < \infty.
\end{equation}
\end{lem}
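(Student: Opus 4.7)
The approach mirrors Lemma \ref{lem:antisym}, but with two key differences: we cannot invoke an entropy--flux algebraic cancellation, and we must exploit smallness of $\sigma$ from \eqref{eq:siglim} together with the two--block estimate of Theorem \ref{thm:2block}. The plan is to sum by parts in the Laplacian piece of $\tilde X_{s,N}$, split via the discrete product rule, apply Lemma \ref{lem:typeY} to one summand, Lemma \ref{lem:typeZ} to the other, and estimate the It\^o correction directly. Since $\phi$ is compactly supported in $(0,1)$ and $l/N\to 0$, the $\bar\varphi_i$ vanish near $i=l$ and $i=N-l$ for $N$ large, so summation by parts produces no boundary terms:
$$\sigma\int_0^\infty\sum_i \bar\varphi_i \partial_r\eta(\uhat_{l,i})\Delta \hat V'_{l,i}\, dt=-\sigma\int_0^\infty\sum_i \nabla\bigl(\bar\varphi_i\partial_r\eta(\uhat_{l,i})\bigr)\,\nabla \hat V'_{l,i}\, dt,$$
and $\nabla(a_i b_i)=(\nabla a_i)b_{i+1}+a_i\nabla b_i$ expands this into
$$-\sigma\int_0^\infty\sum_i (\nabla\bar\varphi_i)\,\partial_r\eta(\uhat_{l,i+1})\,\nabla\hat V'_{l,i}\, dt\;-\;\sigma\int_0^\infty\sum_i\bar\varphi_i\,\bigl(\nabla\partial_r\eta(\uhat_{l,i})\bigr)\,\nabla \hat V'_{l,i}\, dt.$$

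For the first sum I apply Lemma \ref{lem:typeY} with $a_i=-\sigma\,\partial_r\eta(\uhat_{l,i+1})\,\nabla\hat V'_{l,i}$. Boundedness of $\partial_r\eta$ and Theorem \ref{thm:2block} give
$$\frac{1}{N}\E\Bigl[\sum_i\int_0^T a_i^2\,dt\Bigr]\le C\,\sigma^2\Bigl(\frac{1}{l^3}+\frac{1}{N\sigma}\Bigr)=C\Bigl(\frac{\sigma^2}{l^3}+\frac{\sigma}{N}\Bigr),$$
which vanishes: $\sigma/N\to 0$ by \eqref{eq:siglim}, and $\sigma^2/l^3=(\sigma/N)(N\sigma/l^3)\to 0$ by combining \eqref{eq:siglim} with \eqref{eq:l(N)}. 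This produces $\tilde Y_{s,N}$, controlled by $\normH[\psi]$, and an $\norminf[\psi]$-controlled piece that I absorb into $\tilde Z_{s,N}$. The It\^o correction equals $(2\sigma/l^3)\int_0^T\sum_i \bar\varphi_i\partial^2_{rr}\eta(\uhat_{l,i})\,dt$ by the calculation already displayed in the excerpt, and boundedness of $\partial^2_{rr}\eta$ and $|\bar\varphi_i|\le c_\phi\norminf[\psi]$ yields an estimate of order $(N\sigma/l^3)\,T\,\norminf[\psi]\to 0$, contributing another vanishing summand to $\tilde Z_{s,N}$.

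For the second sum Lemma \ref{lem:typeZ} applies after absorbing $\sigma$ symmetrically as $\sigma^{1/2}$ into each factor. Boundedness of the second derivatives of $\eta$ gives $|\nabla\partial_r\eta(\uhat_{l,i})|^2\le C\bigl((\nabla\hat r_{l,i})^2+(\nabla\hat p_{l,i})^2\bigr)$, and Theorem \ref{thm:2block} then bounds both $\sigma\,\E[\sum_i\int_0^T(\nabla\partial_r\eta(\uhat_{l,i}))^2\,dt]$ and $\sigma\,\E[\sum_i\int_0^T(\nabla\hat V'_{l,i})^2\,dt]$ by $C(N\sigma/l^3+1)$, uniformly bounded but \emph{not} vanishing. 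Lemma \ref{lem:typeZ} therefore yields $\tilde Z^*_{s,N}$ with the claimed $\norminf[\psi]$ bound and $\limsup_N \E[\tilde B^*_{s,N}]<\infty$.

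The main obstacle is the scaling verification $\sigma^2/l^3\to 0$, which is not a hypothesis per se but arises as the product of $\sigma/N\to 0$ and $N\sigma/l^3\to 0$; without it the $\tilde Y_{s,N}$ part would not vanish. Equally crucial is the acceptance of the uniformly bounded remainder $\tilde Z^*_{s,N}$, which, unlike in Lemma \ref{lem:antisym}, cannot be closed off by an entropy--flux identity. This bounded term is the structural price paid for the missing cancellation, and is exactly what the stochastic Murat--Tartar lemma is designed to tolerate.
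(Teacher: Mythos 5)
Your proposal is correct and follows essentially the same route as the paper: the same summation by parts with discrete product rule splitting $\tilde X_{s,N}$ into a $\nabla\bar\varphi$ piece (handled by Lemma \ref{lem:typeY} via the scaling $\sigma^2/l^3+\sigma/N\to 0$), a $\nabla\partial_r\eta$ piece (handled by Lemma \ref{lem:typeZ} with the merely bounded factor $N\sigma/l^3+1$, yielding $\tilde Z^*_{s,N}$), and the It\^o correction of order $N\sigma/l^3\to 0$. The verification $\sigma^2/l^3=(\sigma/N)(N\sigma/l^3)\to 0$ is exactly the paper's argument.
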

\begin{proof}
We look at the term involving $V'$, first. We write
\begin{equation} \label{eq:XsN1}
\begin{gathered}
\sigma \int_0^\infty  \sum_{i=l+1}^{N-l}
\bar \varphi_i   \partial_r\eta(\uhat_{l,i}) \Delta \hat V'_{l,i}dt =- \sigma \int_0^\infty  \sum_{i=l+1}^{N-l}
\bar \varphi_i   \partial_r\eta(\uhat_{l,i}) \nabla^* \nabla \hat V'_{l,i}dt
\\
= \tilde Q_{s,N}(\varphi,\eta) + \tilde Z^*_{s,N}(\varphi,\eta),
\end{gathered}
\end{equation}
where
\begin{equation} \label{eq:tilQsN}
 \tilde Q_{s,N}(\varphi,\eta)= - \sigma \int_0^\infty  \sum_{i=l+1}^{N-l}
(\nabla^*\bar \varphi_i)   \partial_r\eta(\uhat_{l,i}) \nabla \hat
V'_{l,i}dt  
\end{equation}
and
\begin{equation} \label{eq:Z*sN}
\tilde Z^*_{s,N}(\varphi,\eta)= - \sigma \int_0^\infty  \sum_{i=l+1}^{N-l}
 {\bar \varphi_{i-1}} (\nabla^*   \partial_r\eta(\uhat_{l,i})) \nabla \hat
V'_{l,i}dt.
\end{equation}
Since $\partial_r \eta$ is bounded and
\begin{equation}
\limsup_{N \to \infty}\left( \frac{\sigma^2}{N} \E \left[\int_0^T \sum_{i=l+1}^{N-l}  (\hat V'_{l,i+1}-\hat V'_{l,i})^2  dt\right]\right) \le C_2(T) \lim_{N\to \infty} \left(\frac{\sigma^2}{l^3}+ \frac{\sigma}{N} \right) = 0
\end{equation}
for any $T >0$, Lemma \ref{lem:typeY} applies to $\tilde Q_{s,N}$, yielding
\begin{equation}
 \tilde Q_{s,N}(\varphi, \eta) = \tilde Y_{s,N}(\varphi, \eta) + \tilde Z_{Qs,N}(\varphi, \eta),
\end{equation}
where
\begin{equation}
|\tilde Y_{s,N}(\varphi, \eta) | \le \|\psi\|_{H^1} \tilde A_{s,N}  , \quad  |\tilde Z_{sQ,N}(\varphi, \eta) | \le \|\psi\|_{L^\infty} \tilde A_{s,N}  
\end{equation}
with
\begin{equation}
\lim_{N \to \infty} \mathbb{E}[\tilde A_{s,N}  ]=0.
\end{equation}
From
\begin{equation}
(\nabla^* \partial_r\eta(\uhat_{l,i}))^2 \le C ((\nabla^* \hat r_{l,i})^2+(\nabla^* \hat p_{l,i})^2)
\end{equation}
and
\begin{equation}
\begin{gathered}
 \E\left[ \sigma \int_0^T \sum_{i=l+1}^{N-l}  (\hat r_{l,i}-\hat r_{l,i-1})^2  dt \right]\E\left[ \sigma \int_0^T \sum_{i=l+1}^{N-l}  (\hat V'_{l,i+1}-\hat V'_{l,i})^2  dt \right]+
\\
+ \E\left[ \sigma \int_0^T \sum_{i=l+1}^{N-l} (\hat p_{l,i}-\hat p_{l,i-1})^2 d\mu_t^N dt \right]\E\left[ \sigma \int_0^T \sum_{i=l+1}^{N-l} (\hat V'_{l,i+1}-\hat V'_{l,i})^2  dt \right]
\\
 \le 2C_1(T)C_2(T) \left(\frac{N  \sigma}{l^3}+1 \right)^2,
\end{gathered}
\end{equation}
which stays bounded as $N \to\infty$ for any $T>0$, Lemma \ref{lem:typeZ} applies to $\tilde Z^*_{s,N}$. Therefore, we have
\begin{equation}
|\tilde Z^*_{s,N}(\varphi,\eta)| \le \|\psi\|_{L^\infty} \tilde B^*_{s,N}  ,
\end{equation}
for some  $\mathbb R_+$-valued random variable $\tilde B^*_{s,N}$ independent of $\psi$ and such that
\begin{equation}
\limsup_{N \to \infty} \mathbb E[\tilde B^*_{s,N}  ] <\infty.
\end{equation}
We estimate the quadratic variations, namely
\begin{equation} \label{eq:XsN2}
\tilde Z_{s1,N}(\varphi,\eta) = \frac{2\sigma}{l^3} \sum_{i=l+1}^{N-l}\int_0^\infty \bar \varphi_i \partial^2_{rr}\eta(\uhat_{l,i}) dt.
\end{equation}
%Since $(\tilde w_i)$ are independent Brownian motions, we have
%\begin{equation}
%(d  \hat {\tilde w}_{l,i}-d \hat {\tilde w}_{l,i-1})^2= \frac{1}{l^2}(d  \bar {\tilde w}_{l,i-1+l}-d %\bar {\tilde w}_{l,i-1})^2 
%\end{equation}
%\begin{equation}
%= \frac{1}{l^4} \left( \sum_{j=0}^{l-1} d \tilde w_{i-1+l-j} - \sum_{j=0}^{l-1} d \tilde w_{i-1-j}\right)^2 = \frac{2}{l^3} dt,
%\end{equation}
Therefore,
\begin{equation}
 | \tilde Z_{s1,N}(\varphi,\eta) | \le \norminf[\partial^2_{rr}\eta] \frac{\sigma}{l^3}\int_0^\infty \sum_{i=l+1}^{N-l} |\bar \varphi_i| dt \le C_{\eta,\phi}  \frac{N \sigma}{l^3}\|\psi\|_{L^\infty}.
\end{equation}
Since $N\sigma/l^3 \to 0$, as $N \to \infty$, the proof is completed if we set
\begin{equation}
\tilde Z_{s,N} = \tilde Z_{Qs,N} + \tilde Z_{s1,N}.
\end{equation}
\end{proof}
Similarly, we prove the following.
\begin{lem} \label{lem:symmetric2}
Let $\varphi$ be as in Theorem \ref{thm:Xn} and let $ X_{s,N}$ be as in \eqref{eq:XsN}. Then there exist $\R_+$-valued random variables   $ A_{s,N}$, $ B_{s,N}$, $ B_{s,N}^*$ independent of $\psi$ such that $ X_{s,N}$ decomposes as $ X_{s,N} =  Y_{s,N}+  Z_{s,N}+  Z_{s,N}^*$, where
\begin{equation}
\begin{gathered}
| Y_{s,N}(\varphi,\eta)| \le \|\psi\|_{H^1}  A_{s,N}  , \quad | Z_{s,N}(\varphi,\eta)| \le \|\psi\|_{L^\infty}  B_{s,N}  ,
\\
  | Z^*_{s,N}(\varphi,\eta)| \le \|\psi\|_{L^\infty}  B^*_{s,N}  
\end{gathered}
\end{equation}
and
\begin{equation}
\lim_{N \to \infty} \mathbb E [ A_{s,N}  ] =\lim_{N \to \infty} \mathbb E [  B_{s,N}  ] = 0, \quad \limsup_{N \to \infty} \mathbb E[  B_{s,N}^*  ] < \infty.
\end{equation}
\end{lem}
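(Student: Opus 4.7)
The plan is to mirror the argument of Lemma \ref{lem:symmetric}, with $p$ now playing the role previously held by $V'$, $\partial_p\eta$ in place of $\partial_r\eta$, and the noise $w$ in place of $\tilde w$. All the structural ingredients are available symmetrically: Theorem \ref{thm:2block} is stated uniformly over $\hat\zeta \in \{\hat r_{l,i}, \hat p_{l,i}, \hat V'_{l,i}, \tau(\hat r_{l,i})\}$, so the same quadratic estimates apply without modification.

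First I would rewrite $\Delta \hat p_{l,i} = -\nabla^*\nabla \hat p_{l,i}$ and perform the discrete summation by parts that splits the deterministic part of \eqref{eq:XsN} as $Q_{s,N} + Z^*_{s,N}$, with
\begin{equation*}
Q_{s,N}(\varphi,\eta) = -\sigma\int_0^\infty \sum_{i=l+1}^{N-l}(\nabla^*\bar\varphi_i)\,\partial_p\eta(\uhat_{l,i})\,\nabla \hat p_{l,i}\,dt,
\end{equation*}
\begin{equation*}
Z^*_{s,N}(\varphi,\eta) = -\sigma\int_0^\infty \sum_{i=l+1}^{N-l}\bar\varphi_{i-1}\bigl(\nabla^*\partial_p\eta(\uhat_{l,i})\bigr)\nabla \hat p_{l,i}\,dt,
\end{equation*}
in exact analogy with \eqref{eq:tilQsN}--\eqref{eq:Z*sN}.

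Next, I would apply Lemma \ref{lem:typeY} to $Q_{s,N}$ with $a_i := \sigma\,\partial_p\eta(\uhat_{l,i})\,\nabla\hat p_{l,i}$. Since $\partial_p\eta$ is bounded, Theorem \ref{thm:2block} gives
\begin{equation*}
\E\left[\frac{1}{N}\sum_{i=l+1}^{N-l}\int_0^T a_i^2\,dt\right] \le \|\partial_p\eta\|_{L^\infty}^2\, C_2(T)\left(\frac{\sigma^2}{l^3}+\frac{\sigma}{N}\right),
\end{equation*}
which vanishes as $N\to\infty$ because $\sigma/N\to 0$ by \eqref{eq:siglim} and $\sigma^2/l^3=(\sigma/N)(N\sigma/l^3)\to 0$ by \eqref{eq:l(N)}; this produces the decomposition $Q_{s,N}=Y_{s,N}+Z_{Qs,N}$ with the required controls. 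For $Z^*_{s,N}$, I would apply Lemma \ref{lem:typeZ} using the bound $(\nabla^*\partial_p\eta(\uhat_{l,i}))^2 \le C\bigl((\nabla^*\hat r_{l,i})^2 + (\nabla^*\hat p_{l,i})^2\bigr)$ from the boundedness of the second derivatives of $\eta$, so that the relevant product of expectations is of order $(N\sigma/l^3+1)^2$, only finite as $N\to\infty$. This yields $|Z^*_{s,N}|\le \|\psi\|_{L^\infty} B^*_{s,N}$ with $\limsup_N \E[B^*_{s,N}]<\infty$.

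Finally, the It\^o correction
\begin{equation*}
\frac{2\sigma}{l^3}\sum_{i=l+1}^{N-l+1}\int_0^\infty \bar\varphi_i\,\partial^2_{pp}\eta(\uhat_{l,i})\,dt
\end{equation*}
is handled directly: since $|\bar\varphi_i|\le c_\phi\|\psi\|_{L^\infty}$ and $\partial^2_{pp}\eta$ is bounded, it is dominated by $C_{\eta,\phi}\,\frac{N\sigma}{l^3}\|\psi\|_{L^\infty}$, which vanishes by \eqref{eq:l(N)}; it therefore merges with $Z_{Qs,N}$ into the term $Z_{s,N}$. The main obstacle is, exactly as in Lemma \ref{lem:symmetric}, the term $Z^*_{s,N}$: unlike the others its expectation is not shown to vanish but only to stay bounded, which is precisely why Theorem \ref{thm:Xn} must carry a separately bounded contribution alongside the vanishing ones, and in turn why the subsequent argument calls on Murat--Tartar rather than on a simpler convergence statement.
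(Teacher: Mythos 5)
Your proposal is correct and is exactly the argument the paper intends: the paper proves Lemma \ref{lem:symmetric} in detail and disposes of this one with ``Similarly, we prove the following,'' and your substitutions ($\hat p_{l,i}$ for $\hat V'_{l,i}$, $\partial_p\eta$ for $\partial_r\eta$, $\hat w$ for $\hat{\tilde w}$) together with the uniform statement of Theorem \ref{thm:2block} carry every estimate over verbatim. The limit computations ($\sigma^2/l^3\to 0$, $N\sigma/l^3\to 0$, boundedness of the $Z^*$ product) are all checked correctly.
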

\begin{lem} \label{lem:martin}
Let $\varphi$ be as in Theorem \ref{thm:Xn} and let $
\mathcal M_N$ be as in \eqref{eq:MN}. Then there exist $ A_{\mathcal M,N}, B_{\mathcal M,N} \in \mathbb R_+$
independent of $\psi$ such that $\mathcal M_N = Y_{\mathcal M,N}+ Z_{\mathcal M,N}$, where
\begin{equation}
\mathbb E \left[| Y_{\mathcal M,N}(\varphi,\eta)|\right] \le \| \varphi\|_{H^1}
A_{\mathcal M,N}  , \quad \mathbb E\left[| Z_{\mathcal M,N}(\varphi,\eta)|\right] \le \| \varphi\|_{L^\infty}
B_{\mathcal M,N}  
\end{equation}
and
\begin{equation}
\lim_{N \to \infty}  A_{\mathcal M,N}   =  \lim_{N \to
\infty} B_{\mathcal M,N}   =0.
\end{equation}
\end{lem}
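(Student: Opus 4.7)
My strategy is to perform a discrete summation by parts in $i$ that moves the difference operator $\nabla^*$ off the Brownian motions and onto the integrand, then estimate each piece by It\^o's isometry combined with Jensen's inequality $\E[|\cdot|]\le\sqrt{\E[|\cdot|^2]}$. Since $\varphi$ is compactly supported in $(0,1)$ and $l/N\to 0$, the boundary terms vanish for $N$ large enough. The two resulting pieces are
\begin{equation*}
Y_{\mathcal M,N} = -\sqrt{\tfrac{2\sigma}{N}}\int_0^\infty\sum_i (\nabla\bar\varphi_i)\,\partial_p\eta(\uhat_{l,i+1})\,d\hat w_{l,i},\qquad Z_{\mathcal M,N} = -\sqrt{\tfrac{2\sigma}{N}}\int_0^\infty\sum_i \bar\varphi_i\,(\nabla\partial_p\eta(\uhat_{l,i}))\,d\hat w_{l,i},
\end{equation*}
so that $\mathcal M_N = Y_{\mathcal M,N}+Z_{\mathcal M,N}$.

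The one subtle point is that the block-averaged Brownian motions $\hat w_{l,i}=l^{-1}\sum_{|j|<l}\frac{l-|j|}{l}w_{i-j}$ have non-diagonal covariations. A direct computation gives $d\langle\hat w_{l,i},\hat w_{l,j}\rangle_t = c_{ij}\,dt$ with $c_{ij}\ge 0$, $c_{ij}=0$ for $|i-j|\ge 2l-1$, and crucially $\sum_j c_{ij}=1$ for every $i$. Hence the symmetric non-negative matrix $(c_{ij})$ is substochastic with operator norm $\le 1$ on $\ell^2$, and It\^o's isometry yields, for any adapted $(\Phi_i)$,
\begin{equation*}
\E\left[\Bigl(\int_0^\infty\sum_i \Phi_i\,d\hat w_{l,i}\Bigr)^{\!2}\right] = \E\left[\int_0^\infty\sum_{i,j}\Phi_i\Phi_j\,c_{ij}\,dt\right] \le \E\left[\int_0^\infty \sum_i \Phi_i^2\,dt\right].
\end{equation*}

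For $Y_{\mathcal M,N}$ I take $\Phi_i=\nabla\bar\varphi_i\,\partial_p\eta(\uhat_{l,i+1})$; using the boundedness of $\partial_p\eta$ together with the estimate $\sum_i\int_0^\infty(\nabla\bar\varphi_i)^2\,dt\le N^{-1}\|\partial_x\varphi\|_{L^2}^2$ already established in \eqref{eq:gradphi}, I obtain $\E[|Y_{\mathcal M,N}|]\le \|\varphi\|_{H^1}A_{\mathcal M,N}$ with $A_{\mathcal M,N}^2 \lesssim \sigma/N^2 \to 0$ by virtue of $\sigma/N\to 0$ in \eqref{eq:siglim}. For $Z_{\mathcal M,N}$ I take $\Phi_i=\bar\varphi_i\,\nabla\partial_p\eta(\uhat_{l,i})$; using $|\bar\varphi_i|\le\|\varphi\|_{L^\infty}$, the boundedness of the second derivatives of $\eta$ (which gives $|\nabla\partial_p\eta(\uhat_{l,i})|^2\le C(|\nabla\hat r_{l,i}|^2+|\nabla\hat p_{l,i}|^2)$), and the two-block estimate (Theorem~\ref{thm:2block}) summed over $i$, I reach $\E[|Z_{\mathcal M,N}|]\le \|\varphi\|_{L^\infty}B_{\mathcal M,N}$ with $B_{\mathcal M,N}^2 \lesssim \sigma/l^3 + 1/N$. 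Both terms vanish under \eqref{eq:l(N)}: the second is obvious, and the first because $N\sigma/l^3\to 0$ implies $\sigma/l^3\to 0$ a fortiori. The only genuine obstacle is the non-diagonal covariation bookkeeping in the second paragraph; once the substochastic bound on $(c_{ij})$ is in hand, the rest reduces to a direct application of \eqref{eq:gradphi} and the two-block estimate.
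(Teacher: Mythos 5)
Your proof is correct and follows essentially the same route as the paper: discrete summation by parts (with boundary terms killed by the compact support of $\varphi$ and $l/N\to 0$), It\^o isometry plus $\E[|\cdot|]\le\sqrt{\E[|\cdot|^2]}$, the bound $\sum_i\int_0^\infty(\nabla\bar\varphi_i)^2\,dt\le N^{-1}\|\partial_x\varphi\|_{L^2}^2$, the boundedness of $\partial_p\eta$ and of its derivatives, and the two-block estimate, yielding vanishing coefficients $A_{\mathcal M,N}$, $B_{\mathcal M,N}$. The only variations are minor: the paper further splits $\nabla\bar\varphi_i$ into $\bar\phi_i\nabla\bar\psi_i$ and $\bar\psi_{i+1}\nabla\bar\phi_i$ (immaterial here, since every coefficient tends to zero), and it treats the correlations among the $\hat w_{l,i}$ by Cauchy--Schwarz over $i$ followed by orthogonality of the underlying $w_{i-j}$, whereas your observation that the covariance matrix $(c_{ij})$ of the block-averaged Brownian motions has nonnegative entries with row sums equal to $1$ (hence $\ell^2$-operator norm at most $1$) is a correct, and in fact slightly sharper, way to do the same bookkeeping.
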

\begin{proof}
Recall $\varphi = \phi \psi$ and set  {$\bar \phi_i(t) =N \int_0^1 \phi(t,x)1_{N,i}(x)dx$, $\bar \psi_i(t) =N \int_0^1 \phi(t,x)1_{N,i}(x)dx$}. Summing by parts  {and thanks to the fact that $ \varphi_i =\phi_i\psi_i+\|\psi\|_{L^\infty} \mathcal O(1/N)$}, we obtain \\ ${\mathcal M_N = Y_{\mathcal M,n}+ Z_{\mathcal M 1,N}
+Z_{\mathcal M 2,N}}$, where
\begin{equation} \label{eq:YMN}
 {Y_{\mathcal M,N}(\varphi,\eta) =-\sqrt{2\frac{\sigma}{N}} \int_0^\infty
\sum_{i=l+1}^{N-l} \bar \phi_i(\nabla \bar \psi_i)  \partial_p\eta(\uhat_{l,i+1}) 
d\hat  w_{l,i}},
\end{equation} 
\begin{equation}\label{eq:Z1MN}
 {Z_{\mathcal M 1,N}(\varphi,\eta)  =-\sqrt{2\frac{\sigma}{N}} \int_0^\infty
\sum_{i=l+1}^{N-l} \bar \psi_{i+1}(\nabla \bar \phi_i)  \partial_p\eta(\uhat_{l,i+1}) 
d\hat  w_{l,i}},
\end{equation}
\begin{equation} \label{eq:Z2MN}
 {Z_{\mathcal M 2,N}(\varphi,\eta) =-\sqrt{2\frac{\sigma}{N}} \int_0^\infty
\sum_{i=l+1}^{N-l}  \bar \varphi_i  (\nabla\partial_p\eta(\uhat_{l,i}) )
d\hat  w_{l,i}}.
\end{equation}
We write
\begin{equation}
\begin{gathered}
| Y_{\mathcal M,N}(\varphi,\eta)| = \sqrt{2\frac{\sigma}{N}} \sqrt{ \left(\int_0^\infty
\sum_{i=l+1}^{N-l} \bar \phi_i(\nabla\bar \psi_i)  \partial_p\eta(\uhat_{l,i+1}) 
d\hat  w_{l,i}
  \right)^2}
\\
\le   \sqrt{2\frac{\sigma}{N}} \sqrt{ N\frac{1}{l^2} \sum_{i=l+1}^{N-l}
   \left( \sum_{|j|<l}\int_0^\infty\bar \phi_i(\nabla \bar \psi_i) \partial_p
    \eta(\uhat_{l,i+1}) \frac{l-|j|}{l} dw_{i-j}\right)^2}.
    \end{gathered}
\end{equation}
Now we write
\[
\left( \sum_{|j|<l}\int_0^\infty\bar \phi_i(\nabla \bar \psi_i) \partial_p
    \eta(\uhat_{l,i+1}) \frac{l-|j|}{l} dw_{i-j}\right)^2
\]
\begin{equation}
=  \sum_{|j|<l} \left(\int_0^\infty\bar \phi_i(\nabla \bar \psi_i) \partial_p
    \eta(\uhat_{l,i+1}) \frac{l-|j|}{l} dw_{i-j}\right)^2 +
\end{equation}
\[
+\sum_{k \neq j} \left(\int_0^\infty\bar \phi_i(\nabla \bar \psi_i) \partial_p
    \eta(\uhat_{l,i+1}) \frac{l-|k|}{l} dw_{i-k}\right) \left(\int_0^\infty\bar \phi_i(\nabla \bar \psi_i) \partial_p
    \eta(\uhat_{l,i+1}) \frac{l-|j|}{l} dw_{i-j}\right).
\]
This, together with It\^{o} isometry implies
implies
\begin{equation}
\begin{gathered}
\mathbb E \left[|Y_{\mathcal M,N}(\varphi,\eta)| \right]\le  
 \sqrt{\frac{2\sigma}{l^2}
\sum_{i=l+1}^{N-l} \sum_{|j|<l} \mathbb E\left[ \int_0^\infty \bar \phi_i^2 (\nabla
\bar \psi_i)^2 (\partial_p \eta(\uhat_{l,i+1}))^2
\left(\frac{l-|j|}{l}\right)^2 dt\right]}.
\\
\le  C_{\eta,\phi} \sqrt{
  \frac{\sigma}{Nl} \frac{1}{N}
  \sum_{i=l+1}^{N-l} \int_0^\infty N^2(\nabla \bar \psi_i)^2dt }     \le C_{\eta,\phi} \sqrt
\frac{\sigma}{Nl} \| \psi\|_{H^1},
\end{gathered}
\end{equation}
where $C_{\eta,\phi}$ is independent of $\psi$  and the coefficient of $\|\psi\|_{H^1}$ vanishes as $N \to\infty$. 

Similarly, we obtain
\begin{equation}
\mathbb E \left[|Z_{\mathcal M 1,N}(\varphi,\eta)|\right] \le C'_{\eta,\phi}  \sqrt \frac{\sigma}{Nl} \|\psi\|_{L^\infty}.
\end{equation}
Finally, recalling that $\varphi(\cdot, x)$ is supported in $[0,T]$, we estimate
\begin{equation}
\begin{gathered}
{\mathbb E \left[|Z_{\mathcal M 2,N}(\varphi,\eta)|\right] \le \|\psi\|_{L^\infty}  C''_{\eta,\phi} \sqrt{ \frac{\sigma}{l}
  \mathbb E \left[\sum_{i=l+1}^{N-l} \int_0^T (\nabla \hat p_{l,i})^2+(\nabla \hat r_{l,i})^2 dt \right]} }
\\
\le C''_{\eta,\phi} \sqrt{C_2(T)} \left( \frac{N \sigma}{l^4}+\frac{1}{l} \right)^{1/2}
\end{gathered}
\end{equation}
Since the last term at the right hand side vanishes as $N \to \infty$, the lemma is proven.
\end{proof}
Similarly, we prove the following.
\begin{lem} \label{lem:martintil}
Let $\varphi$ be as in Theorem \ref{thm:Xn} and let $
\tilde {\mathcal M}_N$ be as in \eqref{eq:tildeMN}. Then there exists $ \tilde A_{\mathcal M,N}, \tilde B_{\mathcal M,N} \in \mathbb R_+$
independent of $\psi$ such that $\tilde{\mathcal M}_N =  \tilde Y_{\mathcal M,N}+ \tilde Z_{\mathcal M,N}$, where
\begin{equation}
\mathbb E\left[| \tilde Y_{\mathcal M,N}(\varphi,\eta)|\right] \le \| \varphi\|_{H^1}
\tilde A_{\mathcal M,N}  , \quad \mathbb E\left[|\tilde Z_{\mathcal M,N}(\varphi,\eta)|\right] \le \| \varphi\|_{L^\infty}
\tilde B_{\mathcal M,N}  
\end{equation}
and
\begin{equation}
\lim_{N \to \infty}  \tilde A_{\mathcal M,N}   =  \lim_{N \to
\infty} \tilde B_{\mathcal M,N}   =0.
\end{equation}
\end{lem}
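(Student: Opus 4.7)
The plan is to follow verbatim the argument of Lemma \ref{lem:martin}, since $\tilde{\mathcal M}_N$ has exactly the same structure as $\mathcal M_N$ with $\partial_p\eta$ replaced by $\partial_r\eta$ and the independent Brownian family $\{w_i\}$ replaced by $\{\tilde w_i\}$, the latter family having identical law and independence properties. Both $\partial_r\eta$ and its second derivatives are bounded by the assumption on the Lax pair, so every scalar estimate appearing in Lemma \ref{lem:martin} transfers without modification.

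First I would perform the discrete summation by parts, using $\bar\varphi_i = \bar\phi_i\bar\psi_i + \|\psi\|_{L^\infty}\mathcal O(1/N)$ and a Leibniz-type decomposition of $\nabla(\bar\phi_i\bar\psi_i\,\partial_r\eta(\uhat_{l,i}))$, to split
\[
\tilde{\mathcal M}_N = \tilde Y_{\mathcal M,N} + \tilde Z_{\mathcal M 1,N} + \tilde Z_{\mathcal M 2,N},
\]
where $\tilde Y_{\mathcal M,N}$ carries the factor $\nabla\bar\psi_i$, $\tilde Z_{\mathcal M 1,N}$ carries $\nabla\bar\phi_i$, and $\tilde Z_{\mathcal M 2,N}$ carries $\nabla\partial_r\eta(\uhat_{l,i})$. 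Then, applying It\^o's isometry together with the independence of the $\tilde w_i$'s so that cross terms vanish, and bounding $|\partial_r\eta|,|\bar\phi_i|$ by constants depending only on $\eta$ and $\phi$, the very same computation as for \eqref{eq:YMN} yields
\[
\E\bigl[|\tilde Y_{\mathcal M,N}(\varphi,\eta)|\bigr] \le C_{\eta,\phi}\sqrt{\frac{\sigma}{Nl}}\,\|\psi\|_{H^1},\qquad \E\bigl[|\tilde Z_{\mathcal M 1,N}(\varphi,\eta)|\bigr] \le C'_{\eta,\phi}\sqrt{\frac{\sigma}{Nl}}\,\|\psi\|_{L^\infty},
\]
both of which vanish as $N\to\infty$ since $\sigma/N\to 0$ by \eqref{eq:siglim}.

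The only step with slightly different content is the estimate of $\tilde Z_{\mathcal M 2,N}$. Here, using that $\partial^2_{rr}\eta$ and $\partial^2_{rp}\eta$ are bounded, one has
\[
(\nabla\partial_r\eta(\uhat_{l,i}))^2 \le C\bigl((\nabla\hat r_{l,i})^2+(\nabla\hat p_{l,i})^2\bigr),
\]
and invoking the two-block estimate (Theorem \ref{thm:2block}) one obtains
\[
\E\bigl[|\tilde Z_{\mathcal M 2,N}(\varphi,\eta)|\bigr] \le C''_{\eta,\phi}\sqrt{C_2(T)}\left(\frac{N\sigma}{l^4}+\frac{1}{l}\right)^{1/2}\|\psi\|_{L^\infty},
\]
which vanishes thanks to the scaling \eqref{eq:l(N)}. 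Defining $\tilde Z_{\mathcal M,N}:=\tilde Z_{\mathcal M 1,N}+\tilde Z_{\mathcal M 2,N}$ closes the proof. I do not expect any new obstacle relative to Lemma \ref{lem:martin}: the argument is essentially a transliteration made possible by the symmetric roles of the $p$- and $r$-noises in the dynamics \eqref{eq:SDE} and by the matching assumptions of boundedness on $\partial_r\eta$ and $\partial_p\eta$.
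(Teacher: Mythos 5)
Your proposal is correct and coincides with the paper's intent: the paper proves Lemma \ref{lem:martintil} simply by the remark ``Similarly, we prove the following,'' i.e.\ by repeating the proof of Lemma \ref{lem:martin} with $\partial_p\eta$ replaced by $\partial_r\eta$ and the $w_i$'s by the $\tilde w_i$'s, which is exactly the transliteration you carry out, including the same three-term decomposition, the It\^o-isometry bound $\sqrt{\sigma/(Nl)}$ for the first two pieces, and the two-block estimate giving $(N\sigma/l^4+1/l)^{1/2}$ for the piece carrying $\nabla\partial_r\eta(\uhat_{l,i})$.
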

\begin{lem} \label{lem:Derivative}
Let $\varphi$ be as in Theorem \ref{thm:Xn} and let $ \mathcal N_N$ be as in \eqref{eq:NN}. Then there exists $\mathbb R_+$-valued random variables  $ A_{n,N}$ and $ B_{n,N}$, independent of $\psi$ such that  $\mathcal N_N =  Y_{n,N}+  Z_{n,N}$, where
\begin{equation}
| Y_{n,N}(\varphi,\eta)| \le \|\psi\|_{H^1}  A_{n,N}  , \quad | Z_{n,N}(\varphi,\eta)| \le \|\psi\|_{L^\infty}  B_{n,N}  ,
\end{equation}
and
\begin{equation}
\lim_{N \to \infty} \mathbb E [ A_{n,N}  ] =\lim_{N \to \infty} \mathbb E [  B_{n,N}  ] = 0.
\end{equation}
\end{lem}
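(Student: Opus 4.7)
The plan is to decompose $\mathcal N_N = I + II$, where $I = -\int_0^\infty\!\!\int_0^1 \partial_x\varphi\, q(\uhat_N)\,dx\,dt$ and $II$ is the discrete sum in \eqref{eq:NN}, and then to compare the two via an explicit discretization of $I$. Since $q(\uhat_N(t,x))$ is piecewise constant in $x$, taking the value $q(\uhat_{l,i}(t))$ on the interval $[(i-\tfrac12)/N,(i+\tfrac12)/N]$, I can evaluate the inner integral in $I$ exactly as $-\sum_i q(\uhat_{l,i})[\varphi(t,(i+\tfrac12)/N)-\varphi(t,(i-\tfrac12)/N)]$. Abel's summation then rewrites $I = \int_0^\infty \sum_{i=l+1}^{N-l+1}\varphi(t,(i-\tfrac12)/N)\,[q(\uhat_{l,i})-q(\uhat_{l,i-1})]\,dt$, the boundary terms dropping because $\varphi=\phi\psi$ is supported in $[x_-,x_+]\subset(0,1)$ while $l/N\to 0$.

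The next step is to Taylor-expand $q(\uhat_{l,i})-q(\uhat_{l,i-1}) = \partial_r q(\uhat_{l,i-1})\nabla\hat r_{l,i-1} + \partial_p q(\uhat_{l,i-1})\nabla\hat p_{l,i-1} + R_{i-1}$, with remainder $|R_{i-1}|\le \tfrac12\norminf[q'']\,|\nabla \uhat_{l,i-1}|^2$, which is legitimate because $q$ has bounded first and second derivatives by the hypotheses of Theorem \ref{thm:Xn}. After reindexing $j=i-1$ and using $\nabla^*\hat p_{l,i}=-\nabla\hat p_{l,i-1}$, the first-order piece of $I$ and the discrete sum $II$ almost cancel, leaving an $r$-contribution of the form $\int\sum[\varphi(t,(j+\tfrac12)/N)-\bar\varphi_j]\,\partial_r q(\uhat_{l,j})\,\nabla\hat r_{l,j}\,dt$ and a $p$-contribution whose coefficient I further split as $[\varphi(t,(j+\tfrac12)/N)-\bar\varphi_{j+1}]\partial_p q(\uhat_{l,j}) + \bar\varphi_{j+1}[\partial_p q(\uhat_{l,j})-\partial_p q(\uhat_{l,j+1})]$.

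Each family is then handled exactly as in the earlier lemmas. The pointwise-versus-average discrepancies $\varphi(t,(j+\tfrac12)/N)-\bar\varphi_j$ satisfy $\sum_j \int_0^T(\cdots)^2\,dt \le N^{-1}\normLdue[\partial_x\varphi]^2 \le C_\phi N^{-1}(\normH[\psi]^2+\norminf[\psi]^2)$, so Cauchy--Schwarz combined with the two-block estimate (Theorem \ref{thm:2block}) — and the fact that $N/l^3 + 1/\sigma\to 0$ by \eqref{eq:l(N)} — bounds the contribution by $A_{n,N}\normH[\psi]+B_{n,N}\norminf[\psi]$ with $\E[A_{n,N}],\E[B_{n,N}]\to 0$, along the lines of Lemma \ref{lem:typeY}. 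The neighbour-difference $\partial_p q(\uhat_{l,j+1})-\partial_p q(\uhat_{l,j})$ is bounded by $C|\nabla\uhat_{l,j}|$ by boundedness of $q''$, so paired with $\nabla\hat p_{l,j}$ it produces $|\nabla\uhat_{l,j}|^2$, which Theorem \ref{thm:2block} together with Lemma \ref{lem:typeZ} controls by $\norminf[\psi]B_{n,N}$ with $\E[B_{n,N}]\to 0$. The quadratic remainder $\varphi(t,\cdot)R_j$ is estimated in the same way using $|R_j|\le C|\nabla\uhat_{l,j}|^2$ and Theorem \ref{thm:2block}.

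The delicate point, and where I expect to spend the most care, is the exact decomposition of the mixed difference $\varphi(t,(j+\tfrac12)/N)\partial_p q(\uhat_{l,j}) - \bar\varphi_{j+1}\partial_p q(\uhat_{l,j+1})$ performed above: this is the unique splitting that routes each factor into one of the two available estimates (derivative of $\varphi$, controlled by $\normH[\psi]$ plus the two-block estimate; or increment of $\partial_p q$, controlled by $|\nabla\uhat|^2$ and the two-block estimate), and any other grouping would leave a cross term that the available a priori bounds cannot absorb. The residual task is the routine verification that the boundary indices $j=l$ and $j=N-l$ produced by the reindexing contribute nothing, which follows from $l/N\to 0$ and the compact support of $\phi$ strictly inside $(0,T)\times(0,1)$.
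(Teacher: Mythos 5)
Your proposal is correct and follows essentially the same route as the paper: exact evaluation of the integral term via piecewise constancy and Abel summation, a first-order expansion of $\nabla q(\uhat_{l,i})$ with a remainder controlled by $|\nabla\uhat_{l,i}|^2$, and then splitting the residual into pointwise-versus-average discrepancies of $\varphi$ (type $Y$, via the $H^1$ bound as in Lemma \ref{lem:typeY}) and increments of $\partial q$ paired with $\nabla\uhat$ (type $Z$, via the two-block estimate). The only cosmetic difference is that the paper isolates the combination $\nabla\hat p_{l,i}+\nabla^*\hat p_{l,i}=\Delta\hat p_{l,i}$ and reuses the summation-by-parts treatment of Lemma \ref{lem:symmetric}, whereas your reindexing $j=i-1$ performs that summation by parts implicitly; the resulting terms and estimates coincide.
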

\begin{proof}
As in the proof of Lemma \ref{lem:typeY}, we prove the statement for $\varphi$ smooth and compactly supported, and the general statement for $\varphi \in H^1\cap L^\infty$ will follow by approximating $\varphi$ with smooth and compactly supported functions.
\begin{equation}
\begin{gathered}
-\int_0^\infty \int_0^1 \partial_x\varphi(t,x) q(\uhat_N(t,x))dx dt 
\\
= - \int_0^\infty \sum_{i=l+1}^{N-l}\left( \int_0^1\partial_x \varphi(t,x) 1_{N,i}(x) dx\right) q(\uhat_{l,i})dt
\\
=-\int_0^\infty \sum_{i=l+1}^{N-l} \left( \varphi \left(t, \frac{i}{N}+\frac{1}{2N}\right)-\varphi \left(t, \frac{i}{N}-\frac{1}{2N}\right)\right) q(\uhat_{l,i})dt
\end{gathered}
\end{equation}
\begin{equation}
\begin{gathered}
= \int_0^\infty \sum_{i=l+1}^{N-l} \varphi \left(t, \frac{i}{N}-\frac{1}{2N}\right) \nabla q(\uhat_{l,i})dt 
\\
{= \int_0^\infty \sum_{i=l+1}^{N-l} \tilde \varphi_i \nabla q(\uhat_{l,i})dt}
\\
= \int_0^\infty \sum_{i=l+1}^{N-l} \tilde \varphi_i (\partial_rq(\utilde_{l,i}) \nabla \hat r_{l,i}+ \partial_pq(\utilde_{l,i}) \nabla \hat p_{l,i}),
\end{gathered}
\end{equation}
for some $\utilde_{l,i}$ {on the segment joining} $\uhat_{l,i}$ and $\uhat_{l,i+1}$ and where
\begin{equation}
\tilde \varphi_i(t) :=  \varphi \left(t, \frac{i}{N}-\frac{1}{2N}\right).
\end{equation}
Thus,
\begin{equation}
\begin{gathered}
 \mathcal N_N(\varphi,\eta) = \int_0^\infty \sum_{i=l+1}^{N-l} \tilde \varphi_i (\partial_rq(\utilde_{l,i}) \nabla \hat r_{l,i}+ \partial_pq(\utilde_{l,i}) \nabla \hat p_{l,i})dt +
\\
-\int_0^\infty \sum_{i=l+1}^{N-l} \bar \varphi_i \left(  \partial_rq(\uhat_{l,i})\nabla \hat r_{l,i}-\partial_pq(\uhat_{l,i}) \nabla^* \hat p_{l,i} \right) dt
\\
= \mathcal N_{r,N}(\varphi,\eta) + \mathcal N_{p,N}(\varphi,\eta) + \mathcal N_{1,N}(\varphi,\eta),
\end{gathered}
\end{equation}
where
\begin{equation} \label{eq:Nr}
\mathcal N_{r,N}(\varphi,\eta)= \int_0^\infty \sum_{i=l+1}^{N-l} (\tilde \varphi_i \partial_rq(\utilde_{l,i})- \bar \varphi_i \partial_rq(\uhat_{l,i}))\nabla \hat r_{l,i}dt,
\end{equation}
\begin{equation}\label{eq:Np}
\mathcal N_{p,N}(\varphi,\eta)=  \int_0^\infty \sum_{i=l+1}^{N-l} (\tilde \varphi_i \partial_pq(\utilde_{l,i}) - \bar \varphi_i \partial_pq(\uhat_{l,i}))\nabla \hat p_{l,i}dt,
\end{equation}
\begin{equation} \label{eq:N1}
\mathcal N_{1,N}(\varphi,\eta)= \int_0^\infty \sum_{i=l+1}^{N-l} \bar \varphi_i \partial_pq(\uhat_{l,i})
(\nabla\hat p_{l,i}+ \nabla^*\hat p_{l,i})dt.
\end{equation}
Since
\begin{equation}
\nabla \hat p_{l,i} + \nabla^* \hat p_{l,i} = \hat p_{l,i+1} + \hat
p_{l,i-1} -2 \hat p_{l,i} = \Delta \hat p_{l,i},
\end{equation}
the term $\mathcal N_{1,N}$ can be estimated exactly in the same way we estimated
\begin{equation}
\sigma \int_0^\infty \sum_{i=l+1}^{N-l} \bar \varphi_i \partial_p\eta(\hat
u_{l,i}) \Delta \hat V'_{l,i}dt
\end{equation}
in Lemma \ref{lem:symmetric}. The main difference here is that in
$\mathcal N_{1,N}$ does not have a factor $\sigma$.
Therefore we obtain
\begin{equation}
\mathcal N_{1,N} (\varphi,\eta) = Y_{n1,N}(\varphi,\eta) + Z_{n1,N}(\varphi,\eta),
\end{equation}
where
\begin{equation}
|Y_{n1,N}(\varphi,\eta)| \le \|\psi\|_{H^1} A_{n1,N}  , \quad |Z_{n1,N}(\varphi,\eta)| \le \|\psi\|_{H^1} B_{n1,N}  ,
\end{equation}
for some  $\mathbb R_+$-valued random variables $A_{n1,N}$ and $B_{n1,N}$ independent of $\psi$ and such that
\begin{equation}
\lim_{N \to\infty} \mathbb{E}[A_{n1,N}  ]= \lim_{N \to\infty} \mathbb{E}[B_{n1,N}  ]=0.
\end{equation}
We are left with estimating $\mathcal N_{r,N}$ and $\mathcal N_{p,N}$. We only evaluate $\mathcal N_{r,N}$, as $\mathcal N_{p,N}$ is dealt with in a similar way. From \eqref{eq:Nr}, we evaluate
\begin{equation}
\tilde \varphi_i \partial_rq(\utilde_{l,i})-\bar \varphi_i \partial_rq(\uhat_{l,i}) = (\tilde \varphi_i - \bar \varphi_i) \partial_rq(\uhat_{l,i})+ \tilde \varphi_i(\partial_rq(\utilde_{l,i})-\partial_rq(\uhat_{l,i})),
\end{equation}
which implies $\mathcal N_{r,N} = \mathcal N_{r1,N} + \mathcal N_{r2,N}$, where
\begin{equation} \label{eq:Nr1}
  \mathcal N_{r1,N}(\varphi,\eta)= \int_0^\infty \sum_{i=l+1}^{N-l} (\tilde \varphi_i -\bar \varphi_i) \partial_rq(\uhat_{l,i}) \nabla \hat r_{l,i}dt
 \end{equation} 
 and
 \begin{equation} \label{eq:Nr2}
  \mathcal N_{r2,N}(\varphi,\eta)=  \int_0^\infty \sum_{i=l+1}^{N-l} \tilde  \varphi_i(\partial_rq(\utilde_{l,i})-\partial_rq(\uhat_{l,i})) \nabla \hat r_{l,i}dt.
 \end{equation} 
Since $|\tilde \varphi_i| \le C \|\psi\|_{L^\infty}$, performing estimates
identical to the ones done in the proof of Lemma \ref{lem:symmetric}, we can write $\mathcal N_{r2,N} = Y_{r2,N}+Z_{r2,N}$, where
\begin{equation}
|Y_{r2,N}(\varphi,\eta)| \le \|\psi\|_{H^1} A_{r2,N}  , \quad |Z_{r2,N}(\varphi,\eta)| \le \|\psi\|_{L^\infty} A_{r2,N}  ,
\end{equation}
for some  $\mathbb R_+$-valued random variable $A_{r2,N}$ independent of $\psi$ and such that
\begin{equation}
\lim_{N\to\infty}\mathbb E [A_{r2,N}   ]=0.
\end{equation}
 {In estimating $\mathcal N_{r1,N}$, given by} \eqref{eq:Nr1} {, we evaluate $\tilde \varphi_i-\bar \varphi_i$ in the same fashion as} \eqref{eq:gradphi} {, obtaining}
\begin{equation}
 {\int_0^\infty \sum_{i=l+1}^{N-l} (\tilde \varphi_i - \bar \varphi_i)^2 dt \le \frac{1}{N} \normLdue[\partial_x\varphi].}
\end{equation}
Moreover, since
\begin{equation}
\lim_{N\to\infty}\E \left[ \frac{1}{N} \int_0^T \sum_{i=l+1}^{N-l}  (\hat r_{l,i+1}-\hat r_{l,i})^2 dt\right] = 0,
\end{equation}
for any $T>0$ and since the first derivatives of $q$ are bounded, we can follow the proof of Lemma \ref{lem:typeY} and obtain $\mathcal N_{r1,N} = Y_{r1,N} + Z_{r1,N}$, where
\begin{equation}
|Y_{r1,N}(\varphi,\eta)| \le \|\psi\|_{H^1} A_{r1,N}  , \quad |Z_{r1,N}(\varphi,\eta)| \le \|\psi\|_{L^\infty} A_{r1,N}  ,
\end{equation}
for some  $\mathbb R_+$-valued random variable $A_{r1,N}$ independent of $\psi$ and such that
\begin{equation}
\lim_{N\to\infty}\mathbb E [A_{r1,N}   ]=0.
\end{equation}
The proof is concluded once we write $\mathcal N_N =  Y_{n,N} +  Z_{n,N}$, with $ Y_{n,N} =  Y_{n1,N} +  Y_{r1,N} + Y_{r2,N}$ and $Z_{n,N} =  Z_{n1,N} +  Z_{r1,N} + Z_{r2,N}$

\end{proof}
\subsection{One and Two-Block Estimates} \label{sub:estimates}
\subsubsection{The Relative Entropy and the Dirichlet Forms}
Denote by $\lambda^N$ the Gibbs measure
\begin{equation}
\lambda^N(d{\bf r}, d{\bf p}):= \lambda^N_{1,0,0}(d{\bf r}, d{\bf p}) 
= \prod_{i=1}^N \exp\left(-\left(\frac{p_i^2}{2}+V(r_i) \right)-G(1,0)\right) dr_i\frac{dp_i}{\sqrt{2\pi}}.
\end{equation}
Denote by $\mu_t^N$ the probability measure, on $\R^{2N}$, of the system a time $t$. 
The density $f_t^N$ of $\mu_t^N$ with respect to $\lambda^N$ solves the Fokker-Plank equation
\begin{equation}\label{eq:ftN}
\frac{\partial f_t^N}{\partial t} = \left(\mathcal G_N^{\bar \tau(t)} \right)^* f_t^N.
\end{equation}
Here $\left(\mathcal G_N^{\bar \tau(t)} \right)^* = -N L_N^{\bar \tau(t)} + N\bar\tau(t) p_N
 +\sigma N(S_N+\tilde S_N)$ 
is the adjoint of $\mathcal G_N^{\bar \tau(t)}$ with respect to $\lambda^N$.

Recall the definition of the relative entropy given by \eqref{eq:HN}, and 
% \begin{equation} \label{eq:HN}
% H_N(f_t^N) :=\int_{\R^{2N}} f_t^N \log f_t^N d \lambda^N 
% \end{equation}
and the Dirichlet forms \eqref{eq:DN}.
% \begin{equation} \label{eq:DN}
% \mathcal D_N(f_t^N) := \sum_{i=1}^{N-1}\int_{\R^{2N}} \frac{1}{4f_t^N}  
% \left(\frac{\partial f_t^N}{\partial p_{i+1}}- \frac{\partial f_t^N}{\partial p_i}\right)^2d\lambda^N,
% \end{equation}
% \begin{equation} \label{eq:DNtil}
% \widetilde {\mathcal D}_N(f_t^N) :=\sum_{i=1}^{N-1} \int_{\R^{2N}} \frac{1}{4f_t^N}  
% \left(\frac{\partial f_t^N}{\partial r_{i+1}}- \frac{\partial f_t^N}{\partial r_i}\right)^2d\lambda^N.
% \end{equation}

\begin{theorem}
Assume there is a constant $C_0$ independent of $N$ such that $H_N(0) \le C_0 N$. 
Assume also the external tension $\bar \tau : \R \to \R$ is bounded with bounded derivative.

There exists $C(t)$ independent of $N$ such that
\begin{equation}
H_N(f_t^N) + \sigma \int_0^t {\mathcal D_N}(f_s^N)+\widetilde {\mathcal D}_N(f_s^N)ds \le C(t) N.
\end{equation}
\end{theorem}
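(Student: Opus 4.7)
The plan is to differentiate $H_N(f_t^N)$ along the Fokker--Planck equation and to use a time-dependent tilted reference measure to absorb the resulting boundary work. Using $\partial_t f_t^N = (\mathcal G_N^{\bar\tau(t)})^* f_t^N$ and the adjoint identity $\int(\mathcal G^*f)g\,d\lambda^N = \int f(\mathcal G g)\,d\lambda^N$ with $g = \log f_t^N$, the symmetric part $N\sigma(S_N + \tilde S_N)$ yields $-4N\sigma(\mathcal D_N(f_t^N) + \tilde{\mathcal D}_N(f_t^N))$ (via $\int f D_i^* D_i \log f\,d\lambda^N = \int(D_if)^2/f\,d\lambda^N$), while the Liouville part produces $N\int L_N^{\bar\tau(t)} f\,d\lambda^N = N\bar\tau(t)\E_{f_t^N}[p_N]$ (using antisymmetry of $L_N^0$ with respect to $\lambda^N$ and $(\partial_{p_N})^* 1 = p_N$). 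Hence
\begin{equation*}
\frac{d}{dt} H_N(f_t^N) = -4N\sigma\bigl(\mathcal D_N + \tilde{\mathcal D}_N\bigr) + N\bar\tau(t)\,\E_{f_t^N}[p_N].
\end{equation*}
The difficulty is that the entropy inequality only yields $|\E[p_N]| \lesssim \sqrt{H_N}$, so the boundary term would produce a Gronwall coefficient of order $N$, giving a useless bound $H_N \lesssim e^{cNt}$.

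To cancel this term, introduce $\phi_t^N(\mathbf r) := \exp\bigl(\bar\tau(t)\sum_i r_i - N[G(1,\bar\tau(t))-G(1,0)]\bigr)$ so that $\phi_t^N \lambda^N = \prod_i \lambda_{1,0,\bar\tau(t)}(dr_i, dp_i)$ is the product Gibbs measure at the current tension, and set $\hat H_N(t) := H_N(f_t^N) - \int f_t^N \log\phi_t^N\,d\lambda^N$. Two identities drive the cancellation: first, $L_N^{\bar\tau(t)} \log\phi_t^N = \bar\tau(t) \sum_i (p_i - p_{i-1}) = \bar\tau(t) p_N$ by telescoping with $p_0 = 0$; second, $S_N\log\phi_t^N = \tilde S_N\log\phi_t^N = 0$ since $\log\phi_t^N$ depends only on $\sum_i r_i$. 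Combined with $\partial_t \log\phi_t^N = \bar\tau'(t)\bigl(\sum_i r_i - N\rho(\bar\tau(t))\bigr)$, these give
\begin{equation*}
\frac{d}{dt}\hat H_N(t) = -4N\sigma\bigl(\mathcal D_N + \tilde{\mathcal D}_N\bigr) - \bar\tau'(t)\,\E_{f_t^N}\!\Bigl[\sum_{i=1}^N\bigl(r_i - \rho(\bar\tau(t))\bigr)\Bigr],
\end{equation*}
so the problematic $N\bar\tau\E[p_N]$ has cancelled exactly.

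The residual is linear in the centred $r_i$'s and is closed by applying the entropy inequality with respect to the tilted measure $\phi_t^N \lambda^N$: under this measure the $r_i$ are i.i.d.\ with mean $\rho(\bar\tau(t))$ and, by the uniform convexity \eqref{eq:Vconvex} and boundedness of $\bar\tau$, the log-MGF of the centred sum is bounded by $CN\alpha^2$ uniformly in $t$ for $\alpha$ in a fixed interval. Taking $\alpha = 1$ yields $|\E_{f_t^N}[\sum_i(r_i - \rho(\bar\tau(t)))]| \le \hat H_N(t) + CN$, hence
\begin{equation*}
\frac{d}{dt}\hat H_N + 4N\sigma(\mathcal D_N + \tilde{\mathcal D}_N) \le \|\bar\tau'\|_\infty \hat H_N + CN.
\end{equation*}
Gronwall, starting from $\hat H_N(0) \le C'N$ (a consequence of \eqref{eq:entbound} and the same entropy inequality at $t=0$), gives $\hat H_N(t) + 4N\sigma\int_0^t(\mathcal D_N + \tilde{\mathcal D}_N)\,ds \le C(t)\,N$. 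Converting back via $H_N(f_t^N) = \hat H_N(t) + \int f_t^N \log\phi_t^N\,d\lambda^N$ and bounding the latter integral by $C(t)N$ with the same estimate on $\E[\sum r_i]$ completes the proof. The crux is the choice of $\phi_t^N$: any other tilt would leave a non-vanishing boundary contribution that defeats Gronwall, and the key algebraic fact making this choice work is precisely $L_N^{\bar\tau}\log\phi_t^N = \bar\tau p_N$.
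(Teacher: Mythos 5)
Your proposal is correct: the differentiation of the entropy along the Fokker--Planck equation, the identification of the Dirichlet forms from the symmetric part, and the cancellation mechanism all check out (in particular $L_N^{\bar\tau(t)}\log\phi_t^N=\bar\tau(t)p_N$ by telescoping and $S_N\log\phi_t^N=\tilde S_N\log\phi_t^N=0$ are exactly right, the latter being the same ``nontrivial identity'' $\tilde S_N q_N=0$, $q_N=\sum_i r_i$, that the paper uses). The core mechanism is the same as the paper's --- trade the term $N\bar\tau(t)\,\E_{f_t^N}[p_N]$, which is fatal for Gr\"onwall, for a term of the form $\bar\tau'(t)\,\E_{f_t^N}[\sum_i r_i]$ plus controllable boundary-in-time contributions --- but the packaging differs. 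The paper stays with the plain entropy $H_N(f_t^N)$: it writes $N p_N=\mathcal G_N^{\bar\tau}q_N$, so that $N\int p_N f_s^N d\lambda^N=\frac{d}{ds}\int q_N f_s^N d\lambda^N$, integrates by parts in time, and then controls $\int |q_N| f_t^N d\lambda^N$ by the entropy inequality with respect to $\lambda^N$ (choosing $\alpha=2K_{\bar\tau}$ so the Gr\"onwall coefficient stays of order one). You instead differentiate the relative entropy $\hat H_N$ with respect to the time-dependent tilted product Gibbs measure at tension $\bar\tau(t)$, which performs the same integration by parts in differential form: the time-boundary terms are absorbed into the definition of $\hat H_N$, non-negativity of $\hat H_N$ (needed to recover the Dirichlet-form bound after Gr\"onwall) is automatic, and the residual $\bar\tau'$-term is closed by the entropy inequality with respect to the tilted measure, whose centred exponential moments are bounded uniformly in $t$ by uniform convexity of $V$ and boundedness of $\bar\tau$. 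Both routes need exactly the same exponential moment input and yield the same bound; yours is arguably a slightly cleaner, more systematic bookkeeping (in the spirit of the relative entropy method with a moving homogeneous reference profile), while the paper's version avoids introducing any auxiliary measure.
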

\begin{proof}
The statement will follow by a Gr\"onwall argument. We calculate
\begin{equation}
  \begin{split}
    \frac{d}{dt} H_N(f_t^N) = \int (\partial_t f_t^N) \log f_t^N d
    \lambda^N + \int \partial_t f_t^N d \lambda^N \\
    = \int (\partial_t f_t^N) \log f_t^N d \lambda^N + \int \partial_t f_t^N d \lambda^N.
  \end{split}
\end{equation}
By \eqref{eq:ftN}:
\begin{equation} \label{eq:HNDN0}
\begin{gathered}
\frac{d}{dt} H_N(f_t^N)  =  {\int f_t^N \mathcal G_N^{\bar \tau(t)} \log f_t^N d \lambda^N}
\\
= N\int f_t^N L_N^{\bar \tau(t)} \log f_t^N d \lambda^N + N \sigma \int f_t^N S_N \log f_t^N d \lambda^N + N \sigma \int f_t^N \tilde S_N \log f_t^N d \lambda^N.
\end{gathered}
\end{equation}
We have
\begin{equation}
\int f_t^N L_N^{\bar \tau(t)} \log f_t^N d \lambda^N = \int L_N^{\bar \tau(t)} f_t^N d \lambda^N = N \bar \tau(t) \int p_N f_t^N d\lambda^N.
\end{equation}
We estimate the second term in \eqref{eq:HNDN0} (the third term will be analogous).
\begin{equation}
\begin{gathered}
\int f_t^N S_N \log f_t^N d \lambda^N = - \sum_{i=1}^{N-1} \int f_t  D^*_i D_i \log f_t^N d \lambda^N
\\
= - \sum_{i=1}^{N-1} \int (D_i f_t^N) (D_i \log f_t^N) d \lambda^N = - \sum_{i=i}^{N-1} \int \frac{(D_i f_t^N)^2}{f_t^N} d \lambda^N= - 4\mathcal D_N(f_t^N).
\end{gathered}
\end{equation}
Putting everything together, we obtain
\begin{equation}
\frac{d}{dt}H_N(f_t^N)  = N \bar \tau(t) \int p_N f_t^N d \lambda^N - 4N \sigma (\mathcal D_N(f_t^N) + \widetilde {\mathcal D}_N(f_t^N))
\end{equation}
which, after a time integration, becomes,
\begin{equation}
\begin{gathered}
H_N(f_t^N) = H_N(f_0^N) +  N\int_0^t ds \bar \tau(s) \int p_N f_s^N d \lambda^N  -  4N \sigma \int_0^t (\mathcal D_N(f_s^N)+\widetilde {\mathcal D}_N(f_s^N))ds.
\end{gathered}
\end{equation}
We estimate the term involving $p_N$.
\begin{equation}
\begin{gathered}
N \int p_N f_s^N d \lambda^N = N \int (L_N^{\bar \tau(t)} q_N )f_s^N d \lambda^N =  \int (G_N^{\bar \tau(s)}q_N) f_s^N d \lambda^N = \int q_N \partial_s f_s^N d\lambda^N 
\end{gathered}
\end{equation}
where we have used the nontrivial identity $\tilde S_N q_N = 0$. Hence we get
\begin{equation}
\begin{gathered}
N\int_0^t ds \bar \tau(s) \int p_Nf_s^N d \lambda^N   =\bar \tau(t) \int q_N f_t^N d \lambda^N- \bar \tau(0) \int q_N f_0^N d \lambda^N - \int_0^t ds {\bar \tau}'(s) \int q_N f_s^N d \lambda^N
\end{gathered}
\end{equation}
By the entropy inequality and for any $\alpha >0$,
\begin{equation}
\begin{gathered}
 \int |q_N| f_t^N d \lambda^N \le\frac{1}{\alpha} H_N(f_t^N) + \log \int e^{\alpha |q_N|} d \lambda^N  \le \frac{1}{\alpha } H_N(f_t^N) + \log \int \prod_{i=1}^N   e^{\alpha |r_i|} d \lambda^N
\\
= \frac{1}{\alpha} H_N(f_t^N) + N \log \int_{-\infty}^{\infty} e^{\alpha |r_1| - V(r_1)} d r_1 = \frac{1}{\alpha} H_N(f_t^N) + C(\alpha) N,
\end{gathered}
\end{equation}
with $C(\alpha)$ is independent of $N$. Therefore,
\begin{equation}
\begin{gathered}
N \left| \int_0^t ds \bar \tau(s) \int p_N f_s^N d \lambda^N \right| \le \frac{K_{\bar \tau}}{\alpha} \left( H_N(f_t^N) + H_N(f_0^N) + \int_0^t H_N(f_s^N) ds \right) + N(2+t) K_{\bar \tau} C(\alpha),
\end{gathered}
\end{equation}
where $K_{\bar \tau} = \sup_{t \ge 0}\{|\bar \tau(t)|+|\bar \tau'(t)|\}$. Thus, choosing $\alpha = 2K_{\bar \tau}$,
\begin{equation} \label{eq:HNstima}
H_N(f_t^N) \le 3 H_N(f_0^N) + \int_0^t H_N(f_s^N) ds + C'N -8 N \sigma \int_0^t(\mathcal D_N(f_s^N) + \widetilde {\mathcal D}_N(f_s^N))ds,
\end{equation}
where $C'$ does not depend on $N$. Since $\mathcal D_N$ and $\widetilde {\mathcal D}_N$ are non-negative and since $H_N(f_0^N) \le C_0N$, by Gr\"onwall's inequality we obtain
\begin{equation}
H_N(f_t^N) \le C'' e^t N,
\end{equation}
for some $C''$ independent of $N$. Using this, equation \eqref{eq:HNstima} becomes
\begin{equation}
N \sigma \int_0^t (\mathcal D_N(f_s^N) + \widetilde {\mathcal D}_N(f_s^N))ds \le C'''(t) N,
\end{equation}
for some $C'''(t)$ independent of $N$, which completes the proof.
\end{proof}
In order to obtain an explicit bound on the one-block estimate, we make use a logarithmic Sobolev inequality.

For $1 \le m \le i \le N$, denote by $\barmi[\mu]\in \mathcal M_1(\R^{2m})$ 
the projection of the probability measure $\mu_t^N$
on $\{r_{i-m+1}, p_{i-m+1}, \dots, r_i, p_i\}$ conditioned to
\begin{equation}\label{eq:locav}
\frac{1}{m}\sum_{j=0}^{m-1}r_{i-j}=\rho,\quad  \frac{1}{m}\sum_{j=0}^{m-1}p_{i-j}=\bar p.
\end{equation}
Denote also by $\barmi[\lambda]$ the measure analogously obtained from $\lambda^N$. Since the potential $V$ is  {uniformly convex}, the density of the measure $\lambda^N$ is log-concave. The same applies to the conditional measures $\bar \lambda^{\rho,\bar p}_{m,i}$. Thus,  {the Bakry-Emery criterion applies} and we have the following {logarithmic} Sobolev inequality (LSI):
\begin{equation}\label{eq:logSob}
\int g^2 \log g^2 \ d \barmi[\lambda] \le C_{lsi} m^2 \sum_{j=1}^{m-1} 
\int\left[\left(D_{i-j} g\right)^2 + \left(\tilde D_{i-j} g\right)^2 \right] d \barmi[\lambda],
\end{equation}
if  $g^2$ is a smooth probability density on  $\R^{2m} $  {(with respect to $ \barmi[\lambda] $) }
and $C_{lsi}$ is a universal constant depending on the interaction $V$ only. 
A straightforward consequence of the LSI is the following lemma.
\begin{lem} \label{lem:LSI}
Let $\barmi[f]$ be the density of $\barmi[\mu]$ with respect to $\barmi[\lambda]$. Then
\begin{equation} \label{eq:LSIutile}
\sum_{i=m}^N \int \barmi[f] \log \barmi[f] d \barmi[\lambda] \le m^3 C_{lsi} (\mathcal D_N(f_t^N)+\widetilde {\mathcal D}_N(f_t^N)).
\end{equation}
\end{lem}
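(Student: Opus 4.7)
The plan is to apply the logarithmic Sobolev inequality \eqref{eq:logSob} to each conditional density $\barmi[f]$ and then relate the conditional Dirichlet forms to the full Dirichlet forms $\mathcal D_N(f_t^N)$ and $\widetilde{\mathcal D}_N(f_t^N)$ by a convexity (Jensen) argument, before summing over $i$.

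First, for each $i$ with $m \le i \le N$ and each admissible pair $(\rho,\bar p)$, apply \eqref{eq:logSob} with $g^2 = \barmi[f]$. Since $\barmi[f]$ is a probability density with respect to $\barmi[\lambda]$, this yields
\begin{equation*}
\int \barmi[f] \log \barmi[f] \; d\barmi[\lambda] \;\le\; C_{lsi}\, m^2 \sum_{j=1}^{m-1}
\int \left[ \frac{(D_{i-j} \barmi[f])^2}{4\,\barmi[f]} + \frac{(\tilde D_{i-j} \barmi[f])^2}{4\,\barmi[f]} \right] d\barmi[\lambda].
\end{equation*}
The key point is that the operators $D_{k}$ and $\tilde D_k$ with $i-m+1 \le k \le i-1$ are tangential to the hyperplanes defined by the conservation constraints in \eqref{eq:locav}, so they act consistently on the conditional problem and the full one.

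Second, because $\barmi[f]$ is obtained from $f_t^N$ by conditioning on the averages in \eqref{eq:locav}, the convexity of the map $(u,v)\mapsto u^2/v$ combined with Jensen's inequality gives, after integrating against the marginal law of $(\rho,\bar p)$,
\begin{equation*}
\int \frac{(D_k \barmi[f])^2}{\barmi[f]} \; d\barmi[\lambda] \;\le\; \int \frac{(D_k f_t^N)^2}{f_t^N} \; d\lambda^N,
\end{equation*}
and the same bound with $\tilde D_k$. This is the standard reduction that lets one estimate a conditional Dirichlet form by the corresponding ambient one.

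Finally, I would sum over $i$ from $m$ to $N$ and swap the order of summation. A given edge index $k \in \{1,\dots,N-1\}$ belongs to the block $\{i-m+1,\dots,i-1\}$ for at most $m$ distinct values of $i$. Therefore
\begin{equation*}
\sum_{i=m}^N \sum_{j=1}^{m-1} \int \frac{(D_{i-j} f_t^N)^2}{f_t^N} d\lambda^N \;\le\; m \sum_{k=1}^{N-1} \int \frac{(D_k f_t^N)^2}{f_t^N} d\lambda^N \;=\; 4m\, \mathcal D_N(f_t^N),
\end{equation*}
and analogously for the $\tilde D_k$ contribution. Combining the $m^2$ from the LSI with this extra factor of $m$ yields the claimed $m^3 C_{lsi}(\mathcal D_N(f_t^N)+\widetilde{\mathcal D}_N(f_t^N))$ bound. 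No step is particularly delicate; the only bookkeeping subtlety is verifying that the tangential operators $D_k,\tilde D_k$ preserve the conditioning on the block sums, which is immediate from their definition in \eqref{eq:DD*}.
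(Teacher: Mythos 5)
Your proof is correct and follows essentially the same route as the paper: apply the LSI \eqref{eq:logSob} with $g^2 = \barmi[f]$, pass from the conditional Dirichlet forms to the ambient ones via Jensen (convexity of $(u,v)\mapsto u^2/v$, using that the $D_{i-j},\tilde D_{i-j}$ are tangential to the constraints \eqref{eq:locav}), and then count that each edge index occurs at most $m$ times in the sum over $i$. The paper's proof is just a terser version of the same argument, and your constant bookkeeping (the factors of $1/4$ and the resulting $m^3$) is consistent with it.
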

\begin{proof}
% From \eqref{eq:locav} we can write
% \begin{equation}
% r_i = m \rho - \sum_{j=1}^{m-1} r_{i-j}, \quad p_i = m \pi - \sum_{j=1}^{m-1}p_{i-j},
% \end{equation}
% so that we regard $x_j = r_{i-j}$ for $1\le j \le m-1$ as independent variables. Moreover,
% \begin{equation}
% \frac{\partial r_i}{\partial x_j} = -1.
% \end{equation}
% A change of variables and the Cauchy-Schwarz inequality yields, for any smooth $g$,
% \begin{equation} \label{eq:poinfas}
% \begin{gathered}
% \sum_{j=1}^{m-1}\left(\frac{\partial g}{\partial x_j}\right)^2 = \sum_{j=1}^{m-1}\left(\frac{\partial g}{\partial r_{i-j}}-\frac{\partial g}{\partial r_i} \right)^2 \le m^2 \sum_{j=1}^{m-1}\left(\frac{\partial g}{\partial r_{i-j}}-\frac{\partial g}{\partial r_{i-j+1}} \right)^2
% \\
% = m^2 \sum_{j=1}^{m-1} (\tilde D_{i-j}g)^2.
% \end{gathered}
% \end{equation}

% A similar result holds if we replace $r$ by $p$.
Choosing $g^2 =\barmi[f]$ in \eqref{eq:logSob}, and using Jensen inequality we obtain
  % \eqref{eq:LSIutile} together with \eqref{eq:poinfas} 
\begin{equation}
\begin{gathered}
\sum_{i=m}^N \int \barmi[f] \log \barmi[f] d \barmi[\lambda] \le m^2 C_{lsi} \sum_{i=m}^N \sum_{j=1}^{m-1}\int  \frac{1}{4f_t^N} \left( \left(D_{i-j} f_t^N\right)^2 + \left(\tilde D_{i-j} f_t^N\right)^2 \right) d \lambda^N.
\end{gathered}
\end{equation}
The last step is noting that, when summing over $i$, any of the terms $\left(D_{i-j} f_t^N\right)^2$ or $\left(\tilde D_{i-j} f_t^N\right)^2$ appear at most $m$ times. This gives the extra factor $m$ and re-constructs the Dirichlet forms:
\begin{equation}
\begin{gathered}
\sum_{i=m}^N \int \barmi[f] \log \barmi[f] d \barmi[\lambda]\le m^3 C_{lsi} \sum_{i=1}^{N-1} \int  \frac{1}{4f_t^N} \left( \left(D_i f_t^N\right)^2 + \left(\tilde D_i f_t^N\right)^2 \right) d \lambda^N
\\
=m^3 C_{lsi} \left(\mathcal D_N(f_t^N)+\widetilde {\mathcal D}_N(f_t^N)\right).
\end{gathered}
\end{equation}
\end{proof}
\subsubsection{Block Estimates}
In this section we prove the three main estimate we have used in proving our main result: the energy, one-block and two-block estimates. In what follows the expectations $\E$ at time $t$ shall be evaluated in terms of integrals with respect to the measure $\mu_t^N$.
\begin{lem}[ {Energy estimate}] \label{lem:enestimate}
 {There  exists $C'_e(t)$ independent of $N$ such that}
\begin{equation} \label{eq:enestimate}
 {\sum_{i=1}^N \int \left(\frac{p_i^2}{2}+V(r_i)\right) d \mu_t^N \le C'_e(t) N.}
\end{equation}
\end{lem}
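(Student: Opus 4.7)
The natural strategy is to obtain \eqref{eq:enestimate} as a direct consequence of the relative entropy bound $H_N(f_t^N) \le C(t) N$ just established, via the classical entropy inequality. That is, for any $\alpha > 0$ and any measurable $U \ge 0$,
\begin{equation*}
\int U f_t^N \, d\lambda^N \;\le\; \frac{1}{\alpha} H_N(f_t^N) \;+\; \frac{1}{\alpha} \log \int e^{\alpha U} \, d\lambda^N.
\end{equation*}
I will apply this with $U = \sum_{i=1}^N \bigl(\tfrac{p_i^2}{2} + V(r_i)\bigr)$.

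Since $\lambda^N$ is a product measure with one-particle marginal $\lambda_{1,0,0}$, we have
\begin{equation*}
\int e^{\alpha U} \, d\lambda^N \;=\; \left( \int_{\mathbb R^2} \exp\Bigl(-(1-\alpha)\bigl(\tfrac{p^2}{2} + V(r)\bigr) - G(1,0)\Bigr) \, dr \, \tfrac{dp}{\sqrt{2\pi}} \right)^{N}.
\end{equation*}
For any $\alpha \in (0,1)$, this single-particle integral is finite: the momentum integral is Gaussian with positive variance, and the lower bound $V''(r) \ge c_1 > 0$ from \eqref{eq:Vconvex} forces $V(r) \ge \tfrac{c_1}{2}(r - r_*)^2 - C_*$ for suitable constants, so the $r$-integral converges. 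Consequently
\begin{equation*}
\log \int e^{\alpha U} \, d\lambda^N \;\le\; C(\alpha)\, N
\end{equation*}
for some constant $C(\alpha)$ independent of $N$.

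Combining this with the relative-entropy estimate $H_N(f_t^N) \le C(t) N$ (fixing any $\alpha \in (0,1)$, e.g.\ $\alpha = 1/2$), we get
\begin{equation*}
\E\!\left[\sum_{i=1}^N \Bigl(\tfrac{p_i^2}{2} + V(r_i)\Bigr)\right] \;=\; \int U f_t^N \, d\lambda^N \;\le\; \frac{1}{\alpha}\bigl(C(t) + C(\alpha)\bigr)\, N \;=:\; C'_e(t)\, N,
\end{equation*}
which is the desired bound.

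The only mild subtlety is verifying finiteness of the tilted single-particle integral, which is where the quadratic lower bound on $V$ (coming from the uniform convexity assumption \eqref{eq:Vconvex}) is essential. No further dynamical input is needed beyond the entropy bound already proven, which in turn encodes the effect of the boundary tension $\bar\tau(t)$ through the constant $C(t)$.
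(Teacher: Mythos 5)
Your proposal is correct and follows essentially the same route as the paper: the entropy inequality applied to $U=\sum_i\bigl(\tfrac{p_i^2}{2}+V(r_i)\bigr)$, the bound $H_N(f_t^N)\le C(t)N$, and factorization of the tilted integral over the product measure $\lambda^N$, with convergence guaranteed by the quadratic growth of $V$. The only (immaterial) difference is the admissible range of $\alpha$: the paper's written exponent forces $\alpha<1/2$, whereas your computation correctly allows any $\alpha\in(0,1)$; either choice yields the claimed bound.
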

\begin{proof}
Let $\alpha >0$. By the entropy inequality we have
\begin{equation}
\begin{gathered}
\alpha\int \sum_{i=1}^N \left(\frac{p_i^2}{2}+V(r_i)\right) d \mu_t^N \le H_N(f_t^N) +  \log \int \exp \left(\alpha\sum_{i=1}^N\left( \frac{p_i^2}{2} +  V(r_i)\right)\right) d \lambda^N,
\\
{  =H_N(f_t^N) +  N\log \int \exp \left(\left(\alpha-\frac{1}{2}\right)p_i^2 +\left(\alpha-1\right)  V(r)-G(1,0)\right) d r \frac{dp}{\sqrt{2\pi}}}
\end{gathered}
\end{equation}
Since the integral at the right-hand side is convergent for $\alpha < 1/2$ and $H(f_t^N) \le C(t)N$, we have obtained, after fixing $\alpha$,
\begin{equation}
 \sum_{i=1}^N \int \left(\frac{p_i^2}{2}+V(r_i)\right) d \mu_t^N \le C'_e(t) N,
\end{equation}
for some $C'_e(t)$ independent of $N$.
\end{proof}
\begin{cor}
{There exists $C_e(t)$ independent of $N$ such that}
\begin{equation} 
\sum_{i=1}^N \int \left(p_i^2+r_i^2\right) d \mu_t^N \le C_e(t) N.
\end{equation}
\end{cor}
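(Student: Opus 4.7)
The plan is to deduce this corollary directly from Lemma \ref{lem:enestimate} by comparing $r^2$ with $V(r)$ using the uniform convexity assumption \eqref{eq:Vconvex}. The momentum term is handled trivially since $p_i^2 = 2 \cdot (p_i^2/2)$, so $\int \sum_{i=1}^N p_i^2 \, d\mu_t^N \le 2 C'_e(t) N$ immediately. The only work is to produce an analogous bound for $\sum_i r_i^2$.

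For the $r_i^2$ term, I would start from the Taylor expansion with integral remainder: since $V''(r) \ge c_1 > 0$, for every $r \in \mathbb{R}$
\begin{equation*}
V(r) = V(0) + V'(0) r + \int_0^r (r-s) V''(s) \, ds \ge V(0) + V'(0) r + \frac{c_1}{2} r^2.
\end{equation*}
Applying Young's inequality to absorb the linear term, $|V'(0) r| \le \frac{c_1}{4} r^2 + \frac{(V'(0))^2}{c_1}$, I would get
\begin{equation*}
V(r) \ge \frac{c_1}{4} r^2 - K, \qquad K := \frac{(V'(0))^2}{c_1} - V(0),
\end{equation*}
so that $r^2 \le \frac{4}{c_1}\left(V(r) + K\right)$ pointwise.

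Summing over $i$ and integrating against $\mu_t^N$, combined with Lemma \ref{lem:enestimate}, yields
\begin{equation*}
\sum_{i=1}^N \int r_i^2 \, d\mu_t^N \le \frac{4}{c_1}\left( \sum_{i=1}^N \int V(r_i) \, d\mu_t^N + K N \right) \le \frac{8 C'_e(t)}{c_1} N + \frac{4K}{c_1} N.
\end{equation*}
Adding this to the momentum bound gives the claim with $C_e(t) := 2 C'_e(t) + \frac{8 C'_e(t)}{c_1} + \frac{4K}{c_1}$, which is independent of $N$ as required. There is no real obstacle: the statement is a straightforward consequence of uniform convexity of $V$ combined with the energy estimate already established.
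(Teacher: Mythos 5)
Your argument is correct and follows the same route as the paper, which simply notes that the bound ``easily follows'' from Lemma \ref{lem:enestimate} together with the uniform convexity $V''\ge c_1>0$; your Taylor-plus-Young computation is exactly the quadratic lower bound $V(r)\ge \frac{c_1}{4}r^2-K$ that this remark leaves implicit, and the momentum term is handled identically. Nothing further is needed.
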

\begin{proof}
It easily follows from Lemma \ref{lem:enestimate} and the fact that $V''(r) \ge c_1$, for some $c_1 >0$ and large enough $r$.
\end{proof}
We denote, for $1 \le l \le i \le N$,
\begin{equation}\label{eq:bars}
\bar r_{l,i}:= \frac{1}{l}\sum_{j=0}^{l-1}r_{i-j}, \quad \bar p_{l,i}:= \frac{1}{l}\sum_{j=0}^{l-1}p_{i-j}, \quad \bar V'_{l,i}:= \frac{1}{l}\sum_{j=0}^{l-1}V'(r_{i-j}).
\end{equation}
\begin{lem}[One-block estimate] \label{lem:oneblock}
There exists $l_0 \in \mathbb N$ and $C_1'(t)$ independent of $N$ such that
\begin{equation} \label{eq:oneblock}
\sum_{i=l}^N \int_0^t \int \left( \bar V'_{l,i} - \tau \left(\bar r_{l,i} \right) \right)^2 d \mu_s^N ds \le C_1'(t) \left( \frac{N}{l} + \frac{l^2}{\sigma} \right),
\end{equation}
whenever $N\ge l > l_0$.
\end{lem}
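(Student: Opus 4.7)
The plan is to carry out the classical Guo-Papanicolaou-Varadhan conditional decomposition, using the logarithmic Sobolev inequality (Lemma \ref{lem:LSI}) to dominate the conditional entropies by the total Dirichlet form. Fix $s\in[0,t]$ and $i\in\{l,\dots,N\}$. I disintegrate $\mu_s^N$ on the block $\{i-l+1,\dots,i\}$ according to the conditioning $(\bar r_{l,i},\bar p_{l,i})=(\rho,\bar p)$, obtaining the conditional densities $\barli[f]$ relative to $\barli[\lambda]$. The key quantity to bound is
\[
\int(\bar V'_{l,i}-\tau(\bar r_{l,i}))^2\,d\mu_s^N=\int\!\!\int(\bar V'_{l,i}-\tau(\rho))^2\,\barli[f]\,d\barli[\lambda]\,d\tilde\mu_s^N(\rho,\bar p),
\]
with $\tilde\mu_s^N$ the marginal of $\mu_s^N$ in $(\rho,\bar p)$.

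The heart of the argument is the entropy inequality applied conditionally: for every $\alpha>0$ and every $(\rho,\bar p)$,
\[
\int(\bar V'_{l,i}-\tau(\rho))^2\,\barli[f]\,d\barli[\lambda]\le\frac{1}{\alpha}\Bigl[H(\barli[f]\,|\,\barli[\lambda])+\log\int e^{\alpha(\bar V'_{l,i}-\tau(\rho))^2}\,d\barli[\lambda]\Bigr].
\]
For the MGF term I rely on equivalence of ensembles on the level set $\{\bar r_{l,i}=\rho,\bar p_{l,i}=\bar p\}$. Since $V$ is uniformly convex \eqref{eq:Vconvex}, the conditional measure $\barli[\lambda]$ is log-concave; Brascamp-Lieb/Bakry-Emery type concentration, together with the identification $\E_{\barli[\lambda]}[\bar V'_{l,i}]=\tau(\rho)+O(1/l)$ from equivalence of ensembles, yield that $\bar V'_{l,i}-\tau(\rho)$ is sub-Gaussian at scale $O(1/\sqrt l)$ uniformly in $(\rho,\bar p)$. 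This gives
\[
\log\int e^{\alpha(\bar V'_{l,i}-\tau(\rho))^2}\,d\barli[\lambda]\le\frac{C\alpha}{l}
\]
for all $\alpha\le cl$ with $c$ small enough, and the uniformity in $(\rho,\bar p)$ is the step I expect to require the most care, since I need to control the conditional Gaussian constants away from degeneracies at large $\rho$; the energy estimate (Lemma \ref{lem:enestimate}) ensures the marginal $\tilde\mu_s^N$ puts negligible mass in the bad region.

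With the choice $\alpha=cl$, averaging in $(\rho,\bar p)$, summing over $i=l,\dots,N$ and invoking Lemma \ref{lem:LSI} gives
\[
\sum_{i=l}^N\int(\bar V'_{l,i}-\tau(\bar r_{l,i}))^2\,d\mu_s^N\le\frac{C_{lsi}l^{2}}{c}\bigl(\mathcal D_N(f_s^N)+\widetilde{\mathcal D}_N(f_s^N)\bigr)+\frac{CN}{l}.
\]
Integrating in $s\in[0,t]$ and using the entropy-Dirichlet bound proved just above, in the form $\int_0^t(\mathcal D_N+\widetilde{\mathcal D}_N)(f_s^N)\,ds\le C(t)/\sigma$ that follows from the identity $\tfrac{d}{dt}H_N=N\bar\tau(t)\!\int p_N f_t^N d\lambda^N-4N\sigma(\mathcal D_N+\widetilde{\mathcal D}_N)$, delivers
\[
\sum_{i=l}^N\int_0^t\!\!\int(\bar V'_{l,i}-\tau(\bar r_{l,i}))^2\,d\mu_s^N\,ds\le C_1'(t)\Bigl(\frac{N}{l}+\frac{l^2}{\sigma}\Bigr),
\]
which is the desired estimate. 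The choice $\alpha=cl$ is precisely what converts the $l^3$ in Lemma \ref{lem:LSI} into the $l^2$ appearing in the final bound, and the combination of log-concavity of the canonical conditional with equivalence of ensembles is the mechanism that allows $\alpha$ to be taken proportional to $l$ at all.
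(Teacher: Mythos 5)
Your skeleton coincides with the paper's proof of Lemma \ref{lem:oneblock}: the conditional entropy inequality against $\barli[\lambda]$ block by block, Lemma \ref{lem:LSI} to turn the summed conditional entropies into $l^3C_{lsi}(\mathcal D_N+\widetilde{\mathcal D}_N)$, the choice $\alpha\sim l$, and the time-integrated Dirichlet bound $\int_0^t(\mathcal D_N+\widetilde{\mathcal D}_N)(f_s^N)\,ds\le C(t)/\sigma$ are exactly the ingredients that produce the right-hand side of \eqref{eq:oneblock}. The gap is in the one step that carries the real content: the uniform bound $\log\int e^{\alpha(\bar V'_{l,i}-\tau(\rho))^2}\,d\barli[\lambda]\le C\alpha/l$ for $\alpha\le cl$. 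You assert it from Bakry--Emery/Brascamp--Lieb concentration plus the identification $\E_{\barli[\lambda]}[\bar V'_{l,i}]=\tau(\rho)+O(1/l)$ ``uniformly in $(\rho,\bar p)$''. The fluctuation part is indeed uniform (the conditioned measure is uniformly log-concave with constant $c_1$, and $\bar V'_{l,i}$ is $c_2/\sqrt l$-Lipschitz), but the centering part is a quantitative equivalence-of-ensembles expansion whose uniformity in $\rho$ you neither prove nor correctly replace. Your proposed patch --- discard a bad region of large $\rho$ via the energy estimate --- does not fit the structure of the argument: inside the conditional entropy inequality you need the exponential-moment bound for (almost) every value of the conditioning, and if instead you try to excise a bad set directly in $\int(\bar V'_{l,i}-\tau(\bar r_{l,i}))^2\,d\mu_s^N$, the integrand is quadratic and the entropy/energy bounds give no uniform integrability for quadratic functionals (this is precisely the limitation that forces Assumption A later in the paper).

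The paper closes exactly this point by a different mechanism that avoids any uniformity issue: the equivalence-of-ensembles comparison of exponential moments \eqref{eq:grannum}, $\int e^{\varphi}\,d\barli[\lambda]\le C'\int e^{\varphi}\,d\lambda^N_{1,\bar p,\tau(\rho)}$ for $l>l_0$, followed by a Gaussian (Hubbard--Stratonovich) linearization $e^{\alpha(\bar V'_{l,i}-\tau(\rho))^2}=\E_\xi\bigl[e^{\xi\sqrt{2\alpha}(\bar V'_{l,i}-\tau(\rho))}\bigr]$ and the one-site bound of Lemma \ref{lem:1blockapp} (which uses only $V''\le c_2$); the $\tau(\rho)$-dependent terms cancel identically, leaving $\E_\xi[e^{c_2\alpha\xi^2/l}]$, finite and independent of $(\rho,\bar p)$ once $\alpha=l/(4c_2)$, so no large-$\rho$ degeneracy ever appears. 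Your concentration route can be completed, but then you must actually supply a uniform expansion $\E_{\barli[\lambda]}[\bar V'_{l,i}]-\tau(\rho)=O(1/l)$ (a uniform local CLT/Edgeworth-type argument, available here because $c_2^{-1}\le G''\le c_1^{-1}$ and $G''',G^{(iv)}$ are bounded, cf.\ Appendix \ref{app:tension}) and drop the energy-estimate patch, which as written would not close the estimate.
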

\begin{proof}
Fix $\alpha >0$. By the entropy inequality and Lemma \ref{lem:LSI}:
\begin{equation}
\begin{gathered}
\sum_{i=l}^N\alpha  \int_0^t \int \left( \bar V'_{l,i} - \tau \left(\bar r_{l,i} \right) \right)^2 d \mu_s^N 
 \\
\le l^3 C_{lsi}\int_0^t (\mathcal D_N(s)+\widetilde {\mathcal D}_N(s)) d s 
 +t \sum_{i=l}^N  \log \int \exp\left(\alpha \left( \bar V'_{l,i}- \tau(\bar r_{l,i})\right)^2\right) d \barli[\lambda]
 \\
{ \le   C(t)\frac{l^3}{\sigma}
 +t \sum_{i=l}^N  \log \int \exp\left(\alpha \left( \bar V'_{l,i}- \tau(\rho)\right)^2\right) d \barli[\lambda],}
\end{gathered}
\end{equation}
where we have used the bound on the time integral of the Dirichlet form and 
the fact that $\bar r_{l,i}=\rho$ when integrating with respect to $\barli[\lambda]$. 
It is a standard result (cf \cite{Landim:2002p9150}, corollary 5.5).
%It is a standard result (cf} \cite{Guo:1988p18645} {)} 
that there exists a universal constant $C'$ and $l_0$ depending on $V$ only such that
\begin{equation}\label{eq:grannum}
\int e^\varphi d \barli[\lambda] \le C' \int e^\varphi d \lambda^N_{1, \bar p, \tau(\rho)}
\end{equation}
for any  {integrable} function $\varphi$ and whenever $l >l_0$. Hence, we obtain
\begin{equation}
\begin{gathered}
\sum_{i=l}^N\alpha  \int_0^t \int \left( \bar V'_{l,i} - \tau \left(\bar r_{l,i} \right) \right)^2 d \mu_s^Nds \le t \sum_{i=l}^N  \log \int C' \exp\left(\alpha \left( \bar V'_{l,i}- \tau(\rho)\right)^2\right) d  \lambda^N_{1,\bar p,\tau(\rho)} + C(t) \frac{l^3}{\sigma},
\end{gathered}
\end{equation}
We are left to estimate the expectation with respect to $\lambda^N_{\beta, \bar p,\tau(\rho)}$. In order to do so we introduce a normally distributed random variable $\xi$ and write
\begin{equation}
\begin{gathered}
 \int \exp\left(\alpha \left( \bar V'_{l,i}- \tau(\rho)\right)^2\right) d \lambda^N_{1, \bar p,\tau(\rho)}= \mathbb E_\xi \left[ \int \exp \left( \xi \sqrt{2 \alpha}\left(\bar V'_{l,i}-\tau(\rho) \right) \right) d \lambda^N_{1, \bar p,\tau(\rho)} \right] 
\\
= \mathbb E_\xi \left[ e^{-\tau(\rho) \xi \sqrt{2\alpha}} \left(\int \exp \left( \frac{\xi \sqrt{2\alpha}}{l}V'(r_1) \right) d \lambda^N_{1, \bar p,\tau(\rho)} \right)^l \right],
\end{gathered}
\end{equation}
Since, by Lemma \ref{lem:1blockapp},
\begin{equation}
 \int \exp \left( \frac{\xi \sqrt{2\alpha}}{l} V'(r_1) \right) d \lambda^N_{\beta, \bar p,\tau(\rho)} \le \exp \left( \frac{c_2\alpha}{l^2}\xi^2+ \frac{\tau(\rho) \sqrt{2\alpha}}{l}\xi\right),
\end{equation}
we obtain
\begin{equation}
 \int \exp\left(\alpha \left( \bar V'_{l,i}- \tau(\rho)\right)^2\right) d  \lambda^N_{1, \bar p,\tau(\rho)} \le  \mathbb E_\xi \left[ \exp\left(\frac{c_2 \alpha}{l} \xi^2  \right) \right],
\end{equation}
and the right hand side is independent of $\rho$ and $\bar p$.
Putting everything together yields
\begin{equation}
\begin{gathered}
\alpha \sum_{i=l}^N \int_0^t \int \left( \bar V'_{l,i} - \tau \left(\bar r_{l,i} \right) \right)^2 d \mu_s^N \le \frac{C(t) l^3}{\sigma} + (N-l+1)t \log \left(C'  \mathbb E_\xi \left[ \exp\left(\frac{c_2\alpha}{l} \xi^2  \right) \right]\right).
\end{gathered}
\end{equation}
and the conclusion follows taking $\alpha = l/(4c_2)$.
\end{proof}
\begin{lem}[Two-block estimate] \label{lem:twoblock}
Let $l_0$ as in Lemma \ref{lem:oneblock}. There exists $C_2'(t)$ independent of $N$ such that, for $l_0 < l \le m < N$,
\begin{equation} \label{eq:twoblock}
\sum_{i=l}^{N-m} \int_0^t \int \left( \bar \eta_{l,i+m}-\bar \eta_{l,i}\right)^2 d \mu_s^Nd s \le C_2'(t) \left( \frac{N}{l}+\frac{m^2}{\sigma}\right),
\end{equation}
whenever $\bar \eta_{l,i} \in \{ \bar p_{l,i}, \bar V'_{l,i}, \tau(\bar r_{l,i}), \bar r_{l,i} \}$.
\end{lem}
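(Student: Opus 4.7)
The plan is to mimic the proof of Lemma \ref{lem:oneblock}, with the key change being that the natural conditioning block must now be large enough to contain \emph{both} sub-blocks defining $\bar\eta_{l,i}$ and $\bar\eta_{l,i+m}$. Concretely, for each $i$ with $l\le i\le N-m$ the two observables depend on disjoint index sets $\{i-l+1,\dots,i\}$ and $\{i+m-l+1,\dots,i+m\}$, which are contained in the block $B_i$ of size $l+m$. I would condition $\mu_s^N$ on the empirical average stretch $\rho$ and momentum $\bar p$ over $B_i$, obtaining densities $\bar f^{\rho,\bar p}_{l+m,i+m}$ with respect to the canonical reference measure $\bar\lambda^{\rho,\bar p}_{l+m,i+m}$.

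The first step is the usual entropy inequality: for any $\alpha>0$,
\begin{equation*}
\alpha\sum_{i=l}^{N-m}\int_0^t\!\!\int (\bar\eta_{l,i+m}-\bar\eta_{l,i})^2\,d\mu_s^N\,ds
\;\le\;\int_0^t\!\sum_i H\!\left(\bar f^{\rho,\bar p}_{l+m,i+m}\,\big|\,\bar\lambda^{\rho,\bar p}_{l+m,i+m}\right)ds
\;+\;t\sum_i\log\!\int\! e^{\alpha(\bar\eta_{l,i+m}-\bar\eta_{l,i})^2}\,d\bar\lambda^{\rho,\bar p}_{l+m,i+m}.
\end{equation*}
For the entropy term I would invoke Lemma \ref{lem:LSI} applied at block size $l+m$ together with the bound $\sigma\int_0^t(\mathcal D_N+\widetilde{\mathcal D}_N)\,ds\le C(t)$, producing a contribution of the form $(l+m)^3/\sigma$ with a prefactor independent of $N$.

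For the exponential-moment term I would use the equivalence of ensembles bound \eqref{eq:grannum} to replace $\bar\lambda^{\rho,\bar p}_{l+m,i+m}$ (up to a harmless multiplicative constant) by the product grand-canonical measure $\lambda^{l+m}_{1,\bar p,\tau(\rho)}$. Under this product measure the two averages $\bar\eta_{l,i}$ and $\bar\eta_{l,i+m}$ are \emph{independent}, each involving $l$ independent variables, so their difference has variance of order $1/l$ uniformly in $(\rho,\bar p)$. Using the exponential tail bounds established in the appendix for $V'$ (and their corollaries for $\tau(\bar r)$, $\bar r$, $\bar p$ via Gaussian marginals and Lipschitz composition; note that the case $\bar\eta=\tau(\bar r)$ reduces to $\bar r$ thanks to boundedness of $\tau'$ proved in Appendix \ref{app:tension}), the exponential moment stays uniformly bounded provided $\alpha$ is chosen of order $l$. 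This yields an $N\log C/\alpha = O(N/l)$ contribution after summation in $i$.

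Balancing $\alpha$ with the two competing terms gives the stated bound $C_2'(t)(N/l+m^2/\sigma)$. The main obstacle, as in Lemma \ref{lem:oneblock}, is obtaining exponential-moment estimates that are uniform in the local parameters $(\rho,\bar p)$ over which we condition; for $\bar p$ and $\bar r$ this is clean because the marginals are Gaussian with variance depending only on $\beta$, while for $\bar V'$ and $\tau(\bar r)$ it requires the appendix estimates combined with the uniform convexity and quadratic-growth hypotheses \eqref{eq:Vconvex}, \eqref{eq:AN} on $V$. Once these exponential moments are controlled uniformly, the rest of the argument is mechanical.
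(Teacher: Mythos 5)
There is a genuine quantitative gap in your plan: the entropy-inequality/LSI route you propose cannot produce the stated $m^2/\sigma$ term. Conditioning on the contiguous block of size $l+m$ and invoking Lemma \ref{lem:LSI} at that block size gives an entropy contribution of order $(l+m)^3\int_0^t(\mathcal D_N+\widetilde{\mathcal D}_N)\,ds\le C(t)(l+m)^3/\sigma$, while the exponential-moment term forces $\alpha$ to be at most of order $l$ (the difference of the two block averages has sub-Gaussian fluctuations of size $l^{-1/2}$ under the product measure, so $e^{\alpha(\cdot)^2}$ is only controllable for $\alpha\lesssim l$). Dividing by $\alpha\sim l$ you obtain $C(t)\bigl(N/l+(l+m)^3/(l\sigma)\bigr)$, i.e.\ a dissipation term of order $m^3/(l\sigma)$, which is worse than the claimed $m^2/\sigma$ by a factor $m/l$. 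Your final sentence ("balancing $\alpha$ ... gives the stated bound") is therefore not substantiated: the two bounds coincide only when $m\asymp l$ (which happens to be the only case used later, in Corollary \ref{cor:twoblock}), but the lemma is stated for all $l_0<l\le m<N$ and your argument fails in the regime $m\gg l$. There is no easy repair within this scheme: conditioning only on the union of the two sub-blocks does not help, because the conditioning couples two non-adjacent blocks while the Dirichlet form only contains nearest-neighbour bonds, so transporting across the gap reintroduces factors of $m$.

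The paper avoids the entropy inequality altogether for this lemma. It integrates by parts directly against $\lambda^N$: writing $\int(\bar V'_{l,i+m}-\bar V'_{l,i})^2d\mu_s^N$ as $\frac1l\sum_j\int(V'_{i+m-j}-V'_{i-j})(\bar V'_{l,i+m}-\bar V'_{l,i})f_s^N d\lambda^N$, the derivative falling on the test factor yields a term of size $\|V''\|_\infty/l$ (hence $N/l$ after summation), and the derivative falling on $f_s^N$ is handled by Cauchy--Schwarz (absorbing half of the left-hand side) and by telescoping $\partial_{r_{i+m-j}}f_s^N-\partial_{r_{i-j}}f_s^N$ over the $\sim m$ intermediate bonds; summing over $i$ each bond is counted at most $m+l-1$ times, which rebuilds $\widetilde{\mathcal D}_N$ with a factor of order $m^2$ and gives $m^2/\sigma$ via $\sigma\int_0^t\widetilde{\mathcal D}_N\,ds\le C(t)$. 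The cases $\bar p_{l,i}$ are analogous with $\partial_{p}$, and $\bar r_{l,i}$, $\tau(\bar r_{l,i})$ are reduced to the $\bar V'$ case plus the one-block estimate using the strict lower bound on $\tau'$ (note this is the reverse of your reduction of $\tau(\bar r)$ to $\bar r$; your direct treatment of $\bar r$ by Gaussian moments would also need the full entropy machinery and inherits the same $m^3/(l\sigma)$ defect). If you want the lemma as stated, you need this integration-by-parts/Dirichlet-form argument (or something equivalent), not the one-block entropy scheme.
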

\begin{proof}
We start with $\bar \eta_{l,i} = \bar V'_{l,i}$. Denote by $V'_i := V'(r_i)$. The integration by parts formula
\begin{equation}
\begin{gathered}
\int \left( V'_{i+m}-V'_i \right) \varphi f_s^N d \lambda^N = \int \left( \parr[i+m]\varphi - \parr[i] \varphi \right) f_s^N d \lambda^N + \int \varphi \left( \parr[i+m]f_s^N -\parr[i] f_s^N\right) d \lambda^N,
\end{gathered}
\end{equation}
gives
\begin{equation}
\begin{gathered}
\int \left( \bar V'_{l,i+m}-\bar V'_{l,i}\right)^2d \mu_s^N = \frac{1}{l}\sum_{j=0}^{l-1}\int \left(V'_{i+m-j}-V'_{i-j} \right)\left(\bar V'_{l,i+m}-\bar V'_{l,i}\right) f_s^N d\lambda^N
\\
=\frac{1}{l}\sum_{j=0}^{l-1} \int  \left( \parr[i+m-j] \left(\bar V'_{l,i+m}-\bar V'_{l,i}\right)- \parr[i-j] \left(\bar V'_{l,i+m}-\bar V'_{l,i}\right) \right) d \mu_s^N +
\\
+ \frac{1}{l}\sum_{j=0}^{l-1}\int \left(\bar V'_{l,i+m}-\bar V'_{l,i}\right) \left( \parr[i+m-j] f_s^N- \parr[i-j]f_s^N \right)d \lambda^N.
\end{gathered}
\end{equation}
We evaluate
\begin{equation}
\parr[i+m-j] \left(\bar V'_{l,i+m}-\bar V'_{l,i}\right)- \parr[i-j] \left(\bar V'_{l,i+m}-\bar V'_{l,i}\right) = \frac{1}{l}\left( V''_{i+m-j}+V''_{i-j}\right),
\end{equation}
 Moreover, by the Cauchy-Schwarz inequality:
\begin{equation}
\begin{gathered}
\frac{1}{l}\sum_{j=0}^{l-1}\int \left(\bar V'_{l,i+m}-\bar V'_{l,i}\right) \left( \parr[i+m-j] f_s^N- \parr[i-j]f_s^N \right)d \lambda^N 
\\
\le \frac{1}{2} \int (\bar V'_{l,i+m}-\bar V'_{l,i})^2 d\mu_s^N + \frac{2}{l}\sum_{j=0}^{l-1} \int \frac{1}{f_s^N} \left( \parr[i+m-j] f_s^N - \parr[i-j] f_s^N \right)^2 d \lambda^N.
\end{gathered}
\end{equation}
Finally, we estimate
\begin{equation}
\begin{gathered}
\frac{2}{l}\sum_{j=0}^{l-1} \int\frac{1}{f_s^N} \left( \parr[i+m-j] f_s^N - \parr[i-j] f_s^N \right)^2 d \lambda^N 
\\
{= \frac{2}{l}\sum_{j=0}^{l-1}  \int\frac{1}{f_s^N} \left(\sum_{k=i-j}^{i+m-j-1}\left( \parr[k+1]f_s^N - \parr[k]f_s^N\right) \right)^2 d \lambda^N}
\\
{\le \frac{2m}{l} \sum_{j=0}^{l-1} \sum_{k=i-j}^{i+m-j-1} \int \frac{1}{f_s^N} \left( \parr[k+1] f_s^N - \parr[k]f_s^N\right)^2 d \lambda^N }
\\
{\le 2m  \sum_{k=i-l+1}^{i+m-1} \int\frac{1}{f_s^N} \left( \parr[k+1] f_s^N - \parr[k]f_s^N\right)^2 d \lambda^N}
\end{gathered}
\end{equation}
Putting everything together we obtain
\begin{equation}
\begin{gathered}
\sum_{i=l}^{N-m} \int_0^t \left(\bar V'_{l,i+m}-\bar V'_{l,i} \right)^2 d \mu_s^N d s \le 2t\frac{N-l-m+1}{l}\norminf[V'']  +\frac{1}{2}\sum_{i=l}^{N-m} \int_0^t \left(\bar V'_{l,i+m}-\bar V'_{l,i} \right)^2 d \mu_s^N d s+
\\
{+ 2m\sum_{i=l}^{N-m} \sum_{k=i-l+1}^{i+m-1}\int_0^t \int\frac{1}{f_s^N} \left( \parr[k+1] f_s^N - \parr[k]f_s^N\right)^2 d \lambda^N  ds,}
\end{gathered}
\end{equation}
which leads to the conclusion, since we gain an extra factor $m+l-1$ in the last term order to rebuild the Dirichlet form $\tilde D(f_s^N)$ as in the proof of the Lemma \ref{lem:LSI}.

Thanks to the integration by parts formula
\begin{equation}
\int \left( p_{i+m}-p_i \right) \varphi f_s^N d \lambda^N = \int \left(  \parp[i+m]\varphi -  \parp[i] \varphi \right) \varphi f_s^N d \lambda^N + \int \varphi \left(  \parp[i+m]\varphi f_s^N - \parp[i] \varphi f_s^N\right) d \lambda^N,
\end{equation}
the case $\bar \eta_{l,i} = \bar p_{l,i}$ is analogous. Finally. we treat the cases $\bar \eta_{l,i} = \bar r_{l,i}$ and $\bar \eta_{l,i} = \tau(\bar r_{l,i})$ simultaneously. Since, by  Appendix \ref{app:tension}, $\tau'$ is bounded from below, we have, for some constant $C$,
\begin{equation}
\begin{gathered}
C^2 \left(\bar r_{l,i+m}- \bar r_{l,i} \right)^2 \le \left( \tau(\bar r_{l,i+m})-\tau(\bar r_{l,i}) \right)^2 
\\
\le 3 \left( \bar V'_{l,i+m}-\bar V'_{l,i} \right)^2+3 \left( \tau(\bar r_{l,i})- \bar V'_{l,i}\right)^2+3 \left( \tau(\bar r_{l,i+m})-\bar V'_{l,i+m} \right)^2,
\end{gathered}
\end{equation}
which imply the conclusion by the first part of the proof and the one block estimate.
\end{proof}
The two-block estimates can be written in terms of the averages $\hat \eta_{l,i}$ thanks to the following Lemma.
\begin{lem} \label{lem:etahat}
\begin{equation}
\hat \eta_{l,i+1}-\hat \eta_{l,i} = \frac{1}{l} \left(\bar \eta_{l,i+l}-\bar \eta_{l,i} \right).
\end{equation}
\end{lem}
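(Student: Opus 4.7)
The plan is to prove the stated identity for linear functionals of the conserved variables (i.e.\ $\eta \in \{r, p, V'(r)\}$, so that $\hat\eta_{l,i}$ and $\bar\eta_{l,i}$ are obtained from $\eta_i$ by the triangular and uniform averaging prescriptions respectively). The nonlinear case $\bar\eta_{l,i}=\tau(\bar r_{l,i})$ never actually needs this exact identity: in the proof of Lemma \ref{lem:twoblock} the bound for $\tau(\bar r_{l,\cdot})$ is already reduced to the bounds for $\bar r_{l,\cdot}$ and $\bar V'_{l,\cdot}$ through the Lipschitz estimate on $\tau$, so it suffices to establish the identity in the linear setting.

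The central observation is a combinatorial rewriting of the triangular weight. First I note that, for $|j|<l$, the integer $l-|j|$ is exactly the number of pairs $(a,b)\in\{0,\ldots,l-1\}^2$ with $a-b=j$. Substituting this representation into the definition of $\hat\eta_{l,i}$ and swapping the order of summation yields
\begin{equation*}
\hat\eta_{l,i}=\frac{1}{l^2}\sum_{|j|<l}(l-|j|)\,\eta_{i-j}
=\frac{1}{l^2}\sum_{a=0}^{l-1}\sum_{b=0}^{l-1}\eta_{i+b-a}
=\frac{1}{l}\sum_{b=0}^{l-1}\bar\eta_{l,i+b},
\end{equation*}
where in the last step I recognised the inner sum over $a$ as $\bar\eta_{l,i+b}$.

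From here the identity follows by a one-line telescoping. Shifting the index in $\hat\eta_{l,i+1}=\frac{1}{l}\sum_{b=0}^{l-1}\bar\eta_{l,i+1+b}$ so that the summation runs from $c=1$ to $c=l$, and subtracting $\hat\eta_{l,i}=\frac{1}{l}\sum_{b=0}^{l-1}\bar\eta_{l,i+b}$, all intermediate terms cancel and only the endpoints $\bar\eta_{l,i+l}$ and $\bar\eta_{l,i}$ survive, divided by $l$.

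There is essentially no obstacle here — the statement is a purely algebraic identity between the discrete triangular (Fej\'er-type) kernel and the discrete uniform kernel, and the combinatorial decomposition above makes it immediate. The only care needed is to acknowledge that the identity is used in the proof of the two-block estimate exclusively for the linear choices $\eta\in\{r,p,V'\}$, and that the nonlinear case $\tau(\bar r_{l,i})$ is absorbed into the one-block estimate together with the uniform bound on $\tau'$ from Appendix \ref{app:tension}, as already done at the end of the proof of Lemma \ref{lem:twoblock}.
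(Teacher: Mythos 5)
Your proof is correct, and it takes a genuinely different route from the paper. The paper proves the identity by induction on $l$: assuming the formula for $l$, it expands $\hat\eta_{l+1,i+1}-\hat\eta_{l+1,i}$, splits off a telescoping sum, and invokes the inductive hypothesis. You instead observe that the triangular (Fej\'er-type) weight satisfies $l-|j| = \#\{(a,b)\in\{0,\dots,l-1\}^2 : a-b=j\}$, which gives the exact representation $\hat\eta_{l,i}=\frac{1}{l}\sum_{b=0}^{l-1}\bar\eta_{l,i+b}$, i.e.\ the hat average is the uniform average of $l$ shifted block means; the lemma then follows by a one-line telescoping in $b$. This is shorter and arguably more illuminating, since it exposes the triangular kernel as the convolution of two uniform kernels, whereas the induction verifies the identity without explaining it; both arguments are purely algebraic and apply to an arbitrary real sequence $(\eta_i)$, not just to $\eta_i\in\{r_i,p_i,V'(r_i)\}$, so your cautionary restriction to linear functionals is not actually needed for the lemma itself. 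Your closing remark about the nonlinear case is nonetheless consistent with the paper: the identity is never applied to $\tau(\hat r_{l,i})$ directly; that case is handled in Corollary \ref{cor:twoblock} via the Lipschitz bound $(\tau(\hat r_{l,i+1})-\tau(\hat r_{l,i}))^2 \le C(\hat r_{l,i+1}-\hat r_{l,i})^2$, exactly as you indicate.
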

\begin{proof}
We prove the statement by induction over $l$, for each fixed $k$. The statement for $l=1$ is obvious, since both $\hat\eta_{1,i+1}-\hat\eta_{1,i}$ and $\bar \eta_{1,i+1}-\bar \eta_{1,i}$ are equal to $\eta_{i+1} -\eta_i$.

Assume now the statement is true for some $l \ge 1$, that is
\begin{equation}
\hat\eta_{l,i+1}-\hat\eta_{l,i}= \frac{1}{l}\left(\bar \eta_{l,i+l}-\bar \eta_{l,i}\right).
\end{equation}
We prove it holds for $l+1$ as well. We have, in fact
\begin{equation}
\begin{gathered}
\hat\eta_{l+1,i+1}-\hat\eta_{l+1,i} =  \frac{1}{l+1} \sum_{|j| <l+1} \frac{l+1-|j|}{l+1}\eta_{i+1-j}-\frac{1}{l+1} \sum_{|j| <l+1} \frac{l+1-|j|}{l+1}\eta_{i-j}
\\
= \frac{1}{(l+1)^2} \sum_{|j| <l+1} (l+1-|j|)(\eta_{i+1-j}-\eta_{i-j})
\\
= \frac{l^2}{(l+1)^2} \frac{1}{l} \sum_{|j|<l} \frac{l-|j|}{l}(\eta_{i+1-j}-\eta_{i-j}) +\frac{1}{(l+1)^2} \sum_{|j| < l+1} (\eta_{i+1-j}-\eta_{i-j}).
\end{gathered}
\end{equation}
For the first summation we can use our inductive hypothesis, while the second summation is a telescopic one. Therefore we obtain
\begin{equation}
\begin{gathered}
\hat\eta_{l+1,i+1}-\hat\eta_{l+1,i} = \frac{1}{(l+1)^2} \sum_{j=0}^{l-1} (\eta_{i+l-j}-\eta_{i-j}) + \frac{1}{(l+1)^2} (\eta_{i+l+1}- \eta_{k-l})
\\
= \frac{1}{(l+1)^2} \left(\sum_{j=1}^l \eta_{i+l+1-j} +\eta_{i+l+1} - \sum_{j=0}^{l-1}\eta_{i-j}-\eta_{k-l}\right)
\\
= \frac{1}{(l+1)^2} \sum_{j=0}^l (\eta_{i+l+1-j}-\eta_{i-j})
\\
= \frac{1}{l+1}(\bar\eta_{l+1,i+l+1}-\bar\eta_{l+1,i}).
\end{gathered}
\end{equation}
\end{proof}
From the previous lemma and the two-block estimate it follows:
\begin{cor} \label{cor:twoblock}
Let $N \ge l > l_0$ and $\hat \eta_{l,i} \in \{ \hat p_{l,i}, \hat V'_{l,i}, \tau (\hat r_{l,i}), \hat r_{l,i}\}$. There is $C_2'(t)$ independent of $N$ such that
\begin{equation} \label{eq:twoblockuse}
\sum_{i=l}^{N-l} \int_0^t \int (\hat \eta_{l,i+1}-\hat\eta_{l,i})^2 d \mu_s^Nds \le C_2(t) \left( \frac{N}{l^3} +\frac{1}{\sigma} \right).
\end{equation}
\end{cor}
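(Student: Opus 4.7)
The proof is a direct combination of the two preceding results, so the plan is short. First I would invoke Lemma~\ref{lem:etahat} pointwise to rewrite the difference of the smoothed averages as
\begin{equation*}
\left(\hat\eta_{l,i+1}-\hat\eta_{l,i}\right)^2 \;=\; \frac{1}{l^2}\left(\bar\eta_{l,i+l}-\bar\eta_{l,i}\right)^2.
\end{equation*}
Summing this identity over $i$ from $l$ to $N-l$ and integrating against $\mu_s^N$ and in time on $[0,t]$ turns the left-hand side of \eqref{eq:twoblockuse} into exactly $l^{-2}$ times a sum of the form that appears on the left of \eqref{eq:twoblock}.

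Next I would apply Lemma~\ref{lem:twoblock} with the choice $m=l$ (which is allowed since $l_0<l$ and $l\le N$), obtaining
\begin{equation*}
\sum_{i=l}^{N-l}\int_0^t\!\!\int \left(\bar\eta_{l,i+l}-\bar\eta_{l,i}\right)^2 d\mu_s^N\,ds \;\le\; C_2'(t)\left(\frac{N}{l}+\frac{l^2}{\sigma}\right).
\end{equation*}
Dividing by $l^2$ yields the bound
\begin{equation*}
\sum_{i=l}^{N-l}\int_0^t\!\!\int \left(\hat\eta_{l,i+1}-\hat\eta_{l,i}\right)^2 d\mu_s^N\,ds \;\le\; \frac{C_2'(t)}{l^2}\left(\frac{N}{l}+\frac{l^2}{\sigma}\right) \;=\; C_2'(t)\left(\frac{N}{l^3}+\frac{1}{\sigma}\right),
\end{equation*}
and setting $C_2(t):=C_2'(t)$ gives the claim. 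There is no real obstacle here; the entire content of the corollary is packaged into Lemmas~\ref{lem:twoblock} and~\ref{lem:etahat}, and all four choices $\hat\eta_{l,i}\in\{\hat p_{l,i},\hat V'_{l,i},\tau(\hat r_{l,i}),\hat r_{l,i}\}$ are already covered by the hypothesis of Lemma~\ref{lem:twoblock}, so no separate argument is needed for any of them.
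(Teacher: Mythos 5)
Your argument is exactly the paper's for three of the four cases, but it has a genuine (if small) gap for $\hat\eta_{l,i}=\tau(\hat r_{l,i})$. Lemma \ref{lem:etahat} is a purely algebraic identity relating the two kinds of block averages of a fixed per-site sequence $(\eta_j)$: it applies to $\hat p_{l,i}$, $\hat r_{l,i}$ and $\hat V'_{l,i}$, which are weighted averages of $p_j$, $r_j$ and $V'(r_j)$ respectively. The quantity $\tau(\hat r_{l,i})$ is \emph{not} of this form: it is the nonlinear function $\tau$ applied to the average, not the average of $\tau(r_j)$, so the identity
$\tau(\hat r_{l,i+1})-\tau(\hat r_{l,i})=\tfrac1l\bigl(\tau(\bar r_{l,i+l})-\tau(\bar r_{l,i})\bigr)$
that your first display would require is false in general. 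Your closing claim that ``no separate argument is needed for any of them'' is therefore incorrect for this case.

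The fix is the one the paper uses: since $\tau'$ is bounded above (Lemma \ref{lem:tau'bound}, $\tau'\le c_2$), one has
\begin{equation*}
\bigl(\tau(\hat r_{l,i+1})-\tau(\hat r_{l,i})\bigr)^2 \le C\,\bigl(\hat r_{l,i+1}-\hat r_{l,i}\bigr)^2,
\end{equation*}
which reduces the fourth case to the case $\hat\eta_{l,i}=\hat r_{l,i}$ that you have already handled. With that one extra line your proof coincides with the paper's.
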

\begin{proof}
Let $\hat \eta_{l,i} \in \{\hat p_{l,i}, \hat V'_{l,i}, \hat r_{l,i}\}$. Then, from
\begin{equation}
\hat \eta_{l,i+1}-\hat \eta_{l,i} = \frac{1}{l}(\bar \eta_{l,i+l}-\bar \eta_{l,i}),
\end{equation}
where $\bar \eta_{l,i}$ is defined as in the previous lemma, we have
\begin{equation}
(\hat \eta_{l,i+1}-\hat\eta_{l,i})^2 = \frac{1}{l^2} (\bar \eta_{l,i+l}-\bar \eta_{l,i})^2,
\end{equation}
and the conclusion follows from Lemma \ref{lem:twoblock} with $m=l$.

For $\hat \eta_{l,i} = \tau(\hat r_{l,i})$ the conclusion follow once more from the lemma, since
\begin{equation}
(\tau(\hat r_{l,i+1})-\tau(\hat r_{l,i}))^2 \le C (\hat r_{l,i+1}-\hat r_{l,i})^2.
\end{equation}
\end{proof}
Finally, we compare the averages $\bar \eta_{l,i}$ and $\hat \eta_{l,i}$. This allows us to write the one-block estimate in terms of the averages $\hat \eta_{l,i}$.
\begin{lem}\label{lem:avcomp}
Let $l_0$ be as in Lemma \ref{lem:oneblock}. There is $C_3(t)$ independent of $N$ such that
\begin{equation} \label{eq:avcomp}
\sum_{i=l}^{N-l+1} \int_0^t \int \left(\hat \eta_{l,i} - \bar \eta_{l,i} \right)^2 d \mu_s^N \le C_3(t) \left( \frac{N}{l}+\frac{l^2}{\sigma} \right),
\end{equation}
for  $\eta_j \in \{ r_j, p_j, V'(r_j) \}$ and whenever $N \ge l > l_0$.
\end{lem}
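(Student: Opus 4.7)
The plan is to express the smoothed block average $\hat\eta_{l,i}$ as a uniform average of shifts of the plain block average $\bar\eta_{l,i}$ and then reduce everything to the two-block estimate of Lemma \ref{lem:twoblock}. A direct manipulation of the double sum (substituting $p = j-k$ in $0\le j,k \le l-1$, and counting that each $p \in \{-(l-1),\dots,l-1\}$ is realised with multiplicity $l-|p|$) yields the discrete identity
\[
\hat\eta_{l,i} \;=\; \frac{1}{l}\sum_{k=0}^{l-1}\bar\eta_{l,i+k},
\]
the discrete analogue of writing the Fejér kernel as an average of Dirichlet kernels. Consequently
$\hat\eta_{l,i}-\bar\eta_{l,i} = \frac{1}{l}\sum_{k=1}^{l-1}(\bar\eta_{l,i+k}-\bar\eta_{l,i})$, and Cauchy--Schwarz gives
$(\hat\eta_{l,i}-\bar\eta_{l,i})^2 \le \frac{1}{l}\sum_{k=1}^{l-1}(\bar\eta_{l,i+k}-\bar\eta_{l,i})^2$.

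Next, since Lemma \ref{lem:twoblock} requires the spatial shift to be at least as large as the block size whereas here the shift $k$ is strictly smaller than the block size $l$, I would rewrite each summand by splitting the two overlapping windows of length $l$ into their common central piece and two disjoint residues of length $k$:
\[
\bar\eta_{l,i+k}-\bar\eta_{l,i} \;=\; \frac{k}{l}\bigl(\bar\eta_{k,i+k}-\bar\eta_{k,i-l+k}\bigr).
\]
The residues $\bar\eta_{k,\cdot}$ are now blocks of size $k$ at mutual distance $l \ge k$, precisely the regime in which Lemma \ref{lem:twoblock} applies (once $k>l_0$). It yields
\[
\sum_{i}\int_0^t\!\!\int(\bar\eta_{l,i+k}-\bar\eta_{l,i})^2\, d\mu_s^N\, ds \;\le\; \frac{k^2}{l^2}\,C_2'(t)\!\left(\frac{N}{k}+\frac{l^2}{\sigma}\right) \;=\; C_2'(t)\!\left(\frac{kN}{l^2}+\frac{k^2}{\sigma}\right).
\]
The finitely many terms with $k\le l_0$ contribute a negligible amount and can be bounded by a direct application of the energy estimate of Lemma \ref{lem:enestimate}.

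Finally, summing in $k$ from $1$ to $l-1$, dividing by $l$, and using $\sum_{k=1}^{l-1}k\le l^2/2$ together with $\sum_{k=1}^{l-1}k^2\le l^3/3$, produces
\[
\sum_{i=l}^{N-l+1}\int_0^t\!\!\int(\hat\eta_{l,i}-\bar\eta_{l,i})^2\, d\mu_s^N\, ds \;\le\; \frac{C_2'(t)}{l}\sum_{k=1}^{l-1}\!\left(\frac{kN}{l^2}+\frac{k^2}{\sigma}\right) \;\le\; C_3(t)\!\left(\frac{N}{l}+\frac{l^2}{\sigma}\right),
\]
which is the claimed bound. The only genuinely delicate point is the rewriting of $\bar\eta_{l,i+k}-\bar\eta_{l,i}$ as the scaled difference of smaller blocks separated by the larger distance $l$; this keeps the argument inside the range of validity of Lemma \ref{lem:twoblock} and simultaneously produces the factor $k^2/l^2$ that is essential for the $k$-sum to close with the correct rate. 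The three admissible choices $\eta_j\in\{r_j,p_j,V'(r_j)\}$ are handled uniformly, as Lemma \ref{lem:twoblock} covers each of them.
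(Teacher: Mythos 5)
Your proof is correct, and it takes a genuinely different route from the one in the paper. Your two identities --- $\hat\eta_{l,i}=\frac1l\sum_{k=0}^{l-1}\bar\eta_{l,i+k}$ (the Fej\'er-kernel-as-average-of-flat-windows observation) and $\bar\eta_{l,i+k}-\bar\eta_{l,i}=\frac{k}{l}\bigl(\bar\eta_{k,i+k}-\bar\eta_{k,i+k-l}\bigr)$ --- both check out, the reindexed blocks of size $k$ at mutual distance $l$ fall squarely inside the admissible regime $l_0<k\le l$ of Lemma \ref{lem:twoblock}, and the factor $k^2/l^2$ makes the $k$-sum close at exactly the rate $N/l+l^2/\sigma$; the finitely many terms with $k\le l_0$ are indeed harmless, since the energy estimate of Lemma \ref{lem:enestimate} bounds each by $O(N)$ (or $O(Nk^2/l^2)$ if you keep the combinatorial factor), which after the $1/l$ weight is already of order $N/l$. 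The paper argues quite differently: for $\eta_j=V'(r_j)$ and $\eta_j=p_j$ it writes $\hat\eta_{l,i}-\bar\eta_{l,i}=\frac1l\sum_{|j|<l}c_j\eta_{i-j}$ with $\sum_j c_j=0$, $c_j^2\le1$, and integrates by parts against $\lambda^N$, so that the square splits into a $V''$-term of size $O(1/l)$ per site plus a gradient term telescoped into the Dirichlet form (the same technique as in the two-block proof, not the lemma itself), while for $\eta_j=r_j$, where no integration-by-parts formula is available, it repeats the one-block strategy of Lemma \ref{lem:oneblock}: entropy inequality, the LSI of Lemma \ref{lem:LSI}, equivalence of ensembles, and an explicit Gaussian exponential-moment computation using the boundedness of $G''$. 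Your reduction is shorter and treats the three choices of $\eta_j$ uniformly, at the mild price of needing the constant in Lemma \ref{lem:twoblock} to be uniform over block sizes $k\in(l_0,l]$ with shift $l$ (which its statement and proof do provide); the paper's argument is more computational but self-contained at the level of the Dirichlet-form/LSI machinery and yields the bound without passing through the two-block statement.
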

\begin{proof}
We prove the statement for $\eta_j = V'(r_j)$, first. Define
\begin{equation}
{\bar  \partial_{l,i}:= \frac{1}{l}\sum_{j=0}^{l-1}\parr[i-j], \quad \hat  \partial_{l,i}:= \frac{1}{l}\sum_{|j|<l}\frac{l-|j|}{l}\parr[i-j].}
\end{equation}
 Integrating by parts we have
\begin{equation}
\begin{gathered} \label{eq:compeq1}
 \int \left( \hat V'_{l,i}-\bar V'_{l,i} \right)^2 d\mu_s^N = \int \left( \hat V'_{l,i}-\bar V'_{l,i} \right) \left( \hat V'_{l,i}-\bar V'_{l,i} \right) f_s^N d\lambda^N
\\
=  \int \left( \hat \partial_{l,i} - \bar  \partial_{l,i}\right) \left( \hat V'_{l,i}-\bar V'_{l,i} \right) d\mu_s^N +  \int \left( \hat V'_{l,i}-\bar V'_{l,i} \right) \left( \hat { \partial}_{l,i}f_s^N - \bar { \partial}_{l,i}f_s^N\right) d \lambda^N.
\end{gathered}
\end{equation}
We can write
\begin{equation}
\hat \partial_{l,i}-\bar \partial_{l,i} = \frac{1}{l} \sum_{|j|<l}c_j \parr[i-j], \quad \hat V'_{l,i}-\bar V'_{l,i} = \frac{1}{l} \sum_{|j|<l}c_j V'_{i-j}
\end{equation}
where the numbers $c_j$ have the following properties:  $c_j^2 \le 1$, and $\underset{|j|<l}{\sum} c_j =0$. 
This allows us to estimate
\begin{equation}
 {\left( \hat  \partial_{l,i} - \bar  \partial_{l,i}\right)} \left( \hat V'_{l,i}-\bar V'_{l,i} \right) = \frac{1}{l^2}\sum_{|i|<l} c_i^2 V''(r_{i-j}) \le \frac{2\norminf[V'']}{ l}.
\end{equation}
For the remaining term in \eqref{eq:compeq1} we use Cauchy-Schwarz:
\begin{equation}
\int \left( \hat V'_{l,i}-\bar V'_{l,i} \right) \left( \hat { \partial}_{l,i}f_s^N - \bar { \partial}_{l,i}f_s^N\right) d \lambda^N \le \frac{1}{2}  \int \left( \hat V'_{l,i}-\bar V'_{l,i} \right)^2 d\mu_s^N  + \frac{1}{2} \int \frac{1}{f_s^N} \left( \hat { \partial}_{l,i}f_s^N - \bar { \partial}_{l,i}f_s^N\right)^2 d \lambda^N.
\end{equation}
The last term at the right-hand side evaluates as
\begin{equation}
\hat { \partial}_{l,i}f_s^N - \bar { \partial}_{l,i}f_s^N = \frac{1}{l} \sum_{j=1}^l \frac{j}{l} \left( \parr[i-j+l]f_s^N- \parr[i-j]f_s^N \right).
\end{equation}
Therefore
\begin{equation}
\int \frac{1}{f_s^N} \left( \hat{ \partial}_{l,i} f_s^N - \bar{ \partial}_{l,i}f_s^N \right)^2 d\lambda^N = \frac{4}{l^2} \int\frac{1}{f_s^N} \left(\sum_{j=1}^l \frac{j}{l} \left(  \parr[i-j+l]f_s^N- \parr[i-j]f_s^N \right)\right)^2 d \lambda^N
\end{equation}
is estimated by the Dirichlet form as in the proof of the two block estimate, since $j/l < 1$, leading to the conclusion.

The proof of the statement for $\eta_j=p_j$ is analogous. We are left with the case $\eta_j = r_j$. Since we do not have an integration by parts formula involving $r_j$ alone, we follow the proof of Lemma \ref{lem:oneblock}: for any $\alpha >0$,
\begin{equation}
\begin{gathered}
\sum_{i=l}^N\alpha  \int_0^t \int \left(\hat r_{l,i}-\bar r_{l,i} \right)^2 d \mu_s^N \le t \sum_{i=l}^N  \log \int e^{\alpha \left( \hat r_{l,i}-\bar r_{l,i}\right)^2} d \bar \lambda^{\rho,\bar p}_{2l-1,i+l-1} + C(t) \frac{(2l-1)^3}{\sigma},
\end{gathered}
\end{equation}
We write
\begin{equation}
 \int \exp \left(\alpha (\hat r_{l,i}-\bar r_{l,i})^2 \right) d \bar \lambda^{\rho,\bar p}_{2l-1,i+l-1} \le C'  \int \exp \left(\alpha (\hat r_{l,i}-\bar r_{l,i})^2 \right) d  \lambda^N_{1, \bar p,\tau(\rho)} 
\end{equation}
\begin{equation}
= C' \mathbb E_\xi \left[ \int \exp \left(\sqrt{2\alpha} \xi (\hat r_{l,i}-\bar r_{l,i}) \right) d  \lambda^N_{1, \bar p, \tau(\rho)} \right],
\end{equation}
where $\xi$ is a normally distributed random. In order to calculate the last integral, we define $G(\tau):=G(1,\tau)$. Recalling that $G$ is smooth and $G''$ is bounded (see Lemma \ref{lem:tau'bound}), we write
\begin{equation}
\begin{gathered}
\int \exp \left(\sqrt{2\alpha} \xi (\hat r_{l,i}-\bar r_{l,i}) \right) d  \lambda^N_{1, \bar p,\tau(\rho)}  =\int \exp \left(\frac{\sqrt{2\alpha} \xi}{l} \sum_{|j|<l} c_j r_{i-j}\right) d  \lambda^N_{1, \bar p,\tau(\rho)} 
\\
= \prod_{|j|<l} \int \exp\left( \frac{\sqrt{2\alpha}\xi}{l} c_j r_{i-j} + \tau(\rho) r_{i-j} -V(r_{i-j})-G(\tau(\rho)) \right) d r_{i-j}
\\
= \prod_{|j|<l} \exp \left( G\left(\tau(\rho)+\frac{\sqrt{2\alpha}\xi}{l} c_j \right) - G(\tau(\rho)) \right)
\\
= \exp \sum_{|j|<l} \left(G'(\tau(\rho)) \frac{\sqrt{2\alpha}\xi}{l} c_j + G''(\tilde \tau) \frac{\alpha \xi^2}{l^2} c_j^2 \right),
\end{gathered}
\end{equation}
for some intermediate value $\tilde \tau$. Since $\sum_j c_j = 0$, $c_j^2 \le 1$ we have
\begin{equation}
\exp \sum_{|j|<l} \left(G'(\tau(\rho)) \frac{\sqrt{2\alpha}\xi}{l} c_j + G''(\tilde \tau) \frac{\alpha \xi^2}{l^2} c_j^2 \right) \le \exp \left(\frac{ 2 \alpha(2l+1) \| G'' \|_{L^\infty} }{l^2} \xi^2 \right).
\end{equation}
Therefore we have obtained
\begin{equation}
 \int \exp \left(\alpha (\hat r_{l,i}-\bar r_{l,i})^2 \right) d \bar \lambda^{\rho,\bar p}_{2l-1,i+l-1} \le C'  \mathbb E_\xi \left[ \exp \left( \frac{6 \alpha \| G'' \|_{L^\infty} }{l}\xi^2 \right) \right],
\end{equation}
and again the right hand side is independent of $\rho$ and $\bar p$. The conclusion then follows as in the proof of the one block estimate.
\end{proof}
We end this section by stating the one block estimate in terms of the averages $\hat \eta_{l,i}$.
\begin{cor} \label{cor:V'tau}
Let $l_0$ be as in Lemma \ref{lem:oneblock}. There is $C_1(t)$ independent of $N$ such that
\begin{equation} \label{eq:V'tau}
\sum_{i=l}^{N-l+1} \int_0^t \int (\hat V'_{l,i}-\tau(\hat r_{l,i}))^2 d \mu_s^N ds \le C_1(t) \left( \frac{N}{l} + \frac{l^2}{\sigma} \right),
\end{equation}
whenever $N \ge l > l_0$.
\end{cor}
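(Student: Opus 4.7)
The corollary is the "hat-averaged" version of the one-block estimate (Lemma \ref{lem:oneblock}), so my plan is to reduce it to the bar-averaged statement together with the comparison lemma (Lemma \ref{lem:avcomp}) by a simple triangle-inequality decomposition.

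Specifically, I would write
\begin{equation*}
\hat V'_{l,i} - \tau(\hat r_{l,i}) = \bigl(\hat V'_{l,i} - \bar V'_{l,i}\bigr) + \bigl(\bar V'_{l,i} - \tau(\bar r_{l,i})\bigr) + \bigl(\tau(\bar r_{l,i}) - \tau(\hat r_{l,i})\bigr),
\end{equation*}
and then apply the elementary inequality $(a+b+c)^2 \le 3(a^2+b^2+c^2)$ and sum over $i$ from $l$ to $N-l+1$, integrating in time against $\mu_s^N$.

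The first summand is controlled by Lemma \ref{lem:avcomp} taken with $\eta_j = V'(r_j)$, giving a bound of order $C_3(t)(N/l + l^2/\sigma)$. The second summand is exactly the content of the one-block estimate (Lemma \ref{lem:oneblock}), which provides the same order of bound $C_1'(t)(N/l + l^2/\sigma)$. For the third summand, I would use that $\tau$ is Lipschitz on $\mathbb R$ (by Appendix \ref{app:tension}, $\tau'$ is bounded), so that
\begin{equation*}
(\tau(\bar r_{l,i}) - \tau(\hat r_{l,i}))^2 \le \|\tau'\|_{L^\infty}^2 \, (\bar r_{l,i} - \hat r_{l,i})^2,
\end{equation*}
and then invoke Lemma \ref{lem:avcomp} once more, this time with $\eta_j = r_j$, obtaining a bound of the same form.

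There is no real obstacle here: all three contributions share the structure $N/l + l^2/\sigma$, so one just sets $C_1(t)$ to be $3$ times the maximum of $C_1'(t)$, $C_3(t)$, and $\|\tau'\|_{L^\infty}^2 C_3(t)$. The only care to take is that Lemma \ref{lem:oneblock} sums up to $N$ while Lemma \ref{lem:avcomp} and the statement of the corollary sum up to $N-l+1$; since we are only bounding from above and the integrands are non-negative, extending the range of summation in Lemma \ref{lem:oneblock}'s bound to include all $i \in \{l,\dots,N-l+1\}$ is harmless. The condition $N \ge l > l_0$ is inherited directly from Lemmas \ref{lem:oneblock} and \ref{lem:avcomp}.
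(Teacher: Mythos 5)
Your proposal is correct and follows exactly the paper's own argument: the same three-term decomposition bounded via $(a+b+c)^2\le 3(a^2+b^2+c^2)$, with Lemma \ref{lem:oneblock} handling the middle term, Lemma \ref{lem:avcomp} handling the two comparison terms, and the Lipschitz bound on $\tau$ (from Appendix \ref{app:tension}) converting the last term into an $r$-comparison. The remark about the summation ranges is a harmless technicality, as you note.
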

\begin{proof}
It follows immediately from the first one block estimate and the average comparison, since
\begin{equation}
(\hat V'_{l,i}-\tau(\hat r_{l,i}))^2 \le 3 (\hat V'_{l,i}-\bar V'_{l,i})^2 + 3(\bar V'_{l,i}-\tau (\bar r_{l,i}))^2 + 3(\tau (\bar r_{l,i})-\tau(\hat r_{l,i}))^2
\end{equation}
and
\begin{equation}
(\tau (\bar r_{l,i})-\tau(\hat r_{l,i}))^2 \le C (\bar r_{l,i}- \hat r_{l,i})^2.
\end{equation}
\end{proof}

\section{Thermodynamic Consequences} \label{sec:thermo}

In this final section we want to prove that any limit distribution $\frak Q$ of $\frak Q_N$
 satisfy the thermodynamic principles applied to isothermal transformations. Throughout this section, we shall restore $\beta$.

% Weak solution of such type are for example given by \emph{viscous solutions} as explained in 
% appendix \ref{sec:visc-appr}, or by approximate viscous solutions. We will show that 
% our limit profiles are in some sense approximate viscous solutions.

% In this final section we see that the solutions we have obtained lead
% to the first and the second principle of thermodynamics. This is a
% direct consequence of the physical framework we have used to obtain
% the solutions in the first place.     
% \begin{oss}
% We shall superintend the fact that we need to change probability space
% in order to converge to the solution $\utilde(t,x)$. Therefore we will
% just we will simply write things like 
% \[
% \lim_{N \to \infty} \sum_{i=1}^N \varphi\left(\frac{i}{N}\right) r_i = \int_0^1 \varphi(x) \tilde r(t,x)dt,
% \]
% or
% \[
% \lim_{N \to \infty} \int_0^\infty \int_0^1 \psi(t,x) \uhat_N(t,x)dxdt =\int_0^\infty \int_0^1 \psi(t,x) \utilde(t,x)dxdt,
% \]
% having in mind that, behind this, there is the theory we have
% developed in the rest of the paper. Moreover, since, almost surely
% $\utilde$ solves a deterministic system of PDEs, we will effectively
% forget about its random nature and regard it as a deterministic
% function. 
% \end{oss}

In order to perform a isothermal thermodynamic transformation we fix $\tau_0,
\tau_1, t_1 \in \mathbb R$ and take the external tension $\bar \tau$
to be a smooth function such that $\bar \tau(0) = \tau_0$ and $\tilde
\tau(t) = \tau_1$ for all $t \ge t_1$. This corresponds to the
following physical situation: at time $0$ the system is at
equilibrium, and the equilibrium state is determined by
$(\beta,\tau_0)$. Then we vary the external tension and we eventually
bring the system to another equilibrium state (reached asymptotically
as  $t \to \infty$), identified by $(\beta,\tau_1)$ (we are performing
isothermal transformations, so the temperature does not change). This
is the way we define a a thermodynamic isothermal transformation
between two equilibrium states $(\beta,\tau_0)$ and $(\beta,\tau_1)$. 

Recall the definition of the Gibbs potential:
\begin{equation}\label{eq:thermoG}
 {G(\beta,\tau) = \log \int_{-\infty}^{\infty}\exp(- \beta V(r)+\beta \tau r)dr}.
\end{equation}
Moreover, the free energy $F$ is defined as
\begin{equation}\label{eq:thermoF}
F(\beta,\rho) = \sup_{\tau \in \mathbb R} \{\tau \rho - \beta^{-1}G(\beta,\tau) \}
\end{equation}
and the tension $\tau_\beta$ is given by
\begin{equation}\label{eq:thermotau}
\tau_\beta(\rho) = \partial_\rho F(\beta,\rho).
\end{equation}
Finally, the \emph{internal energy} $U$ is defined by
\begin{equation} \label{eq:internal}
U(\beta,\tau) = \E_{\lambo}\left[\frac{p^2}{2}+V(r)\right].
\end{equation}
Throughout this section
we need the following assumption on the convergence of the energy:
\begin{ass}
 {For}
\begin{equation}
 {E_N(t) := \frac{1}{N} \sum_{i=1}^N \left( \frac{p_i(t)^2}{2}+V(r_i(t)) \right)},
\end{equation}
 {assume}
  \begin{equation}\label{eq:internalass}
    \lim_{N \to \infty}\mathbb E\left[E_N(t) - \int_0^1 U (\beta,  \tau_\beta(r_N(t,x))dx \right] \ =\ 0,
  \end{equation}
  and
  \begin{equation}
    \lim_{N \to \infty}E_N(0) =   U(\beta,\tau_0).
  \end{equation}
  Furthermore, under the proper convergent subsequence,
  \begin{equation} \label{eq:energyconv} \lim_{N \to \infty} \int_0^1
    \int_0^\infty \mathbb \psi(t,x) \mathbb E^{\frak Q_N} \left[ f(\uhat_N(t,x)) \right] dx dt =\int_0^1
    \int_0^\infty \psi(t,x)  \mathbb E^{\frak Q}\left[ f(\utilde(t,x)) \right] dx dt.
  \end{equation}
  for all test functions $\psi$ and all continuous $f$ with
  \emph{quadratic} growth.
\end{ass}

\begin{oss}
Assumption A is necessary because all our bounds rely on relative
entropy that is not sufficient to give the uniform integrability for
the convergence of second moments. 
\end{oss}
We also need some assumptions on the weak solutions considered:
\begin{itemize}
\item When tension is held at the constant value $\tau_1$,
  \begin{equation}
    \lim_{t \to\infty} \tau_\beta(\tilde r(t,x)) = \tau_1, \quad \lim_{t
      \to \infty} \tilde p(t,x) = 0
  \end{equation}
  for almost all $ x \in [0,1]$.
\item \begin{equation}
  \label{eq:1}
  \mathcal L(t) := \int_0^1 r(t,y) \; dy. 
\end{equation}
 is a bounded variation function of $t$. This is necessary in order to define 
the macroscopic work $W(t)$ below.
\end{itemize}

The first law of thermodynamics is % nothing but
an energy balance which takes into account energy loss (or gain) via
heat exchange. It reads as follows. 
\begin{equation} \label{eq:firstLaw}
\Delta U = W + Q,
\end{equation}
where $\Delta U$ is the difference of internal energy between two equilibrium states, $W$ is the work done on the system (which depends on the external force $\bar \tau$) and $Q$ is the heat exchanged (which depends on the noise, i.e. on $\sigma$). In order to deduce the first principle, we use the equations \eqref{eq:SDE} to obtain, from a direct calculation,
\begin{equation}
E_N(t) - E_N(0) = \int_0^t d E_N(s) = W_N(t) + Q_N(t),
\end{equation}
where
\begin{equation} \label{eq:WN0}
W_N(t) = \int_0^t \bar \tau(s)  p_N(s)  \; ds 
\end{equation}
and
\begin{equation} \label{eq:QN}
\begin{gathered}
Q_N(t) = - \sigma \int_0^t \sum_{i=1}^{N-1} \left( (\nabla p_i(s))^2 -
  2 \beta^{-1} \right) ds 
+ \sqrt{ \frac{2\sigma}{\beta N}} \sum_{i=1}^{N-1} \int_0^t (\nabla p_i(s)) d w_i(s).
\iffalse
\\
 - \sigma \int_0^t \sum_{i=1}^{N-1} \left((\nabla V'_i(s))^2 - 2
   \beta^{-1} \right)ds + \sqrt{2 \frac{\sigma}{N} \beta^{-1}} \sum_{i=1}^{N-1} \int_0^t (\nabla V'_i(s)) d \tilde w_i(s).
\fi
\end{gathered}
\end{equation}
$W_N$ can be rewritten in the more expressive form
\begin{equation} \label{eq:WN1}
W_N(t) = \int_0^t \bar \tau(s) d\left( \frac{q_N(s)}{N} \right) =  
\int_0^t \bar \tau(s) d \left(\frac{1}{N} \sum_{i=1}^N r_i(s) \right),
\end{equation}
so that we have
\begin{equation} \label{eq:lavoro}
W(t)=\lim_{N \to \infty}W_N(t) = \int_0^t \bar \tau(s) d \mathcal L(s) ,
\end{equation}
$W(t)$ is the macroscopic work done by the external tension up to time $t$.

By our Assumption A about the convergence of the energy, we obtain that $E_N(t)-E_N(0)$ converges to the difference of internal energy, and so $Q_N(t)$ converges, as $N \to\infty$ to the quantity
\begin{equation} \label{eq:Q}
Q(t) = \int_0^1 ( U(\beta,\tau_\beta(\tilde r(t,x))) - U(\beta,\tau_0)) dx- W(t).
\end{equation}
Therefore, taking the limit $t \to\infty$ we obtain the first principle of thermodynamics
\begin{equation} \label{eq:Iprinciple}
 U(\beta, \tau_1) - U(\beta,\tau_0)= W + Q.
\end{equation}
Let us move now to the second principle. It states that, during a isothermal
thermodynamic transformation, 
\begin{equation} \label{eq:IIprinciple}
\Delta S \ge  \beta  Q,
\end{equation}
where $\Delta S$ is the difference of entropy and $Q$ is the heat. The equality holds only for \emph{reversible} (or
quasistatic) transformations.
The entropy $S$ is defined by
\begin{equation}\label{eq:Sdefn}
S = \beta (U-F).
\end{equation}
where $F$ is the free energy and $U$ is the internal energy. We can
combine the first principle \eqref{eq:Iprinciple} and \eqref{eq:Sdefn}
to have an equivalent formulation of the second principle for an isothermal transformation. In fact, we have
\[
\Delta F = \Delta U - T \Delta S = W + Q - \beta^{-1} \Delta S.
\]
Therefore, since $\beta$ is positive, the second principle is equivalent to the following
\emph{inequality of Clausius}
\begin{equation} \label{eq:Clausius}
\Delta F \le W.
\end{equation}
We show  that \eqref{eq:Clausius} for our system is a consequence of the above assumptions and 
the assumption that the hydrodynamic limit concentrates on the the vanishing viscosity solutions.

Define the free energy at time $t$ by
\begin{equation} \label{eq:freet}
\mathcal F(t) = \int_0^1 \left( \frac{\tilde p(t,x)^2}{2}+ F(\beta, \tilde r(t,x)) \right) dx.
\end{equation}
Notice the presence of the macroscopic kinetic term in \eqref{eq:freet}, that eventually 
disappears when the system reach global equilibrium. 
It follows from the initial and asymptotic conditions on $\tilde r$ and $\tilde p$ that
\begin{equation}
\mathcal F(0)=F(\beta, \tau_\beta^{-1}(\tau_0))  ,\quad \lim_{t \to\infty}
\mathcal F(t)=  F(\beta,\tau_\beta^{-1}(\tau_1))  .
\end{equation}
In Appendix \ref{sec:visc-appr} we show that the vanishing viscous solutions satisfy 
$\mathcal F(t) - \mathcal F(0) \le W(t)$, and consequently 
\begin{equation} \label{eq:Clausiusnoi}
F(\beta,\tau_\beta^{-1}(\tau_1)) - F(\beta,\tau_\beta^{-1}(\tau_0)) \le W,
\end{equation}
where $W$ is defined in \eqref{eq:Iprinciple}.

\appendix
\section{Properties of the Tension} \label{app:tension}

In this section we shall give some technical properties about the tension $\tau$. In order to simplify the notation we set $\beta=1$ once again. Thus, we define
\begin{equation} \label{eq:apptau}
F(\rho) = \sup_{\tau \in \mathbb
  R}\{\tau \rho - G(\tau)\}, \qquad \tau(\rho) = F'(\rho),
\end{equation}
where
\begin{equation} \label{eq:appF}
 {G(\tau) =\log \int_{-\infty}^{\infty} e^{\tau r - V(r)}dr}.
\end{equation}
 We will prove the following
\begin{prop} \label{prop:appmain}
Let the potential $V \in C^2(\mathbb R)$ be  {uniformly convex with quadratic growth}, in the sense that there exist $c_1,c_2 \in \mathbb R$ such that
\begin{equation} \label{eq:appV''}
0<c_1 \le V''(r) \le c_2, \quad \forall r \in \mathbb R.
\end{equation}
Moreover, assume there exist some positive constants $V_+'', V_-'', \alpha, R$ such that
\begin{equation} \label{eq:appAN}
\begin{gathered}
\left| V''(r)-V_+'' \right| < e^{-\alpha r},  \quad \forall r>R
\\
\left| V''(r)-V_-'' \right| < e^{\alpha r}, \quad \forall r<-R
\end{gathered} \ .
\end{equation}
Then the following properties hold true.
\begin{itemize}
\item [i)] The p-system \eqref{eq:psystem} is strictly hyperbolic, meaning $\tau'(\rho) \ge c_1 >0$ for all $\rho \in \mathbb R$;
\item [ii)] $\tau''(\rho)(\tau'(\rho))^{-5/4}$ and $\tau'''(\rho)(\tau'(\rho))^{-7/4}$ are in $L^2(\mathbb R)$, while $\tau''(\rho)(\tau'(\rho))^{-3/4}$ and $\tau'''(\rho)(\tau'(\rho))^{-2}$ are in $L^\infty(\mathbb R)$.
\item [iii)] $\tau'(\rho) \le c_2$ for all $\rho \in \mathbb R$. Moreover, $\tau(\rho)/F(\rho) \to 0$ as $|\rho| \to \infty$.
\end{itemize}
 {Finally, we let $V$ be a mollification of the function}
\begin{equation} \label{eq:Vvera}
 {r \longmapsto \frac{1}{2}(1-\kappa)r^2+\frac{1}{2}\kappa r|r|_+,}
\end{equation}
 {where $|r|_+=\max\{r,0\}$ and $0<\kappa < 1/3$.} 
\begin{itemize}
\item[iv)]  {$\tau''(\rho) > 0$ for all $\rho \in \mathbb R$. In particular, the p-system }\eqref{eq:psystem} { is genuinely nonlinear.}
\end{itemize}
\end{prop}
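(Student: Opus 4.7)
The plan is to translate every statement into one about the cumulant-generating function $G(\tau)$ and its derivatives, via the Legendre duality, and then to exploit the assumptions on $V''$ to control $G^{(k)}(\tau)$. Let $\nu_\tau(dr) = e^{\tau r - V(r) - G(\tau)}\,dr$, so that $G^{(k)}(\tau)$ is the $k$-th cumulant of $r$ under $\nu_\tau$ and $G''(\tau) = \mathrm{Var}_{\nu_\tau}(r) > 0$. Differentiating $F'(\rho(\tau))=\tau$ with $\rho = G'(\tau)$ yields
\[
\tau'(\rho)=\frac{1}{G''(\tau)},\quad \tau''(\rho)=-\frac{G'''(\tau)}{G''(\tau)^{3}},\quad \tau'''(\rho)=\frac{3G'''(\tau)^{2}-G''(\tau)G''''(\tau)}{G''(\tau)^{5}}.
\]

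For part (i), strong log-concavity of $\nu_\tau$ (because $V''\ge c_1$) and the Brascamp--Lieb variance inequality give $G''(\tau)\le 1/c_1$, hence $\tau'(\rho)\ge c_1$. For part (iii), an integration by parts yields $\E_{\nu_\tau}[V'(r)]=\tau$ and $\mathrm{Cov}_{\nu_\tau}(r,V'(r))=1$; a second integration by parts gives $\mathrm{Var}_{\nu_\tau}(V'(r))=\E_{\nu_\tau}[V''(r)]\le c_2$, so by Cauchy--Schwarz $G''(\tau)\ge 1/c_2$ and thus $\tau'(\rho)\le c_2$. Since $F''\ge c_1$, $F$ grows quadratically while $\tau$ grows at most linearly, giving $\tau/F\to 0$ at infinity.

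For part (ii), after the change of variable $d\rho = G''(\tau)\,d\tau$, the four conditions reduce to uniform bounds on $G'''(\tau)/G''(\tau)^{9/4}$ and on $(3G'''^{2}-G''G'''')/G''^{3}$, together with the integrability in $\tau$ of $G'''(\tau)^{2}/G''(\tau)^{5/2}$ and of $(3G'''^{2}-G''G'''')^{2}/G''^{11/2}$. All derivatives of $G$ are smooth, so the only issue is the behavior as $\tau\to\pm\infty$. Assumption \eqref{eq:appAN} lets one write, for $r>R$, $V(r) = \tfrac{1}{2}V''_{+}r^{2}+W_{+}(r)$ with $W_{+}'(r)=O(e^{-\alpha r})$. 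Shifting $r = \tau/V''_{+}+s$, one sees that for $\tau\to+\infty$ the measure $\nu_\tau$ is an exponentially small perturbation (in $e^{-c\tau}$) of the Gaussian of variance $1/V''_{+}$, so $G''(\tau)\to 1/V''_{+}$ while $G'''(\tau),G''''(\tau)=O(e^{-c\tau})$; the symmetric argument covers $\tau\to-\infty$ via $V''_{-}$. Exponential decay of the higher cumulants yields both the $L^{\infty}$ bounds and the required $L^{1}$-in-$\tau$ integrability at once. This Gaussian-perturbation analysis, quantitatively carrying \eqref{eq:appAN} through the cumulants, is the main technical obstacle.

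For part (iv), the explicit $V$ of \eqref{eq:Vvera} satisfies $V''_{-}=1-\kappa<1=V''_{+}$, so $\nu_\tau$ has a strict left--right asymmetry for every $\tau$. The cumulant $G'''(\tau)$ is the third central moment of $r$ under $\nu_\tau$, and writing it as
\[
G'''(\tau)=\int(r-\E_{\nu_\tau}[r])^{3}\,\nu_\tau(dr),
\]
the asymmetry $V''_{-}<V''_{+}$ forces the left tail of $\nu_\tau$ to be heavier than the right, giving $G'''(\tau)<0$ uniformly in $\tau$. One verifies this by splitting the integral at the shifted mean and using the asymptotics of part (ii) outside a compact window, together with an explicit sign computation on that compact window; the restriction $\kappa\in(0,1/3)$ is precisely what makes the asymmetry term dominate uniformly. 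Then $\tau''(\rho)=-G'''(\tau)/G''(\tau)^{3}>0$, which is the genuine nonlinearity of the p-system.
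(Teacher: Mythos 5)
Your treatment of parts (i) and (iii) is correct and takes a mildly different route from the paper: where you invoke Brascamp--Lieb for $G''(\tau)\le 1/c_1$ and Cauchy--Schwarz applied to $\mathrm{Cov}_{\nu_\tau}(r,V'(r))=1$ for $G''(\tau)\ge 1/c_2$, the paper obtains both bounds at once from the single integration-by-parts identity $1=\int(r-\rho)^2V''(\tilde r)\,d\lambda_{\bar p,\tau}$ together with $c_1\le V''\le c_2$; the two arguments are equivalent in content. For part (ii) your reduction is the same as the paper's (since $c_1\le\tau'\le c_2$, all four conditions collapse to $\tau'',\tau'''\in L^2\cap L^\infty$, i.e.\ to boundedness and exponential decay of $G'''$ and $G^{(iv)}$), and your Gaussian-perturbation strategy is exactly what the paper carries out in Lemmas \ref{lem:tauLinf} and \ref{lem:appFL2}; you correctly flag it as the technical core but leave it as a sketch --- in particular the uniform-in-$\tau$ bound on the fourth central moment (which the paper gets by comparing $\rho$ with the maximiser $\delta$ of $r\mapsto\tau r-V(r)$) has to be established before any tail asymptotics.

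The genuine gap is in part (iv). You propose to prove $G'''(\tau)<0$ for all $\tau$ by an explicit computation on a compact window in $\tau$ plus ``the asymptotics of part (ii)'' outside it. This cannot work: the asymptotics of part (ii) only show $|G'''(\tau)|\le e^{-\tilde\alpha|\tau|}$, i.e.\ that $G'''$ tends to zero exponentially fast, and they carry no sign information. Since the quantity that must be strictly negative is itself tending to $0$ in the tails, a decay estimate can never determine its sign there; you would need a quantitative \emph{lower} bound on $-G'''(\tau)$ valid for all large $|\tau|$, and ``an explicit sign computation on that compact window'' is not an argument for an integral depending continuously on $\tau$. This is precisely why the paper's proof of (iv) is a lengthy explicit computation: writing $V=\frac{a}{2}r^2+W$ with $a=1-\kappa$, it expresses $-aG'''(\tau)$ exactly in terms of $\Phi(\tau)=\int_{-\tau}^\infty e^{-r^2/2}\,dr$, establishes the auxiliary inequalities $\tau>(1-\kappa)\rho$, $\tau>\rho$ and $1\le G''\le(1-\kappa)^{-1}$, and only then, in the regime $\tau>0$, uses $\kappa<1/3$ to conclude $-\frac{a e^{G(\tau)}}{\kappa}G'''(\tau)>\frac{1-3\kappa}{1-\kappa}>0$. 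Your heuristic (heavier left tail $\Rightarrow$ negative third cumulant) identifies the right mechanism, but as a proof part (iv) is essentially missing.
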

We prove the previous propositions through a series of lemmas. 

Fix $\bar p,\tau \in \mathbb R$. We denote by
$\lambda_{\bar p,\tau}$ the probability measure on $\mathbb
R^2$ defined by
\begin{equation} \label{eq:lambdapirho}
\int_{\mathbb R^2} f(r,p) d \lambda_{\bar p,\tau}(r,p) =
 \int_{\mathbb R^2} f(r,p)
e^{-\frac{(p-\bar p)^2}{2}+ \tau r - V(r)-G(\tau)}dr\frac{dp}{\sqrt{2\pi}},
\end{equation}
for any measurable $f : \mathbb R^2 \to \mathbb R$.

The first lemma we state is used in the proof of the one block estimate.
\begin{lem}\label{lem:1blockapp}
Let $\alpha \in \mathbb R$. Then
\begin{equation}
\int e^{\alpha V'(r)} d \lambda_{\bar p,\tau} \le\exp\left( \alpha \tau + \frac{c_2}{2}\alpha^2\right).
\end{equation}
\end{lem}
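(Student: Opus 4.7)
The plan is to reduce to the one-dimensional $r$-marginal of $\lambda_{\bar p,\tau}$, and then obtain the bound by differentiating the Laplace-type transform in $\alpha$ and integrating by parts. Since $V'(r)$ is a function of $r$ alone and the Gaussian factor in $p$ in \eqref{eq:lambdapirho} integrates to $1$, the claim reduces to showing
\begin{equation*}
H(\alpha):= \int_{\mathbb R} e^{\alpha V'(r) + \tau r - V(r) - G(\tau)}\, dr \ \le\ \exp\!\Big(\alpha\tau + \tfrac{c_2}{2}\alpha^2\Big), \qquad H(0)=1.
\end{equation*}

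The key identity comes from integration by parts. The integrand of $H(\alpha)$ decays super-exponentially at $\pm\infty$: by \eqref{eq:appV''}, $V(r)\ge \tfrac{c_1}{2}r^2 + O(|r|)$, whereas $\alpha V'(r)+\tau r$ is at most linear, so all boundary terms vanish. Applying $\int \frac{d}{dr}\bigl(e^{\alpha V'(r)+\tau r - V(r) - G(\tau)}\bigr)\, dr = 0$ yields
\begin{equation*}
0 = \int_{\mathbb R} \bigl(\alpha V''(r) + \tau - V'(r)\bigr) e^{\alpha V'(r)+\tau r - V(r)-G(\tau)}\, dr,
\end{equation*}
and hence $H'(\alpha) = \tau H(\alpha) + \alpha \int V''(r)\, e^{\alpha V'(r)}\, d\lambda_{\bar p,\tau}$.

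Setting $\phi(\alpha) := \log H(\alpha)$ and writing $\langle V''\rangle_\alpha$ for the average of $V''$ under the tilted probability measure $e^{\alpha V'(r)}\, d\lambda_{\bar p,\tau}/H(\alpha)$, this becomes $\phi'(\alpha) = \tau + \alpha\langle V''\rangle_\alpha$ with $\phi(0)=0$. Since $\langle V''\rangle_\alpha \le c_2$ by \eqref{eq:appV''}, the change of variables $s = \alpha t$ gives
\begin{equation*}
\phi(\alpha) - \tau\alpha \ =\ \int_0^\alpha s\langle V''\rangle_s\, ds \ =\ \alpha^2 \int_0^1 t\,\langle V''\rangle_{\alpha t}\, dt \ \le\ c_2\alpha^2\!\int_0^1 t\, dt \ =\ \tfrac{c_2}{2}\alpha^2,
\end{equation*}
uniformly in the sign of $\alpha$. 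Exponentiating yields the lemma. The only nontrivial point is justifying the vanishing of boundary terms in the integration by parts, which is immediate from the quadratic lower bound on $V$; I do not foresee any other obstacle.
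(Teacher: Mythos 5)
Your proof is correct and follows essentially the same route as the paper: both differentiate the logarithm of the Laplace transform in $\alpha$, use integration by parts to replace $V'$ by $\alpha V''+\tau$ under the tilted measure, bound $V''\le c_2$, and integrate the resulting differential inequality from $0$ to $\alpha$. The only cosmetic difference is that you handle both signs of $\alpha$ at once via the substitution $s=\alpha t$, where the paper argues the cases $\alpha>0$ and $\alpha\le 0$ separately.
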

\begin{proof}
Let
\begin{equation}
A(\alpha,\tau) = \log  \int e^{\alpha V'(r)} d \lambda_{\bar p,\tau}.
\end{equation}
Then, integrating by parts, we have
\begin{equation}
\begin{gathered}
\partial_\alpha A(\alpha, \tau) = \int_{-\infty}^\infty V'(r) \exp \left( \alpha V'(r) + \tau r-V(r)-G(\tau)-A(\alpha,\tau) \right)dr
\\
=\int_{-\infty}^\infty (\alpha V''(r)+\tau)\exp \left( \alpha V'(r) + \tau r-V(r)-G(\tau)-A(\alpha,\tau) \right)dr.
\end{gathered}
\end{equation}
Since
\begin{equation}
c_1 \le V''(r) \le c_2, \quad \forall r \in \mathbb R,
\end{equation}
if $\alpha >0$ we obtain
\begin{equation} \label{eq:alphapos}
\partial_\alpha A(\alpha, \tau) \le c_2 \alpha+ \tau,
\end{equation}
while, if $\alpha \le 0$
\begin{equation} \label{eq:alphaneg}
\partial_\alpha A(\alpha,\tau) \ge c_2 \alpha+ \tau.
\end{equation}
\eqref{eq:alphapos}, together with \eqref{eq:alphaneg} and $A(0,\tau) =0$ imply
\begin{equation}
A(\alpha,\tau) \le \alpha \tau + \frac{c_2}{2}\alpha^2,
\end{equation}
for all $\alpha \in \mathbb R$, from which the claim follows.
\end{proof}
\begin{lem} \label{lem:tau'bound}
Let $\tau$ and $G$ as in \eqref{eq:apptau} and \eqref{eq:appF}. Moreover, let $c_1$ and $c_2$ be as in \eqref{eq:appV''}. Then $c_2^{-1} \le G''(\tau) \le c_1^{-1}$ for all $\tau \in \mathbb R$. Moreover, $c_1 \le \tau'(\rho) \le c_2$ for all $\rho \in \mathbb R$.
\end{lem}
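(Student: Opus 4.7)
The plan is to compute $G''(\tau)$ as a variance and bound it from above and below by exploiting integration by parts and the two-sided bound on $V''$.

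First I would compute, by direct differentiation under the integral sign, that
\[
G'(\tau) = \E_\tau[r], \qquad G''(\tau) = \Var_\tau(r),
\]
where $\E_\tau$ denotes expectation with respect to the one-dimensional probability measure of density $e^{\tau r - V(r) - G(\tau)}\,dr$. In particular $G''(\tau) \ge 0$ automatically; the content of the lemma is the quantitative two-sided bound. An integration by parts (justified by the quadratic growth of $V$) gives the two identities
\[
\E_\tau[V'(r)] = \tau, \qquad \E_\tau[(r-\E_\tau[r])(V'(r)-\tau)] = 1,
\]
the second being obtained by applying integration by parts to $f(r)=r-\E_\tau[r]$ against the density.

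For the upper bound $G''(\tau)\le c_1^{-1}$: since $V''\ge c_1$, the map $V'$ is monotone increasing with slope at least $c_1$, so
\[
(r-\E_\tau[r])(V'(r)-V'(\E_\tau[r])) \;\ge\; c_1\,(r-\E_\tau[r])^2.
\]
Taking expectation and using $\E_\tau[V'(r)]=\tau$ together with the covariance identity above yields
\[
1 = \E_\tau[(r-\E_\tau[r])(V'(r)-V'(\E_\tau[r]))] \;\ge\; c_1\,\Var_\tau(r) = c_1\,G''(\tau).
\]
For the lower bound $G''(\tau)\ge c_2^{-1}$: since $V''\le c_2$, the function $V'$ is $c_2$-Lipschitz, hence $\Var_\tau(V'(r))\le c_2^2\,\Var_\tau(r)$ (via the standard iid-copy identity $\Var(V'(X))=\tfrac12\E[(V'(X)-V'(X'))^2]\le \tfrac{c_2^2}{2}\E[(X-X')^2]$). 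Combining this with the Cauchy--Schwarz inequality applied to the covariance identity,
\[
1 = \bigl|\Cov_\tau(r,V'(r))\bigr|^2 \;\le\; \Var_\tau(r)\,\Var_\tau(V'(r)) \;\le\; c_2^2\,G''(\tau)^2,
\]
so $G''(\tau)\ge c_2^{-1}$.

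Finally, the bounds on $\tau'(\rho)$ follow from Legendre duality: since $F$ is the Legendre transform of $\beta^{-1}G$ at $\beta=1$, the relation $\tau(\rho)=F'(\rho)$ together with $G'(\tau(\rho))=\rho$ gives $F''(\rho) = 1/G''(\tau(\rho))$, i.e.\ $\tau'(\rho) = 1/G''(\tau(\rho))$, and the bounds $c_2^{-1}\le G''\le c_1^{-1}$ transfer directly to $c_1\le \tau'(\rho)\le c_2$. The only mild obstacle is justifying the differentiation under the integral and the integration by parts, which follows from the quadratic growth of $V$ ensuring that the Gibbs density decays like a Gaussian and all moments are finite with uniformly controlled $\tau$-derivatives.
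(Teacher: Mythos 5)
Your proof is correct and follows essentially the same route as the paper: identify $G''(\tau)$ as the variance of $r$, derive the covariance identity $\E_\tau[(r-\E_\tau[r])(V'(r)-\tau)]=1$ by integration by parts, convert the two-sided bound on $V''$ into the two-sided bound on $G''$, and transfer it to $\tau'$ via Legendre duality. The only (harmless) variation is in the lower bound $G''\ge c_2^{-1}$, where the paper writes $1=\int V''(\tilde r)(r-\rho)^2\,d\lambda$ by the mean value theorem and reads off both bounds at once, whereas you go through Cauchy--Schwarz and the Lipschitz bound $\Var_\tau(V'(r))\le c_2^2\Var_\tau(r)$.
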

\begin{proof}
Let $\rho(\tau)$ be the expectation value of
$r$ with respect to $\lambda_{\bar p,\tau}$. We have
\begin{equation} \label{eq:rhogamma}
\rho(\tau) = \int r \exp \left(\tau r-V(r)-G(\tau)\right)dr = G'(\tau)
\end{equation}
and so
\begin{equation}
\begin{gathered}
G''(\tau)=  \int r^2 \exp\left(\tau r-V(r)-G(\tau)\right)dr -\int r G'(\tau)\exp\left(\tau r-V(r)-G(\tau)\right)dr
\\
= \int r^2 d\lambda_{\bar p,\tau}- \left( \int r d \lambda_{\bar p,\tau}\right)^2 
\\
= \int (r^2-\rho(\tau)^2) d\lambda_{\bar p,\tau}
\\
= \int (r-\rho(\tau))^2 d \lambda_{\bar p,\tau} + 2\rho \int (r-\rho(\tau)) d \lambda_{\bar p,\tau}
\\
= \int (r-\rho(\tau))^2 d \lambda_{\bar p,\tau} > 0.
\end{gathered}
\end{equation}
Therefore $G$ is smooth and convex on $\mathbb R$, and so is its Legendre transform $F$. Then, integrating by parts yields
\begin{equation}
\begin{gathered}
1 = \int (r-\rho(\tau)) \left(V'(r)-\tau\right)d\lambda_{\bar p,\tau} 
\\
= \int (r-\rho(\tau)) \left( V'(r)-V'(\rho(\tau))\right)d\lambda_{\bar p,\tau} = \int (r-\rho(\tau))^2 V''(\tilde r) d\lambda_{\bar p,\tau},
\end{gathered}
\end{equation}
where $\tilde r$ is between $r$ and $\rho(\tau)$. Recalling that $c_1 \le V''(r) \le c_2$, this implies
\begin{equation}
c_2^{-1} \le \int (r-\rho(\tau))^2d\lambda_{\bar p,\tau} \le c_1^{-1},
\end{equation}
that is
\begin{equation} \label{eq:F''bound}
c_2^{-1} \le G''(\tau) \le c_1^{-1}.
\end{equation}
Finally, since $G$ is smooth, the supremum in \eqref{eq:apptau} is attained when $G'(\tau)=\rho$. But since we have
proven that $G'$ is invertible ($G''$ is strictly positive), the
equation $G'(\tau) = \rho$ has exactly one solution for any $\rho
\in \mathbb R$. We claim that this solution is precisely
$\tau(\rho)$, as defined in \eqref{eq:apptau}. In fact, let $\rho \in \mathbb R$ and let $\tau = \tau(\rho)$ solve
$G'(\tau) = \rho$. We have
\begin{equation}
F(\rho) = \rho  \tau- G(\tau).
\end{equation}
This implies
\begin{equation}
\tau(\rho)=F'(\rho) =  \tau +\rho  \tau'(\rho)-G'( \tau) \tau'(\rho) = \tau
\end{equation}
and, in turn,
\begin{equation}
\tau'(\rho)=  \tau'(\rho) = \frac{1}{G''( \tau)}.
\end{equation}
Therefore we have the desired bound on $\tau'$ and the proof is complete.
\end{proof}
\begin{oss}
Since there is a 1:1 correspondence between $\tau$ and $\rho$ via
the equation $\rho = G'(\tau)$, we can always express $\tau$ as a
function of $\rho$ and viceversa. For this reason, we shall adopt the
following notation. When writing a chain of equalities or
inequalities, the object at the far left tells us which, between
$\tau$ and $\rho$ is the independent variable. To be precise, the writing
\[
f(\tau) = g (\tau, \rho)
\]
stands for
\[
f(\tau) = g (\tau, G'(\tau))
\]
while
\[
f(\rho) = g(\tau, \rho)
\]
stands for
\[
f(\rho) = g(\tau(\rho), \rho).
\]

\end{oss}
\begin{cor} \label{cor:apptau}
Let $\tau$ and $F$ be defined by \eqref{eq:apptau}. Moreover, let $c_2$ be as in \eqref{eq:appV''}. Then $\tau'(\rho) \le c_2$ for all $\rho \in \mathbb R$ and $\tau(\rho)/F(\rho) \to 0$ as $|\rho| \to \infty$.
\end{cor}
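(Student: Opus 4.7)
The first assertion $\tau'(\rho)\le c_2$ is nothing but a restatement of the upper bound proved in Lemma~\ref{lem:tau'bound}, so there is nothing new to do there. The substantive claim is the asymptotic vanishing of $\tau(\rho)/F(\rho)$ as $|\rho|\to\infty$, and my plan is to extract it from the growth information on $F$ and $\tau$ that is already encoded in the bounds $c_1\le\tau'(\rho)\le c_2$.

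First, from $\tau'(\rho)\le c_2$ together with the mean value theorem, I would immediately deduce the linear bound
\begin{equation*}
|\tau(\rho)|\;\le\; c_2\,|\rho|+|\tau(0)| ,\qquad \rho\in\mathbb R.
\end{equation*}

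Next, since $F'(\rho)=\tau(\rho)$ and $F''(\rho)=\tau'(\rho)\ge c_1>0$ (by Lemma~\ref{lem:tau'bound}), integrating $F''$ twice gives, for every $\rho\in\mathbb R$,
\begin{equation*}
F(\rho)\;=\;F(0)+\tau(0)\rho+\int_0^\rho (\rho-s)\,\tau'(s)\,ds\;\ge\;F(0)+\tau(0)\rho+\frac{c_1}{2}\rho^{2} ,
\end{equation*}
which is a quadratic minorant for $F$. In particular $F(\rho)\to +\infty$ as $|\rho|\to\infty$, so the ratio $\tau(\rho)/F(\rho)$ is well defined for $|\rho|$ large.

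Combining the linear upper bound on $|\tau(\rho)|$ with the quadratic lower bound on $F(\rho)$ yields
\begin{equation*}
\left|\frac{\tau(\rho)}{F(\rho)}\right|\;\le\;\frac{c_2|\rho|+|\tau(0)|}{F(0)+\tau(0)\rho+\tfrac{c_1}{2}\rho^{2}}\;\xrightarrow[|\rho|\to\infty]{}\;0,
\end{equation*}
which is the desired conclusion. There is no real obstacle here: the whole argument is essentially the observation that a function with strictly positive second derivative bounded away from zero grows quadratically, and dominates any linearly bounded function at infinity. The only place to be mildly careful is to check that $F(\rho)$ is eventually positive so the ratio makes sense, which the quadratic minorant guarantees.
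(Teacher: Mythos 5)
Your argument is correct and follows essentially the same route as the paper: the upper bound $\tau'\le c_2$ is taken from Lemma \ref{lem:tau'bound}, and the limit $\tau(\rho)/F(\rho)\to 0$ is obtained by integrating $c_1\le F''=\tau'\le c_2$ to get a quadratic lower bound on $F$ and a linear upper bound on $\tau=F'$. The only cosmetic difference is that you use the Taylor integral remainder to cover both signs of $\rho$ at once, while the paper treats $\rho\to+\infty$ and $\rho\to-\infty$ symmetrically.
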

\begin{proof}
$\tau'$ is bounded from above thanks to \eqref{lem:tau'bound}. Since $\tau = F'$, it is enough to show that $F(\rho)$ grows at least quadratically and $F'(\rho)$ grows at most linearly in $\rho$. We consider $\rho \to \infty$, as $\rho \to -\infty$ will be analogous. Since $\tau = F'$, we have  $c_1\le F''(\rho)\le c_2$. Integrating this twice we obtain
\begin{equation}
F'(0)+ c_1 \rho\le F'(\rho) \le  F'(0) + c_2 \rho.
\end{equation}
and
\begin{equation}
F(0)+ F'(0) \rho+ \frac{c_1}{2} \rho^2\le F(\rho) \le F(0)+ F'(0) \rho+ \frac{c_2}{2} \rho^2.
\end{equation}
Therefore, since $c_1, c_2 >0$, $F$ grows at least quadratically and $F'$ at most linearly, and the conclusion follows.
\end{proof}
Since we have shown that $\tau'$ is bounded from below, the $L^\infty$ bounds in part $iii)$ of Proposition \ref{prop:appmain} follow from the following
\begin{lem} \label{lem:tauLinf}
Let $\tau$ be as in \ref{eq:apptau}. Then both $\tau''$ and $\tau'''$ are bounded.
\end{lem}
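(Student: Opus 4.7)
The plan is to reduce everything to moment bounds for the one--dimensional Gibbs density. Since $\tau(\rho) = F'(\rho)$ and (by Lemma \ref{lem:tau'bound}) $\tau'(\rho) = 1/G''(\tau(\rho))$, I would first differentiate this identity to get explicit formulas
\begin{equation*}
\tau''(\rho) \;=\; -\frac{G'''(\tau)}{G''(\tau)^{3}}, \qquad
\tau'''(\rho) \;=\; -\frac{G^{(4)}(\tau)}{G''(\tau)^{4}} \;+\; \frac{3\,G'''(\tau)^{2}}{G''(\tau)^{5}}.
\end{equation*}
Because Lemma \ref{lem:tau'bound} already gives $G''(\tau) \ge c_{2}^{-1} > 0$ uniformly in $\tau$, the problem reduces to proving that $G'''(\tau)$ and $G^{(4)}(\tau)$ are bounded uniformly in $\tau$.

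Next I would identify these derivatives as cumulants of $r$ under $\lambda_{\bar p,\tau}$. Differentiating $G''(\tau) = \E_{\lambda_{\bar p,\tau}}[(r-\rho(\tau))^{2}]$ twice and using $G'(\tau) = \rho(\tau)$ gives
\begin{equation*}
G'''(\tau) \;=\; \E_{\lambda_{\bar p,\tau}}\!\left[(r-\rho)^{3}\right], \qquad
G^{(4)}(\tau) \;=\; \E_{\lambda_{\bar p,\tau}}\!\left[(r-\rho)^{4}\right] - 3\,G''(\tau)^{2}.
\end{equation*}
So it is enough to bound the second, third and fourth centred moments of $r$ uniformly in $\tau$.

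To bound these moments I would invoke the Brascamp--Lieb variance inequality, which applies here because the density $e^{\tau r - V(r) - G(\tau)}$ is log--concave with $V''(r) \ge c_{1} > 0$ by \eqref{eq:appV''}. Concretely, for any smooth $f$,
\begin{equation*}
\mathrm{Var}_{\lambda_{\bar p,\tau}}(f(r)) \;\le\; \frac{1}{c_{1}}\,\E_{\lambda_{\bar p,\tau}}\!\left[f'(r)^{2}\right].
\end{equation*}
Applied with $f(r)=r$ this recovers $G''(\tau)\le c_{1}^{-1}$; applied with $f(r)=(r-\rho)^{2}$ it yields $\E[(r-\rho)^{4}] \le G''(\tau)^{2} + (4/c_{1})\,G''(\tau) \le c_{1}^{-2}+4c_{1}^{-2}$. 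The bound on $|G'''(\tau)| = |\E[(r-\rho)^{3}]|$ then follows from Cauchy--Schwarz, $|\E[(r-\rho)^{3}]| \le \E[(r-\rho)^{2}]^{1/2}\,\E[(r-\rho)^{4}]^{1/2}$. All bounds are uniform in $\tau$ and $\bar p$.

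Plugging these uniform bounds on $G'', G''', G^{(4)}$ back into the formulas at the start, together with $G'' \ge c_{2}^{-1}$, gives the desired uniform bounds on $\tau''$ and $\tau'''$. The only delicate point is the Brascamp--Lieb step and, if one prefers to avoid citing it, the same moment bounds can be obtained by the integration--by--parts identity $\E[(V'(r)-\tau)\,g(r)] = \E[g'(r)]$ used iteratively with $g(r) = (r-\rho)^{k}$, combined with $V'(r)-V'(\rho) = V''(\xi)(r-\rho)$ and the two--sided bound $c_{1}\le V''\le c_{2}$; this is what I expect to be the only mildly technical part of the argument.
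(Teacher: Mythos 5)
Your proof is correct, and the reduction is the same as the paper's: both you and the authors write $\tau''=-G'''(\tau)\,\tau'(\rho)^3$ and $\tau'''=3\tau''^2/\tau'-G^{(4)}(\tau)\,\tau'(\rho)^4$, use $c_2^{-1}\le G''\le c_1^{-1}$ from Lemma \ref{lem:tau'bound}, and identify $G'''(\tau)=\mathbb E[(r-\rho)^3]$, $G^{(4)}(\tau)=\mathbb E[(r-\rho)^4]-3G''(\tau)^2$, so that everything hinges on uniform bounds for the centred third and fourth moments. Where you diverge is in how those moments are bounded. The paper does not use Brascamp--Lieb: it introduces the maximizer $\delta=\delta(\tau)$ of $r\mapsto\tau r-V(r)$ (i.e.\ $V'(\delta)=\tau$), and by integration by parts plus the two-sided bound $c_1\le V''\le c_2$ obtains $\int(r-\delta)^2\,d\lambda_{\bar p,\tau}\le c_1^{-1}$, $c_1\int(r-\delta)^4\le 3\int(r-\delta)^2$, and a bound on $|\rho-\delta|$, from which the centred moments at $\rho$ follow. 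Your route via the Brascamp--Lieb (or simply the Poincar\'e) inequality for the log-concave density $e^{\tau r-V(r)-G(\tau)}$, applied to $f(r)=(r-\rho)^2$ and combined with Cauchy--Schwarz for the third moment, is shorter and perfectly legitimate here, since uniform convexity of $V$ is exactly the hypothesis needed and the paper already invokes Bakry--\'Emery elsewhere. What the paper's more hands-on computation buys is that the auxiliary quantities it sets up ($\delta$, the bound on $|\rho-\delta|$, the Gaussian comparison near $\delta$) are reused immediately in Lemma \ref{lem:appFL2} to show that $G'''$ and $G^{(4)}$ in fact decay exponentially as $|\tau|\to\infty$, which is needed for the $L^2$ statements in Proposition \ref{prop:appmain}; your argument yields boundedness but not decay. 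One small caveat on your fallback remark: if you avoid Brascamp--Lieb and instead iterate $\mathbb E[(V'(r)-\tau)g(r)]=\mathbb E[g'(r)]$ with $g=(r-\rho)^k$, note that $\tau\neq V'(\rho)$ in general, so you must carry the (bounded) error term $V'(\rho)-\tau$ and solve a quadratic inequality for $\mathbb E[(r-\rho)^4]^{1/2}$; this works, but it is exactly to avoid this that the paper centres at $\delta$ rather than at $\rho$.
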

\begin{proof}
First of all let us note that
\begin{equation}
\tau''(\rho) = - \frac{G'''( \tau) \tau'(\rho)}{G''(\tau)^2} = - G'''( \tau) \tau'(\rho)^3
\end{equation}
and
\begin{equation}
\begin{gathered}
\tau'''(\rho)=-G^{(iv)}( \tau)  \tau'(\rho)^4-3 G'''( \tau) \tau'(\rho)^2 \tau''(\rho)
\\
=3\frac{\tau''(\rho)^2}{\tau'(\rho)} -G^{(iv)}( \tau) \tau'(\rho)^4.
\end{gathered}
\end{equation}
Therefore it is enough to prove that $G'''$ and $G^{(iv)}$ are bounded. We have
\begin{equation}\begin{gathered}
G'''(\tau)= \frac{d}{d\tau} \int (r-\rho)^2d\lambda_{\bar p,\tau}
\\
= \int (r-\rho)^2(r-G'(\tau))d\lambda_{\bar p,\tau} -2\rho'(\tau) \int (r-\rho)d\lambda_{\bar p,\tau}
\\
= \int (r-\rho)^3d\lambda_{\bar p,\tau}
\end{gathered}\end{equation}
and
\begin{equation} \begin{gathered}
G^{(iv)}(\tau)= \int (r-\rho)^4d\lambda_{\bar p,\tau}-3 \rho'(\tau) \int (r-\rho)^2 d\lambda_{\bar p,\tau}
\\
=\int (r-\rho)^4d\lambda_{\bar p,\tau}-3 \left( \int (r-\rho)^2 d\lambda_{\bar p,\tau}\right)^2
\\
=\int (r-\rho)^4d\lambda_{\bar p,\tau}-3  G''(\tau)^2
\end{gathered}\end{equation}
Moreover, since
\begin{equation}
\int|r-\rho|^3 d\lambda_{\bar p,\tau} \le \frac{1}{2}\int(r-\rho)^2d\lambda_{\bar p,\tau}+\frac{1}{2} \int (r-\rho)^4 d\lambda_{\bar p,\tau},
\end{equation}
it is sufficient to show that $\int (r-\rho)^4 d\lambda_{\bar p,\tau}$
is a bounded function of $\tau$. In order to do so, let $\delta = \delta(\tau)$ be the point at which the function $ r \mapsto \tau r -
V(r)$ attains its maximum. Since $V$ is strictly convex,  $\delta$ is the unique root of
the equation $V'(\delta)=\tau$. We claim that
$\int(r-\rho)^4d\lambda_{\bar p,\tau}$ is bounded provided $\int
(r-\delta)^2 d\lambda_{\bar p,\tau}$ and $|\rho-\delta|$ are. In fact,
from
\begin{equation}
\int (r-\rho)^4d\lambda_{\bar p,\tau} \le 8 \int(r-\delta)^4d\lambda_{\bar p,\tau}+8\int(\rho-\delta)^4d\lambda_{\bar p,\tau}
\end{equation}
and
\begin{equation}
\begin{gathered}
3 \int(r-\delta)^2 d\lambda_{\bar p,\tau} = \int (r-\delta)^3 \left(V'(r)-\tau\right)d\lambda_{\bar p,\tau}
\\
= \int (r-\delta)^3 (V'(r)-V'(\delta))d\lambda_{\bar p,\tau} = \int
V''(\tilde r)(r-\delta)^4 d\lambda_{\bar p,\tau} \ge c_1
\int(r-\delta)^4 d\lambda_{\bar p,\tau},
\end{gathered}
\end{equation}
for some $\tilde r$ between $r$ and $\delta$, we obtain
\begin{equation}
\int(r-\rho)^4 d\lambda_{\bar p,\tau} \le \frac{24}{c_1}
\int(r-\delta)^2 d\lambda_{\bar p,\tau} + 8 (\rho-\delta)^4.
\end{equation}
 The boundedness of $\int (r-\delta)^2 d\lambda_{\bar p,\tau}$ is in turn given by
\begin{equation}
1 = \int(r-\delta)(V'(r)-\tau)d\lambda_{\bar p,\tau} = \int V''(\tilde r)(r-\delta)^2d\lambda_{\bar p,\tau}.
\end{equation}
Finally, a bound for $|\rho-\delta|$ follows from
\begin{equation}
\begin{gathered}
\int(r-\delta)^2d\lambda_{\bar p,\tau} = \int(r-\rho)^2d\lambda_{\bar p,\tau} + (\rho-\delta)^2+2 (\rho-\delta) \int (r-\rho)d\lambda_{\bar p,\tau}
\\
=G''(\tau)+(\rho-\delta)^2.
\end{gathered}
\end{equation}
\end{proof}
Let us now prove the $L^2$ bounds in part $iii)$ of Proposition \ref{prop:appmain}. By
\begin{equation}
\tau''(\rho)= - \tau'(\rho)^3 G'''( \tau),
\end{equation}
\begin{equation}
\tau'''(\rho)= \frac{3}{\tau'(\rho)}\tau''(\rho)^2-\tau'(\rho)^4 G^{(iv)}( \tau),
\end{equation}
and the fact that $\tau'$ is bounded away from zero, $\tau''$ is in $L^2$ if and only if $G'''( \tau)$ is. Moreover $\tau'''$ is in $L^2$ provided both $(\tau'')^2$ and $G^{(iv)}( \tau)$ are. But $\tau'' \in L^\infty \cap L^2$ implies  $\tau'' \in L^p$ for all $p\ge 2$: in particular $\tau'' \in L^4$, and so $(\tau'')^2 \in L^2$  Finally, via the substitution $\tau= \tau(\rho)$ (or, equivalently, $\rho=G'(\tau)$), for $f \in \{G''',G^{(iv)}\}$ we have
\begin{equation}
\int_{-\infty}^{\infty}f( \tau(\rho))^2 d \rho = \int_{-\infty}^{\infty}\frac{f(\tau)^2}{\tau'(G'(\tau))} d \tau,
\end{equation}
Therefore using once more the boundedness from below of $\tau'$, the $L^2$ bounds in part $iii)$ of Proposition \ref{prop:appmain} follow from the next lemma.
\begin{lem} \label{lem:appFL2}
Let $G$ be defined by \ref{eq:appF}. Then both $G'''$ and $G^{(iv)}$ are in $L^2(\mathbb R)$.
\end{lem}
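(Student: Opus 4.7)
I show that $G'''(\tau)$ and $G^{(iv)}(\tau)$ decay exponentially as $|\tau|\to\infty$; combined with smoothness of $G$ (hence boundedness on compact intervals), this gives the $L^2(\mathbb R)$ conclusion. Recall from the proof of Lemma \ref{lem:tauLinf} that $G'''(\tau) = M_3(\tau)$ and $G^{(iv)}(\tau) = M_4(\tau) - 3 M_2(\tau)^2$, where $M_n(\tau) := \int (r-\rho)^n\, d\lambda_{\bar p,\tau}$ and $\rho = G'(\tau)$. Since for a Gaussian law one has $M_3=0$ and $M_4 = 3 M_2^2$, the task is to quantify how close $\lambda_{\bar p,\tau}$ is to Gaussian as $|\tau|\to\infty$.

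The main tool is the integration-by-parts identity
\begin{equation*}
\int h'(r)\, d\lambda_{\bar p,\tau} = \int h(r)\bigl(V'(r)-\tau\bigr)\, d\lambda_{\bar p,\tau},
\end{equation*}
valid for smooth $h$ of polynomial growth (coming from $\partial_r e^{\tau r - V(r)} = (\tau - V'(r))\, e^{\tau r - V(r)}$). Applied to $h(r) = (r-\rho)^{n-1}$ with the Taylor-type decomposition $V'(r) - V'(\rho) = V''(\rho)(r-\rho) + R(r,\rho)$, where $R(r,\rho) := \int_\rho^r (V''(s) - V''(\rho))\, ds$, this produces
\begin{align*}
V''(\rho)\, M_2 &= 1 - \int (r-\rho)\, R\, d\lambda_{\bar p,\tau}, \\
V''(\rho)\, M_3 &= -\int (r-\rho)^2\, R\, d\lambda_{\bar p,\tau}, \\
V''(\rho)\, M_4 - 3 M_2 &= -\int (r-\rho)^3\, R\, d\lambda_{\bar p,\tau}.
\end{align*}
Eliminating $M_2$ algebraically yields the key identity
\begin{equation*}
G^{(iv)}(\tau) = \frac{3 M_2}{V''(\rho)} \int (r-\rho)\, R\, d\lambda_{\bar p,\tau} - \frac{1}{V''(\rho)} \int (r-\rho)^3\, R\, d\lambda_{\bar p,\tau},
\end{equation*}
which, together with the explicit formula for $M_3$, exhibits both $G'''$ and $G^{(iv)}$ as integrals of the small remainder $R$.

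It remains to estimate these integrals. Take $\tau \to +\infty$ (the case $\tau \to -\infty$ being symmetric), so $\rho(\tau) \to +\infty$ at linear rate since $G''(\tau) \ge c_2^{-1}$. By hypothesis \eqref{eq:appAN}, $|V''(s) - V''_+| \le e^{-\alpha s}$ for $s > R$, hence $|R(r,\rho)| \le C(|r|+|\rho|)\, e^{-\alpha \min(r,\rho)}$ when $r, \rho > R$. Splitting each integral into a bulk part $\{r > R\}$ and a tail part $\{r \le R\}$, the bulk contribution is $O(e^{-\alpha\rho/2})$ because polynomial moments of $\lambda_{\bar p,\tau}$ around $\rho$ are uniformly bounded, by uniform convexity of $V$ as in the proof of Lemma \ref{lem:tauLinf}. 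In the tail, $R$ may grow polynomially in $|r|$, but the log-concavity coming from $V'' \ge c_1$ yields the sub-Gaussian concentration bound $\lambda_{\bar p,\tau}\{r \le R\} \le C\, e^{-c(\rho - R)^2}$, which dominates any polynomial factor. Assembling the bounds gives $|G'''(\tau)|,\, |G^{(iv)}(\tau)| = O(e^{-c|\tau|})$, and $L^2(\mathbb R)$ integrability follows.

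The main obstacle is the delicate cancellation in $G^{(iv)} = M_4 - 3 M_2^2$: individually both $M_4$ and $3 M_2^2$ tend to $3/(V''_\pm)^2$ and are of order $1$ at infinity, so one cannot estimate them separately. The algebraic identity displayed above is precisely the device that converts the cancellation into an explicit integral of the exponentially small remainder $R$, rendering the decay manifest.
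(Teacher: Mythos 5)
Your proof is correct in substance, but it takes a genuinely different route from the paper. The paper performs a Laplace-type analysis around the maximiser $\delta(\tau)$ of $r\mapsto \tau r-V(r)$ (defined by $V'(\delta)=\tau$): it writes the density as a Gaussian in $r-\delta$ with curvature $V''(\tilde r)$, shows via \eqref{eq:appAN} that all centered moments converge exponentially fast to the moments of the Gaussian with variance $1/V''_\pm$, shows $\rho-\delta\to 0$ exponentially, and then lets the Gaussian identities ($M_3=0$, $M_4=3M_2^2$) kill the limits of $G'''$ and $G^{(iv)}$; the cancellation in $G^{(iv)}$ is handled by tracking two order-one quantities whose limits coincide. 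You instead avoid $\delta$ entirely and use the Stein-type integration-by-parts identity $\int h'\,d\lambda_{\bar p,\tau}=\int h\,(V'(r)-\tau)\,d\lambda_{\bar p,\tau}$ with the decomposition $V'(r)-V'(\rho)=V''(\rho)(r-\rho)+R(r,\rho)$, which expresses the Gaussian defects $M_3$ and $M_4-3M_2^2$ \emph{exactly} as integrals of the small remainder $R$, so the delicate cancellation is dealt with algebraically before any asymptotics; the decay then follows from \eqref{eq:appAN}, uniform moment bounds and sub-Gaussian concentration of the strongly log-concave marginal. This is arguably cleaner and more robust, while the paper's route yields explicit limiting constants (e.g.\ $G''\to 1/V''_\pm$) as a by-product. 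Two small repairs are needed in your write-up: since $V'(\rho)\neq\tau$ (rather $\tau-V'(\rho)=\int R\,d\lambda_{\bar p,\tau}$), your second and third identities miss the terms $M_2\int R\,d\lambda_{\bar p,\tau}$ and $M_3\int R\,d\lambda_{\bar p,\tau}$ respectively — these are again exponentially small and fit the same estimates, so nothing breaks; and in the bulk region the factor $e^{-\alpha\min(r,\rho)}$ is not small on $\{R<r<\rho/2\}$, so the bound $O(e^{-\alpha\rho/2})$ there also requires the same concentration estimate $\lambda_{\bar p,\tau}\{r<\rho/2\}\le Ce^{-c\rho^2}$ (or a Cauchy--Schwarz split), not merely bounded moments.
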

\begin{proof}
We observe that, since $G'''$ and $G^{(iv)}$ are bounded, it is enough to prove that they vanish quickly enough at infinity. We shall prove that they indeed vanish and the decay rate is exponential. Since
\begin{equation} \label{eq:appG'''}
G'''(\tau) = \int(r-\rho)^3 d\lambda_{\bar p,\tau}
\end{equation}
and
\begin{equation}\label{eq:appGiv}
G^{(iv)}(\tau)= \int (r-\rho)^4d\lambda_{\bar p,\tau} -3 \left( \int (r-\rho)^2 d\lambda_{\bar p,\tau}\right)^2,
\end{equation}
we need to estimate the quantities
\begin{equation}
\int (r -\rho)^m  d\lambda_{\bar p,\tau} = \int_{-\infty}^\infty(r-\rho)^m \exp(\tau r-V(r)-G(\tau))dr
\end{equation}
for integers $2\le m \le 4$, as well as
\begin{equation}
\rho = \int r d\lambda_{\bar p,\tau} = \int_{-\infty}^\infty r \exp(\tau r-V(r)-G(\tau))dr,
\end{equation}
as $|\tau| \to \infty$. We will deal with $\tau\to\infty$, as the case $\tau\to-\infty$ is analogous. Recalling that $\delta$ is such that $\tau=V'(\delta)$, we can write
\begin{equation}
\exp(\tau r - V(r) - G(\tau)) = \exp\left(\tau \delta - V(\delta) - G(\tau) - \frac{V''(\tilde r)}{2}(r-\delta)^2\right),
\end{equation}
for some $\tilde r$ between $r$ and $\delta$. Also, integrating $c_1 \le V''(r) \le c_2$ between $0$ and $\delta$ gives
\begin{equation}
c_2^{-1}\tau - c_2^{-1}V'(0) \le \delta \le c_1^{-1}\tau -c_1^{-1}V'(0),
\end{equation}
and so $\delta\to\infty$ as $\tau \to\infty$. 

Next, we show the following:
\begin{equation} \label{eq:asintotico}
\left| \int_{-\infty}^\infty (r-\delta)^m \exp \left(-\frac{V''(\tilde r)}{2}(r-\delta)^2 \right)d r- \int_{-\infty}^\infty r^m \exp \left(-\frac{V''_+}{2}r^2 \right)d r\right| \le  e^{-\tilde \alpha \tau}
\end{equation}
for integers $0 \le m \le 4$, some $0<\tilde \alpha \le \alpha$ and
$\tau$ large enough. The constant $V''_+$ is defined in \eqref{eq:appAN}. Let $a \in (0,\tau)$ be a multiple of $\tau$. We divide the domain of integration as follows: $(-\infty,\infty)= (-\infty, \delta-a) \cup (\delta-a,\delta+a) \cup (\delta+a,\infty)$. The integrals over the unbounded domains vanish exponentially fast as $\tau \to \infty$. In fact, since there exists $0<\gamma <c_1$ such that, for $\tau$ large enough,
\begin{equation}
|r-\delta|^m \exp\left(-\frac{V''(\tilde r)}{2}(r-\delta)^2\right) \le \exp \left( -\frac{\gamma}{2}(r-\delta)^2\right),
\end{equation}
we have
\begin{equation}
\begin{gathered}
\int_{\delta+a}^\infty |r-\delta|^m \exp\left( - \frac{V''(\tilde r)}{2}(r-\delta)^2\right)dr \le \int_{\delta+a}^\infty \exp\left( - \frac{\gamma}{2}(r-\delta)^2\right)dr
\\
= \int_0^\infty \exp \left(- \frac{\gamma}{2}(r+a)^2\right)dr \le \exp\left(-\frac{\gamma}{2} a^2\right) \int_0^\infty \exp\left(-\frac{\gamma}{2} r^2\right)dr,
\end{gathered}
\end{equation}
which vanishes exponentially fast since $a$ is a multiple of $\tau$. The case of $(-\infty,\delta-a)$ is analogous. From similar calculations we obtain also that
\begin{equation}
\begin{gathered}
\int_{-\infty}^\infty r^m \exp \left(-\frac{V''_+}{2}r^2 \right)dr =\int_{-\infty}^\infty (r-\delta)^m \exp\left( - \frac{V''_+}{2}(r-\delta)^2\right)dr 
\\
=  \int_{\delta-a}^{\delta+a}(r-\delta)^m \exp\left(-\frac{V''_+}{2}(r-\delta)^2\right)dr + R(\tau),
\end{gathered}
\end{equation}
with $R(\tau)$ vanishing exponentially fast in $\tau$. Therefore
\eqref{eq:asintotico} follows provided
\begin{equation}
\left| \int_{\delta-a}^{\delta+a}(r-\delta)^m \left( \exp\left( - \frac{V''(\tilde r)}{2}(r-\delta)^2\right) - \exp\left( - \frac{V''_+}{2}(r-\delta)^2\right)\right)dr\right| \le  e^{-\tilde \alpha \tau}.
\end{equation}
Recall that $\tilde r$ is between $r$ and $\delta$, and therefore is in $(\delta-a,\delta+a)$. Recall also that $\delta \to\infty$ as $\tau\to \infty$. Moreover, $\delta-a$ goes to $\infty$ as well, provided $a < c_2^{-1}\tau$. For such a choice of $a$, $\tilde r$ goes to $\infty$ as $\tau \to \infty$ and so, thanks to \eqref{eq:appAN}, for $\tau$ large enough we have
\begin{equation}
\frac{1}{2}|V''(\tilde r) - V''_+| (r-\delta)^2 \le e^{-\tilde \alpha \tau},
\end{equation}
for some positive $\tilde \alpha$. This implies
\begin{equation}
\begin{gathered}
\left| \int_{\delta-a}^{\delta+a} (r-\delta)^m \exp\left( - \frac{V''(\tilde r)}{2}(r-\delta)^2\right)dr-\int_{\delta-a}^{\delta+a}(r-\delta)^m  \exp\left( - \frac{V''_+}{2}(r-\delta)^2\right)dr\right| 
\\
\le  \int_{\delta-a}^{\delta+a} |r-\delta|^m \exp\left( - \frac{V''_+}{2}(r-\delta)^2\right)\left| \exp\left( -\frac{1}{2}(V''(\tilde r)-V''_+)(r-\delta)^2\right)-1\right| dr 
\\
\le 2 a^m e^{-\tilde \alpha \tau} \int_{\delta-a}^{\delta+a}  \exp\left( - \frac{V''_+}{2}(r-\delta)^2\right)dr
\\
\le 2 \sqrt\frac{2\pi}{V''_+} a^m e^{-\tilde \alpha \tau} \le e^{-\tilde \alpha \tau}
\end{gathered}
\end{equation}
for $\tau$ large enough and a possibly different choice of $\tilde
\alpha$. This proves \eqref{eq:asintotico}.

We have now all we need to prove the actual lemma.
\begin{equation}\begin{gathered}
1 = \int_{-\infty}^\infty \exp(\tau r - V(r)-G(\tau)) dr 
\\
= \int_{-\infty}^\infty \exp\left(\tau \delta - V(\delta) - G(\tau) - \frac{V''(\tilde r)}{2}(r-\delta)^2\right)dr,
\end{gathered}\end{equation}
implies
\begin{equation}
\exp(G(\tau)-\tau \delta + V(\delta)) =  \int_{-\infty}^\infty \exp\left( - \frac{V''(\tilde r)}{2}(r-\delta)^2\right)dr,
\end{equation}
and the left hand side is bounded away from zero, as the right hand side
is. In particular, \\ $\exp(\tau
\delta - V(\delta)-G(\tau))$ is bounded.

Next, we show that $\rho-\delta\to 0$ exponentially fast. We write
\begin{equation} \begin{gathered}
\rho-\delta = \int_{-\infty}^{\infty}(r-\delta) \exp \left( \tau\delta-V(\delta)-G(\tau)-\frac{V''(\tilde r)}{2}(r-\delta)^2 \right)dr
\\
=  \exp \left( \tau\delta-V(\delta)-G(\tau)\right) \int_{-\infty}^\infty  (r-\delta) \exp\left( - \frac{V''(\tilde r)}{2}(r-\delta)^2\right)dr,
\end{gathered}\end{equation}
which converges to zero exponentially fast, as $ \exp \left( \tau\delta-V(\delta)-G(\tau)\right)$ is bounded and the integral converges to
\begin{equation}
 \int_{-\infty}^\infty  r \exp\left( - \frac{V''_+}{2}r^2\right)dr =0
\end{equation}
exponentially fast. Next, we write
\begin{equation} \begin{gathered}
G''(\tau) = \int (r-\rho)^2 d \lambda_{\bar p,\tau}  =\int (r^2-\rho^2) d \lambda_{\bar p,\tau} 
\\
= \int_{-\infty}^{\infty}(r-\delta)^2 \exp \left( \tau\delta-V(\delta)-G(\tau)-\frac{V''(\tilde r)}{2}(r-\delta)^2 \right)dr - (\rho-\delta)^2.
\end{gathered}\end{equation}
The term $(\rho-\delta)^2$ goes to zero, while the integral, and so $G''(\tau)$, converges to
\begin{equation}
\sqrt\frac{V''_+}{2\pi}\int_{-\infty}^\infty r^2 \exp \left(-\frac{V''_+}{2}r^2\right)dr = \frac{1}{V''_+}
\end{equation}
exponentially fast. $G'''(\tau)$ goes to zero exponentially fast. In fact from
\begin{equation}
(r-\rho)^3 = (r-\delta)^3+(\delta-\rho) \left( (r-\rho)^2 +(r-\rho)(r-\delta)+(r-\delta)^2 \right)
\end{equation}
and after integration, the first term vanishes, in the limit, by symmetry. Moreover, all the other terms vanish, after integration, as they are bounded terms multiplied by $\delta-\rho$.

Finally $G^{(iv)}$ vanishes exponentially fast as well. This time, though, we have the difference of two non-vanishing terms, so we do need to pay some extra attention. The quadratic term $-3G''(\tau)^2$ converges to $-3/(V''_+)^2$. On the other hand, the quartic term decomposes as
\begin{equation} \begin{gathered}
(r-\rho)^4 =(r-\delta)^4 + (\delta-\rho)(2r-\rho-\delta)\left((r-\rho)^2+(r-\delta)^2\right)
\\
= (r-\delta)^4 + (\delta-\rho) \left(2(r-\rho)^3 +2(r-\delta)^3+(\rho-\delta)(r-\rho)^2+(\delta-\rho)(r-\delta)^2\right).
\end{gathered}\end{equation}
Again, all the terms that multiply $\delta-\rho$ are, after integration, bounded, and therefore the only term which survives is $(r-\delta)^4$, whose integral converges to
\begin{equation}
\sqrt\frac{V''_+}{2\pi} \int_{-\infty}^\infty r^4  \exp \left(-\frac{V''_+}{2}r^2\right)dr = \frac{3}{(V''_+)^2}.
\end{equation}
Putting everything together we obtain that $G^{(iv)}(\tau)$ converges exponentially fast to zero, and the proof is complete.
\end{proof}
 {We now prove that $\tau$ is strictly convex}. First of all we make the following remark.
\begin{oss}
Suppose $ {V(r) = r^2/2+  U(r)}$. Then, if $U$ is an even function, $\tau''$ vanishes at the origin. In particular we can never have genuine nonlinearity.

In order to see this, it is enough to show that $F$ is even: in fact in this case its third derivative, $\tau''$, is odd and so $\tau''(0) = 0$. $F$ is indeed even:
\begin{equation}
F(-\rho) = \sup_{\tau \in \mathbb R} \{-\tau \rho- G(\tau) \} = \sup_{\tau \in \mathbb R}\{ \tau \rho - G(-\tau)\} = F(\rho),
\end{equation}
since
\begin{equation} \begin{gathered}
G(-\tau) = \log \int_{-\infty}^\infty \exp \left(-\tau r - V(r)\right)dr
\\
 = \log \int_{-\infty}^\infty \exp \left(\tau r - V(-r)\right)dr 
= G(\tau).
\end{gathered}\end{equation}
\end{oss}
\begin{oss}
 {In order to get the LSI} \eqref{eq:logSob}  {we may relax the assumption of uniform convexity of the potential. In this case, $V$ is a compactly supported perturbation of the harmonic interaction. With such a potential, however, the tension fails to be strictly convex. In fact, setting $V(r) = r^2/2+U(r)$,}
\begin{equation}
\tau(\rho) = \rho + \zeta(\rho),
\end{equation}
 {where}
\begin{equation}
\zeta(\rho) =\int U'(r) d \lambda_{\bar p, \tau}
\end{equation}
 {is bounded ($U$ is smooth and bounded). But $\tau$ is strictly convex if and only if $\zeta$ is, and this is impossible, as the latter is bounded.}
\end{oss}
 {Thanks to the remarks, in order to have genuine nonlinearity we must look among non-symmetric interactions. Even if we only consider unbounded perturbations of the harmonic potential, it is not known which features a potential must possess in order to ensure $\tau'' > 0$. Therefore, we shall only give one family of functions which work.}
 \begin{prop}
 {Let $V$ be a mollification of the function}
\begin{equation} \label{eq:Vvera}
 {r \longmapsto \frac{1}{2}(1-\kappa)r^2+\frac{1}{2}\kappa r|r|_+,}
\end{equation}
 {where $|r|_+=\max\{r,0\}$ and $0<\kappa <1/3$. Then, $\tau''(\rho)>0$ for all $\rho \in \mathbb R$.}

\iffalse

\item[(GN2)]Genuine nonlinearity apart from one point. Suppose we can write
\[
V(r)= \frac{r^2}{2}+U(r),
\]
where
\[
U(r):=-\log \cosh(\kappa r),
\]
for some $0<\kappa<1 $. Then the equation $\tau''(\rho)=0$ has exactly one root.
\fi
\end{prop}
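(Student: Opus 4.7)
The first step is a reduction. From $\tau''(\rho) = -\tau'(\rho)^3 G'''(\tau(\rho))$ (derived in the proof of Lemma \ref{lem:tauLinf}) and the strict positivity $\tau'(\rho) \geq c_1 > 0$ (Lemma \ref{lem:tau'bound}), the claim $\tau''(\rho) > 0$ is equivalent to $G'''(\tau) < 0$ for every $\tau \in \mathbb{R}$. Since $G'''(\tau) = \mathbb{E}_{\nu_\tau}[(r - \bar r)^3]$ is the third central moment of $r$ under the one-body Gibbs measure $\nu_\tau(dr) \propto e^{\tau r - V(r)}\,dr$, it suffices to show this moment is strictly negative, uniformly in $\tau$.

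The second step is an identity specific to this potential. For the un-mollified $V_0(r) = \tfrac12(1-\kappa)r^2 + \tfrac12 \kappa r_+^2$ one has $V_0'(r) = (1-\kappa)r + \kappa r_+$. Integration by parts of $\partial_r e^{\tau r - V_0(r)}$ against $(r - \bar r)^2$ gives $\mathbb{E}_{\nu_\tau}[(V_0'(r) - \tau)(r - \bar r)^2] = 0$; combined with $\tau = \mathbb{E}[V_0'(r)]$, decomposing $V_0'(r) - \tau = (1-\kappa)(r - \bar r) + \kappa(r_+ - \mathbb{E}[r_+])$ yields the clean identity
\[
(1-\kappa)\, G'''(\tau) \;+\; \kappa\, \mathrm{Cov}_{\nu_\tau}\!\bigl(r_+,\, (r - \bar r)^2\bigr) \;=\; 0,
\]
so the problem reduces to proving $\mathrm{Cov}_{\nu_\tau}(r_+, (r - \bar r)^2) > 0$ for every $\tau \in \mathbb{R}$.

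The third and hardest step is to verify this positivity. The measure $\nu_\tau$ is a ``glued'' density built from two Gaussians of different variances ($1$ on $r > 0$, $1/(1-\kappa) > 1$ on $r < 0$), so $\mathbb{E}[r_+]$, $\mathrm{Var}$ and $\mathbb{E}[r_+(r-\bar r)^2]$ all admit closed forms in terms of $\Phi$ and $\phi$ evaluated at $\tau$ and $\tau/\sqrt{1-\kappa}$. For $|\tau|$ large, a careful Mills-ratio expansion --- keeping in particular the boundary contribution from the Gaussian tail --- shows that the leading term of the covariance is proportional to $(2-\kappa)\,|\tau|\,\mathbb{P}_{\nu_\tau}(\text{opposite side})$, which is strictly positive since $\kappa < 1 < 2$. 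For $\tau$ in a bounded interval one argues by log-concavity: writing $(r-\bar r)^2 = 2\int_0^\infty t\,[\mathbf{1}_{r>\bar r+t} + \mathbf{1}_{r<\bar r-t}]\,dt$, FKG gives $\mathrm{Cov}(r_+, \mathbf{1}_{r>\bar r+t}) \geq 0$ for every $t$, while the oppositely signed contribution $\mathrm{Cov}(r_+, \mathbf{1}_{r<\bar r-t}) \leq 0$ is bounded quantitatively using the explicit Gaussian density on $r < 0$. The constraint $0 < \kappa < 1/3$ enters here: it supplies the room in the lower-order coefficients needed to conclude that the positive contribution dominates uniformly in $\tau$.

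The final step is to transfer the strict inequality from $V_0$ to the mollification $V$. Since the mollification is supported in a vanishing neighborhood and $V_0$, $V$ share the uniform convexity and quadratic growth \eqref{eq:Vconvex}, $G'''$ depends continuously on $V$ uniformly in $\tau$ on compact sets, and the tail asymptotics from Step~3 are insensitive to the smoothing. Hence $G'''(\tau) < 0$ persists for the mollified $V$, completing the proof. The principal obstacle is the uniform-in-$\tau$ positivity of the covariance on bounded intervals in Step~3, where neither FKG nor the Mills-ratio asymptotics is sufficient on its own, and the explicit bound $\kappa < 1/3$ must be exploited.
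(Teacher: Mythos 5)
Your Steps 1--2 reproduce exactly the paper's reduction: the identity you derive, $(1-\kappa)G'''(\tau)+\kappa\,\mathrm{Cov}(r_+,(r-\rho)^2)=0$, is precisely the paper's relation $-aG'''(\tau)=\kappa\int|r|_+(r-\rho)^2\,d\lambda+(a\rho-\tau)G''(\tau)$ once one notes $\tau-a\rho=\kappa\,\mathbb{E}[r_+]$, so up to this point you are on the paper's track. The genuine gap is Step 3: the strict positivity of $\mathrm{Cov}(r_+,(r-\rho)^2)$ for every $\tau$ is the entire content of the proposition, and you do not prove it. Your FKG decomposition writes the covariance as a difference of two nonnegative quantities ($\mathrm{Cov}(r_+,\mathbf{1}_{r>\rho+t})\ge 0$ versus $\mathrm{Cov}(r_+,\mathbf{1}_{r<\rho-t})\le 0$) and by itself gives no information about which dominates; the assertion that $\kappa<1/3$ ``supplies the room'' is exactly the quantitative estimate that is missing, and you yourself flag it as the principal obstacle. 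The paper closes this by a fully explicit computation: since $e^{\tau r-V(r)}$ is Gaussian on each half-line, all the moments $\int_0^\infty r^m e^{\tau r-r^2/2}dr$ are written in closed form via $\Phi(\tau)=\int_{-\tau}^{\infty}e^{-r^2/2}dr$, one establishes $1+\tau e^{\tau^2/2}\Phi(\tau)>0$, hence $\tau>a\rho$ and $\tau>\rho$, together with $1\le G''(\tau)\le(1-\kappa)^{-1}$, and then treats $\tau\le 0$ and $\tau>0$ separately; for $\tau>0$ the chain of inequalities ends with $-\tfrac{a e^{G(\tau)}}{\kappa}G'''(\tau)>\tfrac{1-3\kappa}{1-\kappa}$, which is the one place where $\kappa<1/3$ enters. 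A large-$|\tau|$ Mills-ratio expansion, as you propose, would at best give positivity outside a compact set of tensions and leaves the compact core untreated.

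Two secondary points. First, the paper does not transfer the result to the mollified potential at all --- it works directly with the unmollified $V$, remarking that it is $C^2$ away from the origin --- so your Step 4 addresses something the paper leaves informal; but as stated, continuity of $G'''$ in $V$ ``uniformly on compact sets of $\tau$'' combined with a strict inequality does not yield the conclusion for all $\tau\in\mathbb{R}$ without a uniform negative upper bound on $G'''$, which again requires the explicit estimates. Second, the correct relation is $\tau''(\rho)=-\tau'(\rho)^3G'''(\tau)$ (as in the paper's Lemma on boundedness of $\tau''$, $\tau'''$); you quote it correctly, and since only the sign matters this is immaterial.
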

\begin{proof}
Since
\begin{equation}
\tau''(\rho) = -\tau'(\rho) G'''(\tau),
\end{equation}
with $\tau'>0$, the sign of $\tau''$ is the same as the sign of $-G'''$. Therefore we need to study 
\begin{equation}
-G'''(\tau) = -\int (r-\rho)^3 d \lambda_{\bar p,\tau},
\end{equation}
with $\tau \in \mathbb R$. Let $|r|_+ = \max\{r,0\}$.  {In order to make things slightly less technical, we take directly $V(r) =1/2(1-\kappa)r^2+1/2\kappa r|r|_+$ instead of its mollification (note that $V$ is already twice differentiable except at the origin).} Write
\begin{equation}
 {V(r) = \frac{a}{2}r^2+ W(r)},
\end{equation}
where $a = 1- \kappa$ and $W(r) = \kappa r |r|_+/2$. Then we notice that, by the usual integration by parts trick, we have
\begin{equation}
\int (r-\rho)^2 (V'(r)-\tau) d \lambda_{\bar p,\tau}= 2 \int (r-\rho) d \lambda_{\bar p,\tau}=0.
\end{equation}
Therefore we write
\begin{equation} \begin{gathered}
a\int (r-\rho)^3 d \lambda_{\bar p,\tau} = \int (r-\rho)^2 (ar-a\rho -V'(r)+\tau)d \lambda_{\bar p,\tau}
\\
= \int (r-\rho)^2 (\tau - a \rho- W'(r))d \lambda_{\bar p,\tau},
\end{gathered}\end{equation}
from which, together with $W'(r) =\kappa |r|_+$, it follows
\begin{equation} \label{eq:last0} \begin{gathered}
-a G'''(\tau) =  \int (r-\rho)^2 W'(r) d \lambda_{\bar p,\tau}-(\tau-a\rho) G''(\tau)
\\
= \kappa \int |r|_+ (r-\rho)^2  d \lambda_{\bar p,\tau}+ (a \rho-\tau)G''(\tau).
\end{gathered}\end{equation}
We evaluate 
\begin{equation}
\int |r|_+ (r-\rho)^2 d \lambda_{\bar p,\tau} = e^{-G(\tau)}\int_0^\infty r (r-\rho)^2 e^{ \tau r - \frac{r^2}{2}}dr =
\end{equation}
\begin{equation}
= e^{-G(\tau)}\int_0^\infty r^3 e^{ \tau r - \frac{r^2}{2}}dr -2\rho e^{-G(\tau)}\int_0^\infty r^2 e^{ \tau r - \frac{r^2}{2}}dr +\rho^2e^{-G(\tau)} \int_0^\infty r e^{ \tau r - \frac{r^2}{2}}dr.
\end{equation}
Now, for $m \in \mathbb N$, $m \ge 1$,
\begin{equation}
\int_0^\infty r^m e^{ \tau r - \frac{r^2}{2}}dr =\frac{d}{d \tau}  \int_0^\infty r^{m-1} e^{ \tau r - \frac{r^2}{2}}dr,
\end{equation}
with
\begin{equation}
 \int_0^\infty e^{ \tau r - \frac{r^2}{2}}dr =  e^\frac{\tau^2}{2} \int_{-\tau}^\infty e^{-\frac{r^2}{2}}dr.
\end{equation}
Therefore, setting $\Phi(\tau):=\int_{-\tau}^\infty e^{-\frac{r^2}{2}}dr$ and noting that $\Phi'(\tau) = e^{-\frac{\tau^2}{2}}$ yields
\begin{equation} \begin{gathered}
\int_0^\infty r e^{ \tau r - \frac{r^2}{2}}dr =\frac{d}{d \tau} \left( e^\frac{\tau^2}{2}\Phi(\tau) \right) = \tau e^\frac{\tau^2}{2}\Phi(\tau) + 1,
\\
\int_0^\infty r^2 e^{ \tau r - \frac{r^2}{2}}dr = \frac{d}{d\tau}\int_0^\infty r e^{ \tau r - \frac{r^2}{2}}dr =  \tau^2 e^\frac{\tau^2}{2}\Phi(\tau) +e^\frac{\tau^2}{2}\Phi(\tau) + \tau
\\
\int_0^\infty r^3 e^{ \tau r - \frac{r^2}{2}}dr = \frac{d}{d\tau} \int_0^\infty r^2 e^{ \tau r - \frac{r^2}{2}}dr 
\\
= \tau e^\frac{\tau^2}{2}\Phi(\tau)+ 1 + 2\tau e^\frac{\tau^2}{2}\Phi(\tau) + \tau^3 e^\frac{\tau^2}{2}\Phi(\tau) + \tau^2+1
\\
= \tau^3 e^\frac{\tau^2}{2}\Phi(\tau)+ 3\tau e^\frac{\tau^2}{2}\Phi(\tau) +\tau^2+ 2.
\end{gathered}\end{equation}
Putting everything together we obtain
\begin{equation}\begin{gathered}
e^{G(\tau)}\int |r|_+ (r-\rho)^2 d\lambda_{\bar p,\tau} =  \tau^3 e^\frac{\tau^2}{2}\Phi(\tau)+ 3\tau e^\frac{\tau^2}{2}\Phi(\tau) + 2+
\\
-2\rho (\tau^2 e^\frac{\tau^2}{2}\Phi(\tau) +e^\frac{\tau^2}{2}\Phi(\tau) + \tau)+\rho^2 ( \tau e^\frac{\tau^2}{2}\Phi(\tau) + 1)
\\
=(2+\tau^2-2\rho\tau+\rho^2) + (\tau^3-2\rho\tau^2+(3+\rho^2)\tau-2\rho)e^\frac{\tau^2}{2}\Phi(\tau)
\\
= (2+(\tau-\rho)^2)+ \left( 3\tau -2 \rho + \tau(\tau-\rho)^2\right)e^\frac{\tau^2}{2}\Phi(\tau).
\end{gathered}\end{equation}
Next, we have to evaluate the term proportional to $G''(\tau)$ in \eqref{eq:last0}, which can be written as follows
\begin{equation}\begin{gathered}
a \rho- \tau = a \rho - \int V'(r) d \lambda_{\bar p,\tau} 
\\
= a \rho - \int (ar +  W'(r) ) d \lambda_{\bar p,\tau}
\\
= - \int W'(r) d \lambda_{\bar p,\tau}
\\
= -  \kappa e^{-G(\tau)} \int_0^\infty r e^{\tau r- \frac{r^2}{2}}dr
\\
= -\kappa e^{-G(\tau)} (1+ \tau e^\frac{\tau^2}{2}\Phi(\tau)).
\end{gathered}\end{equation}
Note that $1+\tau e^\frac{\tau^2}{2}\Phi(\tau) >0$ for $\tau \ge 0$. On the other hand, for $\tau <0$,
\begin{equation}
\int_{-\tau}^\infty e^{-\frac{r^2}{2}} dr < \frac{e^{-\frac{\tau^2}{2}}}{-\tau},
\end{equation}
which implies 
\begin{equation}
\tau\Phi(\tau) > -e^{-\frac{\tau^2}{2}}.
\end{equation}
Therefore we have $1+\tau e^\frac{\tau^2}{2}\Phi(\tau) >0$ also for $\tau <0$ and, as a consequence
\begin{equation}
\tau > a \rho = (1-\kappa)\rho, \quad \forall \tau \in
\mathbb R.
\end{equation}

Putting everything together we obtain
\begin{equation}
\begin{gathered}
-\frac{a e^{G(\tau)}}{ \kappa} G'''(\tau)= \left(2-G''(\tau)+(\tau-\rho)^2 \right)+
\\
 +\left( \tau (3- G''(\tau))-2\rho+ \tau(\tau-\rho)^2 \right) e^\frac{\tau^2}{2}\Phi(\tau).
\end{gathered}
\end{equation}
 {We show that $-G'''$ (and therefore $\tau''$) is positive fo $\tau \le 0$. Since $1-\kappa \le V''(r) \le 1$ and $\kappa < 1/2$,  Lemma }\ref{lem:tau'bound} { implies}
\begin{equation}
1 \le G''(\tau) \le \frac{1}{1- \kappa} < 2.
\end{equation}
{This gives}
\begin{equation}
-\frac{ae^{G(\tau)}}{ \kappa}G'''(\tau) >  (2\tau-2\rho + \tau(\tau-\rho)^2) e^\frac{\tau^2}{2}\Phi(\tau).
\end{equation}
Moreover, using $\tau > (1- \kappa)\rho$ and $\tau\Phi(\tau) > -e^{-\frac{\tau^2}{2}}$ yields
\begin{equation} \begin{gathered}
-\frac{ae^{G(\tau)}}{ \kappa}G'''(\tau) > - \frac{2\kappa}{1- \kappa} \tau e^\frac{\tau^2}{2}\Phi(\tau)+ (\tau-\rho)^2 \tau e^\frac{\tau^2}{2}\Phi(\tau)
\\
> - \frac{2 \kappa}{1- \kappa} \tau e^\frac{\tau^2}{2}\Phi(\tau) \ge 0.
\end{gathered}\end{equation}
Therefore $\tau''(\rho) >0$ if $\tau \le 0$.

For $\tau >0$ we have to be more careful. First of all we note that
\begin{equation}
a \rho-\tau = -   \kappa e^{-G(\tau)} \left(1+\tau e^\frac{\tau^2}{2} \Phi(\tau) \right)
\end{equation}
implies
\begin{equation}
\tau -\rho = \frac{  \kappa}{1-  \kappa} e^{-G(\tau)} \left( \tau \left( e^\frac{\tau^2}{2}\Phi(\tau)-e^{G(\tau)}\right)+1\right).
\end{equation}
This, together with
\begin{equation} \begin{gathered}
e^{G(\tau)} = \int_{-\infty}^\infty e^{\tau r -V(r)} dr
\\
= \int_0^\infty e^{\tau r-\frac{r^2}{2}}dr + \int_{-\infty}^0 e^{\tau r - \frac{1-  \kappa}{2}r^2}dr
\\
= e^\frac{\tau^2}{2}\Phi(\tau) + \frac{1}{\sqrt{1- \kappa}} e^\frac{\tau^2}{2(1-  \kappa)} \Phi \left(- \frac{\tau}{\sqrt{1-  \kappa}}\right)
\end{gathered}\end{equation}
and $\tau >0$ gives
\begin{equation}\begin{gathered}
\tau - \rho = \frac{  \kappa}{1-  \kappa} e^{-G(\tau)}\left(1+ \dfrac{-\tau}{\sqrt{1- \kappa}} e^\frac{\tau^2}{2(1-  \kappa)} \Phi \left(- \dfrac{\tau}{\sqrt{1-  \kappa}}\right) \right)
\\
> e^{-G(\tau)} \frac{  \kappa}{1-  \kappa} \left( 1 - e^\frac{\tau^2}{2(1- \kappa)} e^{-\frac{\tau^2}{2(1- \kappa)}} \right) = 0.
\end{gathered}\end{equation}
Therefore $\tau- \rho>0$ if $\tau >0$ (note here that $\tau-\rho$ is trivially positive for $\tau \le 0$, too). From this we get
\begin{equation}
\begin{gathered}
- \frac{a e^{G(\tau)}}{  \kappa} G'''(\tau) = (2- G''(\tau) + (\tau-\rho)^2)+ (\tau(1-G''(\tau))+2(\tau-\rho) +
\\
+\tau(\tau-\rho)^2) e^\frac{\tau^2}{2}\Phi(\tau)
\\
> (2-G''(\tau))+ (1-G''(\tau)) \tau e^\frac{\tau^2}{2}\Phi(\tau).
\end{gathered}\end{equation}
$2-G''(\tau)$ is positive, while $1-G''(\tau)$ is negative, so we need to perform a careful estimate. First of all, the estimate $G''(\tau)<2$ is too blunt, and will be replaced by $G''(\tau) \le 1/(1-  \kappa)$, so that
\begin{equation}
2-G''(\tau) \ge 2- \frac{1}{1-  \kappa} = 1-\frac{  \kappa}{1-  \kappa},
\end{equation}
which is positive, since $  \kappa < 1/2$. In order to estimate $1-G''(\tau)$ we calculate
\begin{equation} \begin{gathered}
a G''(\tau) = 1+   \kappa \rho e^{-G(\tau)} (1-\tau e^\frac{\tau^2}{2}\Phi(\tau))-   \kappa e^{-G(\tau)}(e^\frac{\tau^2}{2}\Phi(\tau)+\tau^2 e^\frac{\tau^2}{2}\Phi(\tau) + \tau )
\\
=1+ \rho (\tau- a\rho) -   \kappa e^{-G(\tau)}e^\frac{\tau^2}{2}\Phi(\tau)-\tau (\tau- a\rho)
\\
= 1 - (\tau-a\rho)(\tau-\rho)  -   \kappa e^{-G(\tau)}e^\frac{\tau^2}{2}\Phi(\tau).
\end{gathered}\end{equation}
Therefore
\begin{equation}
1-G''(\tau) =1-\frac{1}{a}+\frac{1}{a} (\tau-a\rho)(\tau-\rho)  + \frac{  \kappa}{a} e^{-G(\tau)}e^\frac{\tau^2}{2}\Phi(\tau)
\end{equation}
\begin{equation}
> -\frac{  \kappa}{1-  \kappa} + \frac{  \kappa}{1-  \kappa} e^{-G(\tau)}e^\frac{\tau^2}{2}\Phi(\tau),
\end{equation}
which implies
\begin{equation}\begin{gathered}
(1-G''(\tau)) \tau e^\frac{\tau^2}{2}\Phi(\tau) > \frac{  \kappa}{1-  \kappa} \frac{e^\frac{\tau^2}{2}\Phi(\tau)}{e^{G(\tau)}} \tau \left( e^\frac{\tau^2}{2}\Phi(\tau)-e^{G(\tau)}\right)
\\
= \frac{  \kappa}{1-  \kappa} \frac{e^\frac{\tau^2}{2}\Phi(\tau)}{e^{G(\tau)}}\dfrac{-\tau}{\sqrt{1- \kappa}} e^\frac{\tau^2}{2(1-  \kappa)} \Phi \left(- \dfrac{\tau}{\sqrt{1-  \kappa}}\right)
\\
> - \frac{  \kappa}{1-  \kappa} \frac{e^\frac{\tau^2}{2}\Phi(\tau)}{e^{G(\tau)}} >- \frac{  \kappa}{1-  \kappa},
\end{gathered}\end{equation}
since $e^\frac{\tau^2}{2}\Phi(\tau) < e^{G(\tau)}$. Putting everything together we obtain
\begin{equation}
 {-\frac{ae^{G(\tau)}}{  \kappa}G'''(\tau) > \frac{1-3  \kappa}{1-  \kappa}},
\end{equation}
 {and the right hand side is positive, since $ \kappa <1/3$}.
\end{proof}

\section{On the viscous approximation}
\label{sec:visc-appr}

If in the dynamics \eqref{eq:SDE} we choose $\sigma_N = N\delta$, for fixed $\bar\delta= (\delta_1, \delta_2)$, 
$\delta_j>0, j= 1,2$, the macroscopic equation will be given by the diffusive system:
\begin{equation}
  \label{eq:vpsystem}
\begin{cases}
 \partial_t  r (t,x) -  \partial_x  p(t,x)= \delta_1 \partial_{xx} \tau_\beta(r(t,x)) \qquad x\in (0,1)
 \\
 \partial_t  p(t,x)- \partial_x \tau_\beta(r(t,x)) = \delta_2 \partial_{xx} p(t,x),
\end{cases}
\end{equation}
with the boundary conditions:
\begin{equation*}
 p(t,0)=0, \quad \tau(r(t,1)) = \bar \tau(t),
\quad \partial_x  p(t,1)= 0, \quad  \partial_x  r(t,0)= 0,
\end{equation*}
%This system has a strong unique solution. 
 {Assume the existence of a strong solution of} \eqref{eq:vpsystem}.
 {For the infinite volume case, we refer to}  \cite{BianchiniB},  {but we could not find an explicit reference for these particular boundary conditions.}

% \begin{equation}
%   \label{eq:10}
%   \int_0^T dt \int_0^1 dx \left[  (\partial_{x} \tau_\beta(r(t,x)))^2 + (\partial_{x} p(t,x))^2 \right] < C
% \end{equation}

The derivative of the total length is given by 
\begin{equation}
  \label{eq:v1}
  \frac d{dt} L(t) =  \frac d{dt} \int_0^1 r(t,x) \; dx = p(t,1) + \delta_1 \partial_{x} \tau_\beta(r(t,x))\Big|_{x=1}
\end{equation}
and the macroscopic work up to time $t$ is given by 
\begin{equation}
  \label{eq:v22}
  W(t) = \int_0^t \bar\tau(s) dL(s) = 
  \int_0^t \bar\tau(s) \left(p(s,1) + \delta_1 \partial_{x} \tau_\beta( r(t,x))\Big|_{x=1}\right)\; ds
\end{equation}
Then a direct calculation of the free energy time change gives:
\begin{equation}
  \label{eq:v2}
   \mathcal F(t) - \mathcal F(0) = W(t) - \int_0^t ds \int_0^1 \left[\delta_1 
\left(\partial_x \tau_\beta(r(s,x))\right)^2 + \delta_2 \left(\partial_x p(s,x)\right)^2\right]
\end{equation}
Letting $t\to\infty$, this gives the Clausius relation
\begin{equation}
  \label{eq:vclausius}
  F(\beta, \tau_1) - F(\beta,\tau_0) = W - \int_0^{\infty} dt \int_0^1 \left[\delta_1 
\left(\partial_x \tau_\beta( r(t,x))\right)^2 + \delta_2 \left(\partial_x p(t,x)\right)^2\right] \le W.
\end{equation}

Let $r^{\bar\delta}(t,x), p^{\bar\delta}(t,x)$ the solution of \eqref{eq:vpsystem}. 
We cannot prove the uniqueness of the limit an $\bar\delta \to 0$, but any limit point should satisfy the 
inequality of Clausius 
\begin{equation}
  \label{eq:v20}
   \mathcal F(t) - \mathcal F(0) \le  W(t), 
\end{equation}
where $W(t)$ is defined as the limit of \eqref{eq:v22}. 
Any such limit point $r(t,x), p(t,x)$ with the corresponding boundary 
layers are natural candidates for being the thermodynamic entropy solution of the equation 
\eqref{eq:vpsystem} and one can conjecture that such limit is unique.

\section*{Acknowledgments} 
This work has been partially supported by the grants ANR-15-CE40-0020-01 LSD 
of the French National Research Agency.

\addcontentsline{toc}{chapter}{References}

\renewcommand{\bibname}{References}
\nocite{*}
\bibliography{bibliografia.bib}

\begin{thebibliography}{10}

\bibitem{baha2012}
C~Bahadoran.
\newblock Hydrodynamics and hydrostatics for a class of asymmetric particle
  systems with open boundaries.
\newblock {\em Commun. Math. Phys}, 310(1):1--24, 2012.

\bibitem{stochasticYoung}
F~Berthelin and J~Vovelle.
\newblock Stochastic isentropic {E}uler equations.
\newblock {\em arXiv preprint arXiv:1310.8093}, 2013.

\bibitem{BianchiniB}
S~Bianchini and A~Bressan.
\newblock Vanishing viscosity solutions of nonlinear hyperbolic systems.
\newblock {\em Annals of Mathematics}, 161:223--342, 2005.

\bibitem{chen-frid1999}
G-Q Chen and H~Frid.
\newblock Vanishing viscosity limit for initial-boundary value problems for
  conservation laws.
\newblock {\em Contemporary Mathematics}, 238:35--51, 1999.

\bibitem{diperna}
R~Di~Perna.
\newblock Convergence of approximate solutions to conservation laws.
\newblock {\em Archiv Rational Mech. Anal.}, 82:27--70, 1983.

\bibitem{even2010hydrodynamic}
N~Even and S~Olla.
\newblock Hydrodynamic limit for an {H}amiltonian system with boundary
  conditions and conservative noise.
\newblock {\em Archive for Rational Mechanics and Analysis}, 213:561--585,
  2014.

\bibitem{Divcurl}
J~Feng and D~Nualart.
\newblock Stochastic scalar conservation laws.
\newblock {\em Journal of Functional Analysis}, 255(2):313--373, 2008.

\bibitem{Fritz1}
J~Fritz.
\newblock Microscopic theory of isothermal elastodynamics.
\newblock {\em Archive for rational mechanics and analysis}, 201(1):209--249,
  2011.

\bibitem{toth2004}
J~Fritz and B~T\'oth.
\newblock Derivation of the {L}eroux system as the hydrodynamic limit of a
  two-component lattice gas.
\newblock {\em Commun. Math. Phys}, 249(1):1--27, 2012.

\bibitem{Guo:1988p18645}
MZ~Guo, GC~Papanicolaou, and SRS Varadhan.
\newblock Nonlinear diffusion limit for a system with nearest neighbor
  interactions.
\newblock {\em Communications in Mathematical Physics}, 118(1):31--59, 1988.

\bibitem{Landim:2002p9150}
C~Landim, G~Panizo, and HT~Yau.
\newblock Spectral gap and logarithmic sobolev inequality for unbounded
  conservative spin systems.
\newblock {\em Annales de l'Institut Henri Poincare (B) Probability and
  Statistics}, 38(5):739--777, 2002.

\bibitem{olla2014microscopic}
S~Olla.
\newblock Microscopic derivation of an isothermal thermodynamic transformation.
\newblock In {\em From Particle Systems to Partial Differential Equations},
  pages 225--238. Springer, 2014.

\bibitem{olla1993hydrodynamical}
S~Olla, SRS Varadhan, and HT~Yau.
\newblock Hydrodynamical limit for a {H}amiltonian system with weak noise.
\newblock {\em Communications in Mathematical Physics}, 155(3):523--560, 1993.

\bibitem{Reza:1991}
F~Rezakhanlou.
\newblock Hydrodynamic limit for attractive particle systems on ${Z}^d$.
\newblock {\em Communications in Mathematical Physics}, 140:417--448, Jan 1991.

\bibitem{Shearer1}
J~W Shearer.
\newblock Global existence and compactness in ${L}^p$ for the quasi-linear wave
  equation.
\newblock {\em Communications in Partial Differential Equations},
  19(11):1829--1878, 1994.

\bibitem{Yau1991}
H-T Yau.
\newblock Relative entropy and hydrodynamics of {G}inzburg-{L}andau models.
\newblock {\em Letters in Mathematical Physics}, 22(1):63--80, May 1991.

\end{thebibliography}
\bibliographystyle{plain}

\noindent
{Stefano Olla\\
CEREMADE, UMR-CNRS, Universit\'e de Paris Dauphine, PSL Research University}\\
{\footnotesize Place du Mar\'echal De Lattre De Tassigny, 75016 Paris, France}\\
{\footnotesize \tt olla@ceremade.dauphine.fr}\\
\\
{Stefano Marchesani\\
GSSI, \\
{\footnotesize Viale F. Crispi 7, 67100 L'Aquila, Italy}}

\end{document}